\newcommand{\C}{{\mathbb C}}
\newcommand{\D}{{\mathbb D}}
\newcommand{\Z}{{\mathbb Z}}
\newcommand{\R}{{\mathbb R}}
\newcommand{\N}{{\mathbb N}}
\newcommand{\Q}{{\mathbb Q}}
\renewcommand{\Re}{\operatorname{Re}}
\renewcommand{\Im}{\operatorname{Im}}
\newcommand{\id}{\operatorname{id}}
\newcommand{\Post}{\operatorname{Post}}
\newcommand{\PMCG}{\operatorname{PMCG}}
\renewcommand{\id}{\operatorname{id}}
\renewcommand{\phi}{\varphi}
\newcommand{\Teich}{\operatorname{Teich}}
\newcommand{\hide}[1]{}
\newcommand{\Mod}{\operatorname{Mod}}
\newcounter{main}
\theoremstyle{plain}
        \newtheorem{theorem}{Theorem}[section]
        \newtheorem*{theorem*}{Theorem}
        \newtheorem*{conj*}{Conjecture}
        \newtheorem{lemma}[theorem]{Lemma}
        \newtheorem{corollary}[theorem]{Corollary}
        \newtheorem{proposition}[theorem]{Proposition}
        \newtheorem{maintheorem}[main]{Theorem}   
        \newtheorem*{maintheorem*}{Theorem~\ref{thm:A}}
\theoremstyle{definition}
        \newtheorem{definition}[theorem]{Definition}
        \newtheorem*{definition*}{Definition}
\theoremstyle{remark}
        \newtheorem*{remark}{Remark}
        \newtheorem{example}[theorem]{Example}
        \newtheorem*{example*}{Example}
        \newtheorem*{examples*}{Examples}
\theoremstyle{definition}
\title[Dynamical approximation of postsingularly finite exponentials]{Dynamical approximation of postsingularly finite exponentials}
\author[Malavika Mukundan]{Malavika Mukundan}
\address {Dept. of Mathematics, 530 Church Street, University of Michigan Ann Arbor, MI, 48109}
\email{malavim@umich.edu}
\begin{document}
\begin{abstract}
    Given any postsingularly finite exponential function $p_\lambda(z) = \lambda \exp(z)$ where $\lambda \in \C^*$, we construct a sequence of postcritically finite unicritical polynomials $p_{d,\lambda_d}(z) = \lambda_d(1+\frac{z}{d})^d$ that converge to $p_\lambda$ locally uniformly in $\C$, with the same postsingular portrait as that of $p_\lambda$. We describe $\lambda_d$ in terms of parameter rays in the space of degree $d$ unicritical polynomials, and exhibit a relationship between the angles of these parameter rays as $d \rightarrow \infty$ and the external addresses associated with  $\lambda$ in the exponential parameter plane. 
\end{abstract}
\maketitle
\section{Introduction}
The past few decades have seen significant advances in the dynamics of polynomials and entire functions. Taylor series expansions tell us that any entire function, such as $\exp(z)$, can be approximated by a sequence of polynomials. This often allows us to adapt the techniques of polynomial dynamics to the entire setting. With this philosophy, Devaney, Lyubich, Rempe, Schleicher and others have conducted broad studies of several
entire maps as limits of polynomials. Whether polynomial approximations can be tailor-made to satisfy specific dynamical properties of the limiting entire map is a broad and intriguing question. The authors of \cite{Hubbard_et_al}  considered this question in the setting of the exponential functions  $p_\lambda(z) = \lambda \exp(z)$ with $\lambda \in \C^*$, and showed that hyperbolic components and specific rays in the parameter plane of $\{p_\lambda\}$ are limits of the corresponding objects in the parameter space of unicritical polynomials $p_{d,\lambda }(z) = \lambda(1 + z/d)^d$ as the degree $d \rightarrow \infty$.  \cite{Kisaka} studied convergence of Julia sets of a sequence of entire functions when the sequence has a limit satisfying certain properties. In \cite{helena}, the author studied a type of kernel convergence of non-escaping hyperbolic components of general entire families. 

In this article, we study approximations of \textit{postsingularly finite} (psf) maps in the $p_\lambda$ family. An entire map is said to be postsingularly finite if the forward orbit of its set of singular values is finite. In the $p_\lambda$ family, these are maps for which the point $0$, the unique singular value, has a finite forward orbit under iteration. Since the singular values of  polynomials are exactly its critical values, we refer to psf polynomials as postcritically finite (pcf). Postcritically finite polynomials  admit various combinatorial descriptions, in terms of \textit{spiders} (see \cite{spideralgo}, \cite{teichmuller_theory_vol2}), \textit{Hubbard trees} (\cite{poirier}), etc. Spiders and \textit{homotopy Hubbard trees} have also been introduced for psf exponentials (see \cite{combi_classif_exp}, \cite{pfrang_rothgang_schleicher_2023} respectively). Polynomials of critically preperiodic type were completely classified in \cite{hubbard_bielefeld_fisher} using purely combinatorial techniques, and psf exponentials in \cite{combi_classif_exp}. Postsingularly finite maps are profitable to study, since they play an important role in understanding the parameter space of both unicritical polynomials and exponentials. The topology of the Mandelbrot set can largely be described by means of pcf quadratic polynomials, as shown in \cite{DH_Orsay} and \cite{fibers}. A similar study of the Multibrot sets was conducted in \cite{multibrot}. 

The machinery developed by William Thurston enables us to study topological models for postcritically finite rational maps. The question of when a topological pcf model is equivalent to a rational map is the subject of Thurston's characterization theorem (\cite{neg_char}), and other results such as Dylan Thurston's positive characterization of rational maps (\cite{pos_char}), the Levy-Bernstein theorem (\cite[Theorem~10.3.1]{teichmuller_theory_vol2}), etc, lean into this theme. In the polynomial setting, by a theorem of Bernstein, Lei, Levy and Rees (\cite[Theorem~10.3.9]{teichmuller_theory_vol2}),  it is known that a topological pcf polynomial is equivalent to a holomorphic one if and only if the topological model does not have a \textit{Levy cycle} (see Section~\ref{sec: thurston maps}). Thurston's theory for postsingularly finite transcendental maps has seen tremendous development in the last decade, kicked off by \cite{hss} for \textit{topological exponential maps}, and continued by \cite{Pfrang_Thesis} and \cite{dreadlock} for general transcendental maps. Work by Shemyakov (\cite{Shemyakov_Thesis}) proves that for Thurston maps of structurally finite type, the only obstructions are Levy cycles. 

In the setting of unicritical polynomials, the spider algorithm developed by \cite{spideralgo} is an implementation of the Thurston pullback operator in the case of critically periodic parameters and some critically preperiodic ones. This provides a link between the combinatorial data contained in spiders and the rigid geometry of the pcf polynomial. Similarly for exponentials, external addresses (see section~\ref{sec: exp dynamics}) play the role of angles in the finite degree case.  Our main result illustrates the utility of this relationship between combinatorics and geometry to prove the existence of a dynamical approximation of psf exponentials by pcf polynomials, and pinpoints these approximating polynomials by identifying angles of rays that land on them in parameter space. 

In concurrent work with Prochorov and Reinke, we show that we can approximate general psf entire maps by pcf polynomials locally uniformly. Our approach there is to translate combinatorial data contained in line complexes into an analytical approximation of the Thurston pullback operator (defined in Section~\ref{sec:thurston pullback}) of entire maps. While the more extensive literature on the combinatorics  unicriticals and exponentials is mined in order to poinpoint our approximating polynomials as landing points of specific parameter rays, our results here chalk out further questions about approximations of general psf entire maps.

\subsection{Results}
Given a Thurston map $f$ on the plane (defined in Section~\ref{sec: thurston maps}), we denote by $\Post{(f)}$ the postsingular set of $f$. When $f$ is a unicritical pcf polynomial, its critical value $0$ is either periodic or strictly preperiodic under iteration. 

We make extensive use of parameter rays to the space of unicriticals. Any map $p_{d,\lambda}$ with $\lambda \in \C^*$ is affine conjugate to a monic polynomial $z^d+c$, where $c$ is unique upto multiplication by a $(d-1)$th root of unity. Moreover, if $\lambda$ is a critically preperiodic parameter, any such $c$ is the landing point of a positive finite number of parameter rays at rational angles.

\noindent We state our main result below.
\begin{maintheorem}\label{thm:A}
Given a postsingularly finite exponential map $p_\lambda$ where $\lambda \in \C^*$, there exists an integer $D$ and postcritically finite polynomials  $p_{d,\lambda_d}$ for $d\geq D$ such that 
\begin{enumerate}
    \item $p_{d,\lambda_d} \longrightarrow p_\lambda$ locally uniformly on $\C$ as $d \rightarrow \infty$
    \item $p_\lambda | \Post{(p_\lambda)}$ is conjugate to $p_{d,\lambda_d}| \Post{(p_{d,\lambda_d})}$
\end{enumerate}
Furthermore, there exists a polynomial $q \in \Z[X]$, integers $\ell,k \geq 1$ with $\deg q  \leq \ell+k-2$ depending only on $\lambda$, and a sequence of monic  polynomials $z^d+c_d$ conjugate to $p_{d,\lambda_d}$,  such that  $c_d$ is the landing point of the parameter ray at angle $\theta_d$, with $(d-1)\theta_d  = \frac{(d-1)q(d)}{d^\ell(d^k-1)}$ for all $d$. 
\end{maintheorem}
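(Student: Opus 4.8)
The plan is to carry the parameter-ray method of \cite{Hubbard_et_al} over to postsingularly finite data. Since $p_\lambda$ is psf, the singular orbit $0\mapsto\lambda\mapsto p_\lambda(\lambda)\mapsto\cdots$ is preperiodic; let $\ell$ be the smallest index at which it enters its cycle (so $\ell\geq 1$, because $0$ is omitted by $\exp$) and let $k$ be the period of the cycle. Recording the itinerary of $\{p_\lambda^n(0)\}_{n\geq1}$ with respect to the partition of $\C$ into the horizontal strips $\{\,|\Im z-2\pi j|<\pi\,\}$, $j\in\Z$, produces, by the combinatorial classification of psf exponentials (\cite{combi_classif_exp}, \cite{hss}, \cite{pfrang_rothgang_schleicher_2023}), a preperiodic integer sequence $\underline s=s_1\cdots s_\ell\,\overline{s_{\ell+1}\cdots s_{\ell+k}}$ that determines $p_\lambda$; moreover $p_\lambda$ is the landing point of the exponential parameter ray at external address $\underline s$.

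Next I would produce the candidate polynomials. For $d$ large, feed $\underline s$ into the degree-$d$ kneading dictionary of \cite{multibrot}, \cite{Hubbard_et_al}: after a shift placing the bounded sequence $\underline s$ into $\{0,\dots,d-1\}$, the $i$-th entry becomes the $i$-th base-$d$ digit $t_i=t_i(d)$ of an angle, and preperiodicity forces $\theta_d:=\sum_{i\geq1}t_id^{-i}=\dfrac{q(d)}{d^{\ell}(d^{k}-1)}$ for a single $q\in\Z[X]$ whose coefficients are explicit affine functions of the $s_i$; a short computation with the periodic tail (the leading base-$d$ digit of $\theta_d$ vanishes) gives $\deg q\leq\ell+k-2$, correspondingly $\theta_d<1/d$, consistent with $c_d$ lying near the cusp of $M_d$ — a cusp which itself tends to $1$ as $d\to\infty$. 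Since $\theta_d$ is strictly preperiodic under $\theta\mapsto d\theta$ with preperiod $\ell$ and period $k$, the parameter ray at angle $\theta_d$ in the degree-$d$ Multibrot set lands at a pcf parameter (\cite{multibrot}); let $z^d+c_d$ be the corresponding monic representative, set $\lambda_d:=d\,c_d^{\,d-1}$ (so $p_{d,\lambda_d}$ is affinely conjugate to $z^d+c_d$, with $c_d^{\,d-1}=\lambda_d/d$, hence $c_d$ one of the $(d-1)$ values of $(\lambda_d/d)^{1/(d-1)}$), and note that $(d-1)\theta_d=\dfrac{(d-1)q(d)}{d^{\ell}(d^{k}-1)}$, the factor $d-1$ recording that it is $(d-1)\theta_d$, not $\theta_d$, that directly reads off $\underline s$ under the $d\to\infty$ dictionary — a reflection of the $(d-1)$-fold parameter map $c\mapsto dc^{d-1}=\lambda$.

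Then I would verify the two conclusions. For (2): the parameter ray at the preperiodic angle $\theta_d$ lands, for $d\geq D$, at a Misiurewicz parameter whose critical-value orbit has preperiod exactly $\ell$ and period exactly $k$ and visits sectors in the pattern prescribed by $\underline s$ — standard combinatorics of $M_d$ together with the construction of $\theta_d$ from $\underline s$, with the spider algorithm of \cite{spideralgo} making the link between the combinatorial spider and the rigid polynomial explicit. Since both postsingular sets are single orbits with the same preperiod and period, $p_{d,\lambda_d}\big|\Post(p_{d,\lambda_d})$ is conjugate to $p_\lambda\big|\Post(p_\lambda)$. For (1), the crux, I follow the ray-convergence estimates of \cite{Hubbard_et_al}: in the normalization $p_{d,\lambda}(z)=\lambda(1+z/d)^d$ the dynamic rays at the angles $\theta_d,d\theta_d,\dots$ converge, uniformly on compacta, to the hairs of $p_\lambda$ at external address $\underline s$; transporting this to parameter space, the ray $\mathcal R_d(\theta_d)$ converges to the exponential parameter ray at $\underline s$, and the combinatorial rigidity from (2) forces the landing points to converge. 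As that exponential ray lands at $\lambda$, we obtain $\lambda_d\to\lambda$, hence $p_{d,\lambda_d}\to p_\lambda$ locally uniformly on $\C$.

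The main obstacle is the convergence statement (1): for each $d$ the pcf parameters carrying the correct $(\ell,k)$-portrait form a finite nonempty set, and one must (i) choose the branch — equivalently, fix the parameter-ray angle $\theta_d$ and the $(d-1)$th root $c_d$ — consistently as $d$ varies, and (ii) establish genuine convergence of the landing points, which is not a soft compactness statement and needs the uniform estimates on dynamic and parameter rays from \cite{Hubbard_et_al}, bootstrapped by the combinatorial control of the preceding steps. A secondary, purely arithmetic chore is the bookkeeping confirming that the base-$d$ encoding of a preperiodic address yields $q\in\Z[X]$ with denominator exactly $d^{\ell}(d^{k}-1)$ and $\deg q\leq\ell+k-2$, uniformly in $d\geq D$.
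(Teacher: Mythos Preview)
Your combinatorial setup is essentially correct and matches the paper's: encoding the external address $\underline{s}$ as base-$d$ digits (with negative entries wrapped via $s_n\mapsto d-|s_n|$, not a global shift) produces the angles $\theta_d$ and the polynomial $q$ with the stated degree bound, and the resulting spiders $S_d(\theta_d)$ are isomorphic to $\Gamma_{\underline{s}}$, which yields the conjugacy on postsingular sets in (2).

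The gap is in your argument for (1). You assert that parameter-ray convergence together with ``combinatorial rigidity from (2) forces the landing points to converge,'' but this inference is not valid as stated: convergence of rays on the open potential interval does not control what happens as the potential tends to $0$, and the results of \cite{Hubbard_et_al} concern hyperbolic components and periodic rays, not Misiurewicz landing points of strictly preperiodic rays. Rigidity identifies the limit \emph{if} one exists and is a psf exponential parameter; it does not supply existence or rule out the $c_d$ drifting. You flag this yourself as ``the main obstacle'' and say it ``needs the uniform estimates \dots\ bootstrapped by the combinatorial control,'' but no such bootstrap is actually carried out, so the proof is incomplete at exactly the point that carries all the analytic weight.

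The paper avoids landing-point analysis altogether and takes a genuinely different route. It works in the spider Teichm\"uller space $\mathcal{T}_{\underline{s}}$ and constructs, for each $d$, an operator $\sigma^d_{\underline{s}}$ conjugate to the degree-$d$ spider operator $\sigma^d_{\theta_d}$. All of these are weakly contracting, the limiting exponential operator $\sigma_{\underline{s}}$ is \emph{strongly} contracting on compacta, and the paper proves $\sigma^d_{\underline{s}}\to\sigma_{\underline{s}}$ locally uniformly (Lemma~\ref{lemma: convergence of spider maps}). A short Banach-fixed-point argument on a small ball about the fixed point $[\phi]$ of $\sigma_{\underline{s}}$ then forces the fixed points $[\phi_d]$ of $\sigma^d_{\underline{s}}$ to lie in that ball for large $d$, hence $[\phi_d]\to[\phi]$; reading off $\lambda_d=\phi_d(e_2)$ and $\lambda=\phi(e_2)$ gives $\lambda_d\to\lambda$. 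This trades the delicate ray-endpoint estimates your approach would require for the robust contraction machinery of Thurston iteration.
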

Each Thurston map $f: (S^2 \setminus \{\infty\} , \Post(f)) \righttoleftarrow$ gives rise to an  operator  $\sigma_f: \Teich(S^2, \Post(f)) \righttoleftarrow$, called the Thurston pullback operator of $f$ (see Section~\ref{sec:thurston pullback}), which has a fixed point $[\phi]$ in  $\Teich(S^2, \Post(f))$ if and only if $f$ is realized as (ie, is Thurston equivalent to) a psf entire function $g$. If $f$ is realized, the function $g = \phi' \circ f \circ \psi$ for some $\phi', \psi \in [\phi]$. 

In the case of psf unicriticals and exponentials, such topological models can be constructed using the underlying spiders of these functions. Given a psf polynomial $p_{d,\lambda}$, choose any $z^d+c$ conjugate to $p_{d,\lambda}$ and an angle $\theta$ such that the corresponding parameter ray lands at $c$. We recall in Section~\ref{sec:spiders} the degree$-d$ spider map $\mathcal{F}_{d,\theta}: S_d(\theta) \righttoleftarrow$. A certain quotient $[\mathcal{F}_{d,\theta}]: [S_d(\theta)] \righttoleftarrow $ obtained by identifying points with the same itinerary can be extended to a branched cover of $S^2$ that is Thurston equivalent to a pcf $p_{d, \lambda}$. We introduce a \textit{spider space} $\mathcal{T}^d_{\theta}$ (a type of Teichm\"{u}ller space) associated with $[S_d(\theta)]$ and an operator $\sigma^d_{\theta}$ on $\mathcal{T}^d_\theta$ (which is highly related to the spider operator in \cite{spideralgo}). This operator is conjugate to the Thurston pullback operator of $[\mathcal{F}_{d,\theta}]$, and has a unique fixed point $[\phi]$ in $\mathcal{T}^d_\theta$, using which we can recover $p_{d,\lambda}$ as $\phi' \circ[\mathcal{F}_{d,\theta}] \circ \phi $. 

In direct analogy, there exists a notion of dynamic/parameter rays to the exponential family $p_\lambda$. Each ray in the exponential setting is labelled by a sequence $\underline{s} \in \Z^{\N}$ called an external address. It is known that each psf exponential parameter $\lambda$ is the landing point of a positive finite number of parameter rays with strictly preperiodic external addresses. Conversely, the parameter ray at every strictly preperiodic external address lands on an psf exponential parameter (see Section~\ref{sec: exp dynamics}). Thus, given psf $p_\lambda$ where $\lambda$ has external address $\underline{s}$, following \cite{combi_classif_exp} we construct a planar graph $\Gamma_{\underline{s}}$ called an exponential spider, and an associated graph map $\mathcal{F}_{\underline{s}}:\Gamma_{\underline{s}} \righttoleftarrow$. A quotient dynamical system $[\mathcal{F}_{\underline{s}}]:[\Gamma_{\underline{s}}] \righttoleftarrow$ can be extended to topological psf map of the plane Thurston equivalent to $p_\lambda$. We construct a Teichm\"{u}ller space  $\mathcal{T}_{\underline{s}}$ with an associated operator $\sigma_{\underline{s}}$ that is conjugate to $\sigma_{[\mathcal{F}_{\underline{s}}]}$, and has a fixed point $[\phi] $ in $\mathcal{T}_{\underline{s}}$. As before, we have $p_\lambda = \phi' \circ [\mathcal{F}_{\underline{s}}] \circ \psi$ for some $\phi',\psi \in [\phi]$.

What we have detailed is the iteration of Thurston pullback operators using spiders. We uncover relationships between spiders $\Gamma_{\underline{s}}$ for preperiodic $\underline{s}$, and spiders $S_d(\theta)$ as $d \rightarrow \infty$.  Our first step is to show that any degree$-d$ spider is realised as a degree$-d+1$ spider. In other words, the spiders $S_d(\theta)$ of rational angles $\theta$ are \textit{persistent} as the degree $d \rightarrow \infty$, . 
\begin{lemma}\label{lemma:injective_maps_of_angles}
Given a degree $d\geq 2$, there exist $d$ injective order preserving maps $Z_j : \Q/\Z \longrightarrow \Q/\Z$ such that for every $\theta \in \Q/\Z$, $S_d(\theta)$ and $S_{d+1}(Z_j(\theta))$ are isomorphic.
\end{lemma}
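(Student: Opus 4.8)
The plan is to build the maps $Z_j$ explicitly from the combinatorics of itineraries, using the fact that a spider $S_d(\theta)$ is determined by the kneading data of $\theta$ under multiplication by $d$ on $\Q/\Z$, together with the cyclic order of the $d$ preimages of the critical value. Concretely, a degree-$d$ spider records, for each point of the orbit of $\theta$ under $m_d(x) = dx$, which of the $d$ sectors cut out by the rays at angles $\frac{\theta + i}{d}$, $i = 0, \dots, d-1$, it lies in; the spider map $\mathcal F_{d,\theta}$ is then the combinatorial model of $z \mapsto z^d + c$ on this data. So the goal is to produce, for each $j \in \{0, \dots, d-1\}$, an order-preserving injection $Z_j : \Q/\Z \to \Q/\Z$ under which the degree-$d$ kneading data of $\theta$ becomes the degree-$(d+1)$ kneading data of $Z_j(\theta)$.

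First I would write $\theta \in \Q/\Z$ in base $d$, say $\theta = 0.a_1 a_2 a_3 \dots$ with digits $a_i \in \{0, \dots, d-1\}$ (eventually periodic since $\theta$ is rational), so that $m_d$ acts as the shift on the digit sequence and the $i$-th sector membership of $m_d^n(\theta)$ is read off from the leading digit $a_{n+1}$. Then I would define $Z_j$ by the rule on base-$(d+1)$ expansions: send the digit string $(a_1, a_2, \dots)$ to the string $(b_1, b_2, \dots)$ where each digit $a_i$ is replaced by $a_i$ if $a_i < j$ and by $a_i + 1$ if $a_i \geq j$ — in other words, we open up a gap at digit value $j$, producing a base-$(d+1)$ expansion that never uses the digit $j$. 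This map is order preserving because it is monotone on digit strings under lexicographic order (with the usual care at the two expansions of a $d$-adic, resp.\ $(d+1)$-adic, rational, which I would handle by fixing the non-terminating convention), and it is injective for the same reason. The sector count goes from $d$ to $d+1$, and the shift (the $m_{d+1}$ action) commutes with $Z_j$ by construction, so the orbit of $Z_j(\theta)$ under $m_{d+1}$ is the $Z_j$-image of the orbit of $\theta$ under $m_d$; the sector into which the $n$-th iterate falls is determined by the leading digit, and the digit $j$ is precisely the one never used, which accounts for the extra sector being empty of orbit points. One then checks that the combinatorial spider map — the ray homotopy data and the cyclic/linear order of legs — is transported verbatim, giving an isomorphism $S_d(\theta) \cong S_{d+1}(Z_j(\theta))$; the $d$ distinct choices of $j$ reflect the $d$ different ways of inserting the new sector, matching the $(d-1)$-fold (resp.\ $d$-fold) ambiguity in choosing the monic representative.

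I expect the main obstacle to be bookkeeping at the boundary rather than anything conceptual: (i) making the digit convention consistent so that $Z_j$ is genuinely well defined and order preserving across the countably many rationals with two expansions, and (ii) verifying that "isomorphic'' in the statement — which presumably means an isomorphism of the underlying planar graphs intertwining the spider maps, and respecting the marking by itineraries used to form the quotients $[S_d(\theta)]$ — really is witnessed by this digit substitution, in particular that two orbit points get identified in $[S_d(\theta)]$ (same itinerary) if and only if their $Z_j$-images get identified in $[S_{d+1}(Z_j(\theta))]$. Both are direct checks once the base-$d$ description of $S_d(\theta)$ from Section~\ref{sec:spiders} is in hand; the order-preservation and injectivity of $Z_j$ are immediate from its definition as a gap-insertion on digit sequences.
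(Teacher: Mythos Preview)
Your proposal is correct for Lemma~\ref{lemma:injective_maps_of_angles}, but the construction you give is genuinely different from the paper's. You define $Z_j$ by a pure digit substitution---replace $a_i$ by $a_i$ if $a_i<j$ and by $a_i+1$ otherwise---which commutes with the shift, so the $\mu_{d+1}$-orbit of $Z_j(\theta)$ is literally the $Z_j$-image of the $\mu_d$-orbit of $\theta$; order preservation then gives spider isomorphism in one stroke. The paper instead uses a $\theta$-dependent symbol shift $u_{j,\theta}$ that splits the $j$-th digit interval at $\frac{\theta+j}{d}$ rather than at its endpoints. This does \emph{not} commute with the shift (check $\theta=17/240$, $d=2$, $j=1$: your map gives $Z_1(\theta)=.0002\overline{0020}_3$ while the paper's gives $.0002\overline{0010}_3$), and the paper must argue order preservation of orbit points directly (Propositions~\ref{prop:monotonedyn}--\ref{prop:missedsector}).

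What the paper's more elaborate map buys is the later machinery: its $Z_j(\theta)$ has $\mu_{d+1}$-orbit missing the dynamical sector $T^{stat}_{d+1,j}(Z_j(\theta))$ (not merely the fixed interval $[\tfrac{j}{d+1},\tfrac{j+1}{d+1})$ your map avoids), which is exactly the condition characterizing the image of $Z_j$ (Proposition~\ref{prop:maximalityprep}) and driving the companion-pair, co-root, and kneading-sequence preservation results in Section~\ref{sec:proofoflemma1.2}. Your simpler $Z_j$ proves the lemma as stated but would not slot into the proofs of Lemmas~\ref{lemma:combi_embeddings}--\ref{lemma: convergence of spider maps} without reworking those arguments. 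One small caution: you identify the sectors $T^{stat}_{d,i}(\theta)=(\tfrac{\theta+i}{d},\tfrac{\theta+i+1}{d})$ with the base-$d$ digit intervals $[\tfrac{i}{d},\tfrac{i+1}{d})$; these differ by the offset $\tfrac{\theta}{d}$, and while this conflation is harmless for your argument (which only needs circular order of orbit points), it is precisely the distinction the paper exploits.
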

This persistence of spiders underlies a deeper relationship between the space of pcf unicriticals in one degree to the next. Let $\mathcal{P}_d$ denote the collection of $\lambda $ in $\C^*$ such that $p_{d,\lambda}$ is pcf. We induce a partial order on $\mathcal{P}_d$ that records when one parameter is in the wake of another (for the exact definition, see Section~\ref{sec: combi embeddings defn}). A combinatorial embedding $\Phi$ from $\mathcal{P}_d$ to $\mathcal{P}_{d+1}$ is an injective, order-preserving map that preserves satellite relationships, such that for each $\lambda \in \mathcal{P}_d$, $p_{d,\lambda} | \Post(p_{d,\lambda})$ is conjugate to $p_{d+1,\Phi(\lambda)}|\Post(p_{d+1,\Phi(\lambda)})$ by a bijection mapping $0$ to $0$ (see Definition~\ref{defn:combi_embedding}). We use Lemma~\ref{lemma:injective_maps_of_angles} to show the following.
\begin{lemma}\label{lemma:combi_embeddings}Given a degree $d\geq 2$, there exist $d$ combinatorial embeddings $\mathcal{E}_j: \mathcal{P}_d \longrightarrow \mathcal{P}_{d+1}$, $j = 0,1,...,d-1$.
\end{lemma}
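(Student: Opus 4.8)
The plan is to push the persistence of spiders from Lemma~\ref{lemma:injective_maps_of_angles} down to the level of parameters, the crucial point being that a pcf unicritical polynomial is determined, up to affine conjugacy, by the quotient dynamical system $[\mathcal{F}_{d,\theta}]$ attached to \emph{any} angle $\theta$ whose parameter ray lands at (a monic model of) it. For $\lambda\in\mathcal{P}_d$ I would record the finite nonempty set $A_d(\lambda)\subset\Q/\Z$ of rational angles $\theta$ for which the spider construction of Section~\ref{sec:spiders} produces a Thurston map $[\mathcal{F}_{d,\theta}]$ Thurston equivalent to $p_{d,\lambda}$ (for strictly preperiodic $\lambda$ these are the angles of parameter rays landing at any $z^d+c$ conjugate to $p_{d,\lambda}$; for periodic $\lambda$ one uses the spider at the root of the corresponding hyperbolic component). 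Fix $j$ and let $Z_j$ be as in Lemma~\ref{lemma:injective_maps_of_angles}. For $\theta\in A_d(\lambda)$ the isomorphism $S_d(\theta)\cong S_{d+1}(Z_j(\theta))$ is, by construction, compatible with the spider self-maps and with the identification of leg endpoints by itinerary, hence descends to an isomorphism of quotient dynamical systems $[\mathcal{F}_{d,\theta}]\cong[\mathcal{F}_{d+1,Z_j(\theta)}]$ and extends to a topological conjugacy between the associated branched covers of $S^2$. Thus $[\mathcal{F}_{d+1,Z_j(\theta)}]$ is realized by a pcf polynomial $p_{d+1,\mu}$, and by the uniqueness part of Thurston rigidity the conjugacy restricts to a conjugacy $p_{d,\lambda}|\Post(p_{d,\lambda})\to p_{d+1,\mu}|\Post(p_{d+1,\mu})$ carrying the critical value $0$ to $0$. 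I set $\mathcal{E}_j(\lambda):=\mu$.

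The main work, and the step I expect to be the principal obstacle, is to check that $\mathcal{E}_j$ is well defined, i.e.\ that $\mu$ is independent of the choice of $\theta\in A_d(\lambda)$ — equivalently that $Z_j(A_d(\lambda))\subseteq A_{d+1}(\mu)$ for a single $\mu$. My plan is to give a purely combinatorial description of the co-landing relation $\theta\sim_d\theta'$ (meaning $\theta,\theta'\in A_d(\lambda)$ for a common $\lambda$): for preperiodic angles via the supporting/characteristic rays recorded by the orbit portrait, which is itself read off from $S_d(\theta)$, and for periodic angles via the two-ray pattern at a root. One then verifies that $Z_j$ both preserves and reflects this relation: preservation follows because the spider isomorphism of Lemma~\ref{lemma:injective_maps_of_angles} transports exactly the data defining $\sim_d$, while reflection and the absence of merging/splitting of the relevant finite blocks of angles follow from $Z_j$ being injective and order preserving. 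The same two inputs give injectivity of $\mathcal{E}_j$: if $\mathcal{E}_j(\lambda_1)=\mathcal{E}_j(\lambda_2)=\mu$ then $Z_j(A_d(\lambda_1))$ and $Z_j(A_d(\lambda_2))$ lie in one $\sim_{d+1}$-class, so by reflection $A_d(\lambda_1)$ and $A_d(\lambda_2)$ lie in one $\sim_d$-class, whence $\lambda_1=\lambda_2$.

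It remains to check that $\mathcal{E}_j$ is order preserving and preserves satellite relationships in the sense of Definition~\ref{defn:combi_embedding}. Since wakes in $\mathcal{P}_d$ correspond to the arcs of $\Q/\Z$ cut out by the co-landing root-angle pairs, and $\mu_1$ lies in the wake of $\mu_2$ precisely when $A_d(\mu_1)$ is separated from the excluded arc of $\mu_2$ by $A_d(\mu_2)$, the monotonicity of $Z_j$ preserves the cyclic order and arc-containment among the relevant finite configurations of angles, so $\mathcal{E}_j$ is order preserving. For satellite relationships I would use that a satellite root is detected by its root-ray angles having denominator $d^k-1$ with the appropriate combinatorial rotation number, a feature visible in $S_d(\theta)$ and transported verbatim by the isomorphism of Lemma~\ref{lemma:injective_maps_of_angles}, so primitive roots map to primitive roots and satellite roots to satellite roots of the same rotation number. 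Finally, the maps $Z_0,\dots,Z_{d-1}$ have pairwise disjoint images, so the resulting angle sets in degree $d+1$ are disjoint and the embeddings $\mathcal{E}_0,\dots,\mathcal{E}_{d-1}$ are pairwise distinct, giving the $d$ combinatorial embeddings claimed.
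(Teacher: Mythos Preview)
Your overall strategy---define $\mathcal{E}_j$ by pushing $Z_j$ from angles to parameters, then verify well-definedness, injectivity, order preservation, and the satellite property---is exactly the paper's approach. Two points need correction.

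First, a conceptual slip: you write that the isomorphism $[\mathcal{F}_{d,\theta}]\cong[\mathcal{F}_{d+1,Z_j(\theta)}]$ ``extends to a topological conjugacy between the associated branched covers of $S^2$''. It cannot, since those branched covers have degrees $d$ and $d+1$. What you actually need survives: $[\mathcal{F}_{d+1,Z_j(\theta)}]$ is realized independently by Section~\ref{sec:spiders}, and the conjugacy on postsingular sets follows already from the graph-level conjugacy on $A_d(\theta)\to A_{d+1}(Z_j(\theta))$. The paper sidesteps this by building a genuine degree-$(d{+}1)$ ``tweaked'' spider map $\mathcal{G}_{d,j,\theta}$ Thurston-equivalent to $\mathcal{F}_{d+1,Z_j(\theta)}$, but you do not strictly need that construction for the lemma.

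Second, and more seriously, your argument for satellite preservation has a real gap. The claim that ``a satellite root is detected by its root-ray angles having denominator $d^k-1$ with the appropriate combinatorial rotation number'' is not a characterization: every period-$k$ angle has denominator dividing $d^k-1$, satellite or primitive. Knowing that $\mathcal{E}_j(\lambda)$ sits in \emph{some} satellite component of the right period, and that $\mathcal{E}_j(\mu)<\mathcal{E}_j(\lambda)$, does not by itself force that satellite to be attached to the component of $\mathcal{E}_j(\mu)$; there could be an intermediate component. The paper handles this with two substantive ingredients: it shows that the kneading sequences of $\theta$ and $Z_j(\theta)$ give the \emph{same} internal address via the $\rho_\nu$-function (so primitivity is preserved), and then uses Lavaurs' separation theorem to pin down the specific parent component along the internal-address chain (Propositions~\ref{prop:primpres} and~\ref{prop:satpres}). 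Your sketch does not supply an equivalent mechanism.

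Two minor remarks. Your well-definedness plan for periodic $\lambda$ mentions only the root-angle pair; the paper also checks separately that $Z_j$ sends co-root angles to co-roots of the \emph{same} component (Section~\ref{sec:coroots}), which is needed for property~(2) of Definition~\ref{defn:combi_embedding} in full generality. And the final claim that $Z_0,\dots,Z_{d-1}$ have pairwise disjoint images is neither proved nor needed: the lemma does not assert the $\mathcal{E}_j$ are distinct.
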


 Next, we show that spiders corresponding to preperiodic external addresses are realised as degree $d$ spiders for sufficiently large $d$. 
\begin{lemma}\label{lemma: external address correspondence}
Given an external address $\underline{s}$ with preperiod $\ell\geq 1$, period $k\geq 1$ such that $s_1 = 0$, there exists a degree $D\geq 2$, $j \in \{0,1,...,D-1\}$ and angles $\theta_d$ for  $d\geq D$ such that
\begin{enumerate}
    \item $\theta_{d+1} = Z_j(\theta_d)$ for all $d\geq D$.
    \item The spiders $S_d(\theta_d)$ and $\Gamma_{\underline{s}}$ are isomorphic for all $d\geq D$
\end{enumerate}
\end{lemma}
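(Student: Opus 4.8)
The plan is to realize $\Gamma_{\underline s}$ as a finite-degree polynomial spider $S_D(\theta_D)$ for one sufficiently large degree $D$, and then to transport this isomorphism to every degree $d\ge D$ using the persistence of spiders supplied by Lemma~\ref{lemma:injective_maps_of_angles}. All the work sits in the first step; the second is a formal induction.

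\textbf{Step 1 (base case).} Both $\Gamma_{\underline s}$ and the finite-degree spiders $S_d(\theta)$ are planar spiders equipped with a self-map, and in each case the isomorphism class is pinned down by combinatorial data: for $\Gamma_{\underline s}$ the cyclic order of the legs, their separation structure, the marked leg at the singular value, and the action of $\mathcal{F}_{\underline s}$ are all read off from the external address $\underline s$ as in \cite{combi_classif_exp}; for $S_d(\theta)$ the analogous data is read off from the itinerary of $\theta$ under $t\mapsto dt$ relative to the partition of $\R/\Z$ into $d$ equal arcs, equivalently from the base-$d$ digits of $\theta,d\theta,d^2\theta,\dots$. Since $\underline s$ is strictly preperiodic, its entries take only finitely many values, say $|s_i|\le M$; for $d>2M$ the order-preserving reduction of a bounded block of integers into the alphabet $\{0,1,\dots,d-1\}$ (sending $s\mapsto s$ if $s\ge 0$ and $s\mapsto d+s$ if $s<0$) is injective on the entries of $\underline s$, and we let $\theta_d$ be the rational angle whose base-$d$ digit sequence is the image of $\underline s$. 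Because $\underline s$ has exact preperiod $\ell$ and exact period $k$, $\theta_d$ is strictly preperiodic of type $(\ell,k)$ under multiplication by $d$; writing out the base-$d$ expansion and summing the geometric tail exhibits $\theta_d=q(d)/\bigl(d^\ell(d^k-1)\bigr)$ for an integer polynomial $q$ independent of $d$ (this is the representation used in Theorem~\ref{thm:A}; the degree bound $\deg q\le \ell+k-2$ comes from the normalization $s_1=0$ and the usual bookkeeping). A strictly preperiodic rational angle lands at a Misiurewicz parameter of the degree-$d$ Multibrot set, so $S_d(\theta_d)$ is a genuine pcf spider. It remains to verify that for $d=D$ the combinatorial data of $S_D(\theta_D)$ coincides with that of $\Gamma_{\underline s}$: the two spiders have the same number of legs $\ell+k$, the digit reduction identifies their separation structures, and it intertwines the two leg maps; the hypothesis $s_1=0$ is exactly what removes the rotational ambiguity, fixing which conjugate $z^D+c_D$ is used and where the marked leg sits. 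Hence $S_D(\theta_D)\cong\Gamma_{\underline s}$.

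\textbf{Step 2 (propagation) and the main obstacle.} Fix any $j\in\{0,1,\dots,D-1\}$ (the reader who wants the explicit form of $\theta_d$ should take the $j$ for which $Z_j$ implements the digit reduction on base-$d$ expansions) and for $d\ge D$ set $\theta_{d+1}:=Z_j(\theta_d)$, which is meaningful since $j\le D-1\le d-1$. By Lemma~\ref{lemma:injective_maps_of_angles}, $S_{d+1}(Z_j(\theta_d))\cong S_d(\theta_d)$ for every $d\ge D$, so an immediate induction starting from $S_D(\theta_D)\cong\Gamma_{\underline s}$ yields $S_d(\theta_d)\cong\Gamma_{\underline s}$ for all $d\ge D$, which is assertion (2); assertion (1) holds by construction. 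The step I expect to be the crux is Step 1: translating the infinite-alphabet combinatorics attached to $\underline s$ into the finite base-$d$ combinatorics of $S_d(\theta)$. The two facts that make this possible are the boundedness of the entries of a preperiodic external address — which is what creates enough room for all large $d$ — and the verification that the digit reduction is not merely a bijection of alphabets but genuinely conjugates the exponential spider map to the polynomial spider map while respecting planar order and the marked singular value, the normalization $s_1=0$ being precisely what makes this identification canonical.
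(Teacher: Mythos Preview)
Your approach is essentially the paper's: construct $\theta_d$ by the digit reduction $s_n\mapsto s_n$ (if $s_n\ge 0$) or $s_n\mapsto d+s_n$ (if $s_n<0$), verify $S_D(\theta_D)\cong\Gamma_{\underline s}$ for one large $D$, and then propagate via Lemma~\ref{lemma:injective_maps_of_angles}. The paper carries this out as Propositions~\ref{prop:address to poly} and~\ref{same_spider_exp}, and its propagation step is Proposition~\ref{prop:spiderconjugacy}.

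A few points deserve tightening. First, calling the digit reduction ``order-preserving'' is misleading: it does \emph{not} preserve the linear order on $\Z$ (e.g.\ $-1<1$ but $d-1>1$), and what you actually need is that it intertwines the \emph{cylindrical} order $<<$ on shifts of $\underline s$ with the \emph{circular} order on the $\mu_d$-orbit of $\theta_d$. This is the content of the paper's Proposition~\ref{same_spider_exp}, and it requires a short case analysis (positive--positive, negative--negative, and the mixed case where one wraps past $\overline{0}$); you assert it but do not verify it. Second, the role of $s_1=0$ is not ``removing rotational ambiguity'' for the spider isomorphism---the isomorphism holds regardless---but rather it kills the top coefficient of the numerator polynomial, which is what gives $\deg q\le \ell+k-2$ after a further reduction modulo $x^\ell(x^{k-1}+\cdots+1)$ (your ``usual bookkeeping''). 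Third, in Step~1 you define $\theta_d$ explicitly for all $d$, while in Step~2 you redefine it inductively via an arbitrary $j$; these agree only for the specific $j$ lying in the gap between the positive and negative reduced digits (this is the $j$ the paper singles out, and it is the one needed if you want the explicit rational expression for $\theta_d$ to persist). For the bare statement of the lemma your ``any $j$'' is fine, but you should say clearly which definition of $\theta_d$ you are using.
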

As a next step, we prove an analytic relationship between $\sigma_{\underline{s}}$ and $\sigma^d_{\theta_d}$, when $\underline{s}$ and $\underline{\theta_d}$ are as above.  We show that $\sigma_{\underline{s}}$ can be approximated by a sequence of operators conjugate to $\sigma^d_{\theta_d}$, and the angles $(d-1)\theta_d$ satisfy a certain stability property . 

\begin{lemma}\label{lemma: convergence of spider maps}
    Let $\underline{s}$ be a preperiodic external address with preperiod $\ell \geq 1$ and period $k \geq 1$, and let $\mathcal{T}_{\underline{s}}$ be the spider space of $\underline{s}$. There exists a polynomial $q \in \Z[x]$ with $\deg q \leq \ell+k-2$,  a sequence of angles $\theta_d$ and a sequence of operators $\sigma^d_{\underline{s}}: \mathcal{T}_{\underline{s}} \longrightarrow \mathcal{T}_{\underline{s}}$ such that 
    \begin{enumerate}
    \item $(d-1)\theta_d = \frac{(d-1)q(d)}{d^\ell(d^k-1)}$ for all $d$,
        \item $\sigma^d_{\underline{s}}: \mathcal{T}_{\underline{s}} \righttoleftarrow$ is conjugate to $\sigma^d_{\theta_d}: \mathcal{T}^d_{\theta_d} \righttoleftarrow$, and 
        \item $\sigma^d_{\underline{s}} \longrightarrow \sigma_{\underline{s}}$  uniformly on compact sets of $\mathcal{T}_{\underline{s}}$.
    \end{enumerate}
\end{lemma}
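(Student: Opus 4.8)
The plan is to build the operators $\sigma^d_{\underline{s}}$ by transporting the finite-degree operators $\sigma^d_{\theta_d}$ through the spider isomorphisms supplied by Lemma~\ref{lemma: external address correspondence}, and then to prove convergence by tracking explicitly how the pullback formulas differ. First I would invoke Lemma~\ref{lemma: external address correspondence} to fix $D$, $j$, and the angles $\theta_d = Z_j(\theta_{d-1})$ for $d \geq D$, so that $S_d(\theta_d) \cong \Gamma_{\underline{s}}$ for all such $d$; this isomorphism identifies the combinatorial/marking data of $\mathcal{T}^d_{\theta_d}$ with that of $\mathcal{T}_{\underline{s}}$, giving a homeomorphism $\iota_d : \mathcal{T}^d_{\theta_d} \to \mathcal{T}_{\underline{s}}$, and I set $\sigma^d_{\underline{s}} := \iota_d \circ \sigma^d_{\theta_d} \circ \iota_d^{-1}$, which establishes item (2) by construction. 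The content of the lemma is then items (1) and (3): that the angles have the stated arithmetic form, and that these transported operators converge to $\sigma_{\underline{s}}$.

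For item (1), the angle $\theta_d$ is determined by the requirement that the degree-$d$ parameter ray at angle $\theta_d$ land on the pcf parameter combinatorially matching $\underline{s}$; concretely, $\theta_d$ must be the $d$-ary ``rotation-number-type'' expansion dictated by the kneading data of $\underline{s}$ (preperiod $\ell$, period $k$, entries $s_i$). Writing this base-$d$ eventually-periodic expansion as a rational number, the preperiodic part contributes a denominator $d^\ell$ and the periodic part a factor $d^k - 1$, so $\theta_d = \frac{q(d)}{d^\ell(d^k-1)}$ for a polynomial $q$ whose coefficients are built from the $s_i$; the degree bound $\deg q \leq \ell + k - 2$ follows by counting: the numerator is a sum of $\ell + k$ terms of the form (bounded coefficient)$\cdot d^{m}$ with $m \leq \ell + k - 1$, and the leading term cancels against the structure forced by $s_1 = 0$. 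Multiplying through by $d-1$ gives the displayed formula. This part is essentially a bookkeeping computation translating the recursion $\theta_{d+1} = Z_j(\theta_d)$ from Lemma~\ref{lemma:injective_maps_of_angles} into closed form, and I would verify the degree bound carefully since it is what feeds the $\deg q \leq \ell + k - 2$ claim in Theorem~\ref{thm:A}.

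For item (3), the idea is that $\sigma^d_{\theta_d}$ and $\sigma_{\underline{s}}$ are both pullback operators whose defining data are branched covers — $z^d + c$ on one side, $\lambda \exp(z)$ on the other — and that $(1 + z/d)^d \to \exp(z)$ locally uniformly. Since the spiders are isomorphic, the combinatorial ``shape'' of the pullback (which strands map where, the itinerary identifications) is literally the same for all $d$ and in the limit; only the analytic map used to pull back complex structures changes. So I would write both operators in the common coordinates on $\mathcal{T}_{\underline{s}}$ and show that the Beltrami-coefficient pullback depends continuously (indeed, with uniform control on compacta) on the holomorphic map being composed, using that $p_{d,\lambda_d} \to p_\lambda$ uniformly on compact sets together with the persistence of the marked points of the spider. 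The main obstacle — and where I would spend the most care — is the non-compactness in two directions: the spider legs run off to $\infty$, and Teichmüller space is itself noncompact, so ``uniform on compact sets of $\mathcal{T}_{\underline{s}}$'' must be extracted from estimates that are genuinely uniform near the puncture at $\infty$. I expect this to require the same kind of a priori control on the geometry of the spider legs (e.g. their asymptotic horizontal-strip behavior and the exponential-type bounds on the relevant quasiconformal maps) that underlies the well-definedness of $\sigma_{\underline{s}}$ itself, now applied with constants independent of $d$ for $d \geq D$; granting those estimates, the convergence $\sigma^d_{\underline{s}} \to \sigma_{\underline{s}}$ follows from the locally uniform convergence $p_{d,\lambda_d} \to p_\lambda$ by a normal-families argument on the pulled-back structures.
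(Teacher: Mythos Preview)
Your treatment of items (1) and (2) matches the paper: the angles $\theta_d$ and the polynomial $q$ come from the explicit $d$-adic expansion built out of the entries $s_n$ (this is the paper's Proposition~\ref{prop:address to poly}), and the conjugacy is exactly transport along the spider isomorphism $h_d$, which the paper packages as an isometry $H_d:\mathcal{T}^d_{\theta_d}\to\mathcal{T}_{\underline{s}}$ with $\sigma^d_{\underline{s}} = H_d\circ\sigma^d_{\theta_d}\circ H_d^{-1}$.

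For item (3), however, your plan diverges from the paper and carries a real gap. You propose to control Beltrami pullbacks directly, worry about the legs running to $\infty$ and about a priori quasiconformal estimates uniform in $d$, and invoke a normal-families argument. The paper avoids all of this. Its route is: first establish \emph{pointwise} convergence $\sigma^d_{\underline{s}}([\phi])\to\sigma_{\underline{s}}([\phi])$ by an explicit branch computation (writing $w_d = d\big(r_d^{1/d}e^{it_d/d}e^{2\pi i s_n/d}-1\big)$ and taking $d\to\infty$ to get $\ln r+it+2\pi i s_n$), then show that locally uniform convergence of spider maps implies convergence in the Teichm\"uller metric (via convergence in moduli space plus discreteness of $\PMCG$), and finally upgrade pointwise to locally uniform by a standard $\epsilon/3$ argument that uses nothing more than the fact that every $\sigma^d_{\underline{s}}$ and $\sigma_{\underline{s}}$ is weakly contracting. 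This last step is the key idea you are missing: once each operator is $1$-Lipschitz, pointwise convergence on a finite $\epsilon/3$-net of a compact set immediately gives uniform convergence on that set. No estimates near $\infty$, no normal families, no uniform-in-$d$ quasiconformal bounds are needed.

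One further slip: you write ``using that $p_{d,\lambda_d}\to p_\lambda$''. At this stage there is no $\lambda_d$; that is the output of Theorem~\ref{thm:A}, not an input here. In the definition of $\sigma^d_{\underline{s}}$ at a point $[\phi]$, the map used is $\phi(e_2)\big(1+\tfrac{z}{d}\big)^d$ with the \emph{same} $\phi(e_2)$ as in $\sigma_{\underline{s}}$, so what one actually uses is $\big(1+\tfrac{z}{d}\big)^d\to e^z$ with a fixed coefficient.
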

These lemmas, along with the theory of quasiconformal maps,  form the major ingredients for proving Theorem~\ref{thm:A}. 
\subsection{Organization of the paper}In Section~\ref{sec:background} we introduce background material on Thurston theory, dynamics of psf unicriticals and exponential maps, spiders, orbit portraits and kneading sequences. We introduce the partial order on $\mathcal{P}_d$ that makes it meaningful to define combinatorial embeddings. In Section~\ref{sec:proofoflemma1.1}, we  construct the maps $Z_j$ and show that they satisfy the properties listed in Lemma~\ref{lemma:injective_maps_of_angles}. We additionally study how combinatorial properties like kneading sequences change under $Z_j$. In Section~\ref{sec:proofoflemma1.2}, we use the maps $Z_j$ to construct the embeddings $\mathcal{E}_j$ and prove Lemma~\ref{lemma:combi_embeddings}. We also provide a combinatorial description of the image of $\mathcal{P}_d$ under each $\mathcal{E}_j$. Sections~\ref{sec:proofoflemma1.3} and \ref{sec:proofoflemma1.4} contain the proof of Lemmas~\ref{lemma: external address correspondence} and \ref{lemma: convergence of spider maps} respectively. We end by proving Theorem~\ref{thm:A} in Section~\ref{sec:maintheorem}.
\subsection{Acknowledgements} The author is deeply grateful to John Hubbard for posing the question of dynamical approximation of exponentials and the combinatorial properties of these approximations, and to Sarah Koch for  ingenious ideas and continued guidance. Sincere thanks to Dierk Schleicher for introducing the broader question of approximating general postsingularly finite entire functions as a natural continuation, and for supporting the author throughout this work. The author is indebted to Nikolai Prochorov and Bernhard Reinke for the many involved discussions during our work on the general approximation question. The author was also greatly benefitted by discussions with Giulio Tiozzo, Dylan Thurston, Lasse Rempe, Caroline Davis, Alex Kapiamba and Schinella D'Souza.

The author was supported by the National Science Foundation under Grant No.\ DMS-1928930
while in residence at the Mathematical Sciences Research Institute in Berkeley, California, during
the Spring 2022 semester.
\section{Background}\label{sec:background}
Throughout this section, fix a degree $d\geq 2$. 
\subsection{Operations on angles}
All angles in this paper are taken to be elements of $\R/\Z$. Given distinct angles $\alpha,\beta$, $\R/\Z \setminus \{\alpha,\beta\}$ consists of two connected components or \textit{arcs}. The length of the shorter arc is denoted $d(\alpha,\beta)$. We take the linear order on $\R/\Z$ induced by that on $[0,1)$. The map $\mu_d: \R/\Z \righttoleftarrow$ is defined as $\mu_d(x) = dx$, and we let $\mathscr{O}_d(\theta) = \{d^{n-1}\theta: n\geq 1\}$. 

Every rational angle is preperiodic under $\mu_d$ with preperiod $\ell\geq 0$ and eventual period $k\geq 1$ (we will often drop the word `eventual'). 
\subsubsection{Itineraries and kneading data}
Given $\theta \in \R/\Z$, for $j = 0,1,...,d-1$, we define the $j$th static sector with respect to $\theta $ as the interval $(\frac{\theta+j}{d},\frac{\theta+j+1}{d}) \subset \R/\Z$, and denote it $T^{stat}_{d,j}(\theta)$. Note that $\bigsqcup_{j=0}^{d-1}T^{stat}_{d,j}(\theta) = \R/\Z \setminus \mu_d^{-1}(\theta)$. 

Now suppose $\theta \in T^{stat}_{d,i}(\theta)$. For $j = 0,1,...,d-1$, we define the $j$th dynamic sector with respect to $\theta$ to be $T^{stat}_{d,j+i}(\theta)$, and denote it $T^{dyn}_{d,j}(\theta)$. Dynamic sectors are well-defined if and only if $\theta \not \in \mu_d^{-1}(\theta)$. 

For any angle $t \in \R/\Z$, the itinerary of $t$ with respect to $\theta$, denoted $\Sigma_{d,\theta}(t)$, is the sequence $\nu_1\nu_2\nu_3.... \in \{0,1,...,d,*\}^{\N}$ where 
\begin{align*}
    \nu_n& = \begin{cases}
        j & d^{n-1}t \in T^{dyn}_{d,j}(\theta)\\
        * & d^{n-1}t \in \mu_d^{-1}(\theta)
    \end{cases}
\end{align*}
An itinerary is called $*-$periodic if it is periodic under the shift map $\nu_1\nu_2\nu_3\nu_4.... \mapsto \nu_2\nu_3\nu_4....$ with period $k$, and the $*$'s occur exactly at indices $k,2k,3k, $ etc. For an angle $\theta$, the itinerary $\Sigma_{d,\theta}(\theta)$ is called the kneading sequence of $\theta$. Kneading sequences of rational angles are  preperiodic or $*-$periodic under the shift map.

\subsection{Thurston maps on the plane}
The theory developed by William Thurston to study postsingularly finite entire functions develops topological models for these functions and asks when such models are `equivalent' in a suitable sense to holomorphic ones. We give the necessary background on Thurston theory in this section. 

Let $f: S^2 \setminus \{\infty\} \righttoleftarrow$  a topological branched cover. If $\deg f$ is finite, then $f$ is said to be a \textit{topological polynomial}. A value $a \in S^2 \setminus \{\infty\} $ is said to be an \textit{asymptotic value} for $f$ if there exists an arc $\gamma: [0,\infty) \longrightarrow t$ such that $\lim_{t \rightarrow \infty} \gamma(t) = \infty$ and $\lim_{t \rightarrow \infty} f(\gamma(t)) = a$. 
The critical and asymptotic values of $f$ are called its \textit{singular values}, and we denote the set of singular values by $S(f)$. The \textit{postsingular set} $\Post{(f)}$ of $f$ is the forward orbit of $S(f)$. If $\Post{(f)}$ is finite, we say that $f$ is \textit{postsingularly finite} (psf). Since topological polynomials do not have asymptotic values, a psf topological polynomial is said to be \textit{postcritically finite} (pcf).

\begin{definition}\label{defn:isom}
    Given any complex structure $\mu$ on $S^2 \setminus \{\infty\}$ such that $(S^2 \setminus \{\infty\},\mu)$ is isomorphic to $\C$, the Riemann surface $(S^2 \setminus \{\infty\}, f^*\mu)$ is isomorphic either to $\C$ or $\D$. If $(S^2 \setminus \{\infty\}, f^*\mu)$ is isomorphic to $\C$ for every choice of $\mu$, we call $f$ a \textit{topological entire} function.
\end{definition}
 We note that any topological polynomial is topologically entire.
 \subsubsection{Thurston maps}\label{sec: thurston maps}
 \begin{definition}
A psf topological entire function $f: (S^2 \setminus {\infty} , A) \righttoleftarrow$ where $\Post{(f)} \subset A$ and $A$ is a finite set, is called a \textit{Thurston map}. Unless specified, we assume that $A = \Post{(f)}$.      
 \end{definition}
\begin{definition}
    Thurston maps $f: (S^2 \setminus {\infty} , A) \righttoleftarrow$ and $g: (S^2 \setminus {\infty} , B) \righttoleftarrow$ are said to be \textit{Thurston equivalent} if there exist orientation-preserving homeomorphisms $\varphi_0,\varphi_1: S^2 \setminus \{\infty\} \righttoleftarrow$ such that 
\begin{itemize}
    \item $g \circ \varphi_0 = \varphi_1 \circ f$
    \item $\varphi_0|A \equiv \varphi_1|A$ and $\varphi_0(A) = \varphi_1(A) = B$
    \item $\varphi_0 \sim \varphi_1$ rel $A$
\end{itemize}

\end{definition}
The map $f$ is said to be \textit{realized} if $f$ is Thurston equivalent to some psf entire function $g$; otherwise it is \textit{obstructed}.

For topological polynomials, obstructions to being realized are well-understood. Let $\Gamma = \{\gamma_0 = \gamma_r,\gamma_1,\gamma_2,...,\gamma_{r-1}\}$ be a collection of essential simple closed curves $\gamma_i \subset S^2 \setminus \Post{(f)}$, which are pairwise disjoint and belong to distinct homotopy classes relative to $\Post{(f)}$. Such a collection is called a \textit{Levy cycle} for $f$ if for every $i \in \{0,1,...,r\}$, at least one connected component $\eta$ of $f^{-1}(\gamma_i)$ is isotopic to $\gamma_{i-1}$ relative to $\Post{(f)}$, and $f|\eta: \eta \longrightarrow \gamma_i$ has degree $1$.

The following result was proven by Bernstein, Lei, Levy and Rees. 
\begin{theorem}
    [{\cite[Theorem~10.3.9]{teichmuller_theory_vol2}}]
    If $f$ is a topological polynomial, it is realized if and only if it does not have a Levy cycle. 
\end{theorem}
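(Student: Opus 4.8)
The plan is to deduce the statement from Thurston's characterization theorem for branched self-covers of $S^2$, after a reduction that is special to polynomials. First I would pass to the sphere: extend $f$ to a branched cover $\hat f : S^2 \righttoleftarrow$ with $\hat f(\infty) = \infty$, so that $\infty$ becomes a totally ramified fixed point of local degree $d = \deg f$ and $\Post(\hat f) = \Post(f) \cup \{\infty\}$. Local degrees at marked points are invariants of Thurston equivalence, so any rational map Thurston equivalent to $\hat f$ must have a totally ramified fixed point and hence is conjugate to a polynomial; conversely a polynomial realizing $f$ realizes $\hat f$. Thus $f$ is realized as a topological polynomial exactly when $\hat f$ is realized in Thurston's sense. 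The finitely many pcf polynomials whose orbifold is not hyperbolic — those conjugate to a power map $z\mapsto z^d$ or to a Chebyshev polynomial — are genuine polynomials with no Levy cycle, so the theorem holds for them directly; in every other case $\hat f$ has hyperbolic orbifold and Thurston's theorem applies: $\hat f$ is realized if and only if it carries no Thurston obstruction, i.e.\ no multicurve $\Gamma$ with $\lambda(f_\Gamma)\ge 1$, where $f_\Gamma$ is the Thurston linear transformation of $\Gamma$.

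For the easy direction I would check that a Levy cycle is a Thurston obstruction. If $|\Post(f)|\le 2$ there are no essential curves and nothing to prove; otherwise, given a Levy cycle $\gamma_0=\gamma_r,\gamma_1,\dots,\gamma_{r-1}$, each $\gamma_i$ is non-peripheral rel $\Post(\hat f)$, and the defining property of a Levy cycle forces the entry of $f_\Gamma$ recording that $\gamma_{i-1}$ appears among the isotopy classes of components of $\hat f^{-1}(\gamma_i)$ to be at least $1$. Hence $f_\Gamma$ dominates, entrywise, the permutation matrix of an $r$-cycle, whose leading eigenvalue is $1$, so $\lambda(f_\Gamma)\ge 1$, $\hat f$ is obstructed, and $f$ is not realized.

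The substance is the converse: an obstruction must contain a Levy cycle. Here I would use the structural fact peculiar to polynomials that, for any essential $\gamma$ with complementary disks $B_\gamma\not\ni\infty$ and $U_\gamma\ni\infty$, every component $\eta$ of $\hat f^{-1}(\gamma)$ has its complementary disk $B_\eta\not\ni\infty$ mapped by $\hat f$ onto $B_\gamma$ — if $\hat f(B_\eta)$ were $U_\gamma$ it would contain $\infty$, forcing a preimage of $\infty$ away from $\infty$. Hence $\hat f|_{B_\eta}\colon B_\eta\to B_\gamma$ is a branched cover, Riemann--Hurwitz gives $\deg(\hat f|_\eta)-1 = \#\{\text{critical points of }\hat f\text{ in }B_\eta\}$, the degrees over the outermost preimage disks of $\gamma$ sum to $d$, and a degree-$1$ component is exactly one whose bounded disk is free of critical points. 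Starting from an obstruction $\Gamma$ with Perron--Frobenius eigenvalue $\lambda\ge 1$ and positive eigenvector, I would restrict to a recurrent sub-multicurve, form the directed graph with an edge $\gamma_j\to\gamma_i$ (weight $1/\deg$) whenever a component of $\hat f^{-1}(\gamma_i)$ is isotopic to $\gamma_j$, and extract from $\lambda\ge 1$ a cycle. The target is a cycle all of whose edges have weight $1$: organizing the curves by nesting of their bounded disks, a degree-$\ge 2$ transition consumes one of the only $d-1$ critical points of $\hat f$, so along a long enough recurrent chain the degrees stabilize at $1$; then checking that the degree-$1$ preimage components strung together along such a cycle are essential, pairwise non-isotopic, and cyclically permuted exhibits a Levy cycle. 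This follows Levy's original argument and its later refinements.

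The main obstacle is precisely this last extraction — promoting ``$\lambda(f_\Gamma)\ge 1$'' to ``there is a cycle of degree-one transitions'' — together with the bookkeeping needed to confirm that the degree-one components one strings together genuinely satisfy the definition of a Levy cycle (essential, in distinct homotopy classes, cyclically mapped with degree $1$), while tracking how the nesting of bounded disks behaves under pullback. Everything else is formal once Thurston's sphere theorem and the ``bounded disk maps to bounded disk'' principle are in hand; in particular no quasiconformal input beyond Thurston's theorem enters this statement.
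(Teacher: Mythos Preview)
The paper does not prove this theorem at all: it is stated as a citation to \cite[Theorem~10.3.9]{teichmuller_theory_vol2} and attributed to Bernstein, Lei, Levy and Rees, with no argument given. So there is no ``paper's own proof'' to compare against; the author is invoking it as a black box.

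Your proposal is a reasonable high-level outline of the standard proof one finds in the cited reference and in the original sources. The reduction to Thurston's theorem via extension to $S^2$, the disposal of the non-hyperbolic orbifold cases, and the easy direction (Levy cycle $\Rightarrow$ Thurston obstruction via a permutation-matrix minorant) are all correct. The ``bounded disk maps to bounded disk'' observation is indeed the key polynomial-specific input for the converse. Your sketch of the extraction step is honest about being the crux and roughly right in spirit, though the phrase ``along a long enough recurrent chain the degrees stabilize at $1$'' is not quite how the argument runs: one works with an irreducible sub-obstruction, uses the eigenvector inequality $\sum_j (f_\Gamma)_{ij} v_j \ge v_i$ together with the bounded-disk principle and the global budget of $d-1$ finite critical points to force, for each curve in the cycle, at least one degree-$1$ preimage component isotopic to the predecessor; stringing these together gives the Levy cycle. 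The essentiality and pairwise non-isotopy of the resulting curves then come for free from the original multicurve. If you want to turn this into a self-contained proof rather than a plan, that inequality-counting step is where the work goes.
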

Recent work by Shemyakov (\cite{Shemyakov_Thesis}) extends this result to the structurally finite topologically entire family. 

\subsubsection{Topological exponential functions}
Note that we will be working only in the context of unicritical polynomials and exponential functions. 
\begin{definition}
A universal cover $f: S^2 \setminus \{\infty\} \longrightarrow S^2 \setminus \{0,\infty\}$ is called a \textit{topological exponential function}. 
\end{definition}
For a function $f$ as above, there is a unique asymptotic value at $0$, and we have $S(f) = \{0\}$. As proved in \cite{hss}, if $f$ is as above, it is topologically entire: given a complex structure on $S^2 \setminus \{\infty\}$ such that $(S^2 \setminus \{\infty\},\mu)$ is isomorphic to $\C$, let $\phi: S^2 \setminus \{\infty\} \longrightarrow \C$ be an integrating map for $\mu$ with $\phi(0) = 0$. The holomorphic map $\phi \circ f: (S^2 \setminus \{\infty\},f^*\mu) \longrightarrow \C^*$ is a universal cover. Since there is no universal cover from $\D$ onto $\C^*$, the Riemann surface $(S^2 \setminus \{\infty\},f^*\mu) $ is isomorphic to $\C$.

\subsubsection{Teichm\"{u}ller spaces of punctured spheres}\label{pullbackoperatordefn}
Let $f:(S^2,\Post{(f)}) \righttoleftarrow$ be a Thurston map. If we want to find a psf entire map $g$ that is Thurston equivalent to $f$, according to Thurston theory we start by looking at complex structures induced by all possible maps $\phi: S^2 \setminus \{\infty\} \longrightarrow \C$ and try to guess a suitable $\psi,g$ such that $g = \phi \circ f \circ \psi^{-1}$. We describe this process in this section and the next.

\begin{definition}
    Given a finite set $A \subset S^2$ with $|A| \geq 3$, the \textit{Teichm\"{u}ller space} $\Teich(S^2,A)$ is defined as 
\begin{align*}
    \Teich(S^2,A)& =\{\text{Orientation-preserving homeomorphisms }\phi: S^2 \longrightarrow \hat{\C}\}/\sim
\end{align*}
where $\phi \sim \psi$ if, for some  $M \in \text{Aut}(\hat{\C})$, $\phi$ is isotopic to $M \circ \psi$ rel $A$. 
\end{definition}
\noindent $\Teich(S^2,A)$ is a contractible complex manifold of dimension $|A| - 3$. The \textit{Teichm\"{u}ller metric} on $\Teich(S^2,A)$ is defined as follows: given $[\phi], [\psi] \in \Teich(S^2,A)$,
\begin{align*}
    d_{T}([\phi], [\psi]) & = \inf_{q.c. \psi' \sim \phi \circ \psi^{-1}}\ln K(\psi)
\end{align*}
where $\psi' \sim \phi \circ \psi^{-1}$ means $\psi'$ is isotopic to $\phi \circ \psi^{-1}$ rel $A$, and $K(\psi)$ is the complex dilitation of $\phi\circ \psi^{-1}$.  
\subsubsection{The Thurston pullback operator}\label{sec:thurston pullback}
\begin{definition}
    Let $f:(S^2 \setminus \{\infty\}, \Post{(f)}) \righttoleftarrow$ be a Thurston map. The \textit{Thurston pullback operator} $\sigma_f:\Teich(S^2,\Post{(f)} \cup \{\infty\}) \longrightarrow \Teich(S^2,\Post{(f)} \cup \{\infty\})$ is defined as follows:

Given $[\phi] \in \Teich(S^2,\Post{(f)} \cup \{\infty\})$, where a representative $\phi$ is chosen such that $\phi(\infty) = \infty$, $\sigma_f([\phi]) = [\psi]$, where $\psi$ completes the diagram
\[
\begin{tikzcd}
\Big(S^2 \setminus \{\infty\},\Post{(f)} \Big) \arrow[r,"\psi"] \arrow[d,"f"]&\Big(\C,\psi(\Post{(f)})\Big) \arrow[d,"g"]\\
\Big(S^2 \setminus \{\infty\},\Post{(f)} \Big) \arrow[r,"\phi"] & \Big(\C,\phi(\Post{(f)})\Big)
\end{tikzcd}
\]
for some entire function $g$.

\end{definition}
\begin{itemize}
    \item It is easy to show that $f$ is realized if and only if $\sigma_f$ has a fixed point in $\Teich(S^2,\Post{(f)}\cup \{\infty\})$. 
\item As shown in \cite{hss} for topological exponential functions, \cite{spideralgo}, $\sigma_f$ is weakly contracting with respect to the Teichm\"{u}ller metric on $\Teich(S^2,\Post{(f)}\cup \{\infty\})$. That is, given $[\phi],[\psi] \in \Teich(S^2,\Post{(f)}\cup \{\infty\})$, we have 
\begin{align*}
    d_T(\sigma_f([\phi]),\sigma_f([\psi])) < d_T([\phi],[\psi])
\end{align*}
Thus if $\sigma_f$ has a fixed point, it is unique. Consequently, if $f$ is Thurston equivalent to an entire map $g$, then $g$ is unique upto conjugation by an affine  transformation.
\item If $f$ is a topological entire function, given any compact subset $K$ of $\Teich(S^2,\Post{(f)}\cup \{\infty\})$, $\sigma_f$ is strongly contracting on $K$: that is, there exists a constant $c_K \in [0,1)$ such that for all $[\phi],[\psi] \in K$,
\begin{align*}
    d_T(\sigma_f([\phi]),\sigma_f([\psi])) \leq c_K d_T([\phi],[\psi])
\end{align*}
A proof is given in \cite{hss}. 
\end{itemize}

\subsection{Dynamics of unicritical polynomials}\label{sec: unicritical dynamics}
Given $c \in \C$, we let $f_{d,c}$ denote the polynomial $z^d+c$. On some maximal neighborhood $U_c$ of $\infty$, there exists a unique conformal map $\phi_{d,c}: U_c \longrightarrow \{|z|<r\}$ for some $0<r<1$ satisfying 
\begin{align*}
    \phi_{d,c}\circ f_{d,c}(z) &= \phi_{d,c}(z)^d \hspace{5pt} \forall z \in \hat{\C} \setminus K(f_{d,c})\\
    \lim_{z \mapsto \infty}\frac{\phi_{d,c}(z)}{z} & = 1
\end{align*}
This map is referred to as the \textit{B\"{o}ttcher chart} of $f_{d,c}$. If the filled Julia set $K(f_{d,c})$ is connected (equivalently, if the $f_{d,c}$-orbit of the critical point $0$ is bounded), then $U_c = \hat{\C} \setminus K(f_{d,c})$. 

The \textit{dynamic ray at angle $\theta \in \R/\Z$} is the preimage of the set $\{re^{2\pi i \theta}: 1\leq r \leq \infty\}$. If $\lim_{r \rightarrow 1^+}\phi_{d,c}^{-1}(re^{2 \pi i \theta})$ exists, we say that the ray \textit{lands}. It is known that dynamic rays at rational angles always land. 

If $K(f_{d,c})$ is locally connnected, $\phi_{d,c}^{-1}$ extends continuously to the boundary  $\partial \D$ and maps onto the Julia set $J(f_{d,c}) = \partial K(f_{d,c})$. This boundary extension  is called the Carath\'{e}odory loop of $f_{d,c}$. 
\subsection{Unicritical non-escaping loci}
The set of $c \in \C$ for which the $f_{d,c}-$ orbit of $0$ is bounded, is the \textit{non-escaping locus} of the polynomials $f_{d,c}$, commonly known as the degree $-d$ Multibrot set. We denote this set by $\mathcal{M}_d$. We recall some known facts about $\mathcal{M}_d$ here. Proofs can be found in \cite[Chapters~9, 10]{teichmuller_theory_vol2}, and \cite{multibrot}.
\begin{itemize}
    \item $\mathcal{M}_d$ is connected and compact, and the map $\Phi_d: \hat{\C} \setminus \mathcal{M}_d \longrightarrow \hat{\C} \setminus \overline{\D}$ given by $\Phi_d(c) = \varphi_{d,c}(c)$, where $\varphi_{d,c}$ is the B\"{o}ttcher chart of $f_{d,c}$, is a conformal isomorphism. This was proven first for $d=2$ by Douady and Hubbard in \cite{Mandelbrotconnected}, and their proof generalizes to higher degrees.

    Given $\theta \in \R/\Z$, the parameter ray at angle $\theta$, denoted $R_d(\theta)$, is the preimage under $\Phi_d$ of the set $\{re^{2\pi i \theta}:1 < r\leq \infty\}$. Landing of parameter rays is defined analogously.  If  $\theta \in \Q/\Z$, $R_d(\theta)$ lands.
    \item Given any hyperbolic component $U \subset \mathcal{M}_d$ (for a definition, see,  \cite[Chapter 3]{orsaynotes} or \cite[Chapter 9] {teichmuller_theory_vol2}), the multiplier map $\rho_U: U \longrightarrow \D$ is a $(d-1)$ - sheeted ramified covering branched over $0$. The unique critical point $c_0$ of $\rho_U$ is called the center of $U$, and $f_{d,c_0}$ has a super-attracting cycle of exact period $k$. For $c \in U$, $f_{d,c}$ has a unique attracting cycle of exact period $k$. $\rho_U$ extends to a continuous map  $\partial U \longrightarrow \partial \D$, and the fiber $\rho_U^{-1}(1)$ consists of $d-1$ parabolic parameters  $c_1,c_2,...,c_{d-1}$ on $\partial U$. 
    
    Of these, there exists a unique parameter, say $c_1$, which is the landing point of exactly two rays: $R_d(\theta)$ and $R_d(\theta')$, where $\theta,\theta'$ are periodic under $\mu_d$ with exact period $k$. This point is called the root of $U$, and the angles $\theta,\theta'$ are said to form a companion pair. The points $c_2,....,c_{d-1}$ are called the co-roots of $U$ and each $c_i$ is the landing point of exactly one ray $R_d(\theta_i)$, where $\theta_i$ has exact period $k$ under $\mu_d$. In the dynamical plane of $f_{d,c_0}$, the dynamic rays at angles $\theta',\theta,\theta_2,...,
    \theta_{d-1}$ all land on the Fatou component $U_0$ containing $c_0$, and $\theta,\theta'$ land at a unique point $z_0 \in \partial U_0$, called its root. The dynamic rays at angles $\theta,\theta'$ separate $c_0$ from the other points in the postcritical set.
    \item 
Let $U$ be a hyperbolic component of $\mathcal{M}_d$, with center $c_0$. Suppose the parameter rays $R_d(\theta),R_d(\theta')$ with $\theta<\theta'$ land at the root of $U$. Then $R_d(\theta) \cup R_d(\theta')$ split $\mathcal{M}_d \setminus U$ into two components. The component not containing $0$ is called the wake of $U$, and denoted $\mathcal{W}(U)$ (we will also refer to this as the wake $c_0$). Furthermore, suppose $\theta_1 < \theta_2 < ....\theta_{d-2}$ are the angles that land at the co-roots of $U$. Then the subsets of $\mathcal{W}(U)$ bound by a pair of rays of the form $(R_d(\theta)_j , R_d(\theta_{j+1}))$, $(R_d(\theta), R_d(\theta_1))$ or $(R_d(\theta_{d-2}), R_d(\theta'))$ are called sub-wakes of $U$.  

    \item If $\theta \in \Q/\Z$ is $k-$periodic under $\mu_d$, $R_d(\theta)$ lands on the root or co-root of a hyperbolic component of period $k$. If $\theta$ is preperiodic under $\mu_d$ with pre-period $\ell\geq 1$ and eventual period $k$, then $R_d(\theta)$ lands at a Misiurewicz parameter $c$ whose critical value has preperiod $\ell$ and period dividing $k$. 
    \item    We call $\theta \in \Q/\Z$ an angular coordinate for $c \in \mathcal{M}_d$ if 
\begin{enumerate}
    \item $c$ is Misiurewicz, and $R_d(\theta)$ lands on $c$, or 
     \item $c$ is critically periodic, and $R_d(\theta)$ lands on the root or a co-root of the hyperbolic component containing $c$.
\end{enumerate}
Given a pcf parameter $c$, we let $\Omega_d(c)$ denote its set of angular coordinates. For $\omega = e^{\frac{2 \pi i}{d-1}}$,  
\begin{align*}
        \Omega_d(\omega c) & = \Omega_d(c) + \frac{1}{d-1}
\end{align*}
\item If $f_{d,c}$ is pcf, with $\theta \in \Omega_d(c)$, its filled Julia set is locally connected. Let $\gamma$ the Carath\'{e}odory loop of $f_{d,c}$. For angles $t,t' \in \R/\Z$, it is known that $\gamma(t) = \gamma(t')$ if and only if $\Sigma_{d,\theta}(t) = \Sigma_{d,\theta}(t')$. 
    \end{itemize}    

\subsection{The quotients $\Lambda_d$}
For all $c\in \C$, $f_{d,c}$ is affine conjugate to $f_{d,\omega c}$, where $\omega = e^{\frac{2 \pi i}{d-1}}$. Moreover, for $c \neq 0$, $f_{d,c}$ is affine conjugate to $p_{d,\lambda}(z) = \lambda(1+\frac{z}{d})^d$, where $\lambda = dc^{d-1}$, and $p_{d,\lambda}$ is affine conjugate to $p_{n,\mu}$ if and only if $\lambda = \mu$. 

We define $\Lambda_d$ as the image of $\mathcal{M}_d \setminus \{0\}$ under the map $c \mapsto dc^{d-1}$. Equivalently, $\Lambda_d$ is the set of $\lambda \in \C^*$ such that $p_{d,\lambda}$ has connected filled Julia set. 

Given $\lambda =dc^{d-1}$, we refer to the parameters $c,\omega c,\omega^2c,...,\omega^{d-2}c$ as the monic representatives for $\lambda$, and denote this set $M_d(\lambda)$. We call $\theta$ an \textit{angular coordinate} for $\lambda$ if $\theta \in \Omega_d(c)$ for some $c \in M_d(\lambda)$. The set of angular coordinates is denoted $\Theta_d(\lambda)$.
\begin{align*}
    \Theta_d(\lambda) & = \bigcup_{n = 0}^{d-2} \Omega_d(\omega^nc)
\end{align*}
We shall denote by $\mathcal{P}_d$ the set of pcf parameters $\lambda$ for which $p_{d,\lambda}$ is pcf. This consists of all Misiurewicz parameters, and all critically periodic parameters with period $\geq 2$. 
\subsection{Combinatorics of pcf polynomials}
\subsubsection{Orbit portraits}
Given any repelling cycle $\{z_1,...,z_r\}$ of a polynomial $p$, the orbit portrait associated with this cycle is the collection $\{\mathscr{A}_1,\mathscr{A}_2,...,\mathscr{A}_r\}$, where $\mathscr{A}_i$ is the set of angles $\theta \in \R/\Z$ such that the dynamic ray at angle $\theta$ in the plane of $p$ lands at $z_i$. 
\begin{definition}\label{defn:formal_portrait}
   A collection  $\mathscr{A} = \{\mathscr{A}_1,\mathscr{A}_2,...,\mathscr{A}_r\}$ is called a formal degree$-d $ orbit portrait if 
    \begin{enumerate}
        \item Each $\mathscr{A}_i$ is a non-empty finite subset of $\Q/\Z$. 
        \item  $\mu_{d}$ maps each $\mathscr{A}_i$ bijectively onto $\mathscr{A}_{i+1}$ and preserves the cyclic
order of the angles.
\item Each $\mathscr{A}_i$ is contained in some arc of length less than $1/d$ in $\R/\Z$.
\item Each $\alpha \in
\cup_{i=1}^r\mathscr{A}_i$ is periodic under $\mu_d$ and all such $\alpha$'s have a common
period $rp$ for some $p \geq 1$. 
\item For every $n$, define $\mathscr{A}_{n,i}$ := $\mathscr{A}_n + \frac{i}{d+1}$. For all pairs $(m,i) \neq (n,j)$, $\mathscr{A}_{m,i}$ and $\mathscr{A}_{n,j}$ are unlinked. 
    \end{enumerate}

\end{definition}

    Given any formal portrait $\mathscr{A}$, there exists $i$ and distinct angles $\alpha,\beta \in \mathscr{A}_i$ such that $d(\alpha,\beta)$ is uniquely minimal among all arc lengths $d(\alpha',\beta')$ where $\alpha'$ and $ ,\beta'$ are distinct angles in some $ \mathscr{A}_j$. The pair $(\alpha,\beta)$ is called the characteristic pair. $\mathscr{A}$ can be reconstructed from $(\alpha,\beta)$, and we call $\mathscr{A}$ the $d-$orbit portrait generated by $(\alpha,\beta)$. 
    
 \cite[Theorem~2.12]{multibrot}, states that every formal degree$-d$ portrait is realized as the orbit portrait associated with the cycle $(z_1,....,z_r)$ of a pcf polynomial $f_{d,c}$ whose critical value has period $rp$,  with each $z_i$ the root of the Fatou component containing $f_{d,c}^{\circ (i-1)}(c)$ ($z_{r+1} = z_1$, $z_{r+2} = z_2$, etc). 

 The authors of \cite{hubbard_bielefeld_fisher} carry out a combinatorial classification of critically preperiodic polynomials of a given degree polynomials of a given degree, in terms of angles landing on the orbit of the critical values. Throughout this paper we will use several properties of dynamic rays.
 \subsection{Dynamics of exponential maps}\label{sec: exp dynamics}
The set of $\lambda \in \C^*$ for which the $p_{\lambda}-$orbit of $0$ is bounded is the \textit{non-escaping locus} of the exponential family $\{p_\lambda\}$, denoted $\Lambda$. Any $\lambda \not \in \Lambda$ is called an \textit{escaping parameter}.
\subsubsection{Dynamic and parameter rays}
Given $\lambda \in \C^*$, choose $c = \ln  \lambda$ such that $\Im c \in [-\pi,\pi]$. Let $U_j^{stat}(\lambda) = \{z: (2j-1)\pi - \Im c < \Im z < (2j+1)\pi + \Im c\}$. Note that this is a connected component of $p_{\lambda}^{-1}(\C \setminus \R_{\leq 0})$, and that the map $p_\lambda | U_j^{stat}(\lambda) : U_j^{stat}(\lambda) \longrightarrow \C \setminus \R_{\leq 0}$ is a conformal isomorphism. The collection $\{U_j^{stat}(\lambda)\}$ partitions the plane, and is called the static partition with respect to $\lambda$. 

\begin{definition}
    We call a sequence $\underline{s} \in \Z^{\N}$ an \textit{external address}. Let $\sigma$ be the left shift map on $\Z^{\N}$. For any $z \in \C$ with $p_{\lambda}^{\circ n}(z) \not \in \R_{\leq 0}$ for all $n$, the external address of $z$ is the sequence $s_1s_2...$ with $p_{\lambda}^{\circ (n-1)}(z) \in U_{s_{n}}^{stat}(\lambda)$ for all $n$.

    Let $<$ denote the standard lexicographic order on $\Z^{\N}$: $\underline{s} < \underline{t}$ if at the first index $n$ where $s_n \neq t_n$, we have $s_n < t_n$. Additionally, let $<<$ denote the cylindrical order: $\underline{s} << \underline{t}$ if one of the following is true:
    \begin{itemize}
        \item $\underline{s} < \underline{t}$, or
        \vspace{5pt}
        \item $
        \underline{t} < \overline{0} < \underline{s}$
    \end{itemize}
\end{definition}
\begin{definition}
Given an address $\underline{s}$, for $r \in \Z$ let $T_r(\underline{s})$ denote the interval $(r\underline{s},(r+1)\underline{s}) = \{ \overline{t}: r\underline{s} < \overline{t} <(r+1)\underline{s})  \}$. Given $\underline{t} \in \Z^{\N}$, the itinerary of $\underline{t}$ with respect to $\underline{s}$, denoted $\Sigma_{\underline{s}}(\underline{t})$, is a sequence $\nu_1\nu_2\nu_3....$ where 
\begin{align*}
    \nu_n& = \begin{cases}
        r & \sigma^{\circ (n-1)}(\underline{t}) \in T_r(\underline{s})\\
        * &\sigma^{\circ (n-1)}(\underline{t}) = r\underline{s} \text{ for some }r \in \Z 
    \end{cases}
\end{align*}
\end{definition}
\begin{definition}
  A sequence $\underline{s} \in \Z^{\N}$ is \textit{bounded} if there exists a constant $C>0$ such that $|s_n| \leq C$ for all $n$. 
  
  Let $F(t) = e^t-1$.  An address $\underline{s}$ is said to be \textit{exponentially bounded} if there exist constants $A\geq 1$, $x>0$ such that $|s_n| \leq A|F^{\circ (n-1)}(x)|$ for all $n\geq 1$. 
  
\end{definition}

Fix $\lambda \in \C^*$. The following theorems illustrate the behaviour of escaping points in the dynamic plane of $p_\lambda$.

\begin{theorem}[{\cite[Theorem~2.3]{schleicherzimmer}}]
    If $\lambda \in \Lambda$, then for
every bounded $\underline{s}$ there is a unique injective and continuous curve $\gamma_{\underline{s}}: (0,\infty)\longrightarrow  \C$
of external address $\underline{s}$ satisfying
\begin{align*}
    \lim_{t\rightarrow \infty}
\Re
\gamma_{\underline{s}}(t)
= +\infty
\end{align*}which has the following properties: it consists of escaping points such that
\begin{align*}
    p_{\lambda}(\gamma_{\underline{s}}(t)) &= \gamma_{\underline{s}}(p_\lambda (t)) \hspace{5pt} \forall t>0\\
    \gamma_{\underline{s}}(t)  &= t - c+2\pi i s_1 +r_{\underline{s}}(t)  \hspace{5pt} \forall t>0
\end{align*}
with $|r_{\underline{s}}(t)| < 2e^{
-t}(|K| + C)$, where $C \in \R$
depends only on a bound for $\underline{s}$.

\end{theorem}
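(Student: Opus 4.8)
The approach I would take is the classical one: realise $\gamma_{\underline{s}}$ as a limit of compositions of inverse branches of $p_\lambda$, exploiting that these branches contract strongly far out to the right. Write $c=\log\lambda$ with $\Im c\in[-\pi,\pi]$, so that $p_\lambda(z)=e^{z+c}$ and, for each $r\in\Z$, the restriction $p_\lambda\colon U_r^{stat}(\lambda)\to\C\setminus\R_{\leq 0}$ is a conformal isomorphism with inverse $L_r(w)=\log w-c+2\pi i r$ (principal branch of $\log$); thus $\Re L_r(w)=\log|w|-\Re c$, $|\Im L_r(w)|\le\pi+|\Im c|+2\pi|r|$, and $|L_r'(w)|=1/|w|$. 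Recall $F(t)=e^t-1$: an increasing self-bijection of $(0,\infty)$ with $F(t)>t$, so that $F^{\circ n}(t)$ grows like an $n$-fold iterated exponential while $F^{\circ(-n)}(t)\to0$. The heuristic is that the hair must satisfy $p_\lambda\circ\gamma_{\underline{s}}=\gamma_{\sigma\underline{s}}\circ F$, hence $\gamma_{\underline{s}}(t)=L_{s_1}\circ\cdots\circ L_{s_n}\bigl(\gamma_{\sigma^{n}\underline{s}}(F^{\circ n}(t))\bigr)$ for every $n$; since the inner factor is $F^{\circ n}(t)$ to leading order, one expects $\gamma_{\underline{s}}(t)=\lim_{n\to\infty}L_{s_1}\circ\cdots\circ L_{s_n}\bigl(F^{\circ n}(t)\bigr)$.

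The first and essential step is to make this limit rigorous on a tail $[t_M,\infty)$, where $|s_n|\le M$ for all $n$ and $t_M$ is large in terms of $M,|c|$. An induction in $n$ shows that for $t\ge t_M$ every value $L_{s_j}\circ\cdots\circ L_{s_n}(F^{\circ n}(t))$ has real part comparable to $F^{\circ(j-1)}(t)$ and imaginary part bounded in terms of $M,|c|$; in particular the branch compositions are legitimate (their arguments avoid $\R_{\leq 0}$), they depend holomorphically on $t$ in a complex neighborhood of $[t_M,\infty)$, and each $L_{s_j}$ is evaluated at a point of modulus at least $\tfrac12 F^{\circ j}(t)$, where it is $(2/F^{\circ j}(t))$-Lipschitz. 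Writing $\gamma_{\underline{s},n}(t)=L_{s_1}\circ\cdots\circ L_{s_n}(F^{\circ n}(t))$, the points $\gamma_{\underline{s},n+1}(t)$ and $\gamma_{\underline{s},n}(t)$ are the images under $L_{s_1}\circ\cdots\circ L_{s_n}$ of $F^{\circ n}(t)$ and of $L_{s_{n+1}}(F(F^{\circ n}(t)))=\log(e^{F^{\circ n}(t)}-1)-c+2\pi i s_{n+1}$, which lie within distance $|c|+2\pi M+1=:D$ of one another, so chaining the Lipschitz bounds gives
\[
\bigl|\gamma_{\underline{s},n+1}(t)-\gamma_{\underline{s},n}(t)\bigr|\;\le\;D\prod_{j=1}^{n}\frac{2}{F^{\circ j}(t)} ,
\]
which is summable in $n$ with extremely fast decay, dominated by its $n=1$ term of size about $De^{-t}$. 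Hence $\gamma_{\underline{s},n}(t)$ is Cauchy, the convergence is uniform on compact subsets of $[t_M,\infty)$ and uniform over addresses bounded by $M$, and the limit $\gamma_{\underline{s}}$ is continuous. Since $\gamma_{\underline{s},1}(t)=t-c+2\pi i s_1+\log(1-e^{-t})$ and the remaining increments obey the bound above, one obtains $\gamma_{\underline{s}}(t)=t-c+2\pi i s_1+r_{\underline{s}}(t)$ with $|r_{\underline{s}}(t)|<2e^{-t}(|c|+C)$, $C$ depending only on $M$ (this is the bound $2e^{-t}(|K|+C)$ of the statement, with $K=c$); applying a Cauchy estimate to the holomorphic increments shows their $t$-derivatives are likewise summable, so $\gamma_{\underline{s}}'(t)\to1$ and $\gamma_{\underline{s}}$ is injective on $[t_M,\infty)$.

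The remaining assertions are essentially formal. From $p_\lambda\circ L_{s_1}=\id$ on $\C\setminus\R_{\leq 0}$ and $F^{\circ n}(t)=F^{\circ(n-1)}(F(t))$ one has $p_\lambda(\gamma_{\underline{s},n}(t))=\gamma_{\sigma\underline{s},n-1}(F(t))$, hence $p_\lambda\circ\gamma_{\underline{s}}=\gamma_{\sigma\underline{s}}\circ F$ in the limit; iterating, $p_\lambda^{\circ(n-1)}(\gamma_{\underline{s}}(t))=\gamma_{\sigma^{n-1}\underline{s}}(F^{\circ(n-1)}(t))\in U_{s_n}^{stat}(\lambda)$ because the outermost branch in that expression is $L_{s_n}$, so every point of the curve has external address $\underline{s}$; and since its real part is $F^{\circ(n-1)}(t)+O(1)\to\infty$, every point escapes, in particular $\Re\gamma_{\underline{s}}(t)\to+\infty$. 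One extends $\gamma_{\underline{s}}$ to all of $(0,\infty)$ by $\gamma_{\underline{s}}(t)=L_{s_1}\circ\cdots\circ L_{s_m}(\gamma_{\sigma^{m}\underline{s}}(F^{\circ m}(t)))$ for any $m$ with $F^{\circ m}(t)\ge t_M$ (such $m$ exists as $F^{\circ m}(t)\to\infty$, and the definition is independent of $m$ by the functional equation on the tail), which preserves continuity; injectivity on $(0,\infty)$ then follows by pushing any coincidence $\gamma_{\underline{s}}(t_1)=\gamma_{\underline{s}}(t_2)$ forward under $p_\lambda^{\circ m}$ into the tail, and the estimate on $r_{\underline{s}}$ propagates under the same pullback, degenerating to a $t$-independent bound as $t\to0^+$. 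For uniqueness, any injective continuous $\tilde\gamma\colon(0,\infty)\to\C$ all of whose points have external address $\underline{s}$ and with $\Re\tilde\gamma(t)\to\infty$ satisfies $\tilde\gamma(t)=L_{s_1}\circ\cdots\circ L_{s_n}(p_\lambda^{\circ n}(\tilde\gamma(t)))$ for every $n$, and since the orbit escapes with real part $\to+\infty$, the same contraction estimate forces $\tilde\gamma(t)$ to coincide with $\gamma_{\underline{s}}(t)$; the functional equation then fixes the parametrisation.

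The step I expect to be the real obstacle is the uniform bookkeeping behind the tail construction: proving by induction that the intermediate iterates of the branch compositions remain in $\C\setminus\R_{\leq 0}$ with real parts of the correct order of magnitude — this is precisely what makes $\prod_j 1/F^{\circ j}(t)$ the right quantity to govern the increments — and then extracting the sharp $2e^{-t}(|c|+C)$ remainder and the derivative estimate yielding injectivity. A secondary technical point is the branch-cut bookkeeping in the extension to small $t$, where, only in the degenerate case $\lambda\in\R_{>0}$ with vanishing address entries, an orbit can meet the partition boundary $\R_{\leq 0}$ and must be treated by hand; granting these, the dynamical properties and the uniqueness statement drop out of the functional equation and the behaviour of $F$ near $0$.
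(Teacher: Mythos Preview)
The paper does not prove this statement: it is quoted verbatim as background from Schleicher--Zimmer (the citation \cite[Theorem~2.3]{schleicherzimmer} in the theorem header), and no argument is supplied in the present paper. There is therefore no ``paper's own proof'' to compare against.

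That said, your sketch is essentially the original Schleicher--Zimmer construction: build the ray tail as the limit of $L_{s_1}\circ\cdots\circ L_{s_n}(F^{\circ n}(t))$, control the increments by the product $\prod_j 2/F^{\circ j}(t)$ coming from the Lipschitz constants of the logarithmic inverse branches, read off the asymptotic $t-c+2\pi i s_1+O(e^{-t})$ from the $n=1$ term, and then pull back to extend over all of $(0,\infty)$. The points you flag as the genuine work---the inductive control keeping the intermediate branch evaluations in $\C\setminus\R_{\leq 0}$ with real parts of order $F^{\circ(j-1)}(t)$, and the branch-cut bookkeeping for small $t$---are exactly where the effort lies in the cited reference as well. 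Your outline is sound as a summary of that proof.
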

The curve $\gamma_{\underline{s}}$ is called the dynamic ray at external address $\underline{s}$.
\begin{theorem}(\cite{combi_classif_exp})
For every preperiodic external address $\underline{s}$ starting with the entry $0$, there
exists a postsingularly finite exponential map $\lambda \exp(z)$ such that the dynamic
ray at external address $\underline{s}$ lands at the singular value $0$.
Every postsingularly finite exponential map  is associated in this way
to a positive finite number of preperiodic external addresses starting
with $0$.
\end{theorem}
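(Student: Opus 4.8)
The plan is to prove the two assertions separately, using the Thurston-theoretic machinery of exponential spiders. For the realization statement, fix a preperiodic external address $\underline s$ with $s_1 = 0$, preperiod $\ell \geq 1$ and period $k \geq 1$. First I would build the combinatorial model: an exponential spider $\Gamma_{\underline s}$, consisting of finitely many disjoint legs joining the $\ell+k$ prospective postsingular points $z_1, \dots, z_{\ell+k}$ to $\infty$, arranged so that the legs together with the strips of a topological preimage partition realize the static-partition data recorded by $\underline s$ and its shifts $\sigma^{n}(\underline s)$. One equips $\Gamma_{\underline s}$ with the graph map $\mathcal F_{\underline s} : \Gamma_{\underline s} \righttoleftarrow$ dictated by this combinatorics, checks that its extension to the plane is a topological exponential function that is postsingularly finite with single asymptotic value $z_1$ (which is to become the singular value $0$) and postsingular set $\{z_1, \dots, z_{\ell+k}\}$, and then passes to the quotient $[\mathcal F_{\underline s}]$ on $[\Gamma_{\underline s}]$ obtained by collapsing points of equal itinerary; this extends to a topological exponential map of the plane.

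Second, I would run the Thurston iteration. Form the associated spider space $\mathcal T_{\underline s}$ and the operator $\sigma_{\underline s}$ conjugate to the Thurston pullback $\sigma_{[\mathcal F_{\underline s}]}$. Since the pullback operator of a topological exponential function is weakly contracting, and strongly contracting on compact sets (\cite{hss}), the map $[\mathcal F_{\underline s}]$ is realized as soon as some forward orbit $\{\sigma_{\underline s}^{\circ n}([\phi_0])\}$ stays in a compact subset of $\mathcal T_{\underline s}$ --- equivalently, as soon as the iterated spider legs do not degenerate. To rule out degeneration I would invoke the obstruction theory: a topological polynomial is realized if and only if it carries no Levy cycle (\cite[Theorem~10.3.9]{teichmuller_theory_vol2}), and this extends to structurally finite topological entire maps by Shemyakov (\cite{Shemyakov_Thesis}), so it suffices to show $[\mathcal F_{\underline s}]$ has no Levy cycle. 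A Levy cycle would produce a cyclic family of essential curves in the complement of the prospective postsingular set, each pulled back with a degree-one component isotopic to the next; translating the existence of such a family into the combinatorics of $\underline s$, I would derive a shift-periodic identification among the legs incompatible with $\underline s$ being preperiodic (and with $s_1 = 0$). The fixed point $[\phi]$ of $\sigma_{\underline s}$ then yields a psf entire map $g = \phi' \circ [\mathcal F_{\underline s}] \circ \psi$; since $[\mathcal F_{\underline s}]$ is a topological exponential function, $g$ is a universal cover $\C \to \C^*$, hence $g = p_\lambda$ for some $\lambda \in \C^*$, and the dynamic ray of $p_\lambda$ carried by the spider leg at $z_1$ has external address $\underline s$ and lands at the singular value $0$.

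For the converse, suppose $p_\lambda$ is postsingularly finite. Because $\exp$ omits $0$, the point $0$ is not a value of $p_\lambda$, so $0$ is strictly preperiodic: its orbit reaches a periodic cycle $\mathcal C$ after $\ell \geq 1$ steps. The cycle $\mathcal C$ is repelling, as a postsingularly finite exponential map has no non-repelling cycles (a non-repelling cycle would attract the singular value and leave its orbit infinite). Now $p_\lambda^{\circ \ell}(0) \in \mathcal C$ is a repelling periodic point, hence the landing point of a positive, finite number of dynamic rays, all of them periodic (\cite{schleicherzimmer}); pulling these rays back along the finite orbit segment $0 \to p_\lambda(0) \to \cdots \to p_\lambda^{\circ\ell}(0)$ shows $0$ is the landing point of at least one and at most finitely many dynamic rays. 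Each such ray has a preperiodic address, because one step of $p_\lambda$ shifts the address, the ray orbit is finite, and for large index the address labels a ray landing at a point of $\mathcal C$ and is therefore periodic. Finally, with $c = \ln\lambda$ normalized by $\Im c \in [-\pi,\pi]$, the singular value $0$ lies in the open static strip $U_0^{stat}(\lambda)$, so any ray accumulating at $0$ is eventually contained in $U_0^{stat}(\lambda)$, which forces its first entry to be $0$ (the boundary case $\lambda < 0$ being handled by the limiting convention).

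I expect the non-degeneration step of the realization argument to be the real obstacle. For transcendental Thurston maps the Teichm\"uller space is infinite-dimensional, so precompactness of spider-iteration orbits is genuinely delicate, and the heart of the matter is the combinatorial statement that a preperiodic exponential spider admits no Levy cycle --- that a hypothetical degenerating annulus would have to carry a shift-periodic pattern contradicting the kneading data of $\underline s$. The finiteness half, by contrast, is essentially bookkeeping once the landing theory for periodic rays of exponential maps (\cite{schleicherzimmer}) is available.
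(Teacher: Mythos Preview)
The paper does not give a proof of this theorem: it is quoted as a background result from \cite{combi_classif_exp} in Section~\ref{sec: exp dynamics}, with no argument supplied. So there is no proof here to compare your proposal against.

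That said, your outline is broadly the strategy carried out in \cite{combi_classif_exp} (and sketched later in this paper in Section~\ref{sec:exp spider defn}): build the spider model $\Gamma_{\underline s}$, pass to the quotient $[\Gamma_{\underline s}]$ that kills the obvious Levy cycle coming from a shorter kneading period, and then argue that the Thurston pullback for $[\mathcal F_{\underline s}]$ has a fixed point. One correction: you write that ``for transcendental Thurston maps the Teichm\"uller space is infinite-dimensional'', but this is not the case here. The space $\Teich(S^2,\Post(f)\cup\{\infty\})$ has complex dimension $|\Post(f)|-2$, which is finite because $f$ is postsingularly \emph{finite}; the transcendental nature of $f$ affects the pullback operator (it is no longer a finite branched cover), not the dimension of the space it acts on. This makes the compactness issue more tractable than you suggest, though ruling out divergence to the boundary of Teichm\"uller space is still the substantive step, and in \cite{combi_classif_exp} it is handled by a direct spider argument rather than by invoking Shemyakov's later general Levy-cycle criterion.
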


As in the polynomial case, we have a notion of parameter rays to the exponential family:
\begin{theorem}[{\cite[Theorem~1.1]{Forster_2007}}]\label{thm:foster rempe schleicher}
    The set of parameters $\lambda  \in \C^*$ for which $p_\lambda ^{\circ n} \rightarrow \infty$ consists of uncountably many disjoint curves in $\C$. More precisely, every path-connected component of this set is an injective curve $\gamma : (0,\infty) \longrightarrow \C$ or $\gamma : [0,\infty) \longrightarrow \C$ with $\lim_{t\rightarrow \infty}\gamma(t) = \infty$.
\end{theorem}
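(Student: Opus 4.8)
The plan is to run the exponential analogue of the Douady--Hubbard correspondence between parameter rays and dynamic rays (the same mechanism behind the uniformization $\Phi_d$ of $\hat{\C}\setminus\mathcal{M}_d$ recalled above), with B\"ottcher coordinates replaced by the hair structure of the escaping set. The starting point is a parameter--dynamics dictionary: since $p_\lambda(0)=\lambda$, the singular orbit $p_\lambda^{\circ n}(0)$ tends to $\infty$ if and only if the point $\lambda$, viewed in the dynamical plane of $p_\lambda$, lies in the escaping set $I(p_\lambda)$. So the set in the statement (reading $p_\lambda^{\circ n}\to\infty$ as escape of the singular orbit) is exactly $\{\lambda\in\C^*:\lambda\in I(p_\lambda)\}$, and the whole problem becomes understanding how the point ``$\lambda$'' sits inside the escaping set of its own map as $\lambda$ varies.

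Then I would import the dynamical picture. By the dynamic-ray theory for exponential maps (\cite[Theorem~2.3]{schleicherzimmer} and its extension to escaping parameters), for every $\lambda\in\C^*$ and every exponentially bounded external address $\underline s$ there is an injective curve $\gamma_{\underline s}^\lambda\colon(0,\infty)\to\C$, depending holomorphically on $\lambda$, with $\gamma_{\underline s}^\lambda(t)=t-\ln\lambda+2\pi i s_1+r_{\underline s}^\lambda(t)$ and $|r_{\underline s}^\lambda(t)|<2e^{-t}(|\ln\lambda|+C)$; moreover $I(p_\lambda)$ is the union of these rays and their landing points, rays at distinct addresses being pairwise disjoint. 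Hence an escaping parameter $\lambda$ lies on a unique dynamic ray $\gamma_{\underline s}^\lambda$ at a unique potential $t$, or is the landing point of a unique one (which I book-keep as $t=0$), so $\lambda\mapsto(\underline s(\lambda),t(\lambda))$ is well defined.

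The parameter ray at address $\underline s$ is the solution set of the diagonal equation $\lambda=\gamma_{\underline s}^\lambda(t)$. For a fixed $t$ past a threshold $T_0(\underline s)$, on a small disk about $(t+2\pi i s_1)-\ln(t+2\pi i s_1)$ the map $\lambda\mapsto\gamma_{\underline s}^\lambda(t)$ has derivative $-1/\lambda+\partial_\lambda r_{\underline s}^\lambda(t)=O(1/t)$ (using a Cauchy estimate on the exponentially small $r_{\underline s}^\lambda$) and maps the disk into itself, so it has a unique fixed point $G_{\underline s}(t)$; since the equation $\lambda-\gamma_{\underline s}^\lambda(t)=0$ has non-vanishing $\lambda$-derivative, the implicit function theorem makes $G_{\underline s}$ analytic in $t$, with $G_{\underline s}(t)=t+2\pi i s_1-\ln(t+2\pi i s_1)+o(1)$; in particular $\Re G_{\underline s}(t)\to+\infty$ and $G_{\underline s}(t)\to\infty$ as $t\to\infty$, and $G_{\underline s}$ is injective for $t>T_0$. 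Combining this with the monotonicity of the potential inherited from the dynamical plane and a continuity argument, one upgrades the local arc to a single injective arc $t\mapsto G_{\underline s}(t)$ defined for all $t\in(0,\infty)$ and tending to $\infty$; it extends continuously to $t=0$ exactly when the ray ``lands'' in parameter space, which is the $[0,\infty)$ case, and otherwise one is in the $(0,\infty)$ case.

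It remains to check that the curves $G_{\underline s}$ are pairwise disjoint, exhaust the set in the statement, are uncountably many, and are precisely its path-components. Disjointness and the injectivity of each $G_{\underline s}$ follow from the dictionary, since $\lambda$ determines $(\underline s,t)$ uniquely; exhaustion, because any escaping $\lambda$ lies on a dynamic ray (or is its endpoint) in its own plane and so equals some $G_{\underline s}(t)$; uncountability, because there are uncountably many exponentially bounded addresses (e.g.\ all $\underline s$ with $|s_n|\le 1$) and distinct addresses give distinct nonempty curves; and each $G_{\underline s}$ is a full path-component because the ``separation'' of dynamic rays at distinct addresses in the dynamical plane transports to parameter space and forbids an arc inside the escaping set from running between two of the curves. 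I expect the main obstacle to be precisely the dynamical input, used with uniformity: one needs $\gamma_{\underline s}^\lambda$ and the remainder $r_{\underline s}^\lambda$ to depend holomorphically on $\lambda$ with estimates uniform over the relevant potentials and over bounded families of addresses, and one must control the rays all the way down to potential $0$ in order to prove that the diagonal solution set is a single injective arc for \emph{all} $t>0$ (not merely large $t$), to settle the open/half-open dichotomy, and to pin down the landing behaviour. That is where the hard analysis of \cite{schleicherzimmer} and its refinements for escaping parameters is indispensable; the contraction-and-continuation scheme above is comparatively soft once that is available.
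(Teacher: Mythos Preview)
This theorem is not proved in the present paper at all: it is quoted verbatim as \cite[Theorem~1.1]{Forster_2007} in the background section on exponential dynamics, with no proof or sketch supplied. So there is nothing in the paper to compare your proposal against; the authors simply import the result as a black box.

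That said, your outline is a reasonable caricature of the actual argument in the F\"orster--Rempe--Schleicher line of work: transfer the hair structure of $I(p_\lambda)$ to parameter space via the diagonal equation $\lambda=\gamma_{\underline s}^\lambda(t)$, use the asymptotic expansion of dynamic rays to get a contraction for large potential, and then push down to all potentials. The place where your sketch is thinnest is exactly the one you flag yourself: extending the injective arc from large $t$ down to all $t>0$ and showing each $G_{\underline s}$ is a full path-component (not merely contained in one) requires uniform control of the rays for escaping parameters, which is the substantive analytic content of \cite{schleicherzimmer} and \cite{Forster_2007}. For the purposes of this paper, however, none of that is needed---the statement is used, not proved.
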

Every path connected component of the set above is referred to as a parameter ray. We say that a parameter ray $\gamma$  lands if $\lim_{t\rightarrow 0} \gamma(t)$ exists. \cite{förster2005parameter} separately constructed parameter rays for the family $\exp(z+\kappa)$, where $\kappa \in \C$. We state a modified version from \cite{Foerster2006}:  
\begin{theorem}(\cite[Theorem~3.9]{Foerster2006})
    For every exponentially bounded sequence $\underline{s} \in \Z^{\N}$, there is a unique curve $\gamma_{\underline{s}} : (t_s,\infty) \longrightarrow \C$ such that every parameter $\kappa  = \gamma_{\underline{s}}(t)$ is an escaping parameter with external address $\underline{s}$ (meaning that in the dynamical plane of $\exp(z+\kappa)$, the ray at address $\underline{s}$ lands at $0$), and potential $t$. Conversely, every parameter $\kappa$ for which the dynamic ray at address $t$ contains $0$ with potential $t>t_s$ satisfies  $\gamma_{\underline{s}}(t) = \kappa$.
\end{theorem}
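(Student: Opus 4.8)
The plan is to identify the parameter ray with the solution curve of a single functional equation in $\kappa$ --- namely, that the singular value $0$ lie on the dynamic ray of address $\underline s$ at potential $t$ --- and to solve this equation, for each $t$, by a contraction argument uniform in $t$. This uses the standard construction of dynamic rays for the family $E_\kappa(z) := \exp(z+\kappa)$, but carried out with the parameter $\kappa$ allowed to vary over an unbounded region.

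First I would set up those rays. For $\kappa \in \C$ and an exponentially bounded address $\underline s$, write $t_n = F^{\circ(n-1)}(t)$ with $F(t) = e^t-1$, and let $L^\kappa_n(w) = \log w - \kappa + 2\pi i s_n$ be the branch of $\log$ carrying the relevant sector into the static strip $U^{stat}_{s_n}(e^\kappa)$. The dynamic ray is the limit
\[
g^\kappa_{\underline s}(t) \;=\; \lim_{n\to\infty}\; L^\kappa_1 \circ L^\kappa_2 \circ \cdots \circ L^\kappa_n\bigl(t_{n+1} + 2\pi i s_{n+1}\bigr),
\]
defined for $t$ beyond some threshold $\tau^\kappa_{\underline s}$. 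Each $L^\kappa_n$ is strongly contracting on a right half-plane, and exponential boundedness of $\underline s$ keeps the tail estimates summable; one obtains, jointly real-analytic in $(\kappa,t)$ and uniformly for $\Re\kappa$ suitably bounded below and $t$ large, the functional equation $E_\kappa\bigl(g^\kappa_{\underline s}(t)\bigr) = g^\kappa_{\sigma\underline s}(F(t))$ together with the expansion $g^\kappa_{\underline s}(t) = t - \kappa + 2\pi i s_1 + \rho^\kappa_{\underline s}(t)$, where $\rho^\kappa_{\underline s}(t)$ and $\partial_\kappa\rho^\kappa_{\underline s}(t)$ tend to $0$ as $t\to\infty$, uniformly on the parameter range in question.

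Now fix $\underline s$ exponentially bounded with $s_1 = 0$. That $\kappa$ be an escaping parameter of address $\underline s$ and potential $t$ means exactly $g^\kappa_{\underline s}(t) = 0$; applying $E_\kappa$ and invoking the functional equation, this is equivalent to $e^\kappa = g^\kappa_{\sigma\underline s}(F(t))$, i.e.\ to the fixed-point equation $\kappa = \log g^\kappa_{\sigma\underline s}(F(t)) =: \Phi_t(\kappa)$ --- the principal branch being legitimate since, by the expansion, $g^\kappa_{\sigma\underline s}(F(t))$ is close to $e^t$ for $t$ large, lying in the right half-plane with controlled argument. The expansion also gives $\partial_\kappa\Phi_t(\kappa) = \partial_\kappa g^\kappa_{\sigma\underline s}(F(t)) / g^\kappa_{\sigma\underline s}(F(t)) \to 0$ as $t\to\infty$, so for $t$ beyond a threshold $t_s$ --- depending only on the growth constants of $\underline s$, and chosen large enough that also $F(t) > \tau^\kappa_{\sigma\underline s}$ on the relevant range of $\kappa$ --- the map $\Phi_t$ is a contraction of a disc centered near $t$. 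The Banach fixed point theorem then yields a unique solution $\kappa = \gamma_{\underline s}(t)$; differentiating $\kappa - \Phi_t(\kappa) = 0$ in $t$, with $\partial_\kappa(\kappa - \Phi_t(\kappa)) = 1 - \partial_\kappa\Phi_t(\kappa) \neq 0$, shows $\gamma_{\underline s}:(t_s,\infty)\to\C$ is real-analytic. Since $\gamma_{\underline s}(t) = \log g^{\gamma_{\underline s}(t)}_{\sigma\underline s}(F(t)) = t + o(1)$, we get $\Re\gamma_{\underline s}(t)\to+\infty$, hence $\lim_{t\to\infty}\gamma_{\underline s}(t) = \infty$; and $\gamma_{\underline s}$ is injective because the dynamic ray $g^\kappa_{\underline s}$ is an injective curve where defined, so $\gamma_{\underline s}(t_1)=\gamma_{\underline s}(t_2)=\kappa$ forces $g^\kappa_{\underline s}(t_1) = 0 = g^\kappa_{\underline s}(t_2)$ and hence $t_1=t_2$. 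The converse assertion --- and with it the uniqueness of the curve $\gamma_{\underline s}$ --- is now immediate: any $\kappa$ whose singular value has address $\underline s$ and potential $t>t_s$ satisfies $g^\kappa_{\underline s}(t)=0$, hence solves $\kappa = \Phi_t(\kappa)$, hence equals $\gamma_{\underline s}(t)$.

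The main obstacle is the first step: establishing the dynamic rays $g^\kappa_{\underline s}$ together with joint control in $(\kappa,t)$ on an \emph{unbounded} parameter region --- in particular with $\Re\kappa$ allowed to be comparable to $t$. Concretely this means running the Schleicher--Zimmer-type contraction with $\kappa$ a variable, tracking carefully which branch of $\log$ is used at each stage so that the orbit of $0$ stays in the prescribed static strips, and extracting uniform bounds on the remainder $\rho^\kappa_{\underline s}$ and its $\kappa$-derivative from geometric-type tail sums weighted by $|s_n|$; this is exactly where exponential boundedness enters and where $t_s$ must be calibrated. Once these estimates are in hand, the contraction and implicit-function argument producing $\gamma_{\underline s}$ and its properties is routine.
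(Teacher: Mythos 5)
This statement is not proved in the paper at all --- it is quoted verbatim from F\"orster--Schleicher \cite[Theorem~3.9]{Foerster2006} --- so the comparison is with the construction in that reference. Your overall strategy (build the dynamic rays $g^{\kappa}_{\underline{s}}$ of $E_\kappa(z)=\exp(z+\kappa)$ with estimates uniform in $(\kappa,t)$, use the asymptotics $g^{\kappa}_{\underline{s}}(t)=t-\kappa+2\pi i s_1+\rho^{\kappa}_{\underline{s}}(t)$, and solve $g^{\kappa}_{\underline{s}}(t)=0$ by a contraction in $\kappa$) is indeed how the parameter-ray \emph{tails} are obtained there, and for large $t$ your fixed-point and implicit-function arguments are sound modulo the uniform estimates you flag. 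The genuine gap is the range of potentials. In the cited theorem $t_s$ is the minimal potential attached to the address $\underline{s}$ (for a bounded address, $t_s=0$), and both the existence of $\gamma_{\underline{s}}$ and the converse/uniqueness assertion are claimed for \emph{every} $t>t_s$. Your threshold is instead ``$t$ so large that $\Phi_t$ is a contraction,'' which you then rename $t_s$. Near the true $t_s$ the remainder bound is of size $e^{-t}\left(|\kappa|+C\right)$ with $|\kappa|\approx t$, which is not small, so $\partial_\kappa\Phi_t$ is not forced to be small and the fixed-point equation may a priori have no solution or several; your argument therefore produces only a ray tail. Extending the curve down to $t_s$, and proving that \emph{every} parameter whose singular value has address $\underline{s}$ and potential $t>t_s$ lies on it, is the substantial part of the proof in \cite{Foerster2006} (a continuation/limiting argument over the whole range of potentials), and it is not ``immediate'' from the tail construction as your last paragraph suggests.

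Two smaller points. First, you fix $s_1=0$, which is not part of the statement: parameter rays exist for all exponentially bounded addresses, and for general $s_1$ the relevant branch is $\operatorname{Log}+2\pi i s_1$ rather than the principal branch, with the sought parameter near $t+2\pi i s_1$. Second, the equivalence between ``$0$ lies on the ray $g^{\kappa}_{\underline{s}}$ at potential $t$'' and your fixed-point equation $\kappa=\Phi_t(\kappa)$ is exactly the statement that $0$ is the preimage of $g^{\kappa}_{\sigma\underline{s}}(F(t))$ lying in the static strip of index $s_1$, not a $2\pi i$-translate of it; the equation $e^{\kappa}=g^{\kappa}_{\sigma\underline{s}}(F(t))$ alone only says $0$ lies on some preimage ray with address $m\,s_2s_3\ldots$ for some $m\in\Z$. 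This is the branch bookkeeping you defer to the ``main obstacle'' paragraph, but it belongs to the equivalence itself, not only to the construction of the rays, and it is where the hypothesis on $s_1$ (or the corrected branch) is actually used.
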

For the $p_\lambda$ family, we shall extensively use the two following results: 
\begin{theorem}[{\cite[Theorem~3.4]{combi_classif_exp}}]
For every postsingularly finite exponential map $p_\lambda$ and every preperiodic external address $\underline{s}$ with $s_1 = 0$, the dynamic ray at address $\underline{s}$ lands at the singular value if
and only if the parameter ray at address $\underline{s}$ lands at $\lambda$.

\end{theorem}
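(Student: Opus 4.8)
The plan is to transfer stability information between the dynamical plane of $p_\lambda$ and the exponential parameter plane, following the Douady--Hubbard-style analysis of Misiurewicz parameters of the Mandelbrot set. The bridge is the characterization of parameter rays built into their construction (the theorem of F\"orster quoted above, transported to the family $p_\lambda(z) = \lambda\exp(z)$ via $\kappa = \ln\lambda$): writing $\gamma_{\underline{s}}$ for the parameter ray and $g^\lambda_{\underline{s}}$ for the dynamic ray of $p_\lambda$, a parameter $\mu \in \C^*$ equals $\gamma_{\underline{s}}(t)$ if and only if, in the dynamical plane of $p_\mu$, the singular value $0$ lies on $g^\mu_{\underline{s}}$ at potential $t$; moreover for each fixed $t$ this $\mu$ is unique. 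Since the substantive content is for strictly preperiodic $\underline{s}$ (if $0$ were periodic it would lie in a Fatou component, so no dynamic ray could land at it, and correspondingly $\lambda$ would be a center, landing point of no parameter ray, making the statement vacuous), I assume $\underline{s}$ has preperiod $\geq 1$.

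The first step is a \emph{stability lemma}: if $p_{\lambda_0}$ is postsingularly finite and $\underline{s}$ is strictly preperiodic with $s_1 = 0$, then there is a neighborhood $V$ of $\lambda_0$ and $T > 0$ such that for $\lambda \in V$ the dynamic ray $g^\lambda_{\underline{s}}$ is defined on $(0,\infty)$ and lands at a point $z_{\underline{s}}(\lambda)$ whose forward orbit is eventually on a repelling cycle, and such that $t \mapsto g^\lambda_{\underline{s}}(t)$, together with its landing point, depends continuously --- indeed holomorphically in $\lambda$ --- on $\lambda$, uniformly for $t \in (0,T]$. The point is that strict preperiodicity forces the ray to land at a point whose forward orbit hits a cycle that must be repelling (the postsingular set of $p_{\lambda_0}$ is finite, so this cycle attracts no singular value), repelling cycles persist and move holomorphically under perturbation, and the whole ray on $(0,T]$ can be reconstructed from a short piece near its landing orbit by pulling back along the expanding inverse branches of $p_\lambda$; this pullback transports the holomorphic motion of the landing orbit with no loss of uniformity as $t \to 0$.

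Granting the lemma, the two implications are short. If the parameter ray at $\underline{s}$ lands at $\lambda$, write $\gamma_{\underline{s}}(t) \to \lambda$ as $t \to 0$; by the bridge $g^{\gamma_{\underline{s}}(t)}_{\underline{s}}(t) = 0$ for all small $t$, and since $\gamma_{\underline{s}}(t)$ eventually lies in $V$ the stability lemma gives $0 = \lim_{t\to 0} g^{\gamma_{\underline{s}}(t)}_{\underline{s}}(t) = \lim_{t\to 0} g^{\lambda}_{\underline{s}}(t) = z_{\underline{s}}(\lambda)$, so $g^\lambda_{\underline{s}}$ lands at $0$. Conversely, suppose $g^\lambda_{\underline{s}}$ lands at $0$, i.e.\ $z_{\underline{s}}(\lambda) = 0$. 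The parameter ray at the strictly preperiodic address $\underline{s}$ lands at some postsingularly finite parameter $\mu$ (cited in Section~\ref{sec: exp dynamics}); by the first implication, $g^\mu_{\underline{s}}$ lands at $0$, i.e.\ $z_{\underline{s}}(\mu) = 0$. It remains to conclude $\lambda = \mu$: one may invoke rigidity of postsingularly finite exponentials (the Thurston/Teichm\"uller argument, which shows such a map is determined by the finite set of external addresses of the dynamic rays landing at $0$), or argue directly that near $\lambda$ the holomorphic map $\mu' \mapsto g^{\mu'}_{\underline{s}}(t)$ is open (a transversality statement at postsingularly finite parameters), so that for each small $t$ it takes the value $0$ at a unique nearby parameter, which by the uniqueness clause of the bridge must be $\gamma_{\underline{s}}(t)$; either way $\gamma_{\underline{s}}(t) \to \lambda$, forcing $\mu = \lambda$.

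The main obstacle is the stability lemma, specifically making the dependence of the dynamic ray on $\lambda$ uniform \emph{down to potential $0$}: one must control the ray in a neighborhood of its landing point --- which sits on a repelling preperiodic orbit undergoing a holomorphic motion --- and verify that the pullback construction transports this motion continuously, without the estimates degenerating as $t \to 0$. A secondary difficulty is the uniqueness input in the converse direction, i.e.\ whether one routes it through Thurston rigidity for psf exponentials or through a transversality statement for $\mu' \mapsto g^{\mu'}_{\underline{s}}(t)$; both are substantial but standard in spirit.
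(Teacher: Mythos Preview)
The paper does not prove this theorem; it is quoted in the background section with a citation to \cite[Theorem~3.4]{combi_classif_exp} and used as a black box thereafter. So there is no in-paper proof to compare against.

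That said, your sketch is the standard Douady--Hubbard style argument transported to the exponential family, and is broadly in line with how the cited reference proceeds. One caution: in the converse direction you invoke the fact that the parameter ray at $\underline{s}$ lands at \emph{some} postsingularly finite $\mu$, citing Section~\ref{sec: exp dynamics}. In the source that is Corollary~3.5, which is stated as a \emph{corollary} of the very theorem you are proving; using it here risks circularity. Your transversality alternative (openness of $\mu' \mapsto g^{\mu'}_{\underline{s}}(t)$ near a postsingularly finite parameter, combined with the uniqueness clause of the F\"orster bridge) avoids this and is the cleaner route, but it is not free: you need that the singular value is not a persistent zero of $\mu' \mapsto z_{\underline{s}}(\mu') - 0$, which is exactly the transversality of Misiurewicz exponentials and requires its own argument (e.g.\ via quasiconformal deformation or the parameterization of escaping points). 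The stability lemma you isolate is indeed the technical heart, and your description of how to obtain uniformity down to potential $0$ via holomorphic motion of the repelling landing orbit and backward iteration is correct in outline.
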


\begin{theorem}\cite[Corollary~3.5]{combi_classif_exp}
     (Landing Properties of Preperiodic Parameter
Rays)
Every parameter ray  at a strictly preperiodic external address $\underline{s}$ lands at a
postsingularly finite exponential map, and every preperiodic exponential
map is the landing point of a finite positive number of parameter rays
at strictly preperiodic external addresses.

\end{theorem}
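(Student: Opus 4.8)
The plan is to establish the two assertions of the theorem separately, combining the dynamical-to-parameter transfer results already quoted from \cite{combi_classif_exp} with a rigidity input for postsingularly finite exponentials. Throughout, write $R(\underline s)$ for the parameter ray at a strictly preperiodic address $\underline s$ (which exists by Theorem~\ref{thm:foster rempe schleicher} together with the address-indexed parametrization of \cite{Foerster2006}) and $\gamma_{\underline s}^{\lambda}$ for the dynamic ray at address $\underline s$ in the plane of $p_\lambda$.

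\emph{Landing.} Fix a strictly preperiodic $\underline s$ and let $A\subset\C$ be the accumulation set of $R(\underline s)$ at the low‑potential end of its parametrization. I would first record the a priori fact that this end of $R(\underline s)$ stays in a bounded region of $\C$, so that $A$ is a nonempty, compact, connected subset of $\C$. The crux is then the stability statement: \emph{for every $\lambda\in A$, $\gamma_{\underline s}^{\lambda}$ lands at the singular value $0$.} To prove it, take parameters $\lambda_n\to\lambda$ on $R(\underline s)$; by the defining property of the parameter ray, $0$ is an escaping point of address $\underline s$ in the plane of $p_{\lambda_n}$, lying on $\gamma_{\underline s}^{\lambda_n}$ at a potential tending to the low‑potential endpoint. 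Using the estimates on dynamic and parameter rays quoted above (uniform in the parameter along the ray), these curves converge to $\gamma_{\underline s}^{\lambda}$ locally uniformly away from its endpoint and $0\in\overline{\gamma_{\underline s}^{\lambda}}$; since $\lambda$ is non‑escaping, $0$ cannot be an interior point of the ray, so it is its landing point. As $\underline s$ is preperiodic, $\gamma_{\underline s}^{\lambda}$ is a preperiodic dynamic ray, so by the ray‑landing theorem of Schleicher--Zimmer (\cite{schleicherzimmer}) its landing point $0$ is a (pre)periodic point of $p_\lambda$; hence $p_\lambda$ is postsingularly finite and $\lambda$ lies in the set $\mathcal P^{\exp}$ of psf exponential parameters. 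Thus $A\subset\mathcal P^{\exp}$. Finally, $\mathcal P^{\exp}$ is discrete in $\C$ — the exponential analogue of the isolation of Misiurewicz points, which follows from Thurston rigidity for psf entire maps (uniqueness of the fixed point of the pullback operator, Section~\ref{sec:thurston pullback}, plus the fact that a psf exponential is pinned down by finite combinatorial data varying discretely). A connected subset of a discrete set is a point, so $A=\{\lambda_\ast\}$ and $R(\underline s)$ lands at the psf parameter $\lambda_\ast$. (When $s_1=0$ one can shortcut: the existence theorem quoted from \cite{combi_classif_exp} produces a psf $\lambda_0$ whose dynamic ray at $\underline s$ lands at $0$, and \cite[Theorem~3.4]{combi_classif_exp} then says directly that $R(\underline s)$ lands at $\lambda_0$; the accumulation argument makes the statement self‑contained and handles $s_1\neq0$.)

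\emph{Finitely many, and at least one.} Let $p_\lambda$ be a postsingularly finite exponential map, so $0$ is strictly preperiodic under $p_\lambda$. By the theorem of \cite{combi_classif_exp} quoted above there is at least one, and only finitely many, preperiodic addresses with first entry $0$ whose dynamic ray in the plane of $p_\lambda$ lands at $0$; each is strictly preperiodic, since a periodic dynamic ray lands at a periodic point while $0$ is not periodic. By \cite[Theorem~3.4]{combi_classif_exp} the parameter ray at each of these addresses lands at $\lambda$, giving at least one strictly preperiodic parameter ray landing at $\lambda$. For finiteness, suppose $R(\underline t)$ lands at $\lambda$ for some strictly preperiodic $\underline t$; then $\lambda$ is the accumulation set of $R(\underline t)$, so the stability statement of the first part forces $\gamma_{\underline t}^{\lambda}$ to land at $0$. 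Since $0$ is a preperiodic point of $p_\lambda$, only finitely many dynamic rays land at it (a preperiodic point supports only finitely many landing rays, again by \cite{schleicherzimmer}), and the address of a landing ray is determined by that ray; hence only finitely many strictly preperiodic parameter rays land at $\lambda$.

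\emph{Main obstacle.} The heart of the matter is the stability step — transferring ``$0$ escapes along address $\underline s$'' for parameters on the ray into ``$\gamma_{\underline s}^{\lambda}$ lands at $0$'' for every accumulation parameter $\lambda$. This is delicate because exponential dynamic rays need not land and can be ``bitten'' under perturbation, so the argument must genuinely exploit the a priori bounds on $\gamma_{\underline s}(t)$ that are uniform in the parameter along the ray, not merely continuity. The other nontrivial input is discreteness of $\mathcal P^{\exp}$; beyond these two points, the proof is bookkeeping on top of the quoted landing and correspondence theorems.
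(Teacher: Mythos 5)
First, note that the paper does not prove this statement at all: it is quoted verbatim from \cite[Corollary~3.5]{combi_classif_exp}, where it is deduced exactly as in your parenthetical ``shortcut'' --- the classification theorem (proved there via Thurston/spider theory) supplies, for each strictly preperiodic $\underline{s}$ with $s_1=0$, a psf parameter whose dynamic ray at $\underline{s}$ lands at $0$, and says each psf parameter carries a positive finite number of such addresses; \cite[Theorem~3.4]{combi_classif_exp} then converts dynamic-ray landing into parameter-ray landing and back. That portion of your write-up is the intended argument and is complete given the two quoted results (in the $p_\lambda$ normalization the singular value $0$ always lies in the static sector $U_0^{stat}(\lambda)$, so only addresses with $s_1=0$ arise, which disposes of your worry about $s_1\neq 0$).

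Your main, self-contained route has genuine gaps precisely at the points you yourself flag as the heart of the matter. (i) The stability step is asserted rather than proved: locally uniform convergence of the dynamic rays $\gamma_{\underline{s}}^{\lambda_n}$ to $\gamma_{\underline{s}}^{\lambda}$ away from the endpoint says nothing about the points $0=\gamma_{\underline{s}}^{\lambda_n}(t_n)$ with $t_n\to 0$, which sit exactly where there is no uniform control, so you do not even get $0\in\overline{\gamma_{\underline{s}}^{\lambda}}$ from what you wrote; and even granting that, membership in the closure does not give landing --- the accumulation set of the ray at its endpoint could be a nondegenerate continuum containing $0$, and ``$0$ is not an interior point'' does not exclude this. (ii) You use that every accumulation parameter is non-escaping, but in the exponential family this is not free: by the very theorem of \cite{Forster_2007} that you quote, path components of the escaping locus can be curves $[0,\infty)\longrightarrow\C$, i.e.\ rays together with escaping endpoints, so ruling out escaping parameters in the accumulation set is part of what has to be proved. (iii) The claim that the set of psf exponential parameters is discrete in $\C$ is false: Misiurewicz-type parameters are dense in the bifurcation locus. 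What is true, and what your argument actually needs, is that psf parameters whose singular orbit has preperiod at most $\ell$ and period dividing $k$ form a discrete closed subset of $\C^*$ (zeros of an explicit holomorphic function of $\lambda$, or Thurston rigidity for a fixed finite portrait); this repair is available because every $\lambda$ in your accumulation set would inherit the bounded combinatorics of $\underline{s}$, but as stated the discreteness claim is wrong. (iv) Boundedness of the low-potential end of the parameter ray is likewise only ``recorded,'' not derived. In short, the shortcut you mention in passing is the actual proof underlying the cited corollary; the argument you present as primary does not yet close.
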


If $\gamma_{\underline{s}}$ lands at $\lambda$, then in the dynamic plane of $p_\lambda$, $0$ has external address $\lambda$.

Let $\mathcal{P}$ denote collection of $\lambda \in \C^*$ such that $p_\lambda$ is postsingularly finite. We let $\Theta(\lambda)$ denote the set of external addresses $\underline{s}$ such that the parameter ray $\gamma_{\underline{s}}$ lands at $\lambda$. 

Exponential maps can also be thought of in the form $\exp(z)+c$; dynamic rays for these maps and parameter rays in the $c-$plane are analogously defined.  We note that if $c = \ln \lambda$ and the dynamic ray at address $\underline{s} = s_1s_2.....$ lands at $c$ in the dynamic plane of $\exp(z)+c$, then the dynamic ray at address $0(s_2-s_1)(s_3-s_1)(s_4-s_1)....$ lands at $0$ in the dynamic plane of $p_\lambda$.

\subsection{Degree$-d$ spiders}\label{sec:spiders}
\begin{figure}[t]
\begin{subfigure}[b]{0.45\textwidth}
    \centering
    \includegraphics[width=1.1\textwidth]{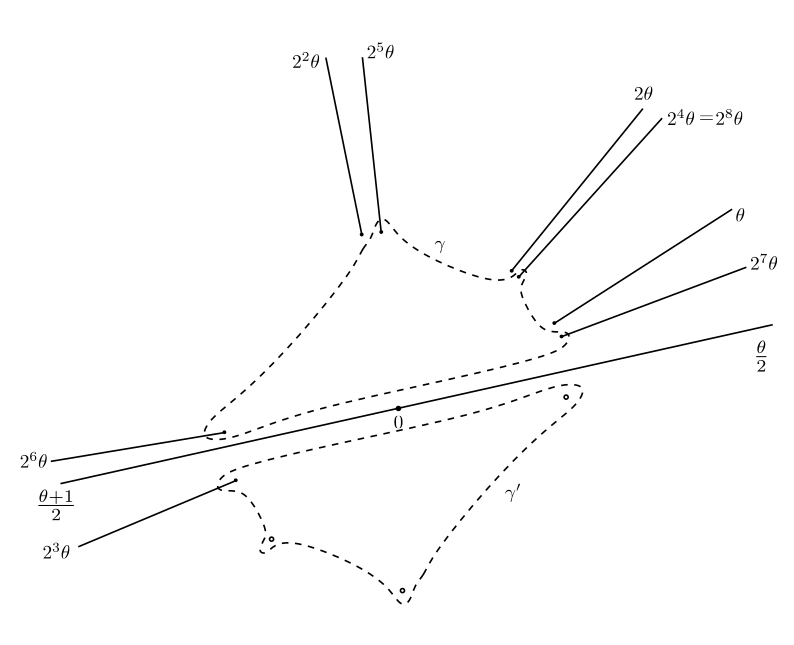}
    \caption{The spider $S_2\Big(\frac{17}{240}\Big)$. The curve $\gamma$ forms a Levy cycle}
\end{subfigure}\hspace{10pt}
\begin{subfigure}[b]{0.45\textwidth}
    
    \includegraphics[width=1.1\textwidth]{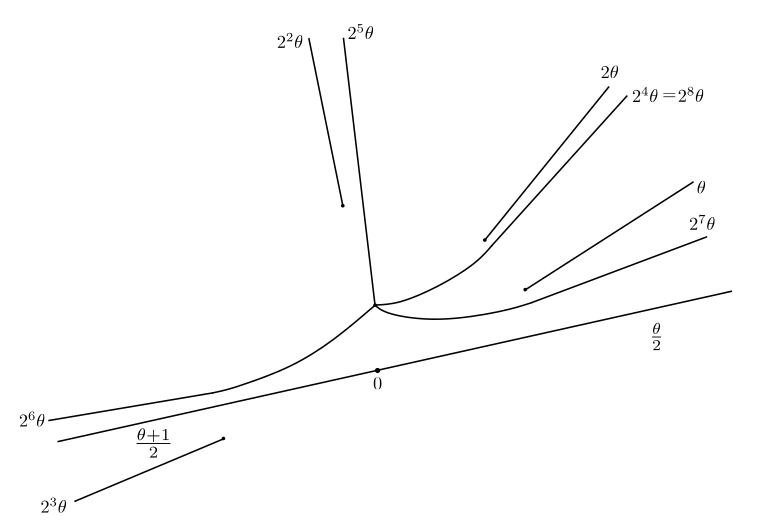}
    \vspace{0.5cm}
    \caption{The quotient spider $\Big[S_2\Big(\frac{17}{240}\Big)\Big]$. The Levy cycle $\gamma$ has been contracted to a point}
\end{subfigure}
\caption{Spiders in degree $2$}
\label{fig:spiders in deg 2}
\end{figure}
Fix $\lambda \in \mathcal{P}_d$. 
In this section, we describe how to construct a topological model for $p_{d,\lambda}$ using angular coordinates for $\lambda$. This construction is done in \cite[Chapter~10]{teichmuller_theory_vol2} and \cite{spideralgo} in the $d=2$ case. \cite{spideralgo} additionally mentions generalizations to higher degrees. 

Given $\theta \in \Theta_d(\lambda)$, we define the spider and extended spider as follows:
\begin{itemize}
    \item For preperiodic $\theta$, the spider $S_d(\theta) = \bigcup_{j\geq 1}\{re^{2\pi i d^{j-1}\theta }: r \in [1,\infty]\}$
    \item For periodic $\theta$ with period $k$, the spider $S_d(\theta)$ is the above set together with the set $\{re^{2 \pi i d^{k-1}\theta} : r \in [0,1]\}$.
    \item The extended spider $\widetilde{S}_d(\theta)$ is the union of the spider and the rays $\{re^{\frac{2\pi i (\theta+j)}{d}}: r \in [0,\infty]\}$ for $j=0,1,...,d-1$
\end{itemize}
A spider is so named because it resembles the eponymous insect perched on $S^2$ with the head placed at $\infty$. Each connected component of $S_d(\theta) \setminus \{\infty\}$ (or  $\widetilde{S}_d(\theta) \setminus \{\infty\}$) is called a \textit{leg}. 
There is a natural map $\mathcal{F}_{d,\theta}:\widetilde{S}_d(\theta)\longrightarrow S_d(\theta)$ that emulates the map $\mu_d$ on $\R/\Z$: for $\theta$ strictly preperiodic under $\mu_d$, given by
\begin{align*}
    \mathcal{F}_{d,\theta}(re^{2\pi it}) & = 
    \begin{cases}
    \infty & r=\infty\\
        e^{2 \pi i\theta} & r=0\\
        re^{2\pi i dt} & dt \neq \theta\\
        (r+1)e^{2 \pi i \theta} & dt = \theta
    \end{cases}
\end{align*}

For $\theta$ periodic under $\mu_d$ with period $k$, the definition of $\mathcal{F}_{d,\theta}$ is the same as above, but we additionally let  $\mathcal{F}_{d,\theta}(re^{2\pi i d^{k-2}\theta}) = (r-1)e^{2\pi i d^{k-1}\theta}$ to make sure that $0$ is periodic. 

The only critical points of $\mathcal{F}_{d,\theta}$ are $0,\infty$, each with local degree $d$. By the Alexander trick (\cite[Chapter~1]{primer}), we can extend $\mathcal{F}_{d,\theta}$ to a $d-$ sheeted branched self-cover of $S^2$. As shown in \cite{spideralgo} and \cite[Chapter~10]{teichmuller_theory_vol2},
\begin{itemize}
    \item If $\theta$ is periodic and an angular coordinate for $\lambda$, $\mathcal{F}_{d,\theta}$ is Thurston equivalent to $p_{d,\lambda}$
    \item If $\theta$ is preperiodic and an angular coordinate for $\lambda$, $\mathcal{F}_{d,\theta}$ is realized if and only if the eventual period of $\theta $ under $\mu_d$ is equal to the eventual period of $\lambda$ under $p_{d,\lambda}$( equivalently the period of the kneading sequence $\Sigma_{d,\theta}(\theta)$).  If equality holds, $\mathcal{F}_{d,\theta}$ is Thurston equivalent to $p_{d,\lambda}$
\end{itemize}
Suppose $\theta$ has preperiod $\ell\geq 1$ and period $k\geq 1$ under $\mu_d$, and $k$ is strictly larger than the eventual period $k'$ of $\lambda$ under $p_{d, \lambda}$, then for each $n\geq \ell$, the points of the form $e^{2\pi i d^n\theta},e^{2\pi i d^{n+k'}\theta},e^{2\pi i d^{n+2k'}\theta},$etc. all have the same itinerary with respect to $\theta$. By drawing loops around all points that share an itinerary, we obtain a Levy cycle for $\mathcal{F}_{d,\theta}$.

However, the topological map obtained by contracting each loop in this Levy 
 cycle to a point is realized. We describe this in detail: form quotient graphs of $S_d(\theta)$ and $\widetilde{S}_d(\theta)$ by identifying the end points $e^{2\pi i n^j\theta}$ with $e^{2\pi i n^{j+k'}\theta}$ for all $j$, without changing the circular order of the legs. Call the new quotient graphs $[S_d(\theta)]$ and $[\widetilde{S}_d(\theta)]$ respectively. Then $\mathcal{F}_{d,\theta}$ descends to map $[\mathcal{F}_{d,\theta}]: [\widetilde{S}_d(\theta)] \longrightarrow [S_d(\theta)]$, and the extension of $[\mathcal{F}_{d,\theta}]$ to $S^2$ is Thurston equivalent to $p_{d,\lambda}$.
\begin{remark}
 This quotient construction  can be done for any $\theta$, so the graph $[S_d(\theta)]$ is defined for all $\theta \in \Q/\Z$. We will assume that we have fixed an embedding of $[S_d(\theta)]$ in $S^2$ for all $d, \theta$.     
\end{remark}
A \textit{leg} of $[S_d(\theta)]$ (or $[\widetilde
{S}_d(\theta)]$) is the image of a leg of $S_d(\theta)$ (resp. $\widetilde{S}_d(\theta)$) under the quotient map. When $d,\theta$ are evident, we use the notation $x_1,x_2,x_3,...$ for the equivalence class  of $e^{2\pi i \theta}$, $\mathcal{F}_{d,\theta}(e^{2\pi i \theta})$,  $\mathcal{F}_{d,\theta}^{\circ 2}(e^{2\pi i \theta})$, etc in $[S_d(\theta)]$.

\begin{definition}
   For $d,d'\geq 2$ and $\theta,\theta' \in \Q/\Z$, we say the spiders $S_d(\theta), S_{d'}(\theta')$ are \textit{isomorphic} if they satisfy the following properties:
    \begin{itemize}
        \item They have the same number of legs
        \item Let $\theta_n = d^{n-1}\theta$ and $\theta'_n = (d')^{n-1}\theta'$. Suppose there are totally $r$ legs and the counterclockwise order of angles in $S_d(\theta)$ is $(\theta_{n_1}, \theta_{n_2},....,\theta_{n_r})$, then the corresponding order in $S_{d'}(\theta')$is  $(\theta'_{n_1}, \theta'_{n_2},....,\theta'_{n_r})$.
    \end{itemize}
\end{definition}
\begin{remark}
    If $S_d(\theta), S_{d'}(\theta')$ are isomorphic, then $[S_d(\theta)], [S_{d'}(\theta')]$ are also isomorphic in the same sense as the above two points.
\end{remark}
\subsubsection{The spider algorithm}
The spider algorithm was first introduced in \cite{spideralgo} for unicritical polynomials. 

Given $\theta \in \Theta_d(\lambda)$, let $A_d(\theta)$ be the set of end points of the legs of $[S_d(\theta)]$ (that is, $A_d(\theta)$ is the image of the set $\{e^{2\pi i d^j\theta}\}_{j\geq 0}$ under the quotient $S_d(\theta) \longrightarrow [S_d(\theta)]$). For $\lambda \in \mathcal{P}_d$, we note that $|A_d(\theta)| \geq 2$.  We define the spider space $\mathcal{T}^d_{\theta}$ as follows:
\begin{align*}
    \mathcal{T}^d_{\theta}& =\{\phi: [S_d(\theta)] \hookrightarrow \hat{\C}  |  \phi(\infty) = \infty,\\ &\hspace{17 pt}\phi \text{ is orientation preserving
     and preserves circular order of legs at }\infty\}/\sim
\end{align*}
where $\phi \sim \psi$ if, for some affine map $M$, $\phi$ is isotopic to $M \circ \psi$ rel $A_d(\theta)$. In \cite{spideralgo}, the authors define spider space as the space of maps $\phi: S_d(\theta)\longrightarrow \hat{\C}$ under the same equivalence instead. In the case of periodic $\theta$, for instance, $S_d(\theta)$ and $[S_d(\theta)]$ are the same graphs, and the two definitions coincide. 
\begin{proposition}\label{prop:isom teich spaces}
    $\mathcal{T}^d_\theta$ is isomorphic to the Teichm\"{u}ller space $\Teich(S^2,A_d(\theta)\cup \{\infty\})$, 

\end{proposition}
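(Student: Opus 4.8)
The plan is to realise $\mathcal{T}^d_\theta$ and $\Teich(S^2, A_d(\theta)\cup\{\infty\})$ as two descriptions of the same object, by passing back and forth between an embedded spider and an embedded marked sphere. Throughout write $A = A_d(\theta)$, $\Sigma = [S_d(\theta)]$, and let $\iota\colon \Sigma\hookrightarrow S^2$ be the fixed embedding of the remark above, normalised so that $\iota(\infty)=\infty$; since $\lambda\in\mathcal{P}_d$ we have $|A|\ge 2$, hence $|A\cup\{\infty\}|\ge 3$ and the right-hand Teichm\"uller space is defined.

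\textbf{The extension map.} Given a representative $\phi\colon\Sigma\hookrightarrow\hat{\C}$ of a point of $\mathcal{T}^d_\theta$, the first step is to extend $\phi\circ\iota^{-1}\colon\iota(\Sigma)\to\phi(\Sigma)$ to an orientation-preserving homeomorphism $\hat\phi\colon S^2\to\hat{\C}$. Here one uses that $\Sigma$ is a finite connected graph with $\infty$ a vertex, so both $S^2\setminus\iota(\Sigma)$ and $\hat{\C}\setminus\phi(\Sigma)$ are finite disjoint unions of open topological disks; because $\phi$ is orientation preserving and preserves the circular order of the legs at $\infty$ — which, for a graph all of whose cycles pass through $\infty$, pins down the combinatorial planar type — the complementary disks on the two sides correspond, with boundaries matched by $\phi\circ\iota^{-1}$, and one fills in disk by disk using the Schoenflies theorem. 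Set $\Psi([\phi]) = [\hat\phi]$. Two things must be checked. First, any two such extensions of $\phi$ differ by a homeomorphism of $S^2$ fixing $\iota(\Sigma)$ pointwise, which is supported on a union of closed disks with fixed boundary and hence isotopic to the identity rel $\iota(\Sigma)$ by the Alexander trick; so $[\hat\phi]$ is independent of the extension. Second, if $\phi$ and $M\circ\psi$ are isotopic rel $A$ with $M$ affine, the isotopy of embeddings of $\Sigma$ extends (again disk by disk) to an ambient isotopy of $S^2$ fixing $A\cup\{\infty\}$, and $M$ is a M\"obius map, so $[\hat\phi]=[\widehat{M\circ\psi}]$ in $\Teich(S^2,A\cup\{\infty\})$. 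Hence $\Psi$ is well defined.

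\textbf{The restriction map.} In the other direction, given $[\tilde\psi]\in\Teich(S^2,A\cup\{\infty\})$, pick the representative $\tilde\psi$ with $\tilde\psi(\infty)=\infty$ — unique up to post-composition by a M\"obius map fixing $\infty$, that is, by an affine map — and set $\Phi([\tilde\psi])=[\tilde\psi\circ\iota]$. The restriction $\tilde\psi\circ\iota$ is an embedding of $\Sigma$ fixing $\infty$ and orientation preserving, hence preserves the circular order of legs at $\infty$, so it represents a point of $\mathcal{T}^d_\theta$; an isotopy rel $A\cup\{\infty\}$ between two choices of $\tilde\psi$ restricts to an isotopy rel $A$ of the corresponding spider embeddings (fixing $\infty$), and the affine indeterminacy in the choice of $\tilde\psi$ is precisely the affine group dividing out $\mathcal{T}^d_\theta$; so $\Phi$ is well defined. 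From the constructions, restriction of $\hat\phi$ recovers $\phi$ exactly, so $\Phi\circ\Psi=\id$; and an extension of a restriction $\tilde\psi\circ\iota$ agrees with $\tilde\psi$ on $\iota(\Sigma)$, hence is isotopic to $\tilde\psi$ rel $\iota(\Sigma)\supseteq A\cup\{\infty\}$ by the Alexander trick, so $\Psi\circ\Phi=\id$. Thus $\Psi$ is a bijection.

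\textbf{Main obstacle and conclusion.} The technical core is entirely topological: (i) that an orientation-preserving embedding of $\Sigma$ preserving the cyclic order of legs at $\infty$ has the same planar type as $\iota$, so that complementary regions can be matched — this requires checking that, although $\Sigma$ may contain cycles arising from identified leg-endpoints, those cycles all pass through $\infty$ and all complementary regions are disks, whence the embedding type is determined by the rotation system at $\infty$ alone; and (ii) the Alexander-trick statement used twice above. Minor care is also needed in reconciling the equivalence relations — isotopy rel $A$ with an affine group on the spider side, isotopy rel $A\cup\{\infty\}$ with the M\"obius group on the Teichm\"uller side — the point being that normalising $\infty\mapsto\infty$ collapses the M\"obius group to the affine group and makes $\infty$ just another marked point, so that the two quotients agree. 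Once $\Psi$ is a bijection, it is routine to verify it is a homeomorphism for the natural topologies and that it intertwines the coordinate charts used to put a complex structure on each side; one then declares $\mathcal{T}^d_\theta$ to carry the structure of contractible complex manifold of dimension $|A|-2$ together with the Teichm\"uller metric, transported from $\Teich(S^2,A\cup\{\infty\})$ via $\Psi$, which is the isomorphism asserted. For the subsequent iteration of $\sigma^d_\theta$ only contractibility and the metric are needed, and both pass through $\Psi$.
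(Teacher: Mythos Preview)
Your proof is correct and follows essentially the same approach as the paper: define a restriction map from $\Teich(S^2,A_d(\theta)\cup\{\infty\})$ to $\mathcal{T}^d_\theta$ and an extension map in the other direction via the Alexander trick, then check they are mutual inverses. The paper's proof is terser---it simply declares the extension map ``well-defined'' and an inverse without spelling out the disk-matching, the cycle structure of $[S_d(\theta)]$, or the M\"obius-versus-affine normalisation---whereas you make these points explicit; but the underlying argument is the same.
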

\begin{proof}
    Define a map $H:\Teich(S^2,A_d(\theta)\cup \{\infty\}) \longrightarrow  \mathcal{T}^d_\theta$ by $H([\phi]) = [\phi|[S_d(\theta)]]$ for all $[\phi] \in \Teich(S^2,A_d(\theta)\cup \{\infty\})$. 

If $\psi$ is isotopic to $M\circ \phi$ relative to $A_d(\theta)$, then $\psi|[S_d(\theta)]$ is isotopic to $M \circ \phi | [S_d(\theta)] $ relative to $A_d(\theta)$, therefore this map is well-defined. Conversely, given $[\phi] \in \mathcal{T}^d_\theta$, we can extend $\phi$ to $S^2$ using the Alexander trick to get an element in $\Teich(S^2,A_d(\theta)\cup \{\infty\})$. It is easy to see that this map is also well-defined, and forms an inverse for $H$.
\end{proof}
Thus the Teichm\"{u}ller distance on $\mathcal{T}^d_\theta$ is given as 
\begin{align*}
      d_{T}([\phi], [\psi]) & = \inf_{q.c. \psi' \sim \phi \circ \psi^{-1}}\ln K(\psi')
\end{align*}
for any $[\phi], [\psi] \in \mathcal{T}^d_\theta$, where $\psi' \sim \phi \circ \psi^{-1}$ means $\psi'$ is isotopic to $\phi \circ \psi^{-1}$ rel $A_d(\theta) \cup \{\infty\}$, and $K(\psi')$ is the complex dilitation of the quasiconformal map $\psi$. 

  On $\mathcal{T}^d_{\theta}$, the spider operator $\sigma^d_\theta$ is defined as follows: given $[\phi] \in \mathcal{T}^d_\theta$, with the representative $\phi$ chosen so that $\phi(x_1) = 0$, let $\sigma^d_\theta([\phi]) = [\psi]$, where $\psi$ completes the following diagram:
\[
\begin{tikzcd}
\Big([S_d(\theta)],A_d(\theta)\Big )
\arrow[r,"\psi"] \arrow[d,"{[\mathcal{F}_{d,\theta}]}"]&\Big(\hat{\C},\psi(A_d(\theta))\Big) \arrow[d,"p_\phi"]\\
\Big ([S_d(\theta)],A_d(\theta)\Big ) \arrow[r,"\phi"] & \Big (\hat{\C},\phi(A_d(\theta))\Big )
\end{tikzcd}
\]
where $p_\phi(z)=\phi(x_2)\big(1+\frac{z}{d}\big)^d$.

The operator $\sigma^d_\theta$ is well-defined, and is equal to $H \circ \sigma_{[\mathcal{F}_{d,\theta}]} \circ H^{-1}$. It is thus weakly contracting with respect to the Teichm\"{u}ller metric on $\mathcal{T}^d_\theta$. An alternate proof that does not use the relationship between $\sigma^d_\theta$ and $\sigma_{[\mathcal{F}_{d,\theta}]}$ is similar to the proof of the weak contracting property for the spider operator defined in \cite{spideralgo}. The map $\sigma^d_\theta$ has a fixed point $[\phi] \in \mathcal{T}^d_\theta$, where $\phi$ is chosen so that  $\phi(e^{2\pi i \theta}) = 0$, and there exists $\psi \in [\phi]$ satisfying $\phi \circ [\mathcal{F}_{d,\theta}]\circ \psi^{-1} = p_{d,\lambda}$. 
\subsection{Combinatorial embeddings}\label{sec: combi embeddings defn}
We introduce a partial order on $\mathcal{P}_d$ as follows: $\lambda < \mu$ if there exist angles $\theta,\theta' \in \Theta_d(\lambda)$ that land at the same point in $\mathcal{M}_d$ and $\alpha \in \Theta_d(\mu)$ such that $\theta < \alpha < \theta'$.

\begin{remark}
    Note that this definition is equivalent to the following. $\lambda < \mu$ if and only if there exist $c \in M_d(\lambda)$, $c' \in M_d(\mu)$ such that 
    \begin{itemize}
        \item $\lambda$ is critically periodic, and $c'$ is in the wake of $c$
        \item $\lambda$ is Misiurewicz (ie, critically preperiodic), and there exist angles $\theta,\theta' \in \Omega_d(c)$ and $\alpha \in \Omega_d(c')$ such that $\theta < \alpha < \theta'$. 
    \end{itemize}
\end{remark}
\begin{figure}[t]
 \begin{subfigure}[b]{0.4\textwidth}
        \centering
    \includegraphics[scale=0.3]{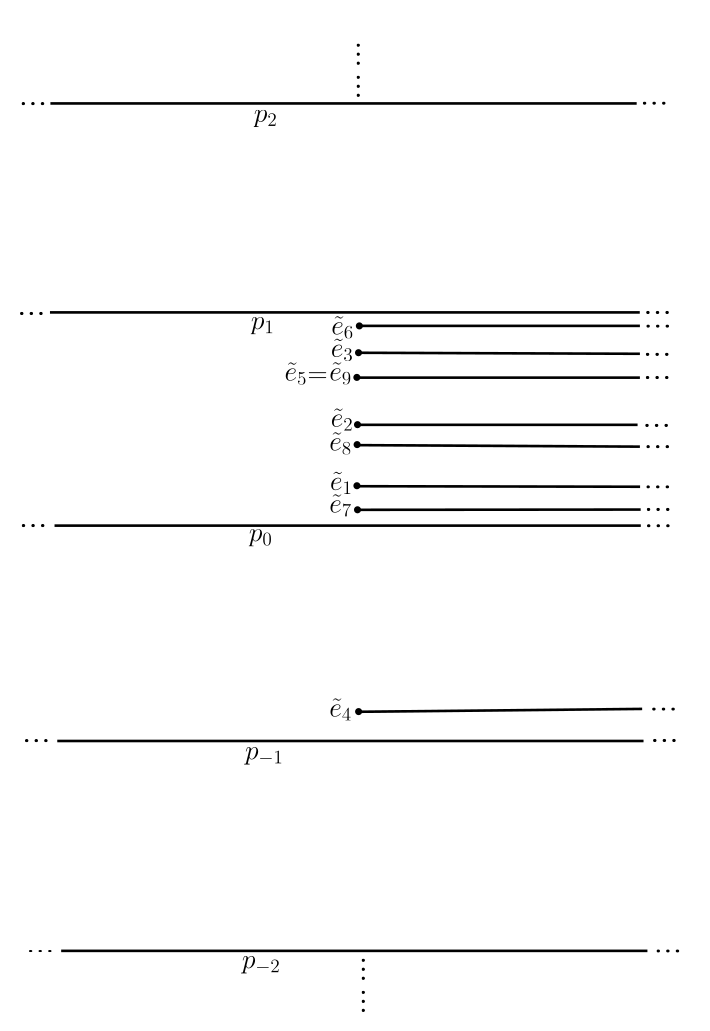}
 \end{subfigure}
  \begin{subfigure}[b]{0.4\textwidth}
        \centering
    \includegraphics[scale=0.3]{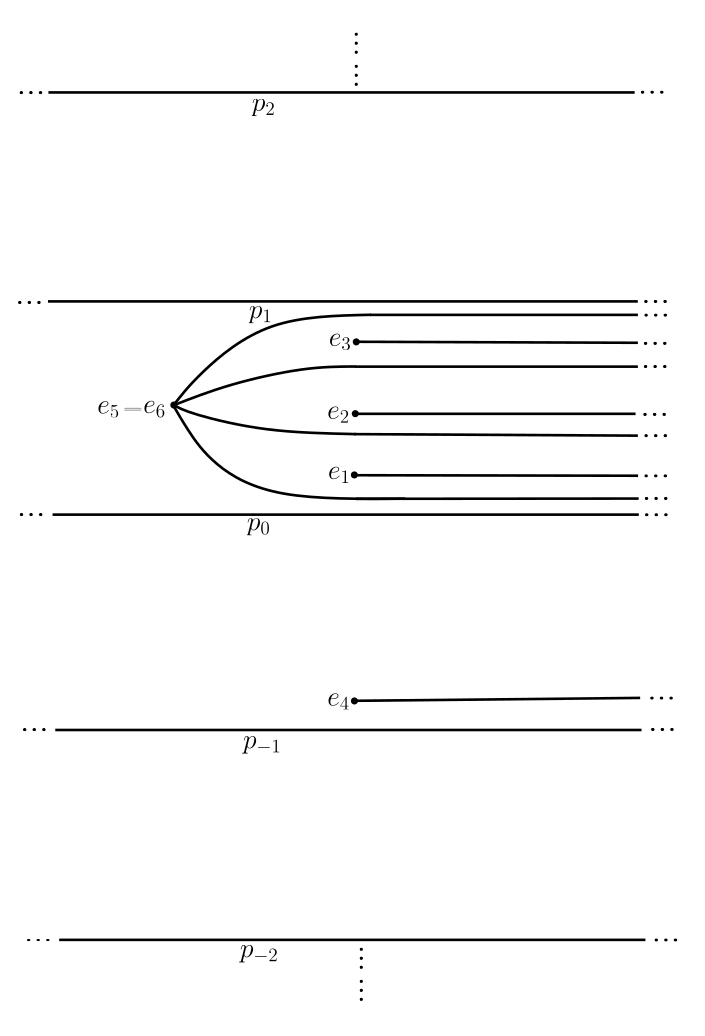}
 \end{subfigure}
    \caption{The spiders $\Gamma_{\underline{s}}$ and $[\Gamma_{\underline{s}}]$ for $s = 0001\overline{0010}$}
    \label{fig:exp_spider}
\end{figure}
\begin{definition}\label{defn:combi_embedding}

    Given an injective map  $\Phi: \mathcal{P}_d \hookrightarrow \mathcal{P}_{d+1}$, we call $\Phi$ a \textit{combinatorial embedding} if it satisfies the following properties:
    \begin{enumerate}
    \item There exists a bijection $h: \Post(p_{d,\lambda}) \longrightarrow \Post(p_{d+1,\Phi(\lambda)})$ such that $h(0) = 0$ and  $ h \circ p_{d,\lambda} = p_{d+1,\Phi(\lambda)} \circ h$
        \item For $\lambda \in \mathcal{P}_d$, and $\theta \in \Theta_d(\lambda)$, there exists an angle $\phi \in \Theta_{d+1}(\Phi(\lambda))$ such that $S_d(\theta)$ and $S_{d+1}(\phi)$ are isomorphic
        \item If $\lambda$ is in a satellite component of a component containing $\mu$, then $\Phi(\lambda)$ is in a satellite component of the component containing $\Phi(\mu)$
        \item If $\lambda < \mu$, then $\Phi(\lambda) < \Phi(\mu)$
    \end{enumerate}
\end{definition}

\subsection{Spiders for exponential maps}
\subsubsection{Spiders corresponding to preperiodic addresses}\label{sec:exp spider defn}
Given a strictly preperiodic external address $\underline{s} $, we recall the construction of the spider graph $\Gamma_{\underline{s}}$ in 
\cite[Section~5.1]{combi_classif_exp} here.  Let $\ell$ and $k$ be the
preperiod and period of $\underline{s}$ respectively.

Start with a vertex $\tilde{e}_{\infty}$, assumed to be at $+\infty$, and the vertex $\tilde{e}_1 = 0$. For each $n \in \{2,...,\ell+k\}$, add a vertex $\tilde{e}_n$ lying on the $y-$axis and belonging to the sector $\{(2s_n-1)\pi < \Im z < (2s_n+1)\pi\}$ such that  $\Im \tilde{e}_n< \Im \tilde{e}_m$ if  $\sigma^{\circ n}(\underline{s}) < \sigma^{\circ m}(\underline{s})$, and $\tilde{e}_n = \tilde{e}_m$ if $\sigma^{\circ n} (\underline{s}) = \sigma^{\circ m} (\underline{s}) $. For each $n$ let  $\gamma_n$ be the horizontal ray $\{x\geq 0, y = \Im \tilde{e}_n\}$ from $\tilde{e}_n$ to $\tilde{e}_\infty$. This completes the definition of $\Gamma_{\underline{s}}$.

Define the augmented spider $\widetilde{\Gamma}_{\underline{s}}$ as follows. First add a vertex called $\tilde{e}_{-\infty}$, taken to be at $-\infty$. Then for each $n \in \Z$, add the edge $p_n = \{y= (2n-1)\pi\}$ joining $\tilde{e}_{-\infty}$ and $\tilde{e}_\infty$. 

Define a graph map $\mathcal{F}_{\underline{s}}: \widetilde{\Gamma}_{\underline{s}}\longrightarrow \Gamma_{\underline{s}}$ by mapping  $\tilde{e}_{\infty}$ to $\tilde{e}_{\infty}$, $\tilde{e}_{-\infty}$ to $\tilde{e}_1 = 0$, and $\tilde{e}_n$ to $\tilde{e}_{n+1}$ for all $n$. Each edge $p_n$ is mapped onto $\gamma_1$, and every $\gamma_n$ is mapped onto $\gamma_{n+1}$. We can use the Alexander trick to extend $\mathcal{F}_{\underline{s}}$ to a map $\mathcal{F}_{\underline{s}} : S^2 \setminus \{\infty\} \longrightarrow
S^2 \setminus \{ \infty\}$ whose postsingular set is $\{\tilde{e}_1,....,\tilde{e}_{\ell+k}\}$. 

Suppose the parameter ray $\gamma_{\underline{s}}$ lands at $\lambda \in \C^*$:
\begin{itemize}
    \item If the eventual period of the orbit of $\lambda$ (equivalently the eventual period of $\Sigma_{\underline{s}}(\underline{s})$) is $k$ , then $\mathcal{F}_{\underline{s}}$ is Thurston equivalent to $p_{\lambda}$
    \item If the eventual period of the orbit of $\lambda$ is $k' < k$, then $\mathcal{F}_{\underline{s}}$ has a Levy cycle; however, by collapsing this Levy cycle we may form a map equivalent to $p_\lambda$. More precisely, form the quotient graphs $[\widetilde{\Gamma}_{\underline{s}}]$ and $[\Gamma_{\underline{s}}]$ by identifying the vertices $\tilde{e}_n$ and $\tilde{e}_{n+k'}$ for $n=\ell, \ell+1,\ell+2,etc$ without changing the vertical order of edges. Then $\mathcal{F}_{\underline{s}}$ descends to a map $[\mathcal{F}_{\underline{s}}]: [\widetilde{\Gamma}_{\underline{s}}] \longrightarrow [\Gamma_{\underline{s}}]$, which can again be extended to $S^2 \setminus \{\infty\}$. This extended map $[\mathcal{F}_{\underline{s}}]$ is a topological exponential map that is Thurston equivalent to $p_\lambda$. 
\end{itemize}
Note that the quotient construction in the second point can be done for any $\underline{s}$, so the graph $[\Gamma_{\underline{s}}]$ is defined for all $\underline{s}$. We define the legs of $[\Gamma_{\underline{s}}]$ to be the images of the edges of $\Gamma_{\underline{s}}$. When $\underline{s}$ is evident, we let $e_n$ denote the class of $\tilde{e}_n$ in $[\Gamma_{\underline{s}}]$, and let $A_{\underline{s}} = \{e_1,...,e_{\ell+k}\}$. We note that $|A_{\underline{s}}| \geq 2$ for all adresses $\underline{s}$.

Draw a line  $\beta = \{ y=b\}$ disjoint from all the legs of $\widetilde{\Gamma}_{\underline{s}}$,  where $b$ is chosen so that  $b < \Im \tilde{e}_m$ if and only if $\overline{0} < \sigma^{\circ (m-1)}(\underline{s})$, and $\beta $ is bound between $p_r$ and $p_{r+1}$ if $r\overline{0} < \beta < (r+1)\overline{0}$. The edge $\beta$ is supposed to represent the lexicographic position of the external address $\overline{0}$. 
We note that $\sigma^{\circ (n-1)}(\underline{s}) << \sigma^{\circ (m-1)}(\underline{s})$ if and only if, starting from $\gamma_{n}$ and moving counterclockwise in a neighborhood of $\infty$, we can reach $\gamma_{m}$  without intersecting $\beta$. This gives a circular order of the legs of $\Gamma_{\underline{s}}$ (and $[\Gamma_{\underline{s}}]$).

\subsubsection{A spider algorithm for exponentials}\label{sec:spider algo for exp}
Given $\lambda \in \mathcal{P}$ with $\underline{s} \in \Theta(\lambda)$, we define the spider space $\mathcal{T}_{\underline{s}}$ as follows:
\begin{align*}
\mathcal{T}_{\underline{s}}& =\{\phi: [\Gamma_{\underline{s}}] \hookrightarrow \hat{\C}  |  \phi(\infty) = \infty\\ &\hspace{17 pt}\phi \text{ is orientation preserving
     and preserves circular order of legs at }\infty\}/\sim
\end{align*}
where $\phi \sim \psi$ if, for some affine map $M$, $\phi$ is isotopic to $M \circ \psi$ rel $A_{\underline{s}}$. 
\begin{proposition}
$\mathcal{T}_{\underline{s}}$ is isomorphic to the Teichm\"{u}ller space $\Teich(S^2,A_{\underline{s}}\cup \{\infty\})$.    
\end{proposition}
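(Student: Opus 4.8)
The plan is to mimic the proof of Proposition~\ref{prop:isom teich spaces} verbatim, since the spider space $\mathcal{T}_{\underline{s}}$ for exponentials is defined in exactly the same way as $\mathcal{T}^d_\theta$ in the finite-degree case, with $[\Gamma_{\underline{s}}]$ and $A_{\underline{s}}$ playing the roles of $[S_d(\theta)]$ and $A_d(\theta)$. The only subtlety is that $[\Gamma_{\underline{s}}]$ is an unbounded graph in the plane (its legs are horizontal rays running off to $\infty$ rather than arcs in a compact region), but since we are working in $S^2 = (S^2\setminus\{\infty\})\cup\{\infty\}$ and all legs terminate at the single vertex $\tilde e_\infty = \infty$, the graph $[\Gamma_{\underline{s}}]$ is a finite topological graph embedded in $S^2$, and the Alexander trick applies just as before.

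First I would define the candidate isomorphism $H: \Teich(S^2, A_{\underline{s}}\cup\{\infty\}) \longrightarrow \mathcal{T}_{\underline{s}}$ by $H([\phi]) = \big[\phi|[\Gamma_{\underline{s}}]\big]$, where the representative $\phi$ is chosen with $\phi(\infty)=\infty$ (possible since $\Aut(\hat\C)$ acts transitively). Next I would check well-definedness: if $\psi$ is isotopic to $M\circ\phi$ rel $A_{\underline{s}}\cup\{\infty\}$ for some $M\in\Aut(\hat\C)$ fixing $\infty$ — hence an affine map — then restricting the isotopy to $[\Gamma_{\underline{s}}]$ shows $\psi|[\Gamma_{\underline{s}}]$ is isotopic to $M\circ\phi|[\Gamma_{\underline{s}}]$ rel $A_{\underline{s}}$, so $H([\phi])$ is independent of the representative. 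Also $H([\phi])$ genuinely lies in $\mathcal{T}_{\underline{s}}$: $\phi|[\Gamma_{\underline{s}}]$ is an orientation-preserving embedding fixing $\infty$, and it preserves the circular order of the legs at $\infty$ because an orientation-preserving homeomorphism of $S^2$ does. Then I would construct the inverse: given $[\phi]\in\mathcal{T}_{\underml{s}}$, extend the embedding $\phi:[\Gamma_{\underline{s}}]\hookrightarrow\hat\C$ to an orientation-preserving homeomorphism $\tilde\phi:S^2\to\hat\C$ by the Alexander trick, applied componentwise to the complementary discs of $[\Gamma_{\underline{s}}]$ in $S^2$; set $H^{-1}([\phi]) = [\tilde\phi]$. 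The Alexander-trick extension is unique up to isotopy rel the graph (in particular rel $A_{\underline{s}}\cup\{\infty\}$), so this is well-defined, and it is a two-sided inverse of $H$: one composition is the identity on the nose after restriction, and the other agrees with the identity up to the isotopy furnished by the Alexander trick.

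The main (and only real) obstacle is bookkeeping about the graph at $\infty$: one must confirm that $[\Gamma_{\underline{s}}]$, including the vertex $\tilde e_\infty$, is a finite graph properly embedded in $S^2$ whose complement is a finite disjoint union of open discs, so that the Alexander trick is available and the circular-order condition on legs at $\infty$ matches the condition that the extension be an orientation-preserving homeomorphism. This is immediate from the construction in Section~\ref{sec:exp spider defn}: there are $\ell+k$ legs $\gamma_n$ (after the identifications forming the quotient), each a closed arc from some $e_n$ to the common endpoint $\infty$, so $[\Gamma_{\underline{s}}]$ is a finite ``star-like'' graph and $S^2\setminus[\Gamma_{\underline{s}}]$ is a union of finitely many sectors at $\infty$, each an open disc. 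Once this is recorded, the argument is identical to Proposition~\ref{prop:isom teich spaces}, and I would simply write: ``the proof is identical to that of Proposition~\ref{prop:isom teich spaces}, replacing $[S_d(\theta)]$ by $[\Gamma_{\underline{s}}]$ and $A_d(\theta)$ by $A_{\underline{s}}$.''
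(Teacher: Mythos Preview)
Your proposal is correct and takes exactly the same approach as the paper: the paper's proof is simply the one-line remark that the argument is identical to that of Proposition~\ref{prop:isom teich spaces}, and you have written out that argument in full with the appropriate substitutions. Your additional remarks about why $[\Gamma_{\underline{s}}]$ is a finite graph in $S^2$ with disc complements are a helpful elaboration, but not something the paper spells out.
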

\begin{proof}
The proof is the same as that of Proposition~\ref{prop:isom teich spaces}. 
\end{proof}
We have an analogous definition of the Teichm\"{u}ller metric on $\mathcal{T}_{\underline{s}}$.

  Given $[\phi] \in \mathcal{T}_{\underline{s}}$ chosen such that $\phi(e_1) = 0$, the spider operator $\sigma_{\underline{s}}$ sends $[\phi]$ to $[\psi] \in \mathcal{T}_{\underline{s}}$, where $\psi$ completes the following diagram:
\[
\begin{tikzcd}
\Big([\Gamma_{\underline{s}}],A_{\underline{s}}\Big) \arrow[r,"\psi"] \arrow[d,"{[\mathcal{F}_{\underline{s}}]}"]&\Big(\C,\psi(A_{\underline{s}})\Big) \arrow[d,"p_\phi"]\\
\Big([S_{\underline{s}}],A_{\underline{s}}\Big) \arrow[r,"\phi"] & \Big(\C,\phi(A_{\underline{s}})\Big)
\end{tikzcd}
\]
where $p_\phi(z) = \phi(e_2)\exp(z)$.
\begin{proposition}\label{prop: strong contraction}
    The operator $\sigma_{\underline{s}}$ is weakly contracting on $\mathcal{T}_{\underline{s}}$ and locally uniformly contracting.  
\end{proposition}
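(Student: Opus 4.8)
The plan is to establish Proposition~\ref{prop: strong contraction} exactly as the analogous statements were obtained for the finite-degree spider operator and for topological exponential maps, namely by identifying $\sigma_{\underline{s}}$ with the Thurston pullback operator $\sigma_{[\mathcal{F}_{\underline{s}}]}$ under the isomorphism $\mathcal{T}_{\underline{s}} \cong \Teich(S^2, A_{\underline{s}} \cup \{\infty\})$ and then invoking the general theory quoted in Section~\ref{sec:thurston pullback}. First I would check that $[\mathcal{F}_{\underline{s}}]$, extended to $S^2 \setminus \{\infty\}$ via the Alexander trick, is genuinely a Thurston map in the sense of the paper: it is a topological branched cover with a single asymptotic value at $e_1 = 0$ and no critical values, its postsingular set is $A_{\underline{s}}$, and — crucially — it is a \emph{topological exponential function}, i.e. a universal cover onto $S^2 \setminus \{e_1, \infty\}$, so that the argument of Section~2.2.3 (no universal cover from $\D$ onto $\C^*$) shows it is topologically entire. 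This is where the hypothesis that $\underline{s}$ is a preperiodic address starting with $0$ enters, guaranteeing the combinatorics of the graph map match those of an exponential.

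Next I would verify that the map $\sigma_{\underline{s}}$ as defined via the pushout diagram with $p_\phi(z) = \phi(e_2)\exp(z)$ agrees, under the isomorphism $H$ of Proposition~\ref{prop: strong contraction}'s analogue (the exponential version of Proposition~\ref{prop:isom teich spaces}), with $\sigma_{[\mathcal{F}_{\underline{s}}]}$. The content here is that in the Thurston pullback construction one pulls back an arbitrary complex structure along $[\mathcal{F}_{\underline{s}}]$ and integrates; because $[\mathcal{F}_{\underline{s}}]$ is a universal cover onto a thrice-punctured-type target and the integrating map is normalized by $\phi(\infty) = \infty$, $\phi(e_1) = 0$, the resulting holomorphic map $g$ must be of the form $z \mapsto a\exp(z)$ with $a = \phi(e_2)$ determined by the image of the first postsingular point. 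Thus the two operators coincide, and weak contraction together with local uniform (i.e. strong on compacta) contraction follow immediately from the three bulleted properties of $\sigma_f$ in Section~\ref{sec:thurston pullback}, which are stated there to hold for any topological entire $f$ (with the topological exponential case attributed to \cite{hss}).

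Alternatively — and this is worth including as the self-contained route the paper gestures at elsewhere — one can prove weak contraction directly, mimicking the spider-operator argument of \cite{spideralgo}: given $[\phi] \neq [\psi]$ with $d_T([\phi],[\psi]) = \ln K$ realized by a Teichm\"uller (hence extremal) quasiconformal map $h \simeq \phi \circ \psi^{-1}$ rel $A_{\underline{s}} \cup \{\infty\}$, one pulls $h$ back through the two pushout squares to a qc map $\tilde h \simeq \sigma_{\underline{s}}([\phi]) \circ \sigma_{\underline{s}}([\psi])^{-1}$ with the \emph{same} dilatation bound $K$ (the complex structure is simply transported through the covering $[\mathcal{F}_{\underline{s}}]$), so $d_T(\sigma_{\underline{s}}([\phi]), \sigma_{\underline{s}}([\psi])) \leq \ln K$; strictness then comes from the fact that equality would force $\tilde h$ to be a Teichm\"uller map whose Beltrami differential is $[\mathcal{F}_{\underline{s}}]$-invariant, i.e. descends to a holomorphic quadratic differential on the quotient, which on a punctured sphere of this type of finite dimension cannot be globally invariant under the infinite-degree covering unless it vanishes — contradicting $[\phi] \neq [\psi]$. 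Local uniform contraction is the statement that on a compact $K \subset \mathcal{T}_{\underline{s}}$ the contraction factor is bounded away from $1$, which in \cite{hss} follows from a compactness argument: the factor by which the Thurston pullback of a topological entire map contracts is, at a given point, strictly less than $1$ and depends continuously on the point, so it attains a maximum $c_K < 1$ on $K$.

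The main obstacle I anticipate is not the contraction estimate itself — that is essentially formal once the identification with $\sigma_{[\mathcal{F}_{\underline{s}}]}$ is in place — but rather pinning down that $[\mathcal{F}_{\underline{s}}]$ is a \emph{topological exponential function} in the precise technical sense (a universal cover onto a twice-punctured sphere), as opposed to merely a topological entire map of infinite degree with one asymptotic value. One must check that after collapsing the Levy cycle the quotient graph map still lifts paths uniquely, i.e. that the deck group is still $\Z$ acting by the vertical translations $p_n \mapsto p_{n+1}$, $\gamma_n \mapsto \gamma_{n+1}$, so that the extension has no spurious critical points and the $\D$-versus-$\C$ dichotomy applies. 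Once that structural fact is confirmed, the proof reduces to quoting Section~\ref{sec:thurston pullback} verbatim, and I would present the argument in that order: (1) $[\mathcal{F}_{\underline{s}}]$ is a topological exponential function; (2) $\sigma_{\underline{s}} = H \circ \sigma_{[\mathcal{F}_{\underline{s}}]} \circ H^{-1}$; (3) apply the weak- and strong-contraction bullets for topological entire Thurston maps.
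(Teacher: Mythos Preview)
Your proposal is correct and matches the paper's own proof essentially verbatim: the paper simply observes that $\sigma_{\underline{s}}$ is conjugate to the Thurston pullback operator $\sigma_{[\mathcal{F}_{\underline{s}}]}$ and then cites the weak- and locally-uniform-contraction properties from Section~\ref{sec:thurston pullback}. Your additional discussion (verifying $[\mathcal{F}_{\underline{s}}]$ is a topological exponential, the alternative direct spider-style argument) goes beyond what the paper writes out but is consistent with and fills in the details the paper takes for granted.
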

\begin{proof}
   The operator $\sigma_{\underline{s}}$ is conjugate to the Thurston pullback operator $\sigma_{[\mathcal{F}_{\underline{s}}]}$, which we know from Section~\ref{sec:thurston pullback} is weakly contracting on $\mathcal{T}_{\underline{s}}$ and locally uniformly contracting.
\end{proof}
The operator $\sigma_{\underline{s}}$ 
 has a fixed point $[\phi]$, where $\phi(e_1) = 0$ and there exists $\psi\in [\phi]$ such that $\phi \circ [\mathcal{F}_{\underline{s}}] \circ \psi^{-1}(z) = \lambda \exp(z)$ (for an alternate proof, see \cite{combi_classif_exp}). 
\begin{definition}
    Given a finite set $A \subset S^2$, the pure mapping class group $\PMCG(S^2,A)$ is the group of orientation preserving homeomorphisms $\phi: (S^2,A) \longrightarrow (S^2,A)$ where $\phi|A = \textrm{id}_A$, with $\phi$ and $\psi$ equivalent if $\phi$ is isotopic to $\psi$ relative to $A$. The group law is given by function composition. We let $\langle h\rangle $ denote the equivalence class of $h$ in $\PMCG(S^2,A)$. 
\end{definition}
$\PMCG(S^2,A)$ inherits the topology of uniform convergence on compact subsets from $\text{Homeo}^+(S^2,A)$. It is known that in this topology, $\PMCG(S^2,A)$ is a discrete group (see for example \cite{primer}).  

\begin{definition}
The moduli space $\Mod(\underline{s}) = \{\phi: A_{\underline{s}} \longrightarrow \C\}/~$ where $\phi \sim \psi$ if $\phi = M \circ \psi$ for some affine map $M$. 
\end{definition}
We let $[[\phi]]$ denote the equivalence class of $\phi$ in $\Mod(\underline{s})$.  It is known that $\Mod(\underline{s})$ is isomorphic to an open subset of $\C^{|A_{\underline{s}}|-2}$ and that the map $\pi: \mathcal{T}_{\underline{s}} \longrightarrow \Mod(\underline{s})$ given by  $\pi([\phi])=[[\phi|A_{\underline{s}} \cup \{\infty\}]]$ is a universal cover (the proof is similar to that of \cite[Proposition~3.1]{spideralgo}). Suppose $A_{\underline{s}} = \{e_1,...,e_r\}$. If we fix a representative $\phi$ such that $\phi(e_1) = 0$, then $\pi([\phi]) = [[\phi]] = \Big(\frac{\phi(e_3)}{\phi(e_2)}, \frac{\phi(e_4)}{\phi(e_2)},...,\frac{\phi(e_r)}{\phi(e_2)}\Big)$.

\begin{proposition}\label{prop:fiber over moduli space}
Given $[\phi] \in \mathcal{T}_{\underline{s}}$, the fiber $\pi^{-1}([[\phi|A_{\underline{s}} \cup \{\infty\}]])$ is the collection of elements of the form $[h \circ \phi]$, where $\langle h\rangle  \in \PMCG(S^2,\phi(A_{\underline{s}}\cup \{\infty\}))$.  
\end{proposition}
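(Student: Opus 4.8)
```latex
\begin{proof}[Proof proposal]
The plan is to identify the fiber of the universal covering $\pi$ with an orbit of the deck transformation group, and then to recognize that deck group as $\PMCG(S^2, \phi(A_{\underline{s}}\cup\{\infty\}))$ acting by precomposition on $\mathcal{T}_{\underline{s}}$. First I would recall the general principle, used in the proof of \cite[Proposition~3.1]{spideralgo} for the finite-degree spider space and entirely analogous here: a point of $\mathcal{T}_{\underline{s}}$ is a homeomorphism $\phi:[\Gamma_{\underline{s}}]\hookrightarrow\hat\C$ up to isotopy rel $A_{\underline{s}}$ and postcomposition by an affine map, while a point of $\Mod(\underline{s})$ only remembers the marked configuration $\phi(A_{\underline{s}}\cup\{\infty\})$ up to affine equivalence. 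So two classes $[\phi],[\phi']$ lie in the same $\pi$-fiber precisely when, after adjusting $\phi'$ by an affine map, $\phi$ and $\phi'$ agree \emph{on the finite set} $A_{\underline{s}}\cup\{\infty\}$ but possibly differ in how they embed the rest of the graph and in their isotopy class rel $A_{\underline{s}}$.

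The key step is then the following: if $\phi,\phi':[\Gamma_{\underline{s}}]\hookrightarrow\hat\C$ are orientation-preserving embeddings (preserving the circular order of legs at $\infty$) with $\phi|_{A_{\underline{s}}\cup\{\infty\}} = \phi'|_{A_{\underline{s}}\cup\{\infty\}}$, extend both to orientation-preserving self-homeomorphisms $\Phi,\Phi'$ of $S^2$ via the Alexander trick, as in Proposition~\ref{prop:isom teich spaces}. Set $h = \Phi'\circ\Phi^{-1}$. Then $h$ is an orientation-preserving homeomorphism of $S^2$ fixing $\phi(A_{\underline{s}}\cup\{\infty\})$ pointwise, so $\langle h\rangle\in\PMCG(S^2,\phi(A_{\underline{s}}\cup\{\infty\}))$, and $\phi' = h\circ\phi$ up to the Alexander-trick extension. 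Conversely, given any $\langle h\rangle$ in that mapping class group, $h\circ\phi$ is again an admissible embedding agreeing with $\phi$ on $A_{\underline{s}}\cup\{\infty\}$, hence $\pi([h\circ\phi]) = \pi([\phi])$. This shows the $\pi$-fiber through $[\phi]$ is exactly $\{[h\circ\phi] : \langle h\rangle\in\PMCG(S^2,\phi(A_{\underline{s}}\cup\{\infty\}))\}$. One should also note that the affine-adjustment ambiguity in the definition of $\mathcal{T}_{\underline{s}}$ and $\Mod(\underline{s})$ matches up, so nothing is lost or gained there: an affine $M$ with $M$ fixing the configuration is absorbed into the equivalence relation on both sides.

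The step I expect to be the main obstacle — really the only subtle point — is the well-definedness and transitivity of this correspondence: one must check that different Alexander-trick extensions of the same $\phi$ differ by a homeomorphism isotopic to the identity rel $A_{\underline{s}}\cup\{\infty\}$ (so the resulting $\langle h\rangle$ depends only on $[\phi]$ and $[\phi']$, not on choices), and that the map $\langle h\rangle\mapsto[h\circ\phi]$ is genuinely onto the fiber and not merely into it. The first is handled because the Alexander trick produces extensions canonically up to isotopy rel the finite marked set; the second follows from the covering-space fact that $\pi$ is a universal cover with deck group $\PMCG(S^2,\phi(A_{\underline{s}}\cup\{\infty\}))$ acting freely and transitively on fibers, which is precisely the content imported from \cite[Proposition~3.1]{spideralgo}. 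Assembling these observations gives the statement.
\end{proof}
```
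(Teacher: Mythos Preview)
Your proposal is correct and follows essentially the same approach as the paper: both arguments reduce to observing that after affine normalization any $[\psi]$ in the fiber agrees with $\phi$ on $A_{\underline{s}}\cup\{\infty\}$, so $h=\psi\circ\phi^{-1}$ (after extending to $S^2$) represents an element of $\PMCG(S^2,\phi(A_{\underline{s}}\cup\{\infty\}))$. The paper's version is simply terser---it normalizes $\phi(e_1)=0$, $\phi(e_2)=1$ and writes the composition directly---whereas you spell out the Alexander-trick extension and the covering-space framing, but the content is the same.
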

\begin{proof}

    Without loss of generality, assume that $\phi(e_1) = 0$ and $\phi(e_2) = 1$. Then $[[\phi|A_{\underline{s}} \cup \{\infty\}]] = (\phi(e_3),....,\phi(e_r))$. It is clear that $\pi([h \circ \phi]) = [\phi]$ for all $\langle h\rangle  \in \PMCG(S^2,\phi(A_{\underline{s}}\cup \{\infty\}))$.
    
 Suppose $[\psi] \in \pi^{-1}([[\phi|A_{\underline{s}} \cup \{\infty\}]])$, if we assume without loss of generality that $\psi(e_1) = 0$ and $\psi(e_2) = 1$, then $\psi | A = \phi|A$, in other words, $\langle \psi \circ \phi^{-1}\rangle \in \PMCG(S^2,A_{\underline{s}}\cup \{\infty\})$. 
\end{proof}

\section{Proof of Lemma~\ref{lemma:injective_maps_of_angles}}\label{sec:proofoflemma1.1}
Throughout this section we fix a degree $d\geq 2$.
\subsection{Construction of $Z_j$}
For any angle $\theta$, we let $\theta = d^{n-1}\theta$. If $\theta$ is $k-$periodic under $\mu_d$, $\theta = \frac{\theta+j_\theta}{d}$ for a unique $j_\theta \in \{0,1,...,d-1\}$. If $\theta$ is strictly preperiodic, then $\mathscr{O}_d(\theta)$ contains no element in $\mu_d^{-1}(\theta)$.

For a given $j \in \{0,1,...,d-1\}$, define a `symbol shift' function \begin{align*}
    u_{j,\theta}:\R/\Z \setminus \Big\{\frac{\theta+j}{d}\Big\}& \longrightarrow \{0,1,...,d\}\\
    u_{j,\theta}(t) & = 
    \begin{cases}
    m & t \in \big[\frac{m}{d},\frac{m+1}{d}\big), m<j\\
    m+1 & t \in \big[\frac{m}{d},\frac{m+1}{d}\big), m>j\\
    j & t \in \big[\frac{j}{d},\frac{\theta+j}{d}\big)\\
    j+1 &t \in \big(\frac{\theta+j}{d},\frac{j+1}{d}\big)
    \end{cases}
    \end{align*}
    Additionally, if 
    $\frac{\theta+j}{d} \in \mathscr{O}_d(\theta)$, let \begin{align*}
    u_{j,\theta}\Biggl(\frac{\theta+j}{d}\Biggl)&=\begin{cases}j & \text{ if }\theta \text{ is the smaller angle in a companion pair}\\ j+1 &\text{otherwise}
    \end{cases}
    \end{align*}
 The $u_{j,\theta}$ function assigns  a symbol to each angle in $\mathscr{O}_d(\theta)$: we first divide $[0,1)$ into $d$ sub-intervals of the form $\big[\frac{m}{d},\frac{m+1}{d}\big)$. The symbol assigned to an angle $t$ depends on which sub-interval $t$ belongs to. The sub-interval $\big[\frac{j}{d},\frac{j+1}{d}\big)$ is `split' at $\frac{\theta+j}{d}$, and the symbols to the left and right of this angle differ by $1$. 

The goal is to push angles in $\big(\frac{\theta+j}{d},1\big)$ a sub-interval further. Define $Z_j \colon \Q/\Z  \righttoleftarrow$ by 
\begin{align*}
    Z_j(\theta) & = \sum_{n=1}^\infty\frac{u_{j,\theta}(\theta_n)}{(d+1)^n} 
\end{align*} The angle $Z_j(\theta)$ has $(d+1)$-adic expansion  $.\overline{u_{j,\theta}(\theta_1)u_{j,\theta}(\theta_2)u_{j,\theta}(\theta_3)...}$.
\begin{remark}\label{noendd}
Suppose $u_{j,\theta}(\theta_n) = d$ for $n\geq N$, then $\theta_{n} \in [\frac{d-1}{d},1)$ $\forall n\geq N$. This forces $\theta_N \equiv 0$, and thus, $u_{j,\theta}(\theta_N) = 0$, contradicting our assumption. Thus the $d+1-$adic expansion of $Z_j(\theta)$ produced by the symbol shift function does not end in a constant spring of $d$'s.  \end{remark}
\begin{example}
    Let $d=2$. The angle $\theta = \frac{17}{2^4(2^4-1)} = \frac{17}{240}$ is strictly preperiodic under $\mu_2$, with preperiod $4$ and period $4$. The sequence $.0001\overline{0010}$ is a $2-$adic expansion for $\theta$. 

    Applying the definition above, we get $3-$adic expansions for $Z_0(\theta)$ and $Z_1(\theta)$ as below:
    \begin{align*}
        Z_0(\theta) & = .1112\overline{1121}  = \frac{3323}{3^4(3^4-1)} = \frac{3323}{6480}\\
        Z_1(\theta)& = .0002\overline{0010}  = \frac{163}{3^4(3^4-1)} = \frac{163}{6480}
    \end{align*}
\end{example}
\begin{example}
    Let $d=2$. For $\theta = \frac{1}{7} = .\overline{001}$, we have 
    \begin{align*}
        Z_0(\theta)& = .\overline{112} = \frac{14}{3^3} = \frac{14}{26}\\
        Z_1(\theta)& = .\overline{001} = \frac{1}{3^3} = \frac{1}{26}
    \end{align*}
\end{example}
\subsection{Monotonicity of $Z_j$}
Fix $j \in \{0,1,...,d-1\}$. In this section we show that $Z_j$ is injective and order-preserving. 
\begin{proposition}\label{prop:monotonedyn}\label{domainvar}\label{sectorspreserved}
Given $s,t \in \R/\Z$ such that $u_{j,\theta}(s_n)$ and $u_{j,\theta}(t_n)$ are defined for all $n$, let
\begin{align*}
    s' & = .u_{j,\theta}(s_1)u_{j,\theta}(s_2)u_{j,\theta}(s_3)....\\
    t' & = .u_{j,\theta}(t_1)u_{j,\theta}(t_2)u_{j,\theta}(t_3)....
\end{align*}
where the right hand side is interpreted as a $d+1$ - adic expansion. Then $s<t$ if and only if $s'<t'$.
\end{proposition}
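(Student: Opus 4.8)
The plan is to reduce the comparison of $s'$ and $t'$ to an entry-by-entry comparison of their $(d+1)$-adic digits, and to show that the symbol-shift function $u_{j,\theta}$ is ``order-compatible'' in the appropriate sense. First I would set up notation: let $s' = .\sigma_1\sigma_2\sigma_3\ldots$ and $t' = .\tau_1\tau_2\tau_3\ldots$ with $\sigma_n = u_{j,\theta}(s_n)$, $\tau_n = u_{j,\theta}(t_n)$, these being genuine $(d+1)$-adic expansions not terminating in a constant string of $d$'s (by Remark~\ref{noendd}, applied once one checks $s_n, t_n$ are iterates of angles under $\mu_d$ — or more carefully, one should first verify $u_{j,\theta}(s_n)\ne d$ cannot persist, which in the generality of arbitrary $s,t$ needs the hypothesis that all $u_{j,\theta}(s_n)$ are defined; I would address the well-definedness of the $(d+1)$-adic expansion as a preliminary remark). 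Since $(d+1)$-adic expansions that do not end in all $d$'s are ordered lexicographically, $s' < t'$ if and only if at the first index $N$ where $\sigma_N \ne \tau_N$ we have $\sigma_N < \tau_N$.

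The core of the argument is the following claim about a single application of $\mu_d$: if $a, b \in \R/\Z$ both lie in the common domain of $u_{j,\theta}$ and $u_{j,\theta}(a) = u_{j,\theta}(b)$, then $a < b$ in the cyclic sense ``starting from the splitting point $\frac{\theta+j}{d}$'' is equivalent to $\mu_d(a) < \mu_d(b)$ in the same rotated sense — because within a single digit-sub-interval $\big[\tfrac{m}{d}, \tfrac{m+1}{d}\big)$ (or within each of the two halves of the split sub-interval) the map $\mu_d$ is an orientation-preserving affine bijection onto $[0,1)$. Conversely, if $u_{j,\theta}(a) < u_{j,\theta}(b)$ then $a < b$ in the rotated linear order, essentially by construction of the $d$ thresholds $\tfrac{0}{d} < \tfrac{1}{d} < \cdots$ together with the extra threshold $\tfrac{\theta+j}{d}$. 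The one subtlety is the tie-breaking convention at $\frac{\theta+j}{d}$ when that angle lies in $\mathscr{O}_d(\theta)$: I would check that the ``companion pair'' convention is exactly what makes the equivalence survive, since $\frac{\theta+j}{d}$ and $\theta$ itself are the two preimages relevant to the periodic case, and the convention assigns $j$ versus $j+1$ so as to place $\theta$ correctly relative to its neighbors.

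Given this single-step claim, the proposition follows by an induction on the first index of disagreement. I would argue: let $N$ be minimal with $s_N \not\equiv t_N$ (equivalently, the iterates first separate at step $N$; if they never separate then $s = t$ and $s' = t'$ and there is nothing to prove). For $n < N$ we have $s_n = t_n$ hence $\sigma_n = \tau_n$, so the lexicographic comparison of $s'$ and $t'$ is decided at index $\geq N$, and likewise $s < t \iff s_N < t_N$ in the rotated order inherited from equality of the earlier iterates — here I must be slightly careful that ``$s < t$'' in $\R/\Z$ translates, after $N-1$ applications of the expanding map $\mu_d$ restricted to the common cylinder, into the rotated comparison of $s_N$ and $t_N$; this is where I unwind that each $\mu_d$ acts affinely and orientation-preservingly on the relevant sub-interval. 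If $\sigma_N \ne \tau_N$, the converse direction of the single-step claim gives $\sigma_N < \tau_N \iff s_N < t_N$ (rotated) $\iff s < t$, and lexicographic order gives $\sigma_N < \tau_N \iff s' < t'$, closing the loop. If instead $\sigma_N = \tau_N$ while $s_N \ne t_N$ (possible since $u_{j,\theta}$ is not injective), then $s_N, t_N$ lie in the same digit-block, and I pass to step $N+1$: by the single-step claim the rotated order of $s_{N+1}, t_{N+1}$ matches that of $s_N, t_N$, and I continue; this descent terminates because $s \ne t$ forces eventual digit disagreement (using again that the expansions do not end in all $d$'s).

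The main obstacle I anticipate is precisely the bookkeeping around the split sub-interval $\big[\tfrac{j}{d}, \tfrac{j+1}{d}\big)$ and the boundary angle $\tfrac{\theta+j}{d}$: making the ``rotated linear order'' (linear order on $\R/\Z$ starting the cut at $\tfrac{\theta+j}{d}$ rather than at $0$) precise, verifying that $u_{j,\theta}$ is monotone for this order, and checking that the companion-pair tie-break in the periodic case is consistent with it. Once that single-step monotonicity lemma is nailed down cleanly, the inductive globalization is routine.
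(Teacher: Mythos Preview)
Your proposal has the right underlying idea—monotonicity of $u_{j,\theta}$—but the ``rotated linear order starting at $\tfrac{\theta+j}{d}$'' is both unnecessary and incorrect. In that order $u_{j,\theta}$ runs through the values $j{+}1,\ldots,d,0,1,\ldots,j$, which is not increasing, so your converse single-step claim fails; and even your forward claim (that $\mu_d$ preserves the rotated order within a $u$-block) fails, since a block like $[\tfrac{d-1}{d},1)$ maps under $\mu_d$ onto all of $[0,1)$, which straddles the cut point. The correct and far simpler statement is that $u_{j,\theta}$ is weakly increasing for the \emph{standard} linear order on $[0,1)$, immediate from its piecewise definition. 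Separately, your index $N$ (``minimal with $s_N\ne t_N$'') is always $1$ when $s\ne t$, since $s_1=s$ and $t_1=t$; the framing around $N$ is vacuous, and the real content is the subsequent case split, which does go through once the rotated order is replaced by the standard one.

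The paper's argument is more direct and avoids the iteration entirely. Write $d$-adic expansions $s=.x_1x_2\ldots$ and $t=.y_1y_2\ldots$ (not ending in constant $(d{-}1)$'s) and let $r$ be the first index with $x_r\ne y_r$. If $s<t$ then $x_r<y_r$, so $s_r$ and $t_r$ lie in distinct intervals $[\tfrac{x_r}{d},\tfrac{x_r+1}{d})$ and $[\tfrac{y_r}{d},\tfrac{y_r+1}{d})$, and a one-line case check gives $u_{j,\theta}(s_r)<u_{j,\theta}(t_r)$ strictly. For $n<r$ one has $s_n<t_n$ in the usual order, so weak monotonicity of $u_{j,\theta}$ yields $u_{j,\theta}(s_n)\le u_{j,\theta}(t_n)$. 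Hence $s'\le t'$ lexicographically, and equality is excluded because the $(d{+}1)$-adic expansion of $s'$ cannot end in a string of $d$'s (Remark~\ref{noendd}). No rotated order, no companion-pair analysis, and no induction are needed.
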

\begin{proof}
Choose $d-$adic expansions $s=.x_1x_2x_3....$ and $t = .y_1y_2y_3.....$ so that if either angle is rational, the corresponding expansion does not end in a constant sequence of $(d-1)$'s.

At the first position $r$ where $x_r \neq y_r$, we must have $x_r< y_r$, and so, $u_{j,\theta}(s_r)$ is less than $u_{j,\theta}(t_r)$. For all indices $n$ smaller than $r$, $x_n $ is less than or equal to $ y_n$, and thus,  $u_{j,\theta}(s_n)$ is less than equal to  $u_{j,\theta}(t_n)$.  This shows that $s'$ is less than or equal to $t'$. These two angles are equal if and only if the following conditions are satisfied:
\begin{itemize}
    \item $u_{j,\theta}(s_r) +1 = u_{j,\theta}(t_r)$
    \item For all $n>r$, $u_{j,\theta}(s_n) = d$ and $u_{j,\theta}(t_n ) = 0$
\end{itemize}
By Remark~\ref{noendd}, the second condition is never true. Thus, $s'$ is strictly less than $t'$.
\end{proof}
\begin{proposition}\label{prop:symmetrypres}
Given $\theta \in \Q/\Z$, $Z_j\big(\theta+\frac{1}{d-1}\big) = Z_j(\theta)+\frac{1}{d}$
\end{proposition}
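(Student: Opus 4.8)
The plan is to track how adding $\frac{1}{d-1}$ to $\theta$ affects both the orbit $\mathscr{O}_d(\theta)$ and the symbol-shift function $u_{j,\theta}$, and to read off the effect on the $(d+1)$-adic expansion defining $Z_j$. First I would record the elementary fact that $\mu_d\big(x+\tfrac{1}{d-1}\big)=\mu_d(x)+\tfrac{d}{d-1}=\mu_d(x)+1+\tfrac{1}{d-1}=\mu_d(x)+\tfrac{1}{d-1}$ in $\R/\Z$, so that adding $\frac{1}{d-1}$ commutes with $\mu_d$. Consequently, writing $\theta_n=\mu_d^{n-1}(\theta)$ and $\eta=\theta+\tfrac{1}{d-1}$, we get $\eta_n=\theta_n+\tfrac{1}{d-1}$ for all $n$; in particular $\eta$ is periodic/preperiodic with the same combinatorial type as $\theta$, $u_{j,\eta}$ is defined exactly when $u_{j,\theta}$ is, and the "smaller angle in a companion pair" status is preserved under the shift by $\tfrac{1}{d-1}$ (this last point being the analogue, in dynamical coordinates, of the parameter-plane identity $\Omega_d(\omega c)=\Omega_d(c)+\tfrac{1}{d-1}$ already stated in the excerpt).

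Next I would show the pointwise identity $u_{j,\eta}\big(t+\tfrac{1}{d-1}\big)=u_{j,\theta}(t)+1 \pmod{d+1}$ for every relevant $t$, where the addition is to be understood so that the resulting symbol still lies in $\{0,1,\dots,d\}$ — the key claim being that the symbol $d$ never occurs (Remark~\ref{noendd}), so symbol $d-1$ is never actually hit either except transiently, and the "$+1$" wraps a symbol $d-1$ around to... — here I need to be careful. The cleaner formulation: the $j$th sub-interval boundary $\tfrac{\theta+j}{d}$ of $u_{j,\theta}$ moves to $\tfrac{\eta+j}{d}=\tfrac{\theta+j}{d}+\tfrac{1}{d(d-1)}$, which is \emph{not} a clean translate by $\tfrac1{d-1}$; so rather than a naive substitution I would compare which sub-interval $\big[\tfrac{m}{d},\tfrac{m+1}{d}\big)$ contains $t$ versus which contains $t+\tfrac1{d-1}$, and check case-by-case (using that $t,t+\tfrac1{d-1}\in\mathscr{O}_d(\theta)\cup\mathscr{O}_d(\eta)$, a finite set avoiding the relevant boundary points for preperiodic angles) that the index increases by exactly $1$ modulo $d$, and then that the symbol-relabelling built into $u_{j,\cdot}$ (which inserts an extra unit of shift at index $j$) converts this "$+1$ mod $d$" into a genuine "$+1$ mod $d+1$" on symbols. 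The periodic case with the companion-pair convention at $\tfrac{\theta+j}{d}$ is handled separately, using the preservation noted above.

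Granting the pointwise identity, the conclusion is immediate:
\begin{align*}
Z_j\Big(\theta+\tfrac{1}{d-1}\Big)&=\sum_{n=1}^\infty\frac{u_{j,\eta}(\eta_n)}{(d+1)^n}=\sum_{n=1}^\infty\frac{u_{j,\theta}(\theta_n)+1}{(d+1)^n}=Z_j(\theta)+\sum_{n=1}^\infty\frac{1}{(d+1)^n}=Z_j(\theta)+\frac{1}{d},
\end{align*}
since $\sum_{n\ge1}(d+1)^{-n}=\tfrac1d$, and the "$+1$" never causes a carry precisely because no symbol equals $d$ (Remark~\ref{noendd} applied to $\eta$, whose expansion also avoids a tail of $d$'s).

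I expect the main obstacle to be exactly the bookkeeping in the middle step: the sub-interval boundary $\tfrac{\theta+j}{d}$ does not translate by $\tfrac{1}{d-1}$, so one cannot simply substitute and must instead verify that the combination of "sub-interval index shifts by $1$ mod $d$" and "the $u$-relabelling adds one extra unit past position $j$" produces a uniform "$+1$ mod $d+1$" on symbols in every case, including the boundary/companion-pair cases. A subsidiary subtlety is confirming that $\theta+\tfrac1{d-1}$ genuinely lies in $\Q/\Z$ with $u_{j,\theta+1/(d-1)}$ everywhere defined, i.e. that the shifted orbit avoids the new split point $\tfrac{\eta+j}{d}$ in the strictly preperiodic case — this follows from the "no element of $\mathscr{O}_d(\theta)$ lies in $\mu_d^{-1}(\theta)$" observation recalled at the start of Section~\ref{sec:proofoflemma1.1}, applied to $\eta$.
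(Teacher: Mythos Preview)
Your approach is exactly the paper's: the paper's proof consists of the single assertion ``For all $n$, $u_{j,\theta'}(\theta'_n)=u_{j,\theta}(\theta_n)+1$'' (with $\theta'=\theta+\tfrac1{d-1}$), after which the conclusion follows by summing the $(d{+}1)$-adic series. You have correctly located this pointwise identity as the entire content of the argument and flagged the bookkeeping subtlety---that the split point $\tfrac{\theta+j}{d}$ moves by $\tfrac{1}{d(d-1)}$ rather than by $\tfrac1{d-1}$---which the paper passes over in silence.

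One small inaccuracy in your sketch: Remark~\ref{noendd} only rules out a \emph{tail} of $d$'s in the $(d{+}1)$-adic expansion of $Z_j(\theta)$, not individual occurrences of the symbol $d$; whenever $\theta_n\in[\tfrac{d-1}{d},1)$ with $j<d-1$ one has $u_{j,\theta}(\theta_n)=d$, so both your ``no-carry'' justification in the final summation and your claim that the sub-interval index increases by ``exactly $1$ modulo $d$'' need more care at such indices. The paper does not address this either, so you are not missing anything relative to its proof.
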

\begin{proof}
Let $\theta' = \theta + \frac{1}{d-1}$. 
For all $n$, $u_{j,\theta'}(\theta_n') = u_{j,\theta}(\theta_n)+1$. 
\end{proof}
\begin{proposition}\label{prop:monotone}
$Z_j$ is strictly increasing. 
\end{proposition}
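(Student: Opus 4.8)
The plan is to deduce Proposition~\ref{prop:monotone} from the two facts already in hand: Proposition~\ref{prop:monotonedyn}, which says the symbol-shift function $u_{j,\theta}$ respects order along orbits, and Proposition~\ref{prop:symmetrypres}, the symmetry relation $Z_j(\theta+\tfrac{1}{d-1}) = Z_j(\theta)+\tfrac1d$. The subtlety is that $u_{j,\theta}$ depends on $\theta$ itself (the splitting of the sub-interval $[\tfrac jd,\tfrac{j+1}{d})$ happens at $\tfrac{\theta+j}{d}$, which moves with $\theta$), so Proposition~\ref{prop:monotonedyn} as stated only compares two angles $s,t$ with respect to a \emph{fixed} base angle $\theta$; it does not directly compare $Z_j(\theta) = .u_{j,\theta}(\theta_1)u_{j,\theta}(\theta_2)\dots$ with $Z_j(\eta) = .u_{j,\eta}(\eta_1)u_{j,\eta}(\eta_2)\dots$. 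So the main work is to control how the $(d+1)$-adic digit strings of $Z_j(\theta)$ and $Z_j(\eta)$ compare when $\theta<\eta$.

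First I would fix $j$ and take two angles $\theta<\eta$ in $\Q/\Z$, writing their $d$-adic expansions $\theta = .x_1x_2\dots$, $\eta = .y_1y_2\dots$, normalized not to end in all $(d-1)$'s, and let $r$ be the first index with $x_r\neq y_r$, so $x_r<y_r$. For indices $n<r$ the orbit points $\theta_n$ and $\eta_n$ agree on their first $r-n$ digits; the key observation is that $u_{j,\theta}$ and $u_{j,\eta}$ can only disagree on an angle that lies in the critical sub-interval $[\tfrac jd,\tfrac{j+1}{d})$ and on opposite sides of the respective split points $\tfrac{\theta+j}{d}$ versus $\tfrac{\eta+j}{d}$ — and even then the disagreement is by exactly $1$, with the $\eta$-value being the larger precisely because the split point has moved right (as $\theta<\eta$). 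Combining this with the digit-by-digit comparison exactly as in the proof of Proposition~\ref{prop:monotonedyn}: at the first position where the digit strings of $Z_j(\theta)$ and $Z_j(\eta)$ differ, the $Z_j(\eta)$ digit is strictly larger, and by Remark~\ref{noendd} neither string ends in a constant run of $d$'s, so the tie-breaking case that would force equality cannot occur. Hence $Z_j(\theta)<Z_j(\eta)$.

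The step I expect to be the main obstacle is the bookkeeping in the previous paragraph: verifying that whenever $u_{j,\theta}(t)\neq u_{j,\eta}(t)$ for an angle $t$ appearing in the relevant orbit segments, the inequality always points the same way (namely $u_{j,\theta}(t)\le u_{j,\eta}(t)$), and simultaneously that at the \emph{first} place the two output strings differ the inequality is strict in that direction. One must handle the boundary angles carefully — the special values $u_{j,\theta}(\tfrac{\theta+j}{d})$ defined via the companion-pair convention, and the half-open endpoints $\tfrac{m}{d}$ — since these are exactly where the two symbol-shift functions are most likely to disagree and where an off-by-one could reverse a comparison. A clean way to organize this is to treat separately the case $r = 1$ (so $\theta$ and $\eta$ already differ in their first $d$-adic digit, and one argues directly that $x_1<y_1$ forces $u_{j,\theta}(\theta_1)\le u_{j,\eta}(\eta_1)$ with strict inequality unless $x_1 = y_1$, impossible here) and the case $r>1$ (where the orbit segments $\theta_1,\dots,\theta_{r-1}$ and $\eta_1,\dots,\eta_{r-1}$ share long common prefixes, so any digit discrepancy is localized and controlled by the movement of the split point). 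Once this case analysis is done, the conclusion that $Z_j$ is strictly increasing is immediate.
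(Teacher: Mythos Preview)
Your route is more direct than the paper's: the paper uses the symmetry $Z_j(\theta+\tfrac{1}{d-1})=Z_j(\theta)+\tfrac{1}{d}$ to reduce to the interval $[0,\tfrac{1}{d-1})$, computes $Z_j$ at the endpoints $\tfrac{m}{d-1}$ explicitly, and handles the boundary comparison $Z_j(\theta)<Z_j(\tfrac{1}{d-1})$ separately, whereas you aim to compare $Z_j(\theta)$ and $Z_j(\eta)$ for arbitrary $\theta<\eta$ in one shot. Both approaches ultimately need a digit-level argument of the same flavour as Proposition~\ref{prop:monotonedyn}, and neither spells it out in full.

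Your key observation, however, has the inequality reversed. Fix $\theta<\eta$ and take a single angle $t\in\bigl(\tfrac{\theta+j}{d},\tfrac{\eta+j}{d}\bigr)$: then $t$ lies to the right of the $\theta$-split, so $u_{j,\theta}(t)=j+1$, but to the left of the $\eta$-split, so $u_{j,\eta}(t)=j$. Moving the split point rightward enlarges the region receiving the \emph{smaller} label, so same-point evaluation gives $u_{j,\theta}(t)\ge u_{j,\eta}(t)$, the opposite of what you wrote. What your argument actually needs is the different-point inequality $u_{j,\theta}(\theta_n)\le u_{j,\eta}(\eta_n)$ for $n\le r$, and this does hold --- but not for the reason you give. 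The correct mechanism: when $x_n=y_n=j$ and $n<r$, the symbol $u_{j,\theta}(\theta_n)$ is governed by the sign of $\theta_{n+1}-\theta$, and $u_{j,\eta}(\eta_n)$ by the sign of $\eta_{n+1}-\eta$; these two sign comparisons proceed identically through digit position $r-n-1$ (all indices involved stay below $r$), and at position $r-n$ one is comparing $x_r$ against $a:=x_{r-n}=y_{r-n}$ versus $y_r$ against $a$, where the constraint $x_r<y_r$ rules out the bad combination $\theta_{n+1}>\theta$ with $\eta_{n+1}<\eta$. Once this is supplied your outline goes through, but the split-point heuristic you offered points the wrong way and does not by itself deliver the needed inequality.
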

\begin{proof}
We show this in three steps.
\begin{itemize}
    \item $Z_j(\frac{m}{d-1})$ is equal to $\frac{m}{d}$ if $m<j$, and $\frac{m+1}{d}$ otherwise. This is clear by the fact that $Z_j(\theta) = .\overline{m}$ if $m<j$ and $.\overline{m+1}$ otherwise.\\
\item 
Given rational angles $\omega<\omega'$ in $[0,\frac{1}{d-1})$, by an argument similar to the one in Proposition~\ref{prop:monotonedyn}, $Z_j(\omega)$ is less than $ Z_j(\omega')$. By Proposition~\ref{prop:symmetrypres}, $Z_j \big |\Q \cap {\big[\frac{m}{d-1},\frac{m+1}{d-1}\big)}$ preserves linear order for $0\leq m \leq d-2$.\\
\item If $\theta$ is less than $\frac{1}{d-1}$, at the first position $r$ where $x_r \neq 1$, we must have $x_r = 0$. For all  $n<r$, $u_{j,\theta}(\theta_n) = u_{j,\frac{1}{d-1}}(\frac{1}{d-1})$.

If $j>0$, then $u_{j,\theta}(\theta_n) = 0$ whereas $u_{j,\frac{1}{d-1}}(\frac{1}{d-1})>0$. If $j=0$, then $u_{j,\theta}(\theta_n)=1$ whereas $u_{j,\frac{1}{d-1}}(\frac{1}{d-1})=2$. This shows that $Z_j(\theta)$ is less than $Z_j(\frac{1}{d-1})$.

\end{itemize}
Combining these points yields the statement of the proposition.
\end{proof}
\subsection{Preserving spiders}\label{sec:preserving spiders}
Next, we show that for any rational angle $\theta$ and any $j$, the spiders of $\theta$ and $Z_j(\theta)$ are isomorphic. For the rest of this section, we let $\phi = Z_j(\theta)$, and let $.{x_1x_2x_3....}$ be a $d-$adic expansion for $\theta$ that does not end in a constant string of $(d-1)$'s. 
\begin{proposition}\label{prop:missedsector}
\begin{enumerate}
    \item The angles in $\mathscr{O}_d(\theta)$ and $\mathscr{O}_{d+1}(\phi)$ are in the same circular order. In particular, the preperiod and period of $\phi$ under $\mu_{d+1}$ coincide with the preperiod and period of $\theta$ under $\mu_d$.  
    \item The  orbit of $\phi$ does not intersect the interior of $T^{stat}_{d+1,j}(\phi)$. 
\end{enumerate}
\end{proposition}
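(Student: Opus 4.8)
\textbf{Proof plan for Proposition~\ref{prop:missedsector}.}

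The plan is to read off both statements directly from the $(d{+}1)$-adic expansion of $\phi = Z_j(\theta)$ produced by the symbol-shift function. For part (1): the key observation is that the legs of a spider are indexed by the orbit $\mathscr{O}_d(\theta)$, and their circular order is determined by the \emph{linear} order of the angles $\theta_n = d^{n-1}\theta$ together with the cyclic structure. Since by construction $Z_j(\theta_n) = \mu_{d+1}^{\,n-1}(\phi)$ — i.e. $Z_j$ intertwines $\mu_d$ on $\mathscr{O}_d(\theta)$ with $\mu_{d+1}$ on $\mathscr{O}_{d+1}(\phi)$, because shifting the $(d{+}1)$-adic expansion corresponds to applying $\mu_{d+1}$ — and since $Z_j$ is strictly order-preserving on $\Q/\Z$ by Proposition~\ref{prop:monotone}, it carries the linear order on $\mathscr{O}_d(\theta)$ bijectively and monotonically to that on $\mathscr{O}_{d+1}(\phi)$. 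Thus $|\mathscr{O}_d(\theta)| = |\mathscr{O}_{d+1}(\phi)|$, which forces equality of preperiods and periods, and the circular orders of the two orbits match. One subtlety to check carefully: the intertwining $Z_j \circ \mu_d = \mu_{d+1}\circ Z_j$ on $\mathscr{O}_d(\theta)$ requires knowing that $u_{j,\theta}(\theta_{n+1})$ is the ``second digit'' of $Z_j(\theta_n)$; this is immediate from the formula $Z_j(\theta) = \sum u_{j,\theta}(\theta_n)(d{+}1)^{-n}$ once one notes $\mathscr{O}_d(\theta_n) = \{\theta_n, \theta_{n+1}, \ldots\}$ is a tail of $\mathscr{O}_d(\theta)$ and the symbol-shift data is the same tail — here the periodic case needs the companion-pair convention in the definition of $u_{j,\theta}$ on $\frac{\theta+j}{d}$ to make $0$ genuinely periodic, which should be spelled out.

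For part (2): the interior of $T^{stat}_{d+1,j}(\phi)$ is the arc $\bigl(\frac{\phi+j}{d+1}, \frac{\phi+j+1}{d+1}\bigr)$, and an angle $t$ lies in this arc precisely when its $(d{+}1)$-adic expansion begins with the digit $j$ \emph{and} the tail after that digit lies strictly between $\phi$ and $\overline{?}$ — more precisely, $t \in T^{stat}_{d+1,j}(\phi)$ iff the first digit of $t$ is $j$ and $\mu_{d+1}(t) \in (\phi, \cdot)$ in the appropriate sense. I would instead argue at the level of symbols: an angle $\phi_n = \mu_{d+1}^{n-1}(\phi)$ lies in the interior of the $j$th static sector iff its leading $(d{+}1)$-adic digit is $j$ and the next digit is... — rather, the cleaner route is to observe that by the definition of $u_{j,\theta}$, the symbol $j$ is assigned only to angles $t$ lying in the \emph{half-open left piece} $\bigl[\frac{j}{d}, \frac{\theta+j}{d}\bigr)$ of the split interval, and $j{+}1$ only to the right piece $\bigl(\frac{\theta+j}{d}, \frac{j+1}{d}\bigr)$; consequently the digit string of any $\phi_n$ never ``uses'' the forbidden configuration that would place $\phi_n$ strictly inside $T^{stat}_{d+1,j}(\phi)$. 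Concretely: $\phi_n \in \mathrm{int}\,T^{stat}_{d+1,j}(\phi)$ would mean $\frac{\phi+j}{d+1} < \phi_n < \frac{\phi+j+1}{d+1}$, i.e. the leading digit of $\phi_n$ is $j$ and $\mu_{d+1}(\phi_n)$ lies strictly between $\phi$ and $\overline{?}$; translating back through $Z_j$ this says $\theta_n \in \mathrm{int}\, T^{dyn}$ or similar configuration that is excluded because $\mathscr{O}_d(\theta)$ avoids $\mu_d^{-1}(\theta)$ in the preperiodic case (Remark at the start of Section~\ref{sec:proofoflemma1.1}) or the companion-pair convention pins $\phi_n$ to the endpoint in the periodic case. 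I would organize this as: assume $\phi_n$ is in the open sector, pull back to a statement about $\theta_n$ relative to $\frac{\theta+j}{d}$, and derive a contradiction with either the preperiodicity remark or the companion-pair convention.

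The main obstacle I anticipate is part (2) in the periodic case: there $\frac{\theta+j}{d}$ genuinely belongs to $\mathscr{O}_d(\theta)$ (it equals $\theta_1 = \theta$ when $j = j_\theta$), so one cannot simply say ``the orbit avoids the split point.'' Instead the content is that the companion-pair convention assigns the symbol to this boundary angle in a way that places the corresponding $\phi_n$ on the \emph{boundary} of $T^{stat}_{d+1,j}(\phi)$ rather than its interior — and checking that the conventional choice ($j$ if $\theta$ is the smaller angle of its companion pair, $j{+}1$ otherwise) produces exactly the boundary behavior, consistently for all points of the periodic cycle, is the delicate bookkeeping step. I would handle it by first establishing that $\frac{\phi+j}{d+1} = Z_j\bigl(\frac{\theta+j}{d}\bigr)$ (using the definition of $Z_j$ on the relevant coset and Proposition~\ref{prop:symmetrypres}-type identities), so that the split point of the $d$-adic picture maps precisely to the boundary point $\frac{\phi+j}{d+1}$ of the $(d{+}1)$-adic picture, and then noting no orbit point can be strictly inside because its $d$-adic preimage is never strictly inside the left/right pieces that $u_{j,\theta}$ distinguishes.
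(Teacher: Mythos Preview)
There is a genuine gap in your plan for part (1). You claim that $Z_j(\theta_n) = \mu_{d+1}^{\,n-1}(\phi) = \phi_n$, i.e.\ that $Z_j$ intertwines $\mu_d$ with $\mu_{d+1}$ along the orbit. This is \emph{false}: the symbol-shift function $u_{j,\theta}$ depends on the base angle $\theta$ through the split point $\frac{\theta+j}{d}$, so the $(d{+}1)$-adic digits of $Z_j(\theta_n)$ are $u_{j,\theta_n}(\theta_{n+m-1})$, whereas the digits of $\phi_n$ are $u_{j,\theta}(\theta_{n+m-1})$. These differ whenever some orbit point lies between $\frac{\theta+j}{d}$ and $\frac{\theta_n+j}{d}$. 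For a concrete failure, take $d=2$, $j=0$, $\theta = 1/7$: then $\phi = 14/26$ and $\phi_3 = 22/26$, but $Z_0(\theta_3) = Z_0(4/7) = 19/26$. So you cannot invoke Proposition~\ref{prop:monotone} (monotonicity of $Z_j$ as a map $\Q/\Z \to \Q/\Z$) to deduce that $\theta_m < \theta_n \iff \phi_m < \phi_n$.

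The paper instead cites Proposition~\ref{prop:monotonedyn}, which is exactly tailored for this: it compares two angles $s,t$ via the \emph{same} symbol function $u_{j,\theta}$, and concludes $s<t \iff s'<t'$ where $s',t'$ are built from the shifted digits. Applying this with $s=\theta_m$, $t=\theta_n$ gives $s'=\phi_m$, $t'=\phi_n$ directly, and the circular-order statement follows. Your approach to part (2) inherits the same defect, since ``translating back through $Z_j$'' again presumes the false intertwining. The paper's argument for (2) is more concrete: it determines, case by case in $m$ versus $j$, exactly which $(d{+}1)$-static sector each $\phi_n$ lands in, and observes that $T^{stat}_{d+1,j}(\phi)$ is simply skipped (with the periodic boundary case placing $\phi_k$ on $\partial T^{stat}_{d+1,j}(\phi)$ via the companion-pair convention, as you correctly anticipated).
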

\begin{proof}
\begin{enumerate}
    \item follows directly from Proposition~\ref{prop:monotonedyn}. 
    \item  If $\theta_n$ is in $\big(\frac{m}{d},\frac{m+1}{d}\big)$, then $\phi_n$ is in $\big(\frac{m}{d+1},\frac{m+1}{d+1}\big)$ if $m<j$, and in $\big(\frac{m+1}{d+1},\frac{m+2}{d+1}\big)$  if $m>j$. 
    
    If $\theta_n$ is in $\big(\frac{m}{d},\frac{\theta+m}{d}\big)$ for $m<j$, then $\theta_{n+1}$ is less than $\theta$ and so, $\phi_{n+1}$ is less than $\phi$. But this implies that $\phi_n$ is in $\big(\frac{m}{d+1},\frac{\phi+m}{d+1}\big)$. 
    
    Using similar arguments,
          \begin{itemize}
              \item the sector $T^{stat}_{d,m}(\theta)$ corresponds to $T^{stat}_{d+1,m}(\phi)$ for $m<j$, and to $T^{stat}_{d+1,m+1}(\phi)$ for  $m>j$, and the orbit points in these sectors are in the same circular order,
              \item the sector $T^{stat}_{d,j}(\theta)$
              corresponds to  $T^{stat}_{d+1,j+1}(\phi)$, and circular order within these sectors is preserved. If there is an orbit point in $\mathscr{O}_d(\theta)$ on the boundary (which would have to be $\theta_k$), if $\theta$ is the smaller angle in a companion pair,  then $\phi_k = \frac{\phi+j}{d+1} \in \partial T^{stat}_{d+1,j-1}(\phi)\cap \partial T^{stat}_{d+1,j}(\phi)$.  Else, $\phi_k = \frac{\phi+j+1}{d+1} \in \partial T^{stat}_{d+1,j}(\phi)\cap \partial T^{stat}_{d+1,j+1}(\phi)$. 
          \end{itemize}
\end{enumerate}
\end{proof}
\begin{remark}
    Proposition~\ref{prop:missedsector} implies that $S_d(\theta)$ and $S_{d+1}(\varphi)$ are isomorphic.

\end{remark}
\begin{proposition}\label{prop:spiderconjugacy}
  $\mathcal{F}_{d,\theta}|S_{d}(\theta)$ and $\mathcal{F}_{d+1,\varphi}|S_{d+1}(\varphi)$ are conjugate are conjugate by a map $h$ that satisfies 
  \begin{enumerate}
      \item $h(\infty) = \infty$
      \item $h(e^{2\pi i \theta}) = e^{2\pi i \phi}$
      \item $h$ preserves the circular order of legs
  \end{enumerate}
\end{proposition}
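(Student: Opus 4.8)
The plan is to build the conjugacy $h$ leg by leg, using the circular-order correspondence already established in Proposition~\ref{prop:missedsector}. Since both $S_d(\theta)$ and $S_{d+1}(\phi)$ are unions of rays (legs) emanating from $\infty$, indexed by the orbit angles $\theta_n = d^{n-1}\theta$ and $\phi_n = (d+1)^{n-1}\phi$ respectively, and Proposition~\ref{prop:missedsector}(1) says these two cyclic sequences of indices agree, I would first define $h$ on the set of endpoints: $h(e^{2\pi i \theta_n}) = e^{2\pi i \phi_n}$ for all $n$ (well-defined, since $\theta_n = \theta_m \iff \phi_n = \phi_m$ by monotonicity of $Z_j$), together with $h(\infty) = \infty$ and, in the periodic case, $h(0) = 0$. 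Then I would extend $h$ to each leg as an orientation-preserving homeomorphism of arcs, and finally choose the extension across the ``feet'' region so that the conjugacy relation holds.

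The key point making this work is that both graph maps $\mathcal{F}_{d,\theta}$ and $\mathcal{F}_{d+1,\phi}$ act on legs in the ``same'' combinatorial way: the leg at angle $t$ is sent to the leg at angle $\mu_d(t)$ (resp.\ $\mu_{d+1}(t)$), with the special leg through $e^{2\pi i\theta}$ (resp.\ $e^{2\pi i\phi}$) getting a foot inserted, and in the periodic case the leg at $d^{k-2}\theta$ (resp.\ $(d+1)^{k-2}\phi$) getting a foot removed. Because $Z_j$ conjugates $\mu_d|_{\mathscr{O}_d(\theta)}$ to $\mu_{d+1}|_{\mathscr{O}_{d+1}(\phi)}$ as an order isomorphism of orbits (this is exactly Proposition~\ref{prop:missedsector}(1) together with the definition of $Z_j$ via the symbol-shift $u_{j,\theta}$), the combinatorial action on legs matches. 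So the steps are: (i) record that $n \mapsto n$ is the matching of leg-indices and that $\mathcal{F}$ permutes legs according to the shift on indices, identically on both sides; (ii) define $h$ on endpoints and $\infty$ (and $0$ if $\theta$ periodic); (iii) extend $h$ over each leg compatibly with $\mathcal{F}_{d,\theta}$ and $\mathcal{F}_{d+1,\phi}$ — start from the leg(s) not in the forward orbit image and propagate the definition forward along the dynamics, using the explicit affine-type formulas for $\mathcal{F}$ on each ray (the map is $re^{2\pi it}\mapsto re^{2\pi i dt}$ away from the foot, and $(r\pm 1)e^{2\pi i \theta}$ near the feet), so at each stage $h$ is forced on the next leg and one checks it is a homeomorphism; (iv) verify properties (1)–(3), which are immediate from the construction.

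Properties (1) and (2) hold by fiat from step (ii); property (3), preservation of circular order of legs, is precisely Proposition~\ref{prop:missedsector}(1). The main obstacle is step (iii): one must make sure the leg-by-leg extension is globally consistent, i.e.\ that going around the orbit cycle (in the periodic case, where $\theta_{k}$ feeds back to $\theta_1$ via the foot at $0$) the forced definitions of $h$ close up to a genuine homeomorphism rather than merely a continuous surjection with a mismatch. Here the parametrization of each leg by the radial coordinate $r \in [1,\infty]$ (or $[0,1]$ for the central leg) is the bookkeeping device: one extends $h$ to be the identity in the $r$-coordinate along the ``non-foot'' legs, and absorbs the $r\mapsto r+1$ / $r\mapsto r-1$ shifts at the feet into a uniform reparametrization, which is possible because the same shifts occur on both the degree-$d$ and degree-$(d+1)$ sides. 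A secondary technical point is the boundary behavior of the foot when $\theta_k$ lies on $\partial T^{stat}_{d,j}(\theta)$ (the companion-pair case), but Proposition~\ref{prop:missedsector}'s last bullet already tracks exactly where $\phi_k$ lands on the corresponding sector boundary, so $h$ can be chosen to respect that incidence.
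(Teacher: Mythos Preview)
Your proposal is correct and follows essentially the same approach as the paper: use the orbit correspondence from Proposition~\ref{prop:missedsector} to match legs and take $h$ to be the identity in the radial coordinate. The paper simply writes the global formula $h(re^{2\pi i\theta_n}) = re^{2\pi i\phi_n}$ at once; since $\mathcal{F}_{d,\theta}$ and $\mathcal{F}_{d+1,\phi}$ act identically on the $r$-coordinate of corresponding legs (same $r\mapsto r$, $r\mapsto r\pm 1$ rules), the conjugacy relation and the closing-up in the periodic case are automatic, so your inductive propagation and consistency check in step~(iii) are unnecessary.
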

\begin{proof}
Let $\theta_n = d^{n-1}\theta$ and $\phi_n = (d+1)^{n-1}\phi$. 
  Let $h(re^{2\pi i \phi_n}) = re^{2\pi i \phi_n}$. By Proposition~\ref{prop:missedsector}implies that $h$ satisfies the required properties.
\end{proof}
By construction of the quotient graphs $[S_d(\theta)]$ and $[S_{d+1}(\phi)]$, the following proposition is clear.
\begin{proposition}\label{prop:spidermapsareconjugate}
   $[\mathcal{F}_{d,\theta}]\Big|[S_d(\theta)]$ and $[\mathcal{F}_{d+1,\phi}]\Big|[S_{d+1}(\phi)]$ are conjugate by a map $h$ such that $h(\infty) = \infty$, $h$ preserves the circular order of legs and  $h(x_1) = \overline{x}_1$,  where $x_1$ is the class of $e^{2\pi i \theta}$ in $[S_d(\theta)]$, and $\overline{x}_1$ is the conjugacy class of $e^{2\pi i \phi}$ in $[S_d(\theta')]$.
\end{proposition}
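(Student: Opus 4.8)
The plan is to descend the conjugacy $h$ of Proposition~\ref{prop:spiderconjugacy} through the quotient maps that define $[S_d(\theta)]$ and $[S_{d+1}(\phi)]$. Write $\theta_n = d^{n-1}\theta$ and $\phi_n = (d+1)^{n-1}\phi$, and let $\pi_d\colon S_d(\theta)\to[S_d(\theta)]$ and $\pi_{d+1}\colon S_{d+1}(\phi)\to[S_{d+1}(\phi)]$ denote the quotient maps. By construction these identify leg endpoints that carry the same itinerary (equivalently, $e^{2\pi i\theta_m}$ with $e^{2\pi i\theta_n}$ whenever $m,n\geq\ell$ and $m\equiv n$ modulo the period $k'$ of the kneading sequence), they do not alter the circular order of legs, and they intertwine $\mathcal{F}_{d,\theta}$ and $\mathcal{F}_{d+1,\phi}$ with their descents $[\mathcal{F}_{d,\theta}]$ and $[\mathcal{F}_{d+1,\phi}]$. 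When $\theta$ is periodic the statement degenerates, since then $[S_d(\theta)]=S_d(\theta)$ and the claim is exactly Proposition~\ref{prop:spiderconjugacy}; so I would focus on the strictly preperiodic case.

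First I would check that $h$ carries the equivalence relation defining $[S_d(\theta)]$ onto the one defining $[S_{d+1}(\phi)]$. By Proposition~\ref{prop:spiderconjugacy}, $h$ sends the leg of $S_d(\theta)$ at angle $\theta_n$ to the leg of $S_{d+1}(\phi)$ at angle $\phi_n$, fixing the radial coordinate, and it intertwines the spider maps, so $n\mapsto\phi_n$ commutes with the shift. By Proposition~\ref{prop:missedsector}, $\theta$ and $\phi$ have equal preperiod and period and the orbit points $\theta_n$, $\phi_n$ occupy corresponding static (hence dynamic) sectors in matching circular order; consequently the itinerary of $\phi_n$ with respect to $\phi$ is obtained from that of $\theta_n$ with respect to $\theta$ by a fixed termwise bijection of symbols. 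In particular the kneading sequences have the same period $k'$, and two orbit points of $\theta$ share an itinerary iff their images do. Hence $\pi_{d+1}\circ h$ is constant on the fibers of $\pi_d$ and descends to a homeomorphism $[h]\colon[S_d(\theta)]\to[S_{d+1}(\phi)]$ with $[h]\circ\pi_d=\pi_{d+1}\circ h$. Combining this identity with $\mathcal{F}_{d+1,\phi}\circ h=h\circ\mathcal{F}_{d,\theta}$ and with the intertwining relations $\pi_d\circ\mathcal{F}_{d,\theta}=[\mathcal{F}_{d,\theta}]\circ\pi_d$ and $\pi_{d+1}\circ\mathcal{F}_{d+1,\phi}=[\mathcal{F}_{d+1,\phi}]\circ\pi_{d+1}$ yields $[\mathcal{F}_{d+1,\phi}]\circ[h]=[h]\circ[\mathcal{F}_{d,\theta}]$ after a diagram chase. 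The three listed properties are then immediate: $[h](\infty)=\infty$ from $h(\infty)=\infty$; $[h]$ preserves the circular order of legs because $h$ does and the quotients respect that order; and $[h](x_1)=\overline{x}_1$ because $x_1$ and $\overline{x}_1$ are the classes of $e^{2\pi i\theta}$ and $e^{2\pi i\phi}$ and $h(e^{2\pi i\theta})=e^{2\pi i\phi}$.

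The only genuinely substantive step is matching the two quotient equivalence relations — in particular, confirming that the period $k'$ used to form $[S_d(\theta)]$ equals the one used to form $[S_{d+1}(\phi)]$ and that $n\mapsto\phi_n$ carries itinerary classes of $\mathscr{O}_d(\theta)$ to itinerary classes of $\mathscr{O}_{d+1}(\phi)$. I expect this to be the main obstacle, but it is handled entirely by the sector correspondence of Proposition~\ref{prop:missedsector}: since each static sector with respect to $\theta$ corresponds to a static sector with respect to $\phi$ (with the index shifted by one past the $j$-th sector), the itinerary of $\phi_n$ is the termwise relabeling of the itinerary of $\theta_n$, so periods and coincidences of itineraries are preserved. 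Everything else is the routine diagram chase that justifies the author's assertion that the proposition is ``clear'' from the construction of the quotient graphs.
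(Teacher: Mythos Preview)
Your proposal is correct and is precisely the argument the paper has in mind: the paper offers no proof beyond the sentence ``By construction of the quotient graphs $[S_d(\theta)]$ and $[S_{d+1}(\phi)]$, the following proposition is clear,'' and you have simply unpacked what that construction entails---descending the conjugacy of Proposition~\ref{prop:spiderconjugacy} through the quotient maps by matching the equivalence relations via the sector correspondence of Proposition~\ref{prop:missedsector} (equivalently, via the kneading-sequence relabeling of Remark~\ref{remark:kneadpres}).
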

\begin{remark}
    In later sections, we will simply say that $[\mathcal{F}_{d,\theta}]\Big|[S_d(\theta)]$ and $[\mathcal{F}_{d+1,\phi}]\Big|[S_{d+1}(\phi)]$ are conjugate assume that the conjugating map satisfy the three properties above. 
\end{remark}
\begin{definition}
A degree $d$ \textit{generalised spider} is a pair $(S,f)$ where $S \subset S^2$ is of the form $\gamma_1 \cup \gamma_2 \cup ....\cup \gamma_r \cup \eta_1 \cup \eta_2 \cup..\cup \eta_d$, with $d,r\geq 2$, such that
\begin{enumerate}
    \item $\gamma_j$s are disjoint simple arcs from $\infty$ to the unit disk
    \item $\eta_j$'s are disjoint simple arcs from $\infty$ to $0$
    \item $f: S \longrightarrow S$ is a continuous map critical points $0,\infty$, such that 
    \begin{itemize}
        \item $f(\gamma_j) = \gamma_{j+1 \text{ mod } d}$
        \item $f(\eta_j) = \gamma_1$ for all $j$
        \item $f(\infty) = \infty$, and the $f-$orbit of $0$ is finite
        \item $f$ preserves circular order of all arcs at $\infty$
        \item $S^2 \setminus f(S)$ is a topological disk
    \end{itemize}
\end{enumerate}
\end{definition}
The dynamical system  $(S,f)$ can be extended to $(S^2,f)$ by the Alexander trick so that $f$ is a postcritically finite degree $d$ branched self-covering of $S^2$ with critical points $0,\infty$.
\begin{proposition}\label{prop:genspider}Given two degree $d$ generalised spiders $(S,f), (S',g)$, if the spiders $S,S'$ are homeomorphic via $h$ such that
\begin{enumerate}
    \item $h(0) = 0$
    \item $h(\infty) = \infty$
    \item $h\big|{f(S)} $ is a homeomorphism of $f(S)$ onto $g(S')$
    \item $g\circ h = h \circ f$ upto isotopy,
\end{enumerate}
Then the extended Thurston maps $f,g$ on $S^2$ are Thurston equivalent.
\end{proposition}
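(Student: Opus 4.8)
\emph{Proof plan.} Write $A=\Post(f)$ and $B=\Post(g)$. Since $(S,f)$ is a generalised spider, $A$ is a finite set contained in $\{\infty,0\}\cup\{\text{feet of the }\gamma_j\}\subset S$, and likewise $B\subset S'$; the hypotheses on $h$ — condition (4) evaluated along the forward orbit of $0$, together with $h(0)=0$ and $h(\infty)=\infty$ — show that $h$ carries $A$ onto $B$. The goal is to produce orientation-preserving homeomorphisms $\varphi_0,\varphi_1\colon S^2\to S^2$ with $g\circ\varphi_0=\varphi_1\circ f$, $\varphi_0|_A=\varphi_1|_A$, $\varphi_0(A)=\varphi_1(A)=B$ and $\varphi_0\simeq\varphi_1$ rel $A$; this is exactly a Thurston equivalence between the extended branched covers $f$ and $g$.

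\emph{Constructing $\varphi_1$ and $\varphi_0$.} First, $S^2\setminus S$ is a disjoint union of open topological disks: the arcs $\eta_1,\dots,\eta_d$ cut $S^2$ into $d$ sectors, and within each sector the $\gamma_j$ meeting it are disjoint slits emanating from $\infty$, whose removal leaves a disk; the same holds for $S^2\setminus S'$. Because $h$ carries $S$ homeomorphically onto $S'$ preserving the circular order of arcs at $\infty$, it carries the boundary of each complementary disk of $S$ homeomorphically onto the boundary of a complementary disk of $S'$, so the Alexander trick extends $h$ over these disks to an orientation-preserving homeomorphism $\varphi_1\colon S^2\to S^2$ with $\varphi_1|_S=h$; in particular $\varphi_1(A)=B$. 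Next, $\varphi_1\circ f$ is an orientation-preserving degree-$d$ branched cover totally ramified exactly over $\{\varphi_1(f(0)),\varphi_1(\infty)\}=\{g(0),\infty\}$, where $\varphi_1(f(0))=h(f(0))=g(h(0))=g(0)$ by condition (4); these are precisely the branch values of $g$. Since any degree-$d$ branched self-cover of $S^2$ totally ramified over two prescribed points restricts, over the complement of those two points, to the unique connected $d$-fold cover of $\C^*$, the map $\varphi_1\circ f$ lifts through $g$: there is an orientation-preserving homeomorphism $\varphi_0\colon S^2\to S^2$ with $g\circ\varphi_0=\varphi_1\circ f$, unique up to post-composition with a deck transformation of $g$ (a cyclic group of order $d$); note $g\circ\varphi_0=\varphi_1\circ f$ and total ramification already force $\varphi_0(0)=0$ and $\varphi_0(\infty)=\infty$, and $\varphi_0(A)=B$ will follow once $\varphi_0|_A=\varphi_1|_A$ is known.

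\emph{Matching the two maps rel $A$.} By condition (4), $h\circ f$ and $g\circ h$ are isotopic as maps $S\to S'$ rel $A$. Since the complementary regions of $S$ are disks and the branched covers $\varphi_1\circ f$ and $g\circ\varphi_1$ of $S^2$ restrict on $S$ to $h\circ f$ and $g\circ h$ respectively, this isotopy extends disk-by-disk (the Alexander trick carried out in families, using that $S^2\setminus f(S)$ is a disk so the target regions match) to an isotopy $G_t\colon S^2\to S^2$ rel $A$ through branched covers, with $G_0=\varphi_1\circ f$ and $G_1=g\circ\varphi_1$. Lifting $G_t$ through $g$ starting at $\varphi_0$ — the standard lifting of an isotopy through a branched cover rel the postsingular set, exactly as in the construction of the Thurston pullback operator — yields homeomorphisms $\Psi_t$ with $g\circ\Psi_t=G_t$, $\Psi_0=\varphi_0$ and $\Psi_t|_A$ constant. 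Then $g\circ\Psi_1=G_1=g\circ\varphi_1$, so $\Psi_1=\delta\circ\varphi_1$ for some deck transformation $\delta$ of $g$. Replacing the initial choice of $\varphi_0$ by $\delta^{-1}\circ\varphi_0$ (still a lift of $\varphi_1\circ f$) replaces $\Psi_t$ by $\delta^{-1}\circ\Psi_t$, which now ends at $\varphi_1$; hence $\varphi_0\simeq\varphi_1$ rel $A$, and in particular $\varphi_0|_A=\varphi_1|_A$. Together with the previous paragraph this gives the desired Thurston equivalence.

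\emph{Main obstacle.} The substance lies entirely in the last step: conditions (1)--(3) only assert that the spiders $S$ and $S'$ are abstractly isomorphic, and by themselves do not single out which of the $d$ lifts $\varphi_0$ is isotopic to $\varphi_1$ rel $A$ — it is condition (4) that resolves this $\mathbb{Z}/d$ ambiguity. Making the argument fully rigorous requires (i) upgrading the isotopy of condition (4) from the graph $S$ to an isotopy of branched covers of $S^2$ rel $A$, where the hypothesis that $S^2\setminus f(S)$ is a disk is used to run the Alexander trick on the complementary regions, and (ii) the homotopy-lifting property for branched covers rel the postsingular set. Both are routine in Thurston theory, but they are the points that need care.
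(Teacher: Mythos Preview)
Your argument is correct, but it takes a longer route than the paper's. The paper simply extends $h$ to a homeomorphism of $S^2$ (Alexander on each complementary disk of $S$), sets $\widetilde g := h\circ f\circ h^{-1}$, and observes that by conditions~(3)--(4) the maps $\widetilde g$ and $g$ agree (up to the given isotopy) on $S'$. Each complementary region $D$ of $S'$ is a disk mapped homeomorphically by both $\widetilde g$ and $g$ onto the single disk $S^2\setminus g(S')$, so $g^{-1}\circ\widetilde g|_D$ is isotopic to the identity rel $\partial D$ by Alexander; gluing these disk-isotopies produces $\psi\simeq\id$ rel $S'$ with $\widetilde g=g\circ\psi$. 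Then $\varphi_1=h$ and $\varphi_0=\psi\circ h$ give the Thurston equivalence directly.

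Your proof instead first manufactures $\varphi_0$ abstractly as a lift of $\varphi_1\circ f$ through $g$ via covering-space theory, and then resolves the $\Z/d$ deck ambiguity by extending the isotopy of condition~(4) to an isotopy $G_t$ of branched covers on $S^2$ and lifting that. This works, and the ingredients you flag (Alexander in families on the complementary disks, homotopy lifting for branched covers rel the postsingular set) are exactly what is needed; but note that the disk-by-disk Alexander step you invoke to build $G_t$ already \emph{is} the paper's whole argument, and once you have it you can read off $\varphi_0$ explicitly without ever passing through the lifting formalism. In short: your approach is the Thurston-pullback way of saying the same thing and would transport better to situations where the complementary regions are not disks, while the paper's approach exploits the disk structure to avoid the covering-space machinery entirely.
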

\begin{proof}
$h$ can be extended to a homeomorphism from $S^2$ to $S^2$, so that each connected component of $S^2 \setminus S$ maps under $h$ homeomorphically onto a connected component of $S^2 \setminus S'$. Let $D$ be an open connected component of $S^2 \setminus S'$. On $D$, let $ \widetilde{g} = h^{-1} \circ f \circ h$. Since $\widetilde{g}$ agrees with $g$ on $\partial D$,  it is isotopic to $g$ relative to $\partial D$. We can similarly define $\widetilde{g}$ on every connected component of $S^2 \setminus S'$ to get $\widetilde{g}: S^2 \setminus S^2$ such that $\widetilde{g} \circ h = h \circ f$, with $\widetilde{g}$ isotopic to $g$ relative to $P_g$.
\end{proof}

We give an alternate topological description of the map $\mathcal{F}_{d+1,\varphi}$ extended to $S^2$ by the Alexander trick.

Choose $\epsilon >0$ small enough so that 
the interval $[\frac{\theta+j}{d}-\epsilon, \frac{\theta+j}{d}+\epsilon]$ does not contain any element of $\mathscr{O}_{d}(\theta)$ if $j \neq j_\theta$, and only contains $\theta_k$ if $j = j_\theta$.
\begin{itemize}
    \item[-]Suppose $\theta$ is strictly preperiodic under $\mu_d$, or $j \neq j_\theta$, we change $\widetilde{S}_d(\theta)$ by adding a new ray at the angle  $\frac{\theta+j+\epsilon}{d}$. That is, we construct a `tweaked' spider $\widetilde{S}^{j,\epsilon}_d(\theta)$ as 
        \begin{align*}
            \widetilde{S}^{j,\epsilon}_d(\theta): = \widetilde{S}_d(\theta)  \cup   \Big\{re^{2\pi i\omega}: \omega = \frac{\theta+j+\epsilon}{d}, 0\leq r \leq \infty\Big\}
        \end{align*}
Define $\mathcal{G}_{d,j,\theta}: \widetilde{S}^{j,\epsilon}_d(\theta) \longrightarrow S_d(\theta)$ as
        \begin{align*}
            \mathcal{G}_{d,j,\theta}(re^{2\pi it}) = \begin{cases} re^{2 \pi i dt} & re^{2\pi i t} \in S_d(\theta), t \neq d^{k-2}\theta\\
            (r-1)e^{2 \pi i dt} & r\geq 1, t = d^{k-2}\theta\\
            (r+1)e^{2\pi i \theta} &r\geq 0, t\in \mu_d^{-1}(\theta)  \cup \{\frac{\theta+i+ \epsilon}{d}\}\\            \infty &r=\infty
            \end{cases}
        \end{align*}
    \item[-] Now suppose $\theta$ is periodic under $\mu_d$, and $j=j_\theta$. Let $\omega = \frac{\theta+j_\theta}{d}$. If $\theta$ is the smaller angle in a companion pair, let $\alpha = \frac{\theta+j_\theta+\epsilon}{d}$. In all other cases, let $\alpha = \frac{\theta+j_\theta-\epsilon}{d}$. Define the tweaked spider $\widetilde{S}^{j,\epsilon}_d(\theta)$ as follows.
                \begin{align*}
                    \widetilde{S}^{j,\epsilon}_d(\theta) : = \widetilde{S}_d(\theta) \cup  \Big\{re^{2\pi i\alpha}: 0\leq r \leq \infty\Big\}
                \end{align*}
                
                We construct $\mathcal{G}_{d,j_\theta,\theta}: \widetilde{S}_d^{j,\epsilon}(\theta) \longrightarrow \widetilde{S}_d^{j,\epsilon}(\theta)$ as follows:
        \begin{align*}
            \mathcal{G}_{d,j,\theta}(re^{2 \pi it}) = \begin{cases} re^{2 \pi i dt} & re^{2 \pi it} \in S_d(\theta) , t \neq d^{k-2}\theta\\
            (r-1)e^{2 \pi i\omega} & r\geq 1, t= d^{k-2}\theta\\
            (r+1)e^{2\pi i \theta} &r\geq 0, t \in \mu_d^{-1}(\theta) \cup \{\alpha\} \\
            \infty &r=\infty
            \end{cases}
        \end{align*}
\end{itemize}
\begin{figure}[t]
    \centering
    \includegraphics[scale=0.4]{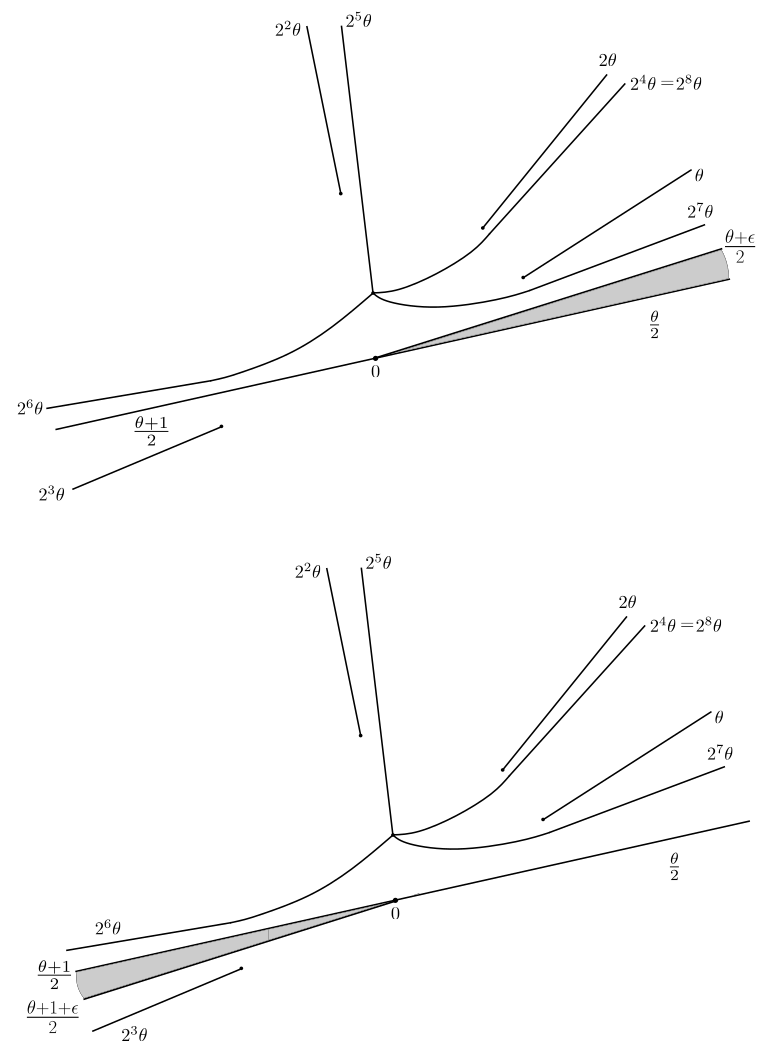}
    \caption{$[\widetilde{S}_2^{0}(\theta)]$ and $[\widetilde{S}_2^{1}(\theta)]$ for $\theta = \frac{17}{2^4(2^4-1)}$}
    \label{fig:tweaked spider}
\end{figure}
        In both cases, the map $\mathcal{G}_{d,j,\theta}$ can be extended using the Alexander trick to a Thurston polynomial of degree $d+1$, with critical set $\{0,\infty\}$. By Proposition~\ref{prop:genspider}, The Thurston class of this extension is independent of $\epsilon$, so we let $\widetilde{S}^{j}_d(\theta)$ denote  $\widetilde{S}^{j,\epsilon}_d(\theta)$, and $S^j_d(\theta) = \mathcal{G}_{d,j,\theta}(\widetilde{S}^j_d(\theta))$. 
        
        If $\theta$ is periodic under $\mu_d$ with exact period $k$, then $0$ periodic under $\mathcal{G}_{d,j,\theta}$ with exact period $k$.  If  $\theta$ has preperiod $\ell$ and period $k$, then $0$ has preperiod $\ell$ and period $k$ under $\mu_d$.

        The above construction of $\mathcal{G}_{d,j,\theta}$ can be equivalently defined in terms of an arc blow-up of $f_{d,\theta}$: we first slit open the leg $L_j = \Big\{re^{\frac{2\pi i (\theta+j)}{d}}\Big\}$ of $\widetilde{S}_d(\theta)$, and glue to this slit a copy of the slit plane $\hat{\C} \setminus \R_{\leq 0}$ (each copy of $\R_{\leq 0}$ is identified with a copy of $L_j$). We now map this newly attached slit plane onto the sphere (see  \cite{pilgrimlei} where arc blow-ups among external rays was first defined). 
\begin{proposition}
\label{prop:addsector=angle}
      \begin{enumerate}
      \item If $\theta$ is $k-$periodic under $\mu_d$, then $\phi_k = \frac{\phi+u_{j,\theta}(\theta_k)}{d+1}$;
      \item  There exists a homeomorphism $h\colon \widetilde{S}^j_d(\theta) \longrightarrow \widetilde{S}_{d+1}(\phi)$ that satisfies 
      \begin{itemize}
          \item $h(0)=0$
          \item $h(\infty) = \infty$
          \item $h(S^j_d(\theta)) = S_{d+1}(\phi)$
      \end{itemize}
     while preserving the circular order of legs at $\infty$, and conjugates $\mathcal{G}_{d,j,\theta}$ to $\mathcal{F}_{d+1,\phi}$. 
      \item $\mathcal{G}_{d,j,\theta}$ is Thurston equivalent to $\mathcal{F}_{d+1,\phi}$.
      \end{enumerate}
\end{proposition}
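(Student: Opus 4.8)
The plan is to handle the three parts in the stated order, with part~(2) carrying essentially all of the content and parts~(1) and~(3) being short.

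For part~(1), I would set $a_n = u_{j,\theta}(\theta_n)$, so that by the very definition of $Z_j$ the angle $\phi = Z_j(\theta)$ has $(d+1)$-adic expansion $.\,\overline{a_1a_2a_3\cdots}$ and $\phi_n = (d+1)^{n-1}\phi$ is the cyclic shift $.\,\overline{a_na_{n+1}\cdots}$. When $\theta$ is $k$-periodic under $\mu_d$ we have $\mu_d(\theta_k)=\theta_1$, so the symbol string is $k$-periodic and $\phi_k = \frac{a_k+\phi}{d+1} = \frac{\phi+u_{j,\theta}(\theta_k)}{d+1}$. One also checks that $u_{j,\theta}(\theta_k)$ is legitimately defined: either $\theta_k=\frac{\theta+j_\theta}{d}$ with $j_\theta\ne j$, so $\theta_k$ lies in the domain of $u_{j,\theta}$, or $j_\theta = j$, in which case $u_{j,\theta}$ at this point was defined by hand in terms of the companion-pair condition.

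For part~(2), I would first build a bijection between the rays of $\widetilde{S}^j_d(\theta)$ and those of $\widetilde{S}_{d+1}(\phi)$ respecting the circular order at $\infty$ and at $0$. Proposition~\ref{prop:missedsector}(1) pairs the leg rays $re^{2\pi i\theta_n}\leftrightarrow re^{2\pi i\phi_n}$. The remaining rays of $\widetilde{S}^j_d(\theta)$ are the $d$ rays $\mu_d^{-1}(\theta)=\{re^{2\pi i(\theta+m)/d}\}_{m=0}^{d-1}$ together with the single extra ``tweak'' ray inside the $j$-th static sector; I would pair $\frac{\theta+m}{d}\mapsto\frac{\phi+m}{d+1}$ for $m<j$, $\frac{\theta+m}{d}\mapsto\frac{\phi+m+1}{d+1}$ for $m>j$, and the two rays inside the $j$-th sector with $\frac{\phi+j}{d+1}$ and $\frac{\phi+j+1}{d+1}$ in whichever order matches their circular positions. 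Consistency of this assignment with the circular order of legs is precisely what Proposition~\ref{prop:missedsector} provides: $T^{stat}_{d,m}(\theta)$ corresponds to $T^{stat}_{d+1,m}(\phi)$ for $m<j$ and to $T^{stat}_{d+1,m+1}(\phi)$ for $m>j$, while $T^{stat}_{d,j}(\theta)$ corresponds to $T^{stat}_{d+1,j+1}(\phi)$ and the orbit of $\phi$ misses the interior of $T^{stat}_{d+1,j}(\phi)$, so the tweak ray slides into this vacant sector without crossing a leg; in the periodic case $j=j_\theta$, part~(1) tells me that $\phi_k$ sits on the boundary of $T^{stat}_{d+1,j}(\phi)$ on exactly the side prescribed by the companion-pair convention, which is what dictated whether the tweak ray was placed on the $+\epsilon$ or $-\epsilon$ side of $\frac{\theta+j_\theta}{d}$. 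I would then define $h$ radially — a point at radius $r$ on a ray of $\widetilde{S}^j_d(\theta)$ goes to the point at radius $r$ on the paired ray of $\widetilde{S}_{d+1}(\phi)$ — and, since the rays of each graph meet only at $0$ and $\infty$ and $h$ fixes both, $h$ is a homeomorphism with $h(0)=0$, $h(\infty)=\infty$, $h(S^j_d(\theta))=S_{d+1}(\phi)$, preserving circular order of legs. Finally I would verify $h\circ\mathcal{G}_{d,j,\theta}=\mathcal{F}_{d+1,\phi}\circ h$ ray by ray against the piecewise formulas: on a leg ray $\mathcal{G}_{d,j,\theta}$ multiplies the angle by $d$ (with the $\pm1$ radius shift at the distinguished leg $d^{k-2}\theta$ in the periodic case) and $\mathcal{F}_{d+1,\phi}$ by $d+1$, matching $\theta_n\mapsto\theta_{n+1}$ with $\phi_n\mapsto\phi_{n+1}$; and each of the $d+1$ rays over $\theta$, including the tweak ray, is sent by $\mathcal{G}_{d,j,\theta}$ to $(r+1)e^{2\pi i\theta}$ while its partner, lying in $\mu_{d+1}^{-1}(\phi)$, is sent by $\mathcal{F}_{d+1,\phi}$ to $(r+1)e^{2\pi i\phi}=h((r+1)e^{2\pi i\theta})$.

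For part~(3), I would note that $(\widetilde{S}^j_d(\theta),\mathcal{G}_{d,j,\theta})$ and $(\widetilde{S}_{d+1}(\phi),\mathcal{F}_{d+1,\phi})$ are degree-$(d+1)$ generalised spiders and that the homeomorphism $h$ of part~(2) satisfies the four hypotheses of Proposition~\ref{prop:genspider}: $h(0)=0$, $h(\infty)=\infty$, $h$ restricts to a homeomorphism of the images $S^j_d(\theta)$ and $S_{d+1}(\phi)$, and $g\circ h = h\circ f$ holds on the nose, hence up to isotopy. Proposition~\ref{prop:genspider} then yields Thurston equivalence of the $S^2$-extensions. I expect the main obstacle to be the circular-order bookkeeping in part~(2): pinning down exactly where the one extra tweak ray lands and confirming it falls into the static sector that the orbit of $\phi$ avoids, with the periodic-case side-choice mediated by part~(1) and the companion-pair convention. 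Once that is settled, $h$ can be constructed to respect every circular order simultaneously and all remaining verifications are just a matter of reading off the defining formulas for $Z_j$, $\mathcal{G}_{d,j,\theta}$, and $\mathcal{F}_{d+1,\phi}$.
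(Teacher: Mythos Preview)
Your proposal is correct and follows essentially the same route as the paper: part~(1) is read off from the $(d+1)$-adic expansion of $\phi$, part~(2) defines $h$ radially via exactly the ray-pairing you describe (the paper simply writes down the same formula for $h$ and declares the verification easy), and part~(3) is Proposition~\ref{prop:genspider} applied to the conjugacy from~(2). Your treatment of the periodic $j=j_\theta$ case---using part~(1) and the companion-pair convention to decide which boundary of $T^{stat}_{d+1,j}(\phi)$ receives $\phi_k$ and hence which side the tweak ray goes to---is in fact more carefully spelled out than in the paper's proof.
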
    
\begin{proof}
\begin{enumerate}
 \item is clear by the definition of $\phi$.
          \item  For $re^{2 \pi i t} \in \widetilde{S}^j_d(\theta)$, let
\begin{align*}
    h(re^{2\pi it})&= \begin{cases} re^{2\pi i \phi_n}&t=\theta_n\text{ for some }n\geq 1\\
    re^{2 \pi i (\frac{\phi+m}{d+1})}&t= \frac{\theta+m}{d},m<j\\
    re^{2 \pi i (\frac{\phi+m+1}{d+1})}&t= \frac{\theta+m}{d},m>j\\
        re^{2 \pi i (\frac{\phi+j}{d+1})}&t= \frac{\theta+j-\epsilon}{d}\\
                re^{2 \pi i (\frac{\phi+j+1}{d})}&t= \frac{\theta+j+\epsilon}{d}
    \end{cases}
\end{align*}
It is easy to verify that $h$ satisfies the required properties.
\item 
Clear by (2) and Proposition~\ref{prop:genspider}.
\end{enumerate}
\end{proof}    
\begin{remark}\label{remark:kneadpres}     

    Let $\nu$ be the kneading sequence of $\theta$. 
Suppose $T^{stat}_{d,m}(\theta)$ is the sector containing $\theta$ in its interior. Construct a sequence  $\nu^j$ using the following rule:
\begin{itemize}
    \item Every symbol $\nu_n$ corresponding to $\theta_n$ in one of the  sectors  $T^{stat}_{d,m}(\theta),T^{stat}_{d,m+1}(\theta),...,T^{stat}_{d,j-1}(\theta)$ in counterclockwise order - that is, a symbol in $\{0,1,...,j-1-m\}$,  stays the same. Formally, $$\nu_n  \in \{0,1,...,j-1-m\} \implies \nu^j_n = \nu_n$$
    \item Every symbol $\nu_n$, corresponding to $\theta_n$ in one of the sectors $T^{stat}_{d,j}(\theta),T^{stat}_{d,j+1}(\theta),...,T^{stat}_{d,m-1}(\theta)$ in counterclockwise order - that is, a symbol in $\{j-m,j+1-m,...,d-1\}$ increases by $1$ modulo $d$. That is,
    $$\nu_n \in \{j-m, j+1-m,d-1\} \implies \nu^j_n = \nu_n +1 \text{ (mod } d)$$
    \item If $\nu_n = *$, let $\nu^j_n = *$
\end{itemize}
In the resulting sequence $\nu^j$,  $\nu^j_m \not = \nu^j_n$ unless $\nu_m = \nu_n$. Proposition~\ref{prop:monotonedyn} shows that $\nu^j$ is the kneading sequence of $Z_j(\theta)$.

We refer here to \cite{Kaffl2006}, where the author shows that preperiodic and $*-$periodic kneading sequences of any degree $d$ can be embedded into the same collection in degree $d' \geq d$ in $\binom{d'-1}{d-1}$ ways. The above discussion illustrates this result for $d'=d+1$ where we restrict to the set of kneading sequences realizable by polynomial maps.
\end{remark} 

\subsection{Image of $Z_j$}
Fix $j \in \{0,1,...,d-1\}$.
\begin{proposition}\label{prop:maximalityprep}Let $\phi \in \Q/\Z$. 
If the  $\mu_{d+1}-$orbit of $\phi$ does not intersect $T^{stat}_{d+1,j}(\phi)$, then $\phi = Z_j(\theta)$ for a unique angle $\theta \in \Q/\Z$.
\end{proposition}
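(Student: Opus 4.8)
The statement is essentially the surjectivity-with-uniqueness counterpart to Proposition~\ref{prop:missedsector}(2) together with the injectivity from Proposition~\ref{prop:monotone}, so the strategy is to \emph{invert the symbol-shift construction}. Given $\phi$ whose $\mu_{d+1}$-orbit avoids the interior of $T^{stat}_{d+1,j}(\phi)$, I would first record that $\phi$ is rational, hence preperiodic under $\mu_{d+1}$ with some preperiod $\ell\geq 0$ and period $k\geq 1$; fix the $(d+1)$-adic expansion $\phi = .\,\overline{a_1a_2a_3\dots}$ that does not end in a constant string of $d$'s. The hypothesis that the orbit misses $T^{stat}_{d+1,j}(\phi)$ says precisely that for every $n$ the shifted angle $\phi_n = \mu_{d+1}^{n-1}(\phi)$ lies in $[\tfrac{a_n}{d+1},\tfrac{a_n+1}{d+1})$ with $a_n \neq j$ whenever $\phi_n$ is in the interior of that static sector; more carefully, no $\phi_n$ lies strictly between $\tfrac{\phi+j}{d+1}$ and $\tfrac{\phi+j+1}{d+1}$. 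I would use this to define a ``desymbolization'' map on the symbols appearing in the expansion: set $b_n = a_n$ if $a_n < j$ and $b_n = a_n - 1$ if $a_n > j$ (the value $a_n = j$ can occur only at a preperiodic-boundary coincidence, handled exactly as in the $u_{j,\theta}$ definition with companion pairs), and let $\theta := .\,\overline{b_1b_2b_3\dots}$ interpreted as a $d$-adic expansion.

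\textbf{Key steps.} (i) \emph{Well-definedness of $\theta$.} Check that the sequence $(b_n)$ does not end in a constant string of $(d-1)$'s — this follows from the corresponding property of $(a_n)$ for $d+1$ and the definition of $b_n$ (Remark~\ref{noendd} in reverse) — so $\theta \in \Q/\Z$ is unambiguously defined, and that $(b_n)$ is periodic/preperiodic with the same combinatorics as $(a_n)$, so $\theta$ is preperiodic under $\mu_d$. (ii) \emph{$\theta$ shifts correctly.} Verify that $\mu_d(\theta) $ has expansion $.\,\overline{b_2b_3\dots}$, i.e.\ the desymbolization commutes with the shift; this is where the orbit-avoidance hypothesis is used in its full strength, because it guarantees that the $d$-adic ``digit'' of $\theta_n$ really is $b_n$ (no carrying ambiguity is introduced by the removed sector). (iii) \emph{$Z_j(\theta) = \phi$.} Show that with this $\theta$, the symbol-shift function $u_{j,\theta}$ applied to $\theta_n$ returns $a_n$ for every $n$: the cases $b_n < j$ and $b_n \geq j$ of $u_{j,\theta}$ match the cases $a_n < j$ and $a_n > j$ by construction, and the split of the sector $[\tfrac{j}{d},\tfrac{j+1}{d})$ at $\tfrac{\theta+j}{d}$ together with $\phi$ missing $T^{stat}_{d+1,j}(\phi)$ forces the boundary/companion-pair cases to agree. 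Hence $Z_j(\theta) = .\,\overline{u_{j,\theta}(\theta_1)u_{j,\theta}(\theta_2)\dots} = .\,\overline{a_1a_2\dots} = \phi$. (iv) \emph{Uniqueness.} If $Z_j(\theta) = Z_j(\theta') = \phi$, then by Proposition~\ref{prop:monotone} ($Z_j$ strictly increasing, hence injective) we get $\theta = \theta'$.

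\textbf{Main obstacle.} I expect step (ii)–(iii) — proving that the desymbolization genuinely produces a \emph{valid} $d$-adic expansion of an angle whose orbit digits are exactly $b_n$, and that no digit ambiguity slips in at the boundary of the deleted sector — to be the delicate part. The subtlety is entirely at the boundary points of $T^{stat}_{d+1,j}(\phi)$: one must show that an orbit point of $\phi$ can land on $\partial T^{stat}_{d+1,j}(\phi)$ only in the periodic case and only in the way dictated by the companion-pair convention, so that the deleted-sector hypothesis does not merely say ``interior avoided'' but pins down enough to reconstruct $\theta$ uniquely. This is a finite case analysis mirroring the one in the proof of Proposition~\ref{prop:missedsector}, run backwards, and once it is dispatched the rest is the bookkeeping of matching $(d+1)$-adic and $d$-adic expansions digit by digit.
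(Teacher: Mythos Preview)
Your overall strategy---define an inverse symbol map, build $\theta$ from the desymbolized digits, verify $Z_j(\theta)=\phi$, and get uniqueness from Proposition~\ref{prop:monotone}---is exactly the paper's. But you have misplaced the main obstacle. The boundary of $T^{stat}_{d+1,j}(\phi)$ is a minor issue; the real difficulty is hidden in your step~(i), where you assert without argument that ``$(b_n)$ is periodic/preperiodic with the same combinatorics as $(a_n)$.'' This is \emph{not} automatic: your desymbolization sends both $(d{+}1)$-adic digits $j$ and $j{+}1$ to the single $d$-adic digit $j$, so a priori $(b_n)$ could have a strictly shorter eventual period than $(a_n)$, or a strictly shorter preperiod. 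Ruling this out is where almost all of the work in the paper's proof goes, and it genuinely uses the orbit-avoidance hypothesis (not just at boundary points) via an inductive inequality argument.

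Moreover, this gap infects your step~(iii). To check $u_{j,\theta}(\theta_n)=a_n$ when $b_n=j$ (i.e.\ $a_n\in\{j,j{+}1\}$), you must decide whether $\theta_n$ lies to the left or right of $\tfrac{\theta+j}{d}$, equivalently whether $\theta_{n+1}\lessgtr\theta$. You want to deduce this from $\phi_{n+1}\lessgtr\phi$, but weak monotonicity of $a\mapsto b$ only gives $\theta_{n+1}\leq\theta$ from $\phi_{n+1}<\phi$ at the first differing digit---and if that digit pair is exactly $(j,j{+}1)$ the $b$'s agree and you must recurse, with no guarantee of termination in a strict inequality unless you already know the periods match. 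So the period/preperiod preservation is not a side remark but the heart of the proof; once it is established, the verification $Z_j(\theta)=\phi$ really does become routine, which is why the paper ends with ``It is now clear.'' Your step~(ii), by contrast, is trivial $d$-adic arithmetic and needs no hypothesis at all.
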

\begin{proof}
If $\phi$ has preperiod $0$ and period $1$, $\phi = Z_j(\theta)$ for some $\theta$ of the form $\frac{m}{d-1}$. 

Otherwise, let $\ell,k$ be the pre-period and period respectively of $\phi$, and choose a $d+1$ - adic expansion $.y_1y_2....y_\ell\overline{y_{\ell+1}y_{\ell+2}...y_{k+\ell}}$ for $\phi$, that does not terminate in a constant stream of $d$'s if $k=1$. 

We define
\begin{align*}
        w:\mathscr{O}_{d+1}(\varphi)&\longrightarrow \{0,1,...,d-1\}\\
        w(t)& = \begin{cases}m & m \in \{0,1,..,j-1\}, t \in [\frac{m}{d+1},\frac{m+1}{d+1})\\
        j &t \in [\frac{j}{d+1},\frac{\phi+j}{d+1}]\\
        j &t \in [\frac{\phi+j+1}{d+1},\frac{j+2}{d+1})\\
        m-1& m \in \{j+2,j+3,...,d\}, t \in [\frac{m}{d+1},\frac{m+1}{d+1})\end{cases}
    \end{align*}

We claim that $\theta := \sum_{n=1}^\infty \frac{w(\phi_n)}{d^n}$ is preperiodic under $\mu_d$ with preperiod $\ell$ and period $k$, and that $Z_{j}(\theta) = \phi$. 

    Let $x_n = w(\phi_n),$ $\theta_n = d^{n-1}\theta$, and $m,j$ be the preperiod of $\theta$ under $\mu_d$. It is clear that $m\leq \ell$  and that $j|k$.
     
        If $j<k$,  then  $\theta_{m+1} = \theta_{m+j+1}$, which can happen only if $x_n = x_{n+j}$  for all $n>m$. 
We note:
\begin{align*}
    \phi_n \in \Big[0,\frac{\phi+j}{d+1}\Biggl] & \implies x_n = y_n\\
    \phi_n \in \Biggl[\frac{\phi+j+1}{d+1},1\Biggl) &\implies x_n = y_n-1
\end{align*}
So the only way that $x_n = x_{n+j}$ is if $y_n = y_{n+j}$ or $\{y_n, y_{n+j}\} = \{j,j+1\}$. 

    Let $D = \{n>m| y_{n} \neq y_{n+j}\}$. Note that $k \in D$ - in particular, $D$ is nonempty. Let $r$ be the least element in $D$. If $y_{r}<y_{r+j}$, this means that $\phi_r \in (\frac{j}{d+1},\frac{\phi+j}{d+1}]$, and that $\phi_{r+j} \in [\frac{\phi+j+1}{d+1},\frac{j+2}{d+1})$. We also note that $\phi_r = \frac{\phi+j}{d}$ and $\phi_{r+j} = \frac{\phi+j+1}{d}$ cannot be simultaneously true, since only one angle of the form $\frac{\phi+m}{d}$ can belong to $\mathscr{O}_{d+1}(\phi)$. Therefore,  
    \begin{align*}
        \phi_{r+1} &< \phi_{r+j+1}\\
        .y_{r+1}y_{r+2}..... &< .y_{r+j+1}y_{r+j+2}.....
    \end{align*}
    So at the next $n>r$ in $D$, we must have $y_{n} < y_{n+j}$.
            The above discussion shows that exactly one of the following is true:
            \begin{enumerate}
                \item $\forall n >m $, $y_{n} \leq y_{n+j}$
                \item $\forall n>m$, $y_{n}\geq y_{n+j}$
            \end{enumerate}
            Suppose (1) is true, we then have, for any $n \in D$,
            \begin{align*}
                y_{n}\leq y_{n+j} \leq ....\leq y_{n+(\frac{k}{j}-1)j} \leq y_{n+k} = y_{n}
            \end{align*}
            That is, $y_{n} = y_{n+j}$, which contradicts the fact that $\phi$ has eventual period $k$. We get a similar contradiction for (2), and therefore, we must have $j=k$. 
            
Suppose  $m<\ell$,  write $m$ as $\ell - rk$ for some integer $r$. Then 
        \begin{align*}
            \theta_{m+1} &= \theta_{k+m+1}
        \end{align*}
and this holds only if for all  $n>m$, we have $x_n = x_{n+k}$. But this means, for all  $n>m$ such that $y_n \neq y_{n+k}$, we have $y_n , y_{n+k} \in \{j,j+1\}$. 

We note that $y_\ell \neq y_{k+\ell}$. Without loss of generality, suppose, $y_\ell = j$, $y_{k+\ell} = j+1$. Then we have $\phi_\ell \in \big[\frac{j}{d+1},\frac{\phi+j}{d+1}\big]$ and $\phi_{k+\ell} \in \big[\frac{\phi+j}{d+1},\frac{j+2}{d+1}\big)$.\\ \\
Therefore, 
$$\phi_{\ell+1}<\phi<\phi_{k+\ell+1}$$
which is a contradiction.
Thus, we must have $m=\ell$.\\
It is now clear that $Z_{j}(\theta)=\phi$. By Proposition~ \ref{prop:monotone}, $\theta$ is unique.
\end{proof}
\begin{proof}[Proof of Lemma~\ref{lemma:injective_maps_of_angles}]
    By Propositions~\ref{prop:monotone} and Propositions~\ref{prop:missedsector}, Lemma~\ref{lemma:injective_maps_of_angles} is clear. 
\end{proof}
\section{Proof of Lemma~\ref{lemma:combi_embeddings}}\label{sec:proofoflemma1.2}
Fix a degree $d\geq 2$. 
For $j=0,1,...,d-1$, we will construct combinatorial embeddings $\mathscr{E}_j:\mathcal{P}_d \longrightarrow \mathcal{P}_{d+1}$ using Lemma~\ref{lemma:injective_maps_of_angles}, and prove Lemma~\ref{lemma:combi_embeddings} by exhibiting additional properties of the $Z_j$'s.

Fix $j \in \{0,1,...,d-1\}$ and $\lambda \in \mathcal{P}_d$. \subsection{Definition for critically periodic parameters}\label{sec:periodic}
First suppose $\lambda$ has a $k-$periodic critical point, with $k\geq 2$, and choose $c \in M_d(\lambda)$.

By {\cite[Theorem~10.3.9]{teichmuller_theory_vol2}}, for any $\theta \in \Omega_d(c)$,  the map $\mathcal{G}_{d,j,\theta}$ defined in Section~\ref{sec:preserving spiders} is Thurston equivalent to a polynomial $\lambda_{j,\theta}\big(1+\frac{z}{d+1}\big)^{d+1}$ such that $\phi=Z_j(\theta) \in \Theta_{d+1}(\lambda_{j,\theta})$. We will show that $\lambda_{j,\theta}$ is independent of the choice of $\theta$ and $c$, allowing us to define $\mathscr{E}_j(\lambda) = \lambda_{j,\theta}$. 
For the rest of Section~\ref{sec:periodic}, fix $\theta \in \Omega_d(c)$. Note that the parameter ray $R_d(\theta)$ could land either at the root or a co-root of the hyperbolic component $U$ containing $c$. 
\subsubsection{Landing at a root}
We first suppose that $\theta$ lands at the root of $U$, and has companion $\theta'$. Let $\varphi = Z_j(\theta)$ and $\varphi' = Z_j(\theta')$. 
\begin{proposition}
\label{prop:companion}
      $\phi, \phi'$ are companion angles under $\mu_{d+1}$.
\end{proposition}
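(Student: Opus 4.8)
The plan is to recall the characterization of companion pairs in terms of orbit portraits, then verify that $Z_j$ carries the degree-$d$ orbit portrait of $(\theta,\theta')$ to the degree-$(d+1)$ orbit portrait of $(\varphi,\varphi')$. Recall from Section~\ref{sec: unicritical dynamics} that if $\theta$ lands at the root of a hyperbolic component $U$ of period $k$, with companion $\theta'$, then $\theta,\theta'$ are both $k$-periodic under $\mu_d$, they lie in a common static sector (hence $d(\theta,\theta')<1/d$), and in the dynamical plane of the center $f_{d,c_0}$ the rays $R(\theta),R(\theta')$ land at the root $z_0$ of the Fatou component $U_0$ containing the critical value; equivalently $(\theta,\theta')$ is the characteristic pair of the orbit portrait of the cycle containing $z_0$. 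So I want to show that $(\varphi,\varphi')$ is the characteristic pair of a degree-$(d+1)$ orbit portrait of period $k$, and that the associated component is the one containing the landing point $\mathscr{E}_j(\lambda)$.

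First I would use Proposition~\ref{prop:missedsector}(1): since $\theta,\theta'$ are both $k$-periodic under $\mu_d$, the angles $\varphi=Z_j(\theta)$ and $\varphi'=Z_j(\theta')$ are $k$-periodic under $\mu_{d+1}$, and moreover the entire sets $\mathscr{O}_{d+1}(\varphi)$, $\mathscr{O}_{d+1}(\varphi')$ avoid the interior of $T^{stat}_{d+1,j}$. Next, from Proposition~\ref{prop:missedsector}(2) and its proof, the static sector $T^{stat}_{d,m}(\theta)$ containing $\theta$ maps to a static sector for $\varphi$ (either $T^{stat}_{d+1,m}$ or $T^{stat}_{d+1,m+1}$ depending on whether $m<j$ or $m>j$; note $m\neq j$ since $\theta$-orbit avoids sector $j$), and $\theta'$ lies in the same sector as $\theta$, so $\varphi'$ lies in the same static sector for $\varphi$ as $\varphi$ does — i.e. $d(\varphi,\varphi')<1/(d+1)$. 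Since $Z_j$ is order preserving (Proposition~\ref{prop:monotone}), $\varphi<\varphi'$, and the separation property ``$R(\theta),R(\theta')$ separate the critical value $0$ from the rest of the postcritical orbit'' is preserved because $Z_j$ preserves the circular order of the whole orbit by Proposition~\ref{prop:missedsector}(1) together with the fact (from the construction of $\mathcal{G}_{d,j,\theta}$, Proposition~\ref{prop:addsector=angle}) that $\mathcal{G}_{d,j,\theta}$ is Thurston equivalent to $\mathcal{F}_{d+1,\varphi}$, hence to $p_{d+1,\mathscr{E}_j(\lambda)}$, and the postcritical portrait is preserved with $0\mapsto 0$. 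Thus in the dynamical plane of the center of the component containing $\mathscr{E}_j(\lambda)$, the rays at $\varphi,\varphi'$ land together at the root of the Fatou component containing the critical value, and $d(\varphi,\varphi')$ is minimal among pairwise distances in the cycle portrait (this minimality transfers from $(\theta,\theta')$ because $Z_j$, being an order-preserving injection $\Q/\Z\to\Q/\Z$ that respects sectors, does not create a strictly closer pair — any pair $\varphi_a,\varphi_b$ in the cycle comes from a pair $\theta_a,\theta_b$ in the same $d$-sector, and the $d$-adic leading agreement of $\theta_a,\theta_b$ is at least as long as that of $\theta,\theta'$, which on applying $Z_j$ translates to $(d+1)$-adic agreement at least as long). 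Therefore $(\varphi,\varphi')$ generates a degree-$(d+1)$ orbit portrait with characteristic pair $(\varphi,\varphi')$, which by the characterization of roots means $\varphi,\varphi'$ are companion angles under $\mu_{d+1}$ landing at the root of the component containing $\mathscr{E}_j(\lambda)$.

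The main obstacle I anticipate is the minimality/characteristic-pair step: proving that $Z_j$ does not accidentally make some \emph{other} pair of orbit angles closer than $(\varphi,\varphi')$, so that $(\varphi,\varphi')$ remains the characteristic pair rather than merely \emph{a} pair defining the portrait. I would handle this by working $(d+1)$-adically: for orbit angles $\theta_a,\theta_b$ the quantity $d(\theta_a,\theta_b)$ is governed by the length of the common prefix of their $d$-adic expansions, $Z_j$ sends these to $(d+1)$-adic expansions with a common prefix of \emph{at least} the same length (the symbol-shift $u_{j,\theta}$ is applied coordinatewise and preserves equality of symbols, by Remark~\ref{remark:kneadpres}), and the characteristic pair $(\theta,\theta')$ has the (uniquely) longest such prefix among cycle pairs; I then need the mild additional observation that no \emph{tie} is created, which follows because $Z_j$ is injective so distinct orbit points stay distinct and the unique minimality in degree $d$ forces unique minimality in degree $d+1$. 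Everything else is bookkeeping with the already-established order- and sector-preservation of $Z_j$.
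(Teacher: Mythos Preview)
Your overall strategy---show that $(\varphi,\varphi')$ is the characteristic pair of a degree-$(d+1)$ formal orbit portrait---is the same as the paper's, but two steps in your execution do not go through as written.

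First, you never confront the fact that $\varphi=Z_j(\theta)$ is built from the symbol shift $u_{j,\theta}$ while $\varphi'=Z_j(\theta')$ is built from $u_{j,\theta'}$, and these are \emph{different} functions: they disagree on the interval $\big(\tfrac{\theta+j}{d},\tfrac{\theta'+j}{d}\big)$. Every statement you make comparing orbit points of $\varphi$ with orbit points of $\varphi'$ (same static sector, same prefix, etc.) implicitly assumes the two symbol shifts agree on the combined orbit $\mathscr O(\theta,\theta')$. The paper isolates exactly this as its first step: because $(\theta,\theta')$ bounds the characteristic arc, no point of $\mathscr O(\theta,\theta')$ lies in $\big(\tfrac{\theta+j}{d},\tfrac{\theta'+j}{d}\big)$, and hence $u_{j,\theta}\big|_{\mathscr O(\theta,\theta')}=u_{j,\theta'}\big|_{\mathscr O(\theta,\theta')}$. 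Without this, your sector comparison and your prefix argument simply do not compare the right objects. (Relatedly, your aside ``$m\neq j$ since the $\theta$-orbit avoids sector $j$'' is backwards: Proposition~\ref{prop:missedsector}(2) says the $\varphi$-orbit avoids $T^{stat}_{d+1,j}(\varphi)$; it says nothing about the $\theta$-orbit avoiding $T^{stat}_{d,j}(\theta)$, and indeed it need not.)

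Second, your minimality argument is not a proof. ``Common-prefix length'' only bounds $d(\cdot,\cdot)$ from above, and $u_{j,\theta}$ acts on \emph{angles}, not on $d$-adic digits---two orbit points with the same $d$-adic first digit equal to $j$ can receive different $(d+1)$-adic first digits ($j$ versus $j+1$) depending on which side of $\tfrac{\theta+j}{d}$ they sit. Remark~\ref{remark:kneadpres} concerns kneading sequences (itineraries relative to the dynamic partition), not $d$-adic digits, so it does not give what you claim. And injectivity of $Z_j$ alone cannot prevent a non-characteristic pair from becoming the shortest arc in degree $d+1$; order preservation controls cyclic order, not arc lengths. The paper avoids all of this by working with the \emph{critical arcs}: each $\mathscr B_i$ has a unique complementary arc of length $>\tfrac{1}{d+1}$, the one for $i=r$ equals a full set $\R/\Z\setminus\big[\tfrac{\varphi'+m_0}{d+1},\tfrac{\varphi+m_0+1}{d+1}\big]$ while the others are strictly contained in such sets, so it is the longest; its $\mu_{d+1}$-image is the arc bounded by $(\varphi,\varphi')$, which is therefore the characteristic arc. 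That is the argument you are missing.
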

\begin{proof}
    Let $\mathscr{A} = \{\mathscr{A}_1,...,\mathscr{A}_r\}$ be the orbit portrait generated by $(\theta,\theta')$, and let $\mathscr{O}(\theta,\theta') = \bigcup_{i=1}^r \mathscr{A}_i$.
    
    The symbol shift functions $u_{j,\theta}$ and $u_{j,\theta'}$  coincide on $\mathscr{O}(\theta, \theta')$, since $\mathscr{O}(\theta,\theta')\cap \big(\frac{\theta+j}{d},\frac{\theta'+j}{d}\big) = \emptyset$.
    We define the sets $\mathscr{B}_i$ as follows:\begin{align*}
    \mathscr{B}_i &= \{\phi_n: \theta_n  \in \mathscr{A}_i\} \cup \{\phi_n ': \theta_n ' \in \mathscr{A}_i\}
    \end{align*}
\noindent  The collection $\mathscr{B} = \{\mathscr{B}_1,...,\mathscr{B}_r\}$ is a  partition of the union of the orbits of $\phi$ and $\phi'$. In order to show that $(\phi,\phi')$ is a companion pair, we first show that the angles in their orbits taken together form a formal orbit portrait.\\
Let 
\begin{align*}
    \mathscr{B}(\theta,\theta') = \bigcup_{j=1}^r \mathscr{B}_j
\end{align*}
In order to show that $\mathscr{B}$ is a formal orbit portrait, we will prove that it satisfies all properties in Definition~\ref{defn:formal_portrait}.
\begin{enumerate}
\item is clear by definition.
\item The fact that $\mathscr{B}_i$ maps bijectively onto $\mathscr{B}_{i+1}$,  is clear since $\mathscr{A}_i$ maps bijectively onto $\mathscr{A}_{i+1}$ under $\mu_d$. The rest follows from Proposition~ \ref{prop:spiderconjugacy}.
\item Define 
\begin{align*}
    \mathscr{B}_i(\theta) & = \mathscr{O}_{d+1}(\phi) \cap \mathscr{B}_i\\ \mathscr{B}_i(\theta') & = \mathscr{O}_{d+1}(\phi') \cap \mathscr{B}_i
\end{align*}
Since $\mathscr{A}_i \subset T^{stat}_{d,m}(\theta)$ for some $m \in \{0,1,...,d-1\}$, we have $\mathscr{B}_i(\theta) \subset T^{stat}_{d+1,n}(\phi)$  and $\mathscr{B}_i(\theta') \subset T^{stat}_{d+1,n}(\phi')$ for some $n \in \{0,1,..,d\}$. It suffices to show that 
\begin{align*}
    \mathscr{B}_i \subset \Biggl(\frac{\phi'+n}{d+1},\frac{\phi+n+1}{d+1}\Biggl) = T^{stat}_{d+1,n}(\phi) \cap T^{stat}_{d+1,n}(\phi')
\end{align*}
For any $s \in \{0,1,...,d\}$, if there exists  $\psi \in \mathscr{B}_i \cap \big[\frac{\phi+s}{d+1},\frac{\phi'+s}{d+1}\big]$, then there exists $\alpha \in \mathscr{A}_i \cap \big[\frac{\theta+r}{d},\frac{\theta'+r}{d}\big]$ for some $r$, which is not possible. Thus, for all $ s \in \{0,1,...,d\}$, 
\begin{align*}
    \mathscr{B}_i \cap \Biggl[\frac{\phi+s}{d+1},\frac{\phi'+s}{d+1}\Biggl]=\emptyset
\end{align*}
The result follows immediately.
\item is clear, since the period of all angles in $\mathscr{B}(\theta,\theta')$ is equal to $k$, and since $\mathscr{A}$ is a formal orbit portrait,  $k = rp$ for some $p\geq 1$.
\item 
If $\mathscr{A}_n \subset \big(\frac{\theta'+j}{d},\frac{\theta+j+1}{d}\big)$, then $\mathscr{B}_n \subset \big(\frac{\phi'+j}{d+1},\frac{\phi+j+1}{d+1}\big)$.

Given distinct integers $0 \leq i, i' \leq d$, by property (2),  $\mathscr{B}_{n,i}$ and $\mathscr{B}_{n,i'}$ are unlinked. Given $n \neq m$, and any $i,i'$, we want to show that $\mathscr{B}_{n,i}, \mathscr{B}_{m,i'}$ are unlinked.
             If $i=i'=0$, then this follows from the fact that $\mathscr{A}_n, \mathscr{A}_{m}$ are unlinked. If at least one of $i,i'$ is nonzero and $\mathscr{B}_{n,i}$ and $\mathscr{B}_{m,i'}$ are linked, then without loss of generality, we can find angles $\alpha, \beta \in \mathscr{B}_{n,i}$ and $\eta, \delta \in \mathscr{B}_{m,i'}$ that satisfy
             \begin{align*}
                 \alpha < \eta < \beta < \delta
             \end{align*}
             This implies 
             \begin{align*}
                (d+1)\alpha < (d+1)\eta < (d+1)\beta < (d+1)\delta
             \end{align*}
              The inequalities are strict since $\mathscr{B}_{n+1}$ and $\mathscr{B}_{m+1}$ are disjoint. But  $\mathscr{B}_s$ maps bijectively onto $\mathscr{B}_{s+1}$ for each $s$, and the above inequality implies that $\mathscr{B}_{n+1}, \mathscr{B}_{m+1}$ are linked, which is a contradiction.
\end{enumerate}
Thus $\mathscr{B}$ is a formal, non-trivial orbit portrait. As a last step, we show that $\phi,\phi'$ are the characteristic angles of this orbit portrait.

The interval $(\phi, \phi')$ has either the smallest or largest arc length in $\mathscr{B}_1$. We let $\ell_i$ be the length of the unique complementary arc $\gamma_i$ of $\mathscr{B}_i$ of length greater than $\frac{1}{d}$. 
    Each $\gamma_i$ is a critical arc - that is, under multiplication by $d+1$, it covers the circle $d$ times. Furthermore, the $\gamma_i$'s are the only critical arcs of $\mathscr{B}$.\\ \\
    For $i\neq r$, $\gamma_i$ is strictly contained in  $\R/\Z \setminus \big[\frac{\phi'+m}{d+1},\frac{\phi+m+1}{d+1}]$ for some $m$, whereas $\gamma_r =\R/\Z \setminus  \big[\frac{\phi'+m_0}{d+1},\frac{\phi+m_0+1}{d+1}\big]$ for some $m_0$.  This proves that $\ell_r = \max_i \ell_i$.

    Therefore, the critical value arc bounded by $\mu_{d+1}(\partial \gamma_r)$ is the shortest critical value arc among all critical value arcs. But we note that $\mu_{d+1}(\partial \gamma_r) = \{\phi,\phi'\}$, meaning that 
    \begin{align*}
        d(\phi',\phi) = \min_{i}\min_{\alpha,\beta \in \mathscr{B}_i} d(\alpha,\beta)
    \end{align*}
    where $d(\alpha,\beta)$ is the length of the smaller arc in $\R/\Z \setminus \{\alpha,\beta\}$. 
    This is the definition of the characteristic angle pair, and the result follows.
\end{proof}
\begin{proposition}\label{prop:maximalityper}
Given a companion pair $(\phi,\phi')$ landing at a hyperbolic root in $\mathcal{M}_{d+1}$ such that the orbit of $\phi$ (or $\phi'$) under $\mu_{d+1}$ does not intersect the interior of $T^{stat}_{d+1,j}(\phi)$, there exists a companion pair $(\alpha,\alpha')$ periodic under $\mu_d$ such that $\phi = Z_j(\alpha), \phi' = Z_j(\alpha')$.
\end{proposition}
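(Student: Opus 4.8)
The plan is to invert the construction of Proposition~\ref{prop:companion}, with Proposition~\ref{prop:maximalityprep} providing the initial pullback.

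\emph{Step 1: pulling back $\phi$.} Since $(\phi,\phi')$ land at the root of a hyperbolic component of $\mathcal{M}_{d+1}$, both are periodic under $\mu_{d+1}$ with a common exact period $k$ and lie on a common $\mu_{d+1}$-cycle, so $\mathscr{O}_{d+1}(\phi)=\mathscr{O}_{d+1}(\phi')$. Hence the hypothesis may be taken in the form ``$\mathscr{O}_{d+1}(\phi)$ misses the interior of $T^{stat}_{d+1,j}(\phi)$'': if instead it is $\phi'$ whose orbit misses $T^{stat}_{d+1,j}(\phi')$, note that the symmetric difference of $T^{stat}_{d+1,j}(\phi)$ and $T^{stat}_{d+1,j}(\phi')$ is a pair of sub-arcs which $\mu_{d+1}$ carries into the characteristic arc $(\phi,\phi')$ of the orbit portrait, and that arc contains no portrait angle, so the orbit misses $T^{stat}_{d+1,j}(\phi)$ as well. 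Now Proposition~\ref{prop:maximalityprep} yields a unique $\alpha\in\Q/\Z$ with $Z_j(\alpha)=\phi$, and Proposition~\ref{prop:missedsector}(1) makes $\alpha$ periodic under $\mu_d$ with exact period $k$. Thus $R_d(\alpha)$ lands at the root or at a co-root of a period-$k$ hyperbolic component $V\subset\mathcal{M}_d$, and it remains to exclude the co-root case.

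\emph{Step 2: no co-root.} Suppose $R_d(\alpha)$ lands at a co-root of $V$, so in the dynamical plane of the centre $c_0$ of $V$ the ray at angle $\alpha$ is the only ray of the critical cycle's orbit portrait at its landing point. Recall that two rational dynamic rays land at the same point of $f_{d,c_0}$ iff the corresponding angles have equal itineraries with respect to $\alpha$ (Section~\ref{sec: unicritical dynamics}), and that for any angle $t$ whose $\mu_d$-orbit avoids the splitting point $\tfrac{\alpha+j}{d}$ the itinerary of $t$ with respect to $\alpha$ and that of $Z_j(t)$ with respect to $\phi$ determine each other through the injective symbol recoding of Remark~\ref{remark:kneadpres} (applied to $t$ in place of $\alpha$). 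Every angle landing at the landing point of $R_{d+1}(\phi)$ lies on the $\mu_{d+1}$-cycle of $\phi$, hence — by the critical-arc argument of Step~1 — lies in the image of $Z_j$; so $Z_j$ is a bijection from the rays at $\alpha$'s landing point onto the rays at $\phi$'s landing point. Consequently only $R_{d+1}(\phi)$ lands at its landing point, making $\phi$ a co-root angle of $\mathcal{M}_{d+1}$ — contradicting that $(\phi,\phi')$ land at a root.

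\emph{Step 3: identifying $\alpha'$.} So $R_d(\alpha)$ lands at the root of $V$; let $\alpha'$ be its companion, so that $(\alpha,\alpha')$ is a companion pair periodic under $\mu_d$ with period $k$. Proposition~\ref{prop:companion} then says $\big(Z_j(\alpha),Z_j(\alpha')\big)=\big(\phi,Z_j(\alpha')\big)$ is a companion pair under $\mu_{d+1}$, and since the companion of $\phi$ is unique we get $Z_j(\alpha')=\phi'$, which is the assertion.

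\emph{Main obstacle.} The technical heart is Step~2: upgrading Remark~\ref{remark:kneadpres} from the kneading sequence of $\alpha$ to the itinerary of an arbitrary angle while tracking the splitting point, and confirming that every ray landing with $R_{d+1}(\phi)$ lies in the image of $Z_j$ (which reduces to the portrait's characteristic — hence critical — arcs being free of orbit angles). A less computational route to Step~2 is to run Proposition~\ref{prop:companion}'s proof in reverse: apply the recoding $w$ from the proof of Proposition~\ref{prop:maximalityprep} to the degree-$(d+1)$ orbit portrait of $\phi$, verify item by item that the resulting collection of angle sets satisfies Definition~\ref{defn:formal_portrait} and is nontrivial (using injectivity of $Z_j$), and invoke \cite[Theorem~2.12]{multibrot} to realize it as the portrait of a period-$k$ pcf polynomial, whose characteristic pair $(\alpha,\alpha')$ then lands at the root of a component of $\mathcal{M}_d$.
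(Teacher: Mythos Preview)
Your three-step outline is exactly the paper's argument: pull back $\phi$ via Proposition~\ref{prop:maximalityprep}, rule out the co-root case, and then let Proposition~\ref{prop:companion} identify the companion. The paper dispatches Step~2 in one sentence (``if $\alpha$ landed at a co-root then $\phi$ would too''), implicitly invoking the co-root analysis of Section~\ref{sec:coroots}; your itinerary argument makes this self-contained, which is a plus, though your notation ``$Z_j(t)$'' for the pushforward of an auxiliary angle is imprecise (the correct object is the $(d{+}1)$-adic angle built from $u_{j,\alpha}$ applied to the $\mu_d$-orbit of $t$, as in Proposition~\ref{prop:monotonedyn}, not $Z_j$ applied to $t$).

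There is, however, a genuine error in Step~1: companion angles do \emph{not} in general lie on a common $\mu_{d+1}$-cycle. This fails precisely for primitive components (already in degree~$2$: the airplane pair $(3/7,4/7)$ has $\mathscr{O}_2(3/7)=\{3/7,6/7,5/7\}$ disjoint from $\mathscr{O}_2(4/7)=\{4/7,1/7,2/7\}$). Your fallback via the symmetric difference of $T^{stat}_{d+1,j}(\phi)$ and $T^{stat}_{d+1,j}(\phi')$ is along the right lines, but as written it only transfers the hypothesis from one sector to the other for the \emph{same} orbit, not from one orbit to the other. The clean fix is to observe that the full orbit portrait $\bigcup_i\mathscr{A}_i$ (which contains both orbits) avoids the arcs $\big(\frac{\phi+m}{d+1},\frac{\phi'+m}{d+1}\big)$ for every $m$ (since these map into the characteristic arc), so if the orbit of $\phi'$ avoids the interior of $T^{stat}_{d+1,j}(\phi')$ then the entire portrait, hence in particular the orbit of $\phi$, avoids the interior of $T^{stat}_{d+1,j}(\phi)$. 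The paper sidesteps this entirely by tacitly taking $\phi$ to be whichever of the two satisfies the hypothesis.
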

\begin{proof}
    We apply Proposition~\ref{prop:maximalityprep} to $\phi$ to get an angle $\alpha$ with preperiod $0$ and period equal to that of $\phi$, and $Z_j(\alpha) = \phi$. If $\alpha$ lands at a co-root in $\mathcal{M}_d$, this would imply that $\phi$ lands at a co-root in $\mathcal{M}_{d+1}$, which contradicts our assumption. Thus $\alpha$ lands at a root, and has a companion $\alpha'$. By Proposition~\ref{prop:companion}, $\phi' = Z_j(\alpha')$.
\end{proof}
\begin{proposition}\label{ind_monic}
Let $c'= e^{\frac{2 \pi i n}{d-1}}c$ for some $n \in \{0,1,...,d-1\}$. For $\theta' = \theta + \frac{n}{d-1}$, we  have 
\begin{align*}
 \lambda_{j,\theta} = \lambda_{j,\theta'}   
\end{align*}
\end{proposition}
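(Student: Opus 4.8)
\emph{Proof proposal.} The plan is to transport the degree-$d$ spider of $\theta'=\theta+\tfrac n{d-1}$ through degree $d+1$ and then invoke Thurston rigidity. First, since $\theta\in\Omega_d(c)$ and $\Omega_d(\omega c)=\Omega_d(c)+\tfrac1{d-1}$, iterating gives $\theta'\in\Omega_d(\omega^{n}c)=\Omega_d(c')$, so $\theta'$ is a periodic angular coordinate for $\lambda$ with monic representative $c'$, and the construction of Section~\ref{sec:periodic} really does produce $\lambda_{j,\theta'}$. Write $\varphi=Z_j(\theta)$ and $\varphi'=Z_j(\theta')$. The goal is to show $\mathcal F_{d+1,\varphi}$ and $\mathcal F_{d+1,\varphi'}$ are Thurston equivalent, and then to identify their holomorphic realizations.

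Because $d^{m-1}\equiv 1\pmod{d-1}$ for all $m$, we have $d^{m-1}\theta'\equiv d^{m-1}\theta+\tfrac n{d-1}$ in $\R/\Z$; hence the rotation $\rho\colon z\mapsto e^{2\pi i n/(d-1)}z$ carries $\widetilde S_d(\theta)$ onto $\widetilde S_d(\theta')$, cyclically permuting the $d$ preimage legs, and intertwines $\mathcal F_{d,\theta}$ with $\mathcal F_{d,\theta'}$. Applying Proposition~\ref{prop:spiderconjugacy} once to $\theta$ and once to $\theta'$ conjugates $\mathcal F_{d,\theta}|S_d(\theta)$ to $\mathcal F_{d+1,\varphi}|S_{d+1}(\varphi)$ and $\mathcal F_{d,\theta'}|S_d(\theta')$ to $\mathcal F_{d+1,\varphi'}|S_{d+1}(\varphi')$; composing the three conjugacies, $\mathcal F_{d+1,\varphi}|S_{d+1}(\varphi)$ and $\mathcal F_{d+1,\varphi'}|S_{d+1}(\varphi')$ become conjugate by a homeomorphism fixing $\infty$, sending $0$ to $0$, and preserving the circular order of legs. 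By Proposition~\ref{prop:genspider}, applied with the augmented spiders, the $S^2$-extensions $\mathcal F_{d+1,\varphi}$ and $\mathcal F_{d+1,\varphi'}$ are Thurston equivalent.

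Now Proposition~\ref{prop:addsector=angle}(3) gives that $\mathcal G_{d,j,\theta}$ is Thurston equivalent to $\mathcal F_{d+1,\varphi}$ and $\mathcal G_{d,j,\theta'}$ to $\mathcal F_{d+1,\varphi'}$, while Section~\ref{sec:periodic} gives that $\mathcal G_{d,j,\theta}$ is Thurston equivalent to $p_{d+1,\lambda_{j,\theta}}$ and $\mathcal G_{d,j,\theta'}$ to $p_{d+1,\lambda_{j,\theta'}}$. Chaining these with the equivalence of $\mathcal F_{d+1,\varphi}$ and $\mathcal F_{d+1,\varphi'}$ from the previous paragraph, the polynomials $p_{d+1,\lambda_{j,\theta}}$ and $p_{d+1,\lambda_{j,\theta'}}$ are Thurston equivalent. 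Since a realized Thurston map determines its holomorphic realization up to affine conjugacy (Section~\ref{sec:thurston pullback}), and $p_{d+1,\alpha}$ is affine conjugate to $p_{d+1,\beta}$ precisely when $\alpha=\beta$, this forces $\lambda_{j,\theta}=\lambda_{j,\theta'}$, which is the assertion.

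The step I expect to be most delicate is the verification that $\rho$ genuinely intertwines $\mathcal F_{d,\theta}$ and $\mathcal F_{d,\theta'}$ on all of $\widetilde S_d(\theta)$ — on the generic rays, on the cyclically permuted preimage legs, on the $0$- and $\infty$-legs, and on the extra edge inserted in the periodic case to keep $0$ periodic — together with checking that the composite conjugacy on the degree-$(d+1)$ spiders meets the hypotheses of Proposition~\ref{prop:genspider}; this is routine but must be done with care. An alternative route, using Proposition~\ref{prop:symmetrypres} in the form $\varphi'=\varphi+\tfrac nd$, is to observe that $\Theta_{d+1}(\mu)$ is invariant under $t\mapsto t+\tfrac1d$ for every $\mu\in\mathcal P_{d+1}$ (the $\Z/d$ rotational symmetry of $\mathcal M_{d+1}$, i.e.\ $\Omega_{d+1}(\omega' c)=\Omega_{d+1}(c)+\tfrac1d$ for $\omega'=e^{2\pi i/d}$ together with $(\omega')^{n}\hat c$ being again a monic representative of $\mu$), whence $\varphi'\in\Theta_{d+1}(\lambda_{j,\theta})\cap\Theta_{d+1}(\lambda_{j,\theta'})$; since a periodic angular coordinate determines its parameter, again $\lambda_{j,\theta}=\lambda_{j,\theta'}$.
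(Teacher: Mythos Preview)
Your \emph{alternative route} in the final paragraph is precisely the paper's proof: the paper invokes Proposition~\ref{prop:symmetrypres} to obtain $\varphi'=Z_j(\theta')=\varphi+\tfrac n d$, notes that $\varphi$ is an angular coordinate for some $c_{j,\theta}\in M_{d+1}(\lambda_{j,\theta})$, and then uses the $\Z/d$--symmetry $\Omega_{d+1}(\omega'c)=\Omega_{d+1}(c)+\tfrac1d$ to conclude that $\varphi'$ is an angular coordinate for $(\omega')^n c_{j,\theta}$, which is still a monic representative of $\lambda_{j,\theta}$. Since $\varphi'$ is by construction also an angular coordinate for $\lambda_{j,\theta'}$, the two parameters coincide. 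That argument is three lines.

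Your primary route through Thurston equivalence is correct in spirit but considerably more work than necessary, and the gap you yourself flag is real: Proposition~\ref{prop:spiderconjugacy} only furnishes a conjugacy on $S_{d+1}(\varphi)$, not on the augmented spider $\widetilde S_{d+1}(\varphi)$, whereas Proposition~\ref{prop:genspider} requires the $\eta_j$ legs as well. The cleanest fix is to bypass the detour through degree~$d$ entirely: once you know $\varphi'=\varphi+\tfrac nd$ from Proposition~\ref{prop:symmetrypres}, the rotation $z\mapsto e^{2\pi i n/d}z$ directly carries $\widetilde S_{d+1}(\varphi)$ to $\widetilde S_{d+1}(\varphi')$ (it permutes the preimage legs cyclically by $j\mapsto j+n$) and conjugates $\mathcal F_{d+1,\varphi}$ to $\mathcal F_{d+1,\varphi'}$ on the full augmented spiders. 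But at that point you are essentially back to the paper's argument, since this rotation is exactly the one witnessing that $(\omega')^n c_{j,\theta}$ and $c_{j,\theta}$ represent the same $\lambda$. The Thurston-rigidity step at the end is then redundant: the angular-coordinate bookkeeping already pins down the parameter.
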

\begin{proof}
$Z_j(\theta)$ is an angular coordinate for a parameter $c_{j,\theta} \in M_{d+1}(\lambda_{j,\theta})$. By Proposition~\ref{prop:symmetrypres}, $\phi':=Z_j(\theta')= \phi+\frac{n}{d}$, and hence $\phi' \in \Theta_{d+1}(\lambda_{j,\theta})$. But  $\phi'$ is an angular coordinate for $e^{\frac{2\pi i n}{d}}c_{j,\theta}\in M_{d+1}(\lambda_{j,\theta'})$, and thus the statement follows. 
\end{proof}
\begin{example}
Let $d=2$.

The quadratic rabbit polynomial (so called because its Julia set looks like a rabbit) is given by  $z^2+c$ where $c\approx-0.122561+0.744862i$. The root of the hyperbolic component containing the rabbit is the landing point of the companion angles $(\theta,\theta') = (\frac{1}{7},\frac{2}{7})$.

The pair  $(Z_0(\theta),Z_0(\theta')) = (\frac{14}{26},\frac{16}{26})$ is a companion pair forming angular coordinates for a cubic rabbit parameter $c_{0,\theta} = c_{0,\theta'} \approx -0.535 - 0.554999i$.

\end{example}
\subsubsection{Landing at a co-root}\label{sec:coroots}
In this section, we assume that $\theta$ lands at a co-root of $U$. There exists an angle pair $(\alpha,\alpha')$ with period $k$ lands at the root of $U$. By the previous section, the angles $\psi = Z_j(\alpha), \psi'=Z_j(\alpha')$ land at the root of a hyperbolic component $V \subset \mathcal{M}_{d+1}$. Since $Z_j$ is order-preserving, $\phi = Z_j(\theta)$ lands in the wake of $(\psi,\psi')$, and by  Proposition~\ref{prop:monotonedyn}, $\phi, \psi$ and $ \psi' $ all have  period $k$ under $\mu_{d+1}$. Additionally, by Remark~\ref{remark:kneadpres}, the kneading sequences of $\phi, \psi $ and $\psi'$ coincide with $\nu^j$ up to and including the index $k-1$. The itineraries of $\phi$ with respect to both $\psi$ and $\psi'$ differ from each other, and from $\nu^j$ at the $k$th position. Since $\psi < \phi <\psi'$, we have $\phi_{k} \in \big(\frac{\psi+n}{d+1},\frac{\psi'+n}{d+1}\big)$ for some $n \in \{0,1,...,d\}$. 

We will show that $\phi$ lands at a co-root of $V$.
\begin{proposition}\label{prop:corootexists}
\begin{enumerate}
    \item $\psi_k \neq \frac{\psi+n}{d+1}$, and $\psi'_k \neq \frac{\psi'+n}{d+1}$
    \item There exists an angle $\phi'$ landing at a co-root of $V$ such that $\phi'_k \in (\frac{\psi+n}{d+1},\frac{\psi'+n}{d+1})$. 
\end{enumerate}
\end{proposition}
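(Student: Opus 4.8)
\noindent\emph{Proof plan.}\quad The plan is to read both claims off the set-up of Section~\ref{sec:coroots}, reducing the combinatorics to a single statement about co-roots in degree $d$. Recall that $\alpha<\theta<\alpha'$, that $(\alpha,\alpha')$ lands at the root of $U$, that $\alpha,\alpha',\theta$ are periodic of period $k$ under $\mu_d$, and that $\psi=Z_j(\alpha)$, $\psi'=Z_j(\alpha')$, $\phi=Z_j(\theta)$. Since $\phi$ is $k$-periodic under $\mu_{d+1}$, $\phi_k=\tfrac{\phi+j_\phi}{d+1}$ for a unique $j_\phi$, and $\psi<\phi<\psi'$ then forces $\phi_k\in\big(\tfrac{\psi+j_\phi}{d+1},\tfrac{\psi'+j_\phi}{d+1}\big)$, so $n=j_\phi$; writing $\psi_k=\tfrac{\psi+j_\psi}{d+1}$ and $\psi'_k=\tfrac{\psi'+j_{\psi'}}{d+1}$, part~(1) becomes the assertion $j_\psi\ne j_\phi$ and $j_{\psi'}\ne j_\phi$. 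By Proposition~\ref{prop:addsector=angle} these indices are obtained by applying the symbol-shift function to the orbit points $\alpha_k,\alpha'_k,\theta_k$, which record the analogous indices $j_\alpha,j_{\alpha'},j_\theta$ in degree $d$, and since that function is order preserving (Proposition~\ref{prop:monotonedyn}), part~(1) reduces to $j_\alpha\ne j_\theta$ and $j_{\alpha'}\ne j_\theta$. The idea is to prove this from the fact that $\theta$ lands at a co-root of $U$: in the dynamical plane of the centre of $U$, the ray at $\theta_k$ lands on the boundary of the critical Fatou component at a point distinct from its root $z_{k-1}$ (its image $\theta$ lands off the root $z_0$), while the rays $\alpha_k$ and $\alpha'_k$ land at $z_{k-1}$ itself; comparing these landing positions against the preimage set $\mu_d^{-1}(\alpha)\cup\mu_d^{-1}(\alpha')$ yields $j_\alpha\ne j_\theta$ and $j_{\alpha'}\ne j_\theta$.

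For part~(2): by Proposition~\ref{prop:companion} the pair $(\psi,\psi')$ is a companion pair under $\mu_{d+1}$, hence it lands at the root of a period-$k$ hyperbolic component $V\subset\mathcal{M}_{d+1}$ whose wake is the arc $(\psi,\psi')$ (see \cite{multibrot}). By the structure of hyperbolic components recalled in Section~\ref{sec: unicritical dynamics}, the $k$-periodic angles lying in $(\psi,\psi')$ are precisely the $d-1$ co-root angles of $V$: each such angle is an angular coordinate of the unique period-$k$ component to which it belongs, that component is contained in $\mathcal{W}(V)$, and no period-$k$ component other than $V$ lies in $\mathcal{W}(V)$. Since $\psi<\phi<\psi'$ and $\phi$ is $k$-periodic under $\mu_{d+1}$, $\phi$ is itself one of these co-root angles, so one may take $\phi'=\phi$, for which $\phi'_k=\phi_k\in\big(\tfrac{\psi+n}{d+1},\tfrac{\psi'+n}{d+1}\big)$ by the choice of $n$.

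The hard part will be the degree-$d$ step $j_\alpha\ne j_\theta$ and $j_{\alpha'}\ne j_\theta$: this needs a careful account of the cyclic order of the external rays landing on the boundary of the critical Fatou component of the centre of $U$, and of how its root is singled out among its marked boundary points, so as to see that the return of a co-root always lands in the interior of one of the arcs $\big(\tfrac{\alpha+m}{d},\tfrac{\alpha'+m}{d}\big)$ having neither $\alpha_k$ nor $\alpha'_k$ as an endpoint. Once this is available, transporting everything to degree $d+1$ via $Z_j$ is routine, and part~(2) is immediate from Proposition~\ref{prop:companion} together with the known combinatorics of $\mathcal{M}_{d+1}$.
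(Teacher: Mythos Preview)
Your approach diverges from the paper's in both parts, with mixed results.

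For part~(1), the paper gives a direct two-line argument entirely in degree $d+1$: since $(\psi,\psi')$ is the characteristic arc of the orbit portrait, arc lengths grow along the orbit, so $d(\psi,\phi)<d(\psi_2,\phi_2)<\cdots<d(\psi_k,\phi_k)$; if $\psi_k=\tfrac{\psi+n}{d+1}$ then (using $\phi_k=\tfrac{\phi+n}{d+1}$) one gets $d(\psi_k,\phi_k)=\tfrac{d(\psi,\phi)}{d+1}<d(\psi,\phi)$, a contradiction. Your reduction to degree $d$ is a detour with two issues. First, the symbol-shift functions $u_{j,\alpha}$ and $u_{j,\theta}$ are \emph{different} maps (they split the $j$th interval at different points), so Proposition~\ref{prop:monotonedyn}, which concerns a single fixed $u_{j,\theta}$, does not directly yield $j_\alpha\ne j_\theta\Rightarrow j_\psi\ne j_\phi$; this implication does hold, but it needs a short case analysis rather than an appeal to monotonicity. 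Second, the ``hard part'' $j_\alpha\ne j_\theta$ you leave open is most cleanly proved by the very same arc-length monotonicity (if $j_\alpha=j_\theta$ then $d(\alpha_k,\theta_k)=d(\alpha,\theta)/d$), so the reduction gains nothing.

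For part~(2), your route is genuinely different and in fact stronger: you argue directly that $\phi$ is a co-root angle of $V$, which is the conclusion the paper reaches only \emph{after} the proposition via an itinerary comparison. Your key input---that no period-$k$ hyperbolic component other than $V$ lies in $\mathcal{W}(V)$---is true (e.g.\ because any component in $\mathcal{W}(V)$ has internal address strictly extending that of $V$, hence period $>k$), but it is not among the facts recalled in Section~\ref{sec: unicritical dynamics}, so it should be justified rather than asserted. The paper instead runs a self-contained pigeonhole count: the $d-1$ co-root angles of $V$ have their $k$th iterates distributed one apiece among the $d-1$ intervals $\big(\tfrac{\psi+m}{d+1},\tfrac{\psi'+m}{d+1}\big)$ with $m\ne j_\psi,j_{\psi'}$, so one of them lands in the interval indexed by $n$. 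Your shortcut is cleaner once the wake fact is in hand (and renders both part~(1) and the post-proposition itinerary argument redundant), while the paper's counting argument avoids importing additional combinatorics of $\mathcal{M}_{d+1}$.
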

\begin{proof}
\begin{enumerate}
    \item Note that $d(\psi,\psi') < d(\psi_2,\psi'_2) < .....< d(\psi_{k},\psi'_{k})$. 
    This also implies $d(\psi,\phi) < d(\psi_2,\phi_2) < .....< d(\psi_{k},\phi_{k})$. Suppose 
    $\psi_k = \frac{\psi+n}{d+1}$, we would then have $d(\psi_k,\phi_k) = \frac{d(\psi,\phi)}{n}$, which is a contradiction. By a similar argument, $\psi'_k \neq \frac{\psi'+n}{d+1}$. 
    \item For any co-root angle $\phi'$, $\phi'_k$ cannot be in $[\frac{\psi'+m}{d+1},\frac{\psi+m+1}{d+1}]$ for any $m$, since this would imply that $\phi' \not \in (\psi,\psi')$.  Therefore each $\phi'_k$ belongs to $(\frac{\psi+m}{d+1},\frac{\psi'+m}{d+1})$, with $\psi_k \neq \frac{\psi+m}{d+1}$ and $\psi'_k \neq \frac{\psi'+m}{d+1}$, by (1). There are $d-1$ co-roots and $d-1$ values of $m$ that satisfy this property. Suppose we have two co-root angles $\phi',\phi''$ with $\phi'_k,\phi''_k \in (\frac{\psi+m}{d+1},\frac{\psi'+m}{d+1})$, this again contradicts the chain of inequalities given in (1), thus for each $m$ with  $\psi_k \neq \frac{\psi+m}{d+1}$ and $\psi'_k \neq \frac{\psi'+m}{d+1}$, there exists exactly one co-root angle $\phi' \in (\frac{\psi+m}{d+1},\frac{\psi'+m}{d+1})$
\end{enumerate}
\end{proof}
The angles $\phi, \phi'$ thus have the same itinerary with respect to $\psi$, and therefore, the dynamic rays at angles $\phi$ and $\phi'$ land at the same point $z_0$ in the plane of $f_{{d+1},\widetilde{c}}$, where $\widetilde{c}$ is the center of $V$, making $z_0$ a cut point in the Julia set of $f_{{d+1},\widetilde{c}}$ unless $\phi = \phi'$. But co-root angles do not land at cut-points, forcing $\phi = \phi'$.
 
The discussion in this section shows that for a periodic parameter $\lambda$, we may define $\mathscr{E}_i(\lambda) = \lambda_{i,\theta}$ for any angle $\theta \in \Omega_d(c)$, for any choice of $c \in M_d(\lambda)$.

\subsection{Definition for critically preperiodic parameters}\label{sec:algorithmangleprep}
Supposing $\lambda$ has a strictly preperiodic critical point, choose $c \in M_d(\lambda)$ and $\theta \in \Omega_d(c)$. Let us denote by $c_{j,\theta}$ the landing point of  $\phi = Z_j(\theta)$ in $\mathcal{M}_{d+1}$. Since $\phi$ is strictly preperiodic under $\mu_{d+1}$, $c_{j,\theta}$ is critically preperiodic.

\begin{proposition}\label{prop:branchpres}For $\theta'\in \Omega_d(c)$,  both $\phi$ and $\phi' = Z_j(\theta')$ land at $c_{j,\theta}$. \end{proposition}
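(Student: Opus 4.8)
The plan is to leverage the combinatorial rigidity of critically preperiodic parameters together with the spider-isomorphism machinery already established for $Z_j$. Recall that for a Misiurewicz parameter in $\mathcal{M}_d$, the finitely many angular coordinates $\theta \in \Omega_d(c)$ are precisely the angles whose dynamic rays land on the critical value (or, combinatorially, the angles that share itinerary/kneading behaviour recording the critical orbit). So the first step is to fix $\theta,\theta' \in \Omega_d(c)$ and recall that $\theta$ and $\theta'$ are \emph{linked to each other} in a controlled way: both have preperiod $\ell$ and the same eventual period $k$ under $\mu_d$, and the critical value $c$ is the common landing point of $R_d(\theta)$ and $R_d(\theta')$. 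Equivalently, $\Sigma_{d,\theta}(\theta) = \Sigma_{d,\theta}(\theta')$, i.e.\ they have the same itinerary with respect to $\theta$ (this is the Misiurewicz-parameter analogue of the companion-pair statement, and is what characterizes $\Omega_d(c)$).

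Second, I would transport this via $Z_j$. By Proposition~\ref{prop:missedsector}, $Z_j$ preserves the circular order of orbits and preserves preperiod and period, so $\phi = Z_j(\theta)$ and $\phi' = Z_j(\theta')$ are both strictly preperiodic under $\mu_{d+1}$ with preperiod $\ell$ and eventual period $k$. The key point is that $Z_j$ does not merely preserve the combinatorics of a single angle but is compatible with itineraries: by Remark~\ref{remark:kneadpres} (and the order-preservation in Proposition~\ref{prop:monotonedyn}), applying the symbol-shift $\nu \mapsto \nu^j$ to the itinerary $\Sigma_{d,\theta}(t)$ yields $\Sigma_{d+1,\phi}(Z_j(t))$ for every $t$ whose orbit avoids the relevant boundary. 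Applying this with $t = \theta'$ and using $\Sigma_{d,\theta}(\theta') = \Sigma_{d,\theta}(\theta)$, we get $\Sigma_{d+1,\phi}(\phi') = \Sigma_{d+1,\phi}(\phi)$.

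Third, I would conclude by the local-connectivity/Carath\'eodory-loop criterion recalled in Section~\ref{sec: unicritical dynamics}: $\phi$ lands at a Misiurewicz parameter $c_{j,\theta}$, and in the dynamical plane of $f_{d+1,c_{j,\theta}}$ two rational angles have rays landing at the same point iff their itineraries with respect to $\phi$ agree. Since $\Sigma_{d+1,\phi}(\phi') = \Sigma_{d+1,\phi}(\phi)$, the dynamic rays $R(\phi)$ and $R(\phi')$ land at the same point, namely the critical value of $f_{d+1,c_{j,\theta}}$; hence $\phi'$ is an angular coordinate for $c_{j,\theta}$, i.e.\ $R_{d+1}(\phi')$ lands at $c_{j,\theta}$ in parameter space. (Here I use that for preperiodic parameters, the parameter ray at $\phi'$ lands at the Misiurewicz parameter whose critical value has the itinerary determined by $\phi'$, which by construction is $c_{j,\theta}$.)

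The main obstacle is the third step: making precise the passage from "same dynamical itinerary" to "same parameter-ray landing point." In the Misiurewicz case this is standard but requires knowing that the itinerary data $\Sigma_{d+1,\phi}(\phi)$ together with the preperiod/period determines the parameter $c_{j,\theta}$ uniquely among Misiurewicz parameters — this is essentially the combinatorial rigidity of critically preperiodic polynomials from \cite{hubbard_bielefeld_fisher}. I would cite that classification, and also need the elementary fact (already used implicitly for $\theta,\theta'$) that $\Omega_d(c)$ is exactly the set of angles whose dynamic ray lands at the critical value and whose itinerary matches; the content of the proposition is then that $Z_j$ maps this set into $\Omega_{d+1}(c_{j,\theta})$, which follows from the itinerary-compatibility of $Z_j$. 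A minor technical point to check is that the boundary-value conventions in the definition of $u_{j,\theta}$ (the companion-pair tie-breaking) do not create an exceptional orbit point that breaks the itinerary-transport identity; since $\theta$ is strictly preperiodic here, its orbit avoids $\mu_d^{-1}(\theta)$ entirely, so this subtlety does not arise.
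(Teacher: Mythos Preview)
Your proposal is correct and follows essentially the same route as the paper: use the Carath\'eodory-loop criterion to get $\Sigma_{d,\theta}(\theta) = \Sigma_{d,\theta}(\theta')$, transport itineraries via Remark~\ref{remark:kneadpres} and Proposition~\ref{prop:monotonedyn} to obtain $\Sigma_{d+1,\phi}(\phi) = \Sigma_{d+1,\phi}(\phi')$, apply the Carath\'eodory-loop criterion again in the plane of $f_{d+1,c_{j,\theta}}$, and then pass from dynamical to parameter landing. The paper is slightly terser on the last step (it simply asserts ``Hence the parameter ray at $\phi'$ also lands at $c_{j,\theta}$''), so your caution there is appropriate but does not reflect a gap in either argument.
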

\begin{proof}
Let $\gamma$ denote the Carath\'{e}odory loop of $f_{d,c}$. Then, given any $\alpha \in \Omega_d(c)$ and $t_1, t_2 \in \R/\Z$,
\begin{align*}
\gamma(t_1) = \gamma(t_2) \iff \Sigma_{d,\alpha}(t_1) = \Sigma_{d,\alpha}(t_2) 
\end{align*}

This gives us  $\Sigma_{d,\theta}(\theta') = \Sigma_{d,\theta}(\theta) = \Sigma_{d,\theta'}(\theta) = \Sigma_{d,\theta'}(\theta')$. This common sequence is also the kneading sequence of both $\theta$ and $\theta'$. Let us call it $\nu$.
By Remark~\ref{remark:kneadpres}, the angles $\phi$ and $\phi'$ have the kneading sequence $\nu^j$, and by Proposition~\ref{prop:monotonedyn}, it is clear that $\Sigma_{d+1,\phi}(\phi) =\Sigma_{d+1,\phi}(\phi') = \Sigma_{d+1,\phi'}(\phi') = \Sigma_{d+1,\phi'}(\phi) =  \nu^j$. 
So in the dynamical plane of $z^{d+1}+c_{j,\theta}$,  $c_{j,\theta}=\gamma_j (\phi) = \gamma_j(\phi')$, where $\gamma_j$ is the corresponding Carath\'{e}odory loop. Hence the parameter ray at $\phi'$ also lands at $c_{j,\theta}$. 
\end{proof}
This proof shows that $c_{j,\theta}$ depends only on $c$. Proposition~\ref{ind_monic} also applies to preperiodic parameters, hence we may define $\mathscr{E}_j(\lambda) = \lambda_{j,\theta} = (d+1)c_{j,\theta}^d$ for any $c \in M_d(c)$ and any $\theta \in \Omega_d(c)$. 
\begin{example}
Set $d=2$. The angles $\theta = \frac{17}{2^4(2^4-1)}$,  $\theta' = \frac{19}{2^4(2^4-1)}$ land at $c \approx 0.63571 +0.59124 i \in \mathcal{M}_2$. 

The angles $Z_1(\theta) = \frac{163}{3^4(3^4-1)}$, $Z_1(\theta') = \frac{169}{3^3(3^4-1)}$ land at $c_{1,\theta} = c_{1,\theta'} \approx 0.62745+0.29882i.
 \in \mathcal{M}_3$.
\end{example}
This finishes the definition of $\mathscr{E}_i(\lambda)$ for all $\lambda \in \mathcal{P}_d$. In the rest of the section, 
we show that $\mathscr{E}_j$ is a combinatorial embedding. 

As in previous sections, fix $\lambda \in \mathcal{P}_d$ and, $c \in M_d(\lambda)$ and $ j \in \{0,1,...d-1\}$. 
\subsection{Properties of $\mathscr{E}_j$}

First suppose $\lambda$ is a periodic parameter with critical value of period $k\geq 2$, and $c$ is the center of hyperbolic component $U$. 
\begin{proposition}\label{prop:addresspres}\label{prop:primpres} Suppose the angle $\theta$ lands at the root of $U$ and has companion $\theta'$.
         Let $V$ be the hyperbolic component in $\mathcal{M}_{d+1}$ bounded by $(Z_j(\theta),Z_j(\theta'))$. Then $U$ is primitive if and only if $V$ is primitive.
\end{proposition}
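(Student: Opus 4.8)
The plan is to rephrase primitivity of a period-$k$ hyperbolic component as a numerical condition on its root's orbit portrait, and then to observe that this number is left invariant by $Z_j$, with all the needed invariance already contained in Propositions~\ref{prop:companion} and \ref{prop:missedsector}. The tool is the classical orbit-portrait dichotomy (see \cite{multibrot}): if $W$ is a period-$k$ hyperbolic component of $\mathcal{M}_d$ and $\mathscr{A}=\{\mathscr{A}_1,\dots,\mathscr{A}_r\}$ is the $d$-orbit portrait generated by the characteristic companion pair landing at its root — equivalently, the orbit portrait of the characteristic cycle of the center of $W$ — then, writing $rp$ for the common period of the angles in $\bigcup_i \mathscr{A}_i$, the component $W$ is primitive exactly when $p=1$, and it is a satellite of a component of period $r$ (with internal-angle denominator $p$) exactly when $p>1$. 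The same statement holds in $\mathcal{M}_{d+1}$.

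Now I would apply this twice. For $U$: let $\mathscr{A}=\{\mathscr{A}_1,\dots,\mathscr{A}_r\}$ be the $d$-orbit portrait generated by $(\theta,\theta')$; by the realization statement recalled after Definition~\ref{defn:formal_portrait} the common period of its angles equals the period $k$ of the critical value of $f_{d,c}$, so $k=rp$ and $U$ is primitive if and only if $p=1$. For $V$: Proposition~\ref{prop:companion} shows that $(\phi,\phi'):=(Z_j(\theta),Z_j(\theta'))$ is the characteristic pair of the formal $(d+1)$-orbit portrait $\mathscr{B}=\{\mathscr{B}_1,\dots,\mathscr{B}_r\}$ built there; since a formal portrait is reconstructed from its characteristic pair and $(\phi,\phi')$ bounds $V$, the collection $\mathscr{B}$ is the orbit portrait of the characteristic cycle of the center of $V$, and $V$ has period $k$. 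Two facts finish the argument: $\mathscr{B}$ has the same number $r$ of sets as $\mathscr{A}$, directly from its definition $\mathscr{B}_i=\{\phi_n:\theta_n\in\mathscr{A}_i\}\cup\{\phi'_n:\theta'_n\in\mathscr{A}_i\}$; and the common period of the angles of $\bigcup_i\mathscr{B}_i$ under $\mu_{d+1}$ equals the common period of the angles of $\bigcup_i\mathscr{A}_i$ under $\mu_d$, namely $k$, by Proposition~\ref{prop:missedsector}(1). Hence the integer $p=k/r$ attached to $\mathscr{B}$ coincides with the one attached to $\mathscr{A}$, so $V$ is primitive if and only if $p=1$, i.e.\ if and only if $U$ is primitive.

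The only non-bookkeeping ingredient is the orbit-portrait characterization of primitivity in terms of $p$, which is classical and I would simply invoke; one should also note that the portrait actually realized at the center of $V$ is literally $\mathscr{B}$ rather than some competitor with the same characteristic angles, which is automatic because formal portraits are determined by their characteristic pairs. I do not anticipate a genuine obstacle: once primitivity is expressed through the portrait, the proposition is a direct consequence of Propositions~\ref{prop:companion} and \ref{prop:missedsector}.
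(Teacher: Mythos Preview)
Your argument is correct and is a genuinely different route from the paper's. You recast primitivity as the numerical condition $p=k/r=1$ on the orbit portrait of the characteristic cycle, and then read off directly from the construction in Proposition~\ref{prop:companion} that $\mathscr{B}$ has the same number $r$ of sets as $\mathscr{A}$, while Proposition~\ref{prop:missedsector}(1) gives that the angles of $\mathscr{B}$ have the same period $k$ under $\mu_{d+1}$ as those of $\mathscr{A}$ under $\mu_d$. Hence $p$ is unchanged and primitivity is preserved.

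The paper instead goes through kneading sequences and internal addresses: for the kneading sequence $\nu$ of $\theta$, Remark~\ref{remark:kneadpres} shows $\nu^j$ is obtained from $\nu$ by a position-independent relabelling of symbols, so $\nu_m=\nu_n\iff \nu^j_m=\nu^j_n$ and hence $\rho_\nu=\rho_{\nu^j}$; this forces $U$ and $V$ to have the \emph{same internal address}, and primitivity is then read off from the divisibility of the last two internal-address entries. Your approach is shorter and stays entirely inside the orbit-portrait machinery already set up for Proposition~\ref{prop:companion}; the paper's approach proves the stronger statement that the internal addresses coincide, which is exactly what is exploited in the very next Proposition~\ref{prop:satpres} on preservation of satellite relations. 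So the paper pays a small overhead here that is immediately cashed in, whereas your argument would need a separate step later to recover internal addresses.
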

\begin{proof}
        Given any sequence $*$-periodic sequence $\nu$, consider the function \begin{align*}
        \rho_\nu: \N \cup \{\infty\} &\longrightarrow \N \cup \{\infty\}\\
        \rho_\nu(k)& = \begin{cases}\inf\{n>k: \nu_{n-k}\neq \nu_n\} &k \neq \infty\\
        \infty &k = \infty\end{cases}
    \end{align*}\\
    From \cite{bruinschleicher}, we know that for any hyperbolic component $V$ in $\mathcal{M}_d$ with kneading sequence $\mu$, the internal address of $\nu$ can be computed as $orb_{\rho_\mu}(1)$, upto the entry before $\infty$. Let $\nu$ be the kneading sequence of $\theta$. Clearly, $\rho_\nu = \rho_{\nu^j}$. 
    
A hyperbolic component is a satellite if and only if the penultimate entry of its internal address divides the final entry (see \cite{shleicherintaddress}). From this, the second statement follows. 
  \end{proof}
\begin{proposition}\label{prop:satpres}
If $(\theta,\theta')$ is a satellite of $(\psi,\psi')$, then $(Z_j(\theta),Z_j(\theta'))$ is a satellite of $(Z_j(\psi), Z_j(\psi'))$.
\end{proposition}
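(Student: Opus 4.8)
The plan is to reduce the satellite relationship to the internal-address criterion and then quote the computations already made for Proposition~\ref{prop:primpres}. Let $U$ be the hyperbolic component of $\mathcal{M}_d$ whose root is landed by $(\theta,\theta')$ and $W$ the component whose root is landed by $(\psi,\psi')$; write $k$ for the period of $\theta$ and $q$ for that of $\psi$, so that $U$ being a satellite of $W$ already forces $q\mid k$. By Propositions~\ref{prop:companion} and \ref{prop:maximalityper} the pairs $(Z_j(\psi),Z_j(\psi'))$ and $(Z_j(\theta),Z_j(\theta'))$ are again companion pairs landing at the roots of hyperbolic components $V$ and $V'$ of $\mathcal{M}_{d+1}$, and by Proposition~\ref{prop:missedsector} the periods of $Z_j(\psi)$ and $Z_j(\theta)$ under $\mu_{d+1}$ are again $q$ and $k$. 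I would then use the internal-address description of satellites: by \cite{shleicherintaddress} and \cite{bruinschleicher} (the criterion already invoked in the proof of Proposition~\ref{prop:primpres}), $U$ is a satellite of $W$ if and only if the internal address of $U$ is the internal address of $W$ followed by $k$, together with $q\mid k$.

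The next step is to transfer each ingredient across $Z_j$. The divisibility $q\mid k$ is immediate from the previous paragraph. For the internal addresses, recall from the proof of Proposition~\ref{prop:primpres} that if $\nu$ is the kneading sequence of $\theta$ (resp. of $\psi$) then the kneading sequence of $Z_j(\theta)$ (resp. of $Z_j(\psi)$) is the symbol-shifted sequence $\nu^{j}$ of Remark~\ref{remark:kneadpres}, and that $\rho_{\nu}=\rho_{\nu^{j}}$; since the internal address of a hyperbolic component is recovered from $\rho$ of its kneading sequence, the internal address of $V'$ equals that of $U$ and the internal address of $V$ equals that of $W$. Hence the internal address of $V'$ is exactly the internal address of $V$ followed by $k$, with $q\mid k$. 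Finally, since $Z_j$ is strictly order preserving (Proposition~\ref{prop:monotone}) we have $Z_j(\psi)<Z_j(\theta)<Z_j(\theta')<Z_j(\psi')$, so $V'\subset\mathcal{W}(V)$. Feeding the internal-address relation, the divisibility, and the wake inclusion into the criterion of \cite{shleicherintaddress}, now applied in $\mathcal{M}_{d+1}$, yields that $V'$ is a satellite of $V$, which is the assertion.

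The one point that needs care is precisely the last one: an internal address does not determine a component, so I must rule out that $V'$ is a satellite of some \emph{other} period-$q$ component sharing the relevant internal-address prefix. This is exactly what the order-preservation of $Z_j$ settles — it places $V'$ in the wake $\mathcal{W}(V)$, and among period-$q$ components with the prescribed internal address only $V$ has this property, the same uniqueness that is used implicitly in Section~\ref{sec:coroots} and Proposition~\ref{prop:corootexists}. Beyond this bookkeeping no new computation is required: periods, preperiods, kneading sequences, the function $\rho_\nu$, and the linear/circular order of angles have all been shown to be $Z_j$-equivariant in the preceding propositions, so the proof is essentially a matter of assembling them.
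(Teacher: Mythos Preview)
Your approach is essentially the same as the paper's: both exploit that $\rho_\nu=\rho_{\nu^j}$ forces the internal address of $(Z_j(\theta),Z_j(\theta'))$ to coincide with that of $(\theta,\theta')$, and then use the order-preservation of $Z_j$ to place $(Z_j(\theta),Z_j(\theta'))$ in the wake of $(Z_j(\psi),Z_j(\psi'))$. The paper organizes the endgame a bit differently: it walks along the entire internal-address chain $R_1,\dots,R_m$ in degree $d$ and $P_1,\dots,P_m$ in degree $d+1$, and shows inductively that $P_t=Z_j(R_t)$ for every $t$ (so in particular $P_{m-1}=(Z_j(\psi),Z_j(\psi'))$), thereby proving a slightly stronger statement than what is needed.

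The one place your write-up is genuinely thin is the uniqueness claim in your last paragraph. You assert that among period-$q$ components with the given internal address, only $V$ has $V'$ in its wake, and you attribute this to Section~\ref{sec:coroots} and Proposition~\ref{prop:corootexists}. That is not where the fact lives; those arguments are itinerary-based and do not supply the uniqueness you need. The correct tool is Lavaurs' theorem (cited in the paper as \cite{lavaurs}): if two distinct period-$q$ ray pairs are nested, there is a ray pair of strictly smaller period between them, which would insert an extra entry into the internal address and contradict your hypothesis that both have internal address ending at $q$. This is exactly the mechanism the paper invokes in its inductive step. Once you replace the reference with Lavaurs, your argument is complete and matches the paper's.
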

\begin{proof}
We will prove something stronger.

Let $k$ be the period of $\psi$, and let $\nu$ be the kneading sequence of $(\theta,\theta')$.
$(Z_j(\theta), Z_j(\theta'))$ is a satellite of a ray pair $(\alpha, \alpha')$ of period $k$ (since its internal address is the same as that of $(\theta,\theta')$), and it lies in the wake of $(Z_j(\psi),Z_j(\psi'))$. 
Now suppose that the internal address of $(\theta,\theta')$ is given by   $$1 \mapsto s_1 \mapsto ... \mapsto s_{m-1} = k \mapsto s_{m} = n$$

Then there exist ray pairs $R_j, P_j $ periodic under $\mu_{d},\mu_{d+1}$ respectively of exact period $l$ that correspond to the entry $s_j$, and moreover, we have 
\begin{align*}
    R_m &= (\theta, \theta')\\
    R_{m-1} &= (\psi,\psi')\\
    P_m &= (Z_j(\theta), Z_j(\theta'))\\
    P_{m-1} &= (\alpha, \alpha')
\end{align*}
If $R = (\alpha',\beta')$, let $Z_j(R)$ denote $(Z_j(\alpha'),Z_j(\beta'))$.

For each $j$, $P_m$ is in the wake of $Z_j(R_j)$. 
Additionally, by definition, 
\begin{align*}
    P_1 = Z_j(R_1)\\
    P_m = Z_j(R_m)
\end{align*}
Let $t>0$ be the least index where $Z(R_{t}) \neq P_{t}$. By Proposition~\ref{prop:monotone} , $P_m$ is in the wake of both $P_{t}$ and $Z_j(R_{t})$. Therefore, one of the following is true: either $P_{t}$ is in the wake of $Z_j(R_{t})$ or $Z(R_{t})$ is in the wake of $P_{t}$. In either case, by a theorem of Lavaurs (\cite{lavaurs}), there exists a ray pair $R$ of period $p<s_{t}$ that separates $P_{t}$ and $ Z_j(R_{t})$. \\ \\
But $R$ lies in the wake of $P_{t-1}$, and $P_m$ lies in the wake of $R$. This suggests that $\rho_\nu(s_{t-1}) \leq p < s_{t}$, which is a contradiction to the fact that $\rho_\nu(s_{t-1}) = s_{t}$. Therefore, $\forall $ indices $j$, we have $P_j = Z_j(R_j)$. Particularly, $(\alpha,\alpha') = P_{m-1} = (Z_j(\theta),Z_j(\theta'))$.
\end{proof}
\subsubsection{More on co-roots}
    Let $\theta$ be an angle that lands at a co-root of $U$, whose root angles are $(\alpha,\alpha')$. Let $V$ be the component in $\mathcal{M}_{d+1}$ on which $Z_j(\alpha)$ lands.
    
    Note that 
    \begin{align*}
        Z_j(\theta) = .\overline{u_{j,\theta}(\theta_1)u_{j,\theta}(\theta_2)....u_{j,\theta}(\theta_k)j+1}
    \end{align*}
    Let $\psi = Z_j(\theta)$, and $\psi' =  .\overline{u_{j,\theta}(\theta_1)u_{j,\theta}(\theta_2)....u_{j,\theta}(\theta_k)j}$.
 
   From Section~\ref{sec:proofoflemma1.2}, we know that $\psi$ lands at a co-root of $V$. Note that $\psi'$ has itinerary $\nu^j$ with respect to $Z_j(\alpha)$. Also note that $\psi' = \psi - \frac{1}{(d+1)^k-1}$. By  Proposition~\ref{prop:corootexists}, we can show that there exists an angle $\psi''$ landing at a co-root of $V$ whose itinerary with respect to $Z_j(\alpha)$ coincides with that of $\psi'$. This forces $\psi' = \psi''$, that is, $\psi'$ lands at a co-root of $V$. 
\begin{proposition}\label{prop:wimpy}
Given a rational angle $\beta \in (\psi',\psi)$, the $\mu_{d+1}-$orbit of $\beta$ intersects the interior of $T^{stat}_{d+1,j}(\beta)$. 
\end{proposition}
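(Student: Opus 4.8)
The plan is to argue by contradiction, using Proposition~\ref{prop:maximalityprep} to reformulate the hypothesis in terms of the image of $Z_j$. Recall from Propositions~\ref{prop:missedsector} and \ref{prop:maximalityprep} that a rational angle $\beta$ has $\mu_{d+1}$-orbit disjoint from the interior of $T^{stat}_{d+1,j}(\beta)$ if and only if $\beta = Z_j(\gamma)$ for a (unique) rational $\gamma$. So it suffices to show that $(\psi',\psi)$ contains no angle in the image of $Z_j$.

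Suppose $\beta = Z_j(\gamma)\in(\psi',\psi)$. Since $Z_j$ is strictly increasing (Proposition~\ref{prop:monotone}) and $Z_j(\theta)=\psi>\beta$, necessarily $\gamma<\theta$. The crux is then to prove $Z_j(\gamma)\le\psi'$ for \emph{every} $\gamma<\theta$, contradicting $\beta>\psi'$; equivalently, $\lim_{\gamma\to\theta^-}Z_j(\gamma)=\psi'$, so $Z_j$ jumps by $\psi-\psi'=\tfrac{1}{(d+1)^k-1}$ at the co-root angle $\theta$ and $(\psi',\psi)$ is precisely the gap this jump leaves in the image. To see this, I would compare the $(d+1)$-adic expansion $Z_j(\gamma) = .\,u_{j,\gamma}(\gamma_1)u_{j,\gamma}(\gamma_2)\cdots$ with that of $\psi' = \psi - \tfrac{1}{(d+1)^k-1} = .\,\overline{w_1\cdots w_{k-1}(w_k-1)}_{(d+1)}$, where $w_n = u_{j,\theta}(\theta_n)$. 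Let $n_0$ be the first index at which the $\mu_d$-orbit of $\gamma$ leaves that of $\theta$; since $\gamma<\theta$, Proposition~\ref{prop:monotonedyn} forces $\gamma_{n_0}<\theta_{n_0}$. The key point — and the only delicate one — is that because $\theta_k=\tfrac{\theta+j}{d}$ is the splitting point of $u_{j,\cdot}$ and $\theta$ is a co-root (hence not the smaller angle of a companion pair, so $w_k=u_{j,\theta}(\theta_k)=j+1$), the fact that $\gamma$'s orbit has fallen below $\theta$'s forces the orbit point of $\gamma$ nearest $\tfrac{\gamma+j}{d}$ in each period-block before $n_0$ to receive the symbol $w_k-1$ rather than $w_k$; thus $Z_j(\gamma)$ matches the expansion of $\psi'$, not that of $\psi$, through position $n_0-1$ and is strictly smaller at position $n_0$, giving $Z_j(\gamma)<\psi'$.

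The main obstacle is exactly this delicate step: making precise how the inequality $\gamma<\theta$ propagates along the $\mu_d$-orbit and interacts with the splitting point $\theta_k$ to pin down the symbol. An equivalent packaging that avoids Proposition~\ref{prop:maximalityprep} is a direct finite descent: for rational $\beta\in(\psi',\psi)$ the first $k-1$ $(d+1)$-adic digits of $\beta$ are $w_1,\dots,w_{k-1}$ and the $k$th is $w_k-1$ or $w_k$, and in either case one checks that either $(d+1)^{k-1}\beta$ already lies in the interior of $T^{stat}_{d+1,j}(\beta)$, or $\mu_{d+1}^{k}(\beta)$ lies again in $(\psi',\psi)$ but is distinct from $\beta$ (it cannot equal $\beta$, since $\psi'$ and $\psi$ are adjacent period-$k$ angles, so $(\psi',\psi)$ contains no angle of period $k$); as the orbit of $\beta$ is finite, this descent must reach the first alternative, which is the desired conclusion.
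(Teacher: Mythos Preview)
Your two approaches both miss the simplicity of what is actually needed, and each has a genuine gap.

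\textbf{What the paper does.} The paper gives a direct five-line computation: it shows that the single iterate $\beta_k=(d+1)^{k-1}\beta$ already lies in the open interval $\bigl(\tfrac{\beta+j}{d+1},\tfrac{\beta+j+1}{d+1}\bigr)$. The only inputs are that $\psi'_k=\tfrac{\psi'+j}{d+1}$ and $|\psi-\psi'|=\tfrac{1}{(d+1)^k-1}$. Writing $r=\beta-\psi'\in\bigl(0,\tfrac{1}{(d+1)^k-1}\bigr)$, one has
\[
\beta_k-\tfrac{\beta+j}{d+1}=(d+1)^{k-1}r-\tfrac{r}{d+1}=r\cdot\tfrac{(d+1)^k-1}{d+1}\in\bigl(0,\tfrac{1}{d+1}\bigr),
\]
which is exactly the statement $\beta_k\in T^{stat}_{d+1,j}(\beta)$; strictness of the endpoints follows because $\psi',\psi$ are consecutive period-$k$ angles, so no $\beta\in(\psi',\psi)$ has period $k$. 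No descent, no appeal to Proposition~\ref{prop:maximalityprep}.

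\textbf{On your first approach.} The idea of identifying $(\psi',\psi)$ as the jump of $Z_j$ at the co-root $\theta$ is attractive, but the step you flag as ``delicate'' is not carried out and is not automatic. You must compare $Z_j(\gamma)=.\,u_{j,\gamma}(\gamma_1)u_{j,\gamma}(\gamma_2)\cdots$ with $\psi'$, and these expansions are built from \emph{different} symbol-shift functions (base angle $\gamma$ versus base angle $\theta$). Proposition~\ref{prop:monotonedyn} only compares two angles under the \emph{same} $u_{j,\theta}$, so it does not do this for you; the monotonicity proof (Proposition~\ref{prop:monotone}) crosses bases only by ad hoc arguments that do not obviously yield the precise bound $Z_j(\gamma)\le\psi'$. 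Also, your parenthetical ``$\theta_k=\tfrac{\theta+j}{d}$'' conflates the free index $j$ with the intrinsic $j_\theta$; in general $\theta_k=\tfrac{\theta+j_\theta}{d}$, and one must track this when reading off the last digit.

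\textbf{On your second approach.} The dichotomy you set up is correct, but the termination ``as the orbit of $\beta$ is finite, this descent must reach the first alternative'' is not justified: a finite orbit can still cycle under $\mu_{d+1}^k$, so the iterates $\beta,\mu_{d+1}^k(\beta),\mu_{d+1}^{2k}(\beta),\dots$ could in principle loop within $(\psi',\psi)$ without ever satisfying the first alternative (you only argued each iterate differs from its immediate predecessor, not from all earlier ones). In fact the second alternative never occurs --- the computation above shows $\beta_k$ is \emph{always} in the open sector --- so the ``one checks'' step you defer is precisely the paper's proof, and once you do it the descent is superfluous.
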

\begin{proof}
We note that $\psi'_k = (d+1)^{k-1}\psi'$, $\beta_k = (d+1)^{k-1}\beta$ and $\psi_k = (d+1)^{k-1}\psi$ are in counterclockwise order. Suppose $|\beta - \psi'| = r \in \big(0,\frac{1}{(d+1)^k-1}\big)$, then
\begin{align*}
    |\beta_k - \psi'_k| &= (d+1)^{k-1}r\\
    \Big| \frac{\beta+j}{d+1} - \frac{\psi'+j}{d+1} \Big | & = \frac{r}{d+1}
\end{align*}
Since $\psi'_k = \frac{\psi'+j}{d+1}$, we note that 
 $\frac{\psi'+j}{d+1},\frac{\beta+j}{d+1}$ and $\beta_k$ are also in counterclockwise order, and 
\begin{align*}
    \Big| \beta_k - \frac{\beta+j}{d+1} \Big| & = |\beta_k - \psi'_k| - \Big| \frac{\beta+j}{d+1} - \frac{\psi'+j}{d+1} \Big | \\ & = (d+1)^{k-1}r - \frac{r}{d+1} \\& < \frac{1}{d+1}
\end{align*}
Arguing similarly, 
\begin{align*}
    \Big| \beta_k - \frac{\beta+j+1}{d+1} \Big| < \frac{1}{d+1}
\end{align*}
Since $d\geq 2$, this means that 
$$\beta_k \in  \Big[\frac{\beta+j}{d+1},\frac{\beta+j+1}{d+1}\Big]$$
$\psi',\psi$ are consecutive among angles periodic of period $k$ under $\mu_{d+1}$, hence $\beta $ cannot be periodic of period $k$. Thus,  \begin{align*}
    \beta_k \in \Biggl(\frac{\beta+j}{d+1},\frac{\beta+j+1}{d+1}\Biggl)
\end{align*}
\end{proof}
\begin{figure}[t]
\begin{subfigure}[b]{0.5\textwidth}
    \centering
    \includegraphics[width=\textwidth]{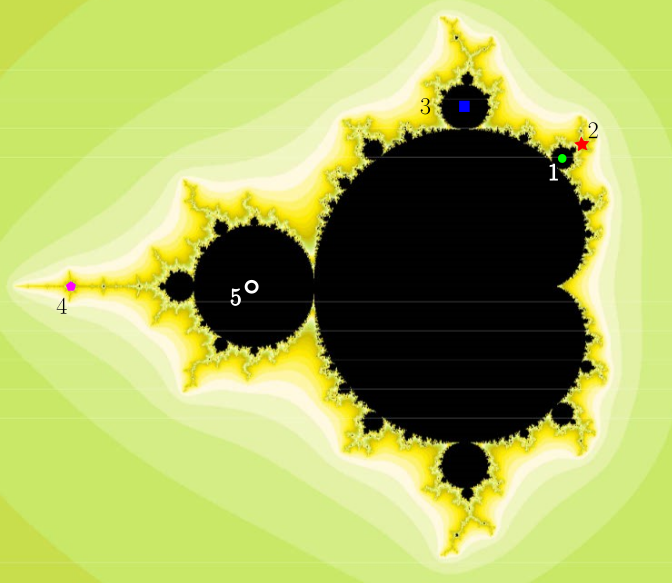}
    \caption{A few psf parameters in $\mathcal{P}_2$}
    \label{fig:L2_L3_embedding}
\end{subfigure}\\
  \begin{subfigure}[b]{0.4\textwidth}
        \centering
    \includegraphics[width=1.1\textwidth]{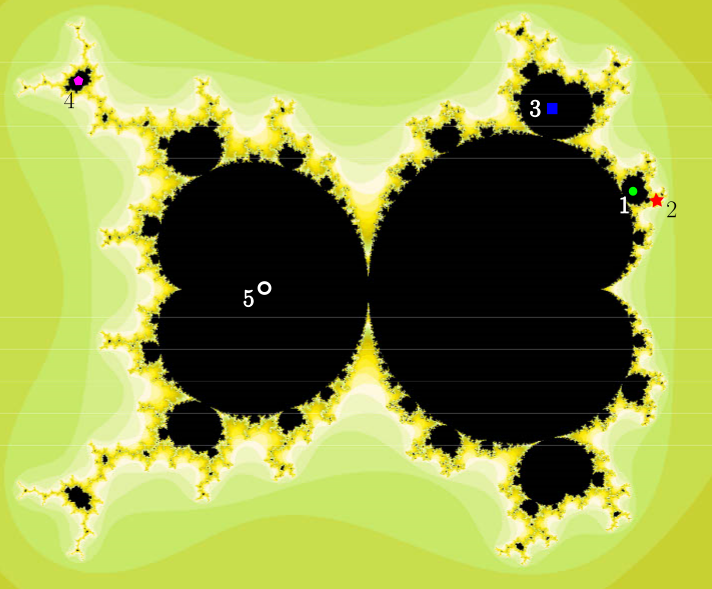}
    \caption{An illustration of $\mathcal{E}_0:\mathcal{P}_2 \longrightarrow \mathcal{P}_3 $}
    \label{fig:E_0_embedding}
  \end{subfigure}\hspace{30pt}
   \begin{subfigure}[b]{0.4\textwidth}
        \centering
    \includegraphics[width=1.1\textwidth]{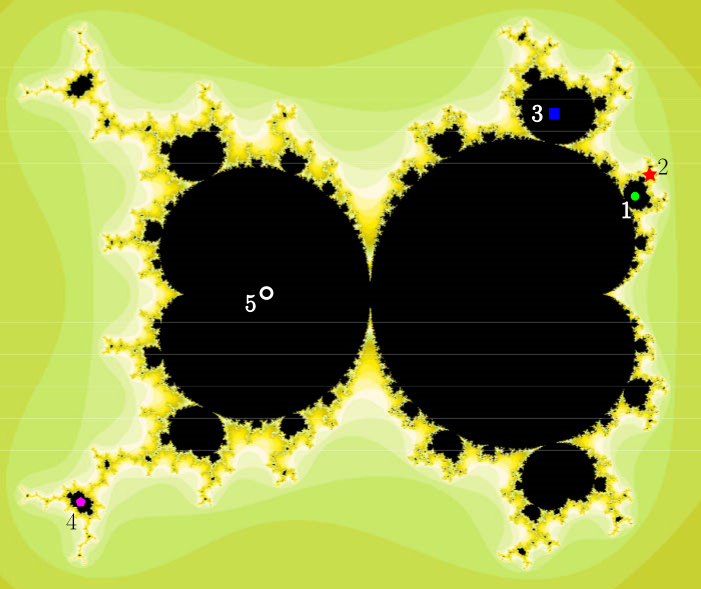}
    \caption{An illustration of $\mathcal{E}_1:\mathcal{P}_2 \longrightarrow \mathcal{P}_3 $}
    \label{fig:E_1_embedding}
  \end{subfigure}
  \caption{Images of a few parameters in $\mathcal{P}_2$ under $\mathcal{E}_0$ and $\mathcal{E}_1$. Parameters in the top picture are mapped to those in the bottom row pictures with matching number labels}
  \label{fig:E_0_1}
\end{figure}
For any $\delta \in \Q/\Z$, by Section~\ref{sec:proofoflemma1.1}, the $\mu_{d+1}$-orbit of $Z_j(\delta)$ does not intersect $T_{d+1,j}^{stat}(\delta)$. By the above proposition, we see that the image of $Z_j$ does not intersect $(\psi',\psi)$. 

\subsubsection{Critically preperiodic parameters}
Now assume that $\lambda$ has preperiod $\ell \geq 1$ and period $k \geq 1$. Choose an angular coordinate $\theta \in M_d(c)$, and let $c_{j,\theta}$ be the landing point of $\phi = Z_j(\theta)$.
\begin{proposition}\label{prop:preperiodic_number of accesses}$\Omega_{d+1}(c_{j,\theta}) = Z_j(\Omega_d(c))$; in particular, $c$ and $c_{j,\theta}$ have the same number of angular coordinates.
\end{proposition}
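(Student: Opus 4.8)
The plan is to prove the two inclusions of the asserted equality separately. One inclusion, $Z_j(\Omega_d(c)) \subseteq \Omega_{d+1}(c_{j,\theta})$, is already in hand: it is exactly Proposition~\ref{prop:branchpres}, which says that for every $\theta'\in\Omega_d(c)$ the parameter ray at $Z_j(\theta')$ lands at $c_{j,\theta}$. Since $Z_j$ is injective by Proposition~\ref{prop:monotone}, once the reverse inclusion is established the cardinality statement $|\Omega_d(c)|=|\Omega_{d+1}(c_{j,\theta})|$ follows at once. So the real content is to show $\Omega_{d+1}(c_{j,\theta})\subseteq Z_j(\Omega_d(c))$.

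To that end I would fix $\phi'\in\Omega_{d+1}(c_{j,\theta})$ and aim to produce $\theta'\in\Omega_d(c)$ with $Z_j(\theta')=\phi'$. Write $\nu$ for the common kneading sequence of the angles in $\Omega_d(c)$ and $\nu^j$ for the sequence manufactured from it by the recipe of Remark~\ref{remark:kneadpres}, so that $\nu^j$ is the kneading sequence of $\phi:=Z_j(\theta)$, i.e.\ $\Sigma_{d+1,\phi}(\phi)=\nu^j$. Because $c_{j,\theta}$ is a Misiurewicz parameter, the Carath\'eodory-loop characterization recalled in Section~\ref{sec: unicritical dynamics} applies: for any angular coordinate of $c_{j,\theta}$, two angles share their image under the Carath\'eodory loop $\gamma_j$ of $f_{d+1,c_{j,\theta}}$ exactly when they have the same itinerary with respect to that coordinate. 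Applying this with the coordinate $\phi$ and using $\gamma_j(\phi')=\gamma_j(\phi)=c_{j,\theta}$ yields $\Sigma_{d+1,\phi}(\phi')=\nu^j$. By Remark~\ref{remark:kneadpres} the sequence $\nu^j$ omits the symbol labelling the static sector $T^{stat}_{d+1,j}(\phi)$, so the $\mu_{d+1}$-orbit of $\phi'$ never enters the interior of $T^{stat}_{d+1,j}(\phi)$.

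The step I expect to be the main obstacle is upgrading this to the statement that the $\mu_{d+1}$-orbit of $\phi'$ misses the interior of its \emph{own} $j$-th static sector $T^{stat}_{d+1,j}(\phi')$ --- this is precisely the hypothesis needed to feed $\phi'$ into Proposition~\ref{prop:maximalityprep} and obtain a unique $\theta'\in\Q/\Z$ with $Z_j(\theta')=\phi'$. To close this gap I would argue combinatorially: $\phi$ and $\phi'$ land at the same parameter $c_{j,\theta}$, so in the dynamical plane of $f_{d+1,c_{j,\theta}}$ the dynamic rays at $\phi$ and $\phi'$ land at the common point $c_{j,\theta}$; hence no orbit point of $\phi'$ can have its dynamic ray trapped strictly inside the region cut off by these two rays, which is the dynamical translation of the assertion that the $\mu_{d+1}$-orbit of $\phi'$ is disjoint from $\mu_{d+1}^{-1}$ of the arc between $\phi$ and $\phi'$. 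Since $T^{stat}_{d+1,j}(\phi)$ and $T^{stat}_{d+1,j}(\phi')$ differ only within that preimage, the orbit of $\phi'$ then avoids the interior of $T^{stat}_{d+1,j}(\phi')$, and Proposition~\ref{prop:maximalityprep} applies. The bookkeeping here --- selecting the correct arc between consecutive angular coordinates and checking the containment of the symmetric difference --- is where the care is required, and it is the part of the argument I would write out most carefully.

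Finally I would verify $\theta'\in\Omega_d(c)$. Transporting the itinerary identity $\Sigma_{d+1,\phi}(\phi')=\nu^j$ back down through the order-preserving symbol shift underlying $Z_j$ --- the content of Proposition~\ref{prop:monotonedyn}, which shows this shift intertwines the $\mu_d$- and $\mu_{d+1}$-itineraries --- gives $\Sigma_{d,\theta}(\theta')=\nu$. Hence $\theta'$ has the same image as $\theta$ under the Carath\'eodory loop of $f_{d,c}$, namely the critical value $c$, so the dynamic ray at $\theta'$ lands at the critical value of $f_{d,c}$; by the standard correspondence between dynamic rays at the critical value of a Misiurewicz polynomial and parameter rays landing at the corresponding parameter (Section~\ref{sec: unicritical dynamics}), the parameter ray $R_d(\theta')$ lands at $c$, i.e.\ $\theta'\in\Omega_d(c)$. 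This establishes the reverse inclusion and, together with the first paragraph, the proposition.
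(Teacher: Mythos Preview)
Your plan matches the paper's proof exactly in structure: one inclusion is Proposition~\ref{prop:branchpres}; for the other you invoke Proposition~\ref{prop:maximalityprep} to pull $\phi'$ back to some $\theta'$, and then use the itinerary/Carath\'eodory-loop correspondence to place $\theta'$ in $\Omega_d(c)$. The paper does precisely this, and in fact simply \emph{asserts} the hypothesis of Proposition~\ref{prop:maximalityprep} (that the $\mu_{d+1}$-orbit of $\phi'$ misses $T^{stat}_{d+1,j}(\phi')$) without argument, so you are right to flag this as the step needing care.

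However, your proposed justification for that step is not sound as written. The claim that ``no orbit point of $\phi'$ can have its dynamic ray trapped strictly inside the region cut off by these two rays'' is unsupported --- nothing prevents the forward orbit of $c_{j,\theta}$ from entering any of the sectors at $c_{j,\theta}$ --- and your passage from that claim to ``the $\mu_{d+1}$-orbit of $\phi'$ is disjoint from $\mu_{d+1}^{-1}$ of the arc between $\phi$ and $\phi'$'' conflates lying in an arc with lying in its preimage. A cleaner route: the period-$1$ parameter rays $R_{d+1}(\tfrac{m}{d})$ land on the boundary of the main component and, together with it, cut $\mathcal{M}_{d+1}$ into $d$ pieces; since $c_{j,\theta}$ lies in exactly one piece, all of $\Omega_{d+1}(c_{j,\theta})$ lies in a single arc $(\tfrac{m}{d},\tfrac{m+1}{d})$. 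Thus $\lfloor d\phi\rfloor=\lfloor d\phi'\rfloor=:i$, so $\phi$ and $\phi'$ each lie in their own $i$-th static sector and the dynamic-to-static shift agrees for both. Since $\nu^j=\Sigma_{d+1,\phi}(\phi)$ omits the dynamic symbol $j-i$ (this is Proposition~\ref{prop:missedsector}), and $\Sigma_{d+1,\phi'}(\phi')=\nu^j$ as well (both are the kneading sequence of $c_{j,\theta}$), the orbit of $\phi'$ omits $T^{dyn}_{d+1,j-i}(\phi')=T^{stat}_{d+1,j}(\phi')$, giving the hypothesis of Proposition~\ref{prop:maximalityprep} directly.
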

\begin{proof}
By Proposition~\ref{prop:branchpres}, $Z_j(\Omega_d(c)) \subset \Omega_{d+1}(c_{j,\theta})$. Given $\phi' \in \Omega_{d+1}(c_{j,\theta})$ with $\phi' \neq \phi$, the $\mu_{d+1}$-orbit of $\phi'$ does not intersect $T^{stat}_{d+1,j}(\phi')$. By Proposition~\ref{prop:maximalityprep}, there exists an angle $\theta'$ with $Z_j(\theta') = \phi'$. 

Let $\nu$ be the kneading sequence of $\theta$. Note that the itinerary of $\phi'$ with respect to $\phi$ is $\nu^j$. By construction, we can show that the itinerary of $\theta'$ with respect to $\theta$ is $\nu$. This implies that in the dynamical plane of $z^d+c$, the dynamic ray at angle $\theta'$ lands at $c$. Thus, $\theta'$ lands at $c $ in $\mathcal{M}_d$, implying $\phi' \in Z_j(\Omega_d(c))$. 
\end{proof}
\begin{example}
Fix $d=2$.
    The angles $\frac{17}{240}, \frac{19}{240}, \frac{23}{240}$ and $\frac{31}{240}$ all land at $c \approx 0.63571 +0.59124 i  \in \mathcal{M}_2$. Their corresponding images $\frac{163}{6840}$, $\frac{169}{6840}$, $\frac{187}{6840}$ and $\frac{241}{6840}$ under $Z_1$ land at the approximate value $ 0.62745 + 0.29882i \in \mathcal{M}_3$.
\end{example}
\begin{proof}[Proof of Lemma~\ref{lemma:combi_embeddings}] 
  Given $\lambda \in \mathcal{P}_d$, choose  $c \in M_d(\lambda)$. By Proposition~\ref{prop:spidermapsareconjugate}, $\lambda$ and $\mathcal{E}_j(\lambda)$ have the same dynamics on their postsingular sets, which shows (1) in Definition~\ref{defn:combi_embedding}. The properties (2) and (3) are clear by Propositions~\ref{prop:missedsector} 
~\ref{prop:addresspres} respectively.
  
  We show that property (4) is true:  given $\mu>\lambda$, there exist angles $\theta,\theta' \in \Theta_d(\lambda)$ that land at the same point in $\mathcal{M}_d$, and $\alpha \in \Theta_d(\mu)$ such that $\theta < \alpha < \theta'$. By monotonicity of $Z_j$, we have $Z_j(\theta) < Z_j(\alpha) < Z_j(\theta')$. By definition of  $\mathcal{E}_j$, for all $\lambda$, $Z_j(\Theta_d(\lambda)) \subseteq \Theta_{d+1}(\mathcal{E}_j(\lambda))$. Thus shows that $\mathcal{E}_j(\lambda) < \mathcal{E}_j(\mu)$. 

  Lastly, we show $\mathcal{E}_j$ is injective. Suppose $\mathcal{E}_j(\lambda) = \mathcal{E}_j(\lambda') = \mu$ for $\lambda \neq \lambda'$. Pick monic representatives $c,c'$ for $\lambda,\lambda'$ respectively that are in the sub-wake $(0,\frac{1}{d-1})$.  
  \begin{itemize}
      \item If $c$ (and therefore $c'$) are critically periodic, let $\theta,\theta'$ be the companion pair in $\Omega_d(c)$ and $\alpha,\alpha'$ be the companion pair in $\Omega_d(c')$. Without loss of generality, we have 
      \begin{align*}
          0 < \theta < \theta' < \alpha < \alpha' < \frac{1}{d-1}
      \end{align*}
      But this implies
            \begin{align*}
          0 < Z_j(\theta) < Z_j(\theta') < Z_j(\alpha) < Z_j(\alpha' )< \frac{1}{d}
      \end{align*}
      By Proposition~\ref{prop:companion}, $( Z_j(\theta) , Z_j(\theta'))$, and $(Z_j(\alpha), Z_j(\alpha' ))$ are companion pairs. The above inequality implies that they land on different hyperbolic components, but since both components are in the sub-wake $(0,\frac{1}{d})$, the centers of these components cannot both be monic representatives for $\mu$. This presents a contradiction.
      \item If $c$ (and therefore $c'$) are critically preperiodic, choose $\theta,\theta' \in \Omega_d(c)$ and $\alpha,\alpha' \in \Omega_d(c')$. Again without loss of generality, we have 
        \begin{align*}
          0 < \theta < \theta' < \alpha < \alpha' < \frac{1}{d-1}
      \end{align*}
          \begin{align*}
          0 < Z_j(\theta) < Z_j(\theta') < Z_j(\alpha) < Z_j(\alpha' )< \frac{1}{d}
      \end{align*}
      Let $x$ be the landing point of $Z_j(\theta)$.  Proposition~\ref{prop:preperiodic_number of accesses}, $\Omega_{d+1}(x) = Z_j(\Omega_d(c))$, and so $Z_j(\alpha)$ and $Z_j(\alpha' )$ land at $y \neq x$. Since $x$ and $y$ are both in the sub-wake $(0,\frac{1}{d})$, they are not both monic representatives of $\mu$, which is a contradiction.  
  \end{itemize}
\end{proof}
Figure~\ref{fig:E_0_1} illustrates $\mathcal{E}_0,\mathcal{E}_1: \mathcal{P}_2 \longrightarrow \mathcal{P}_3$ on a few input points.

\section{Proof of Lemma~\ref{lemma: external address correspondence}}\label{sec:proofoflemma1.3}
Given $\ell\geq 0,k\geq 1$, and a polynomial $q \in \Z[x]$, we define the function $\tau_q: \Z_{\geq 2} \longrightarrow \Q/\Z$ by $\tau_q(x)= \frac{(x-1)q(x)}{x^\ell(x^k-1)} (\text{mod }1)$. 

\begin{proposition}\label{prop:poly_translated}
Given $\ell\geq 0,k\geq 1$, and $\tau_q$ defined as above, 
    $\tau_q \equiv \tau_{q'}$ if and only if $q'-q$ is a multiple of $x^\ell(x^k-1 + x^{k-2}+....+1)$. 
\end{proposition}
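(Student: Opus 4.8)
The plan is to reduce the statement to a purely polynomial divisibility fact, via the elementary principle that a monic integer polynomial dividing the values of another polynomial at all integers must divide it in $\Z[x]$. Set $p := q' - q$. Since $\tau_q(x)$ depends $\Z$-linearly on $q$, in $\Q/\Z$ we have $\tau_{q'} - \tau_q = \tau_p$, so the asserted equivalence is the same as: $\tau_p \equiv 0$ if and only if $p$ is divisible in $\Z[x]$ by $x^\ell\,\frac{x^k-1}{x-1} = x^\ell(x^{k-1}+x^{k-2}+\cdots+1)$. One implication is immediate: if $p = x^\ell\,\frac{x^k-1}{x-1}\, r$ with $r \in \Z[x]$, then for every integer $d \geq 2$ one computes $\tau_p(d) = r(d) \in \Z$, so $\tau_p \equiv 0$.

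For the converse, assume $\tau_p \equiv 0$, that is, $d^\ell(d^k-1) \mid (d-1)p(d)$ for all integers $d \geq 2$. Because $\gcd(d, d^k-1) = 1$, the two factors $d^\ell$ and $d^k-1$ are coprime, so this divisibility is equivalent to the conjunction of $d^\ell \mid (d-1)p(d)$ and $(d^k-1) \mid (d-1)p(d)$. Using $\gcd(d^\ell, d-1) = 1$, the first becomes $d^\ell \mid p(d)$ for all $d \geq 2$. Writing $d^k - 1 = (d-1)N_k(d)$ with $N_k(x) := x^{k-1} + \cdots + 1$ and cancelling the nonzero integer $d - 1$, the second becomes $N_k(d) \mid p(d)$ for all $d \geq 2$.

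The engine is the lemma: if $m \in \Z[x]$ is monic and $m(d) \mid p(d)$ for all integers $d \geq 2$, then $m \mid p$ in $\Z[x]$. To see this, use the division algorithm (valid over $\Z[x]$ since $m$ is monic) to write $p = m q_0 + r$ with $q_0, r \in \Z[x]$ and $\deg r < \deg m$; then $m(d) \mid r(d)$ for all $d$, while $|r(d)| < |m(d)|$ once $d$ is large (as $m$ is monic of strictly larger degree), forcing $r(d) = 0$ for all large $d$ and hence $r \equiv 0$. Applying the lemma with $m = x^\ell$ and with $m = N_k$ gives $x^\ell \mid p$ and $N_k \mid p$ in $\Z[x]$. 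Finally, $N_k(0) = 1$, so $x^\ell$ and $N_k$ are coprime in $\Q[x]$; writing $p = x^\ell a$ with $a \in \Z[x]$ and using that $N_k$ is monic, $N_k \mid x^\ell a$ forces $N_k \mid a$ in $\Z[x]$, whence $p$ is divisible by $x^\ell N_k = x^\ell\,\frac{x^k-1}{x-1}$, as required.

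The only real care needed is the coprimality bookkeeping that licenses splitting the modulus $d^\ell(d^k-1)$ and cancelling $d-1$, together with the final step of upgrading divisibility over $\Q[x]$ to divisibility over $\Z[x]$ — which is exactly where monicity of the divisors $x^\ell$ and $N_k$ is used (equivalently, an appeal to Gauss's lemma). I expect no genuine obstacle: the growth estimate $|r(d)| < |m(d)|$ for large $d$ is the sole quantitative ingredient, and the degenerate cases $\ell = 0$ or $k = 1$ are absorbed automatically since then $x^\ell = 1$ or $N_k = 1$.
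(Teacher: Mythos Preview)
Your proof is correct and follows the same underlying idea as the paper --- reducing the congruence $\tau_q\equiv\tau_{q'}$ to a polynomial divisibility statement --- but you supply the justification that the paper omits: the passage from ``$m(d)\mid p(d)$ for all $d\ge 2$'' to ``$m\mid p$ in $\Z[x]$'' via the division-with-remainder/growth argument for monic $m$. The paper simply asserts the chain of equivalences, lifting the whole denominator $x^\ell(x^k-1)$ at once and then cancelling the factor $(x-1)$; you instead split the modulus arithmetically using $\gcd(d^\ell,d^k-1)=\gcd(d^\ell,d-1)=1$, lift the factors $x^\ell$ and $N_k$ separately, and recombine using $N_k(0)=1$. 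That is a minor reorganisation rather than a genuinely different method, with the benefit that your version is self-contained and handles the $\Z[x]$-versus-$\Q[x]$ bookkeeping explicitly.
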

\begin{proof}
    \begin{align*}
        \tau_q &\equiv \tau_{q'} \\
        \iff \frac{(x-1)(q(x) - q'(x))}{x^\ell(x^k-1)} &\equiv 0\\
        \iff x^\ell(x^k-1) &| (x-1)(q(x) - q'(x))\\ 
        \iff x^\ell(x^{k-1}+x^{k-2}+...+1) &|(q(x) - q'(x))
    \end{align*}
    
\end{proof}

\begin{proposition}\label{prop:address to poly}
    Given an external address $\underline{s} = s_1s_2.....$, with $s_1 = 0$, preperiod $\ell\geq 0$ and period $k\geq 1$, for $d\geq \max |s_n|$, let $\theta_d$ be the angle with $d$-adic expansion $.x_1(d)x_2(d)...x_\ell(d) \overline{x_{\ell+1}(d)x_{\ell+2}(d)....x_{\ell+k}(d)}$ given by
    \begin{align*}
        x_n(d) & = \begin{cases}
            s_n & s_n\geq 0\\
            d-|s_n| & s_n <0
        \end{cases}
    \end{align*}
    
    \noindent Then there exists a degree $D\geq 2$, $j \in \{0,1,....,D-1\}$, $q \in \Z[x]$ with $\deg q \leq \ell+k-2$ such that 
    \begin{align*}
        \theta_{d+1} &=Z_j(\theta_d)\\
        (d-1)\theta_d &= \tau_q(d)
    \end{align*}
    for all $d\geq D$.
\end{proposition}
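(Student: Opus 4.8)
The plan is to prove the two claimed identities separately. The equation $(d-1)\theta_d=\tau_q(d)$ is purely algebraic: it only reflects that $d\mapsto\theta_d$ is the restriction to integers of an explicit rational function of $d$, and extracting $q$ amounts to a polynomial division followed by one further reduction. The recursion $\theta_{d+1}=Z_j(\theta_d)$ instead reflects that, for a correctly chosen index $j$, the symbol shift $u_{j,\theta_d}$ of Section~\ref{sec:proofoflemma1.1} carries the base-$d$ expansion of $\theta_d$ onto the base-$(d+1)$ expansion of $\theta_{d+1}$.

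\textbf{The closed form.} First I would record that $d\mapsto x_n(d)$ is the restriction to $\Z_{\geq 2}$ of a polynomial $X_n\in\Z[x]$ of degree $\leq 1$: namely $X_n(x)=s_n$ when $s_n\geq 0$ (so in particular $X_1\equiv 0$, as $s_1=0$) and $X_n(x)=x-|s_n|$ when $s_n<0$. Summing the repeating base-$d$ expansion then gives, whenever $d$ is large enough that all digits are legitimate,
\begin{equation*}
\theta_d=\frac{N(d)}{d^{\ell}(d^{k}-1)},\qquad N(x):=(x^{k}-1)\sum_{n=1}^{\ell}X_n(x)\,x^{\ell-n}+\sum_{n=1}^{k}X_{\ell+n}(x)\,x^{k-n}\in\Z[x],
\end{equation*}
and a term-by-term count shows $\deg N\leq \ell+k$. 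Since $x^{\ell}(x^{k}-1)$ is monic, Euclidean division over $\Z$ yields $N=A\cdot x^{\ell}(x^{k}-1)+B$ with $A,B\in\Z[x]$, $\deg A\leq 0$ (so $A$ is an integer) and $\deg B\leq \ell+k-1$. Because $x^{\ell}(x^{k-1}+x^{k-2}+\dots+1)=x^{\ell}\tfrac{x^{k}-1}{x-1}$ is monic of degree $\ell+k-1$, I may subtract an integer multiple $c\,x^{\ell}\tfrac{x^{k}-1}{x-1}$ of it from $B$ to obtain $q\in\Z[x]$ with $\deg q\leq \ell+k-2$. Multiplying $\theta_d=A+B(d)/\big(d^{\ell}(d^{k}-1)\big)$ by $d-1$ and using $(x-1)\cdot x^{\ell}\tfrac{x^{k}-1}{x-1}=x^{\ell}(x^{k}-1)$ gives
\begin{equation*}
(d-1)\theta_d=(d-1)A+c+\frac{(d-1)q(d)}{d^{\ell}(d^{k}-1)}\equiv\frac{(d-1)q(d)}{d^{\ell}(d^{k}-1)}=\tau_q(d)\pmod 1,
\end{equation*}
which is the claimed closed form (Proposition~\ref{prop:poly_translated} even identifies this $q$ as the unique representative of its degree, although that is not needed here).

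\textbf{The recursion.} For this I would set $M^{+}=\max\{s_n:1\leq n\leq \ell+k,\ s_n\geq 0\}$ (finite, and $\geq 0$ since $s_1=0$) and $M^{-}=\max\{|s_n|:s_n<0\}$ (taken to be $0$ if $\underline s$ has no negative entries), and then choose $j:=M^{+}+1$ and $D:=\max\big(2,\ M^{+}+M^{-}+2\big)$; note $j\leq D-1\leq d-1$ for $d\geq D$, so $Z_j$ is defined in each relevant degree, and $D$ also forces all digits $x_n(d)$ to lie in $\{0,\dots,d-1\}$. For $d\geq D$ every digit $x_n(d)$ is either $\leq M^{+}<j$ or $\geq d-M^{-}>j$. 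Away from the degenerate situation discussed below, this means no point of the $\mu_d$-orbit of $\theta_d$ lies in $\big[\tfrac{j}{d},\tfrac{j+1}{d}\big)$, so the two ``splitting'' clauses in the definition of $u_{j,\theta_d}$ are never invoked and $u_{j,\theta_d}$ simply fixes every digit $\leq M^{+}$ and increases every digit $\geq d-M^{-}$ by $1$. Hence the image of the digit $x_n(d)$ is $s_n$ when $s_n\geq 0$ and $(d-|s_n|)+1=(d+1)-|s_n|$ when $s_n<0$, i.e.\ exactly $x_n(d+1)$, and the positions of the preperiod and period blocks are unchanged; reading this off $Z_j(\theta_d)=\sum_{n\geq 1}u_{j,\theta_d}\big(\mu_d^{\circ(n-1)}(\theta_d)\big)(d+1)^{-n}$ gives $Z_j(\theta_d)=\theta_{d+1}$. (In the degenerate case where $\underline s$ is eventually the constant address $\overline{-1}$ or $\overline{0}$ — the only way an orbit point of $\theta_d$ can land on a sector boundary, so that the base-$d$ expansion is finite — one verifies $Z_j(\theta_d)=\theta_{d+1}$ by the same bookkeeping applied to the standard finite expansions, the split clause producing precisely the required digit $j$; these cases should be written out explicitly.)

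\textbf{Main obstacle.} The routine part is the algebra of the rational function $\theta_d$; the two points that need genuine care are: (i) obtaining the \emph{sharp} bound $\deg q\leq \ell+k-2$ rather than the $\ell+k-1$ given by bare polynomial division — the extra reduction is available only because $x^{\ell}\tfrac{x^{k}-1}{x-1}$ is monic and because we work modulo $1$; and (ii) producing a \emph{single} index $j$ that lies strictly between the bounded digits $s_n\geq 0$ and the large digits $d-|s_n|$ for all $d\geq D$ simultaneously, so that one symbol-shift map realises the base change uniformly in $d$ — this is precisely where the boundedness of the (pre)periodic address $\underline s$ enters — together with the direct check of the boundary/degenerate configurations just mentioned.
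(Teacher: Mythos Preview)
Your proof is correct and follows the same approach as the paper: compute the numerator of $\theta_d$ as an integer polynomial in $d$, reduce modulo $x^{\ell}(x^{k-1}+\cdots+1)$ to reach $\deg q\leq\ell+k-2$, and choose $j$ in the gap between the nonnegative digits $s_n$ and the large digits $d-|s_n|$ so that the symbol shift $u_{j,\theta_d}$ increments exactly the latter. The only cosmetic differences are that the paper takes $D=2\max|s_n|+3$ and uses $s_1=0$ to bound $\deg\tilde q\leq\ell+k-1$ directly (subtracting the monic $x^{\ell}(x^{k-1}+\cdots+1)$ only when $s_2<0$), whereas your Euclidean-division phrasing reaches the same $q$ without tracking the leading coefficient; you are also more explicit than the paper in flagging the boundary case where $\underline s$ ends in $\overline{-1}$, where an orbit point of $\theta_d$ can sit on a sector endpoint.
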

\begin{proof}
    
Let $D = 1+2(\max |s_n|+1)$. 
For all $n$, and $d\geq D$,
\begin{align*}
    0 &\leq x_n(d) \leq \frac{D-3}{2}  \hspace{10 pt} \text{ if } s_n\geq 0\\
     \frac{D+3}{2}& \leq x_n(d) \leq d-1  \hspace{10 pt}\text{ if } s_n < 0
     \end{align*}
The interval $(\frac{D-3}{2},\frac{D+3}{2})$ contains at least one integer in $\{0,1,...,D-1\}$, and for any integer $j$ in this interval, for all $d\geq D$, we have
\begin{align*}
    \theta_{d+1} & = Z_j(\theta_d)
\end{align*}
Let $\widetilde{q}(x) = \sum_{n=1}^\ell x_n(x) x^{\ell-n}(x^k-1) + \sum_{n=1}^k x_{\ell+n}(x)x^{k-n}$. 
We also have 
\begin{align*}
    \theta_d & = \frac{\widetilde{q}(d)}{d^\ell(d^k-1)}
\end{align*}

Since $x_1(d) = s_1 = 0$  and $x_2(d)$ is at most linear, $\deg \Tilde{q} \leq \ell+k-1$. If $\deg \Tilde{q} \leq \ell+k-2$, set $q = \Tilde{q}$. Otherwise, note that $\Tilde{q}$ has degree $\ell+k-1$ if and only if $x_2(d)$ is of the form $d-|s_2|$. In this case, the leading coefficient of $\Tilde{q}$ is $1$. Set $q = \Tilde{q}-x^{\ell}(x^{k-1}+x^{k-2}+...+x+1)$. $q$ has degree $\leq \ell+k-2$, and we have 
\begin{align*}
   (d-1) \theta_d & = \tau_q(d)
\end{align*}
 by Proposition~\ref{prop:poly_translated}.\\

\end{proof}
\begin{example}
Given $r \in \Z$, for $\underline{s} = 0\overline{r}$, we have two separate cases:
\begin{itemize}
    \item If $r\geq 0$, then $D = 2r+3$,  $\theta_d = \frac{r}{d(d-1)}$ for all $d\geq D$, and  $q(x) = r$.
    \item If $r<0$, then $D = 2r+3$, $\theta_d = \frac{d-|r|}{d(d-1)}$ for all $d\geq D$, and $q(x) = r$.
\end{itemize}
\end{example}

\begin{example}\label{ex:last example}
    Let $\underline{s} = 000(-1)\overline{0010}$. 
We have $D = 5$, and for all $d\geq D$,
\begin{align*}
    \theta_d& = .000(d-1)\overline{0010} =  \frac{d^5 - d^4 +1}{d^4(d^4-1)} 
\end{align*}
and we have $q(x) = x^4 +x - 1$.

In fact, letting $\theta = \frac{17}{2^4(2^4-1)} = \frac{17}{2^4(2^4-1)}$, we have  $\theta_d = Z_1^{\circ (d-2)}(\theta)$ for all $d\geq D$.
\end{example}
\begin{figure}[t]
    \centering
    \includegraphics[scale=0.5]{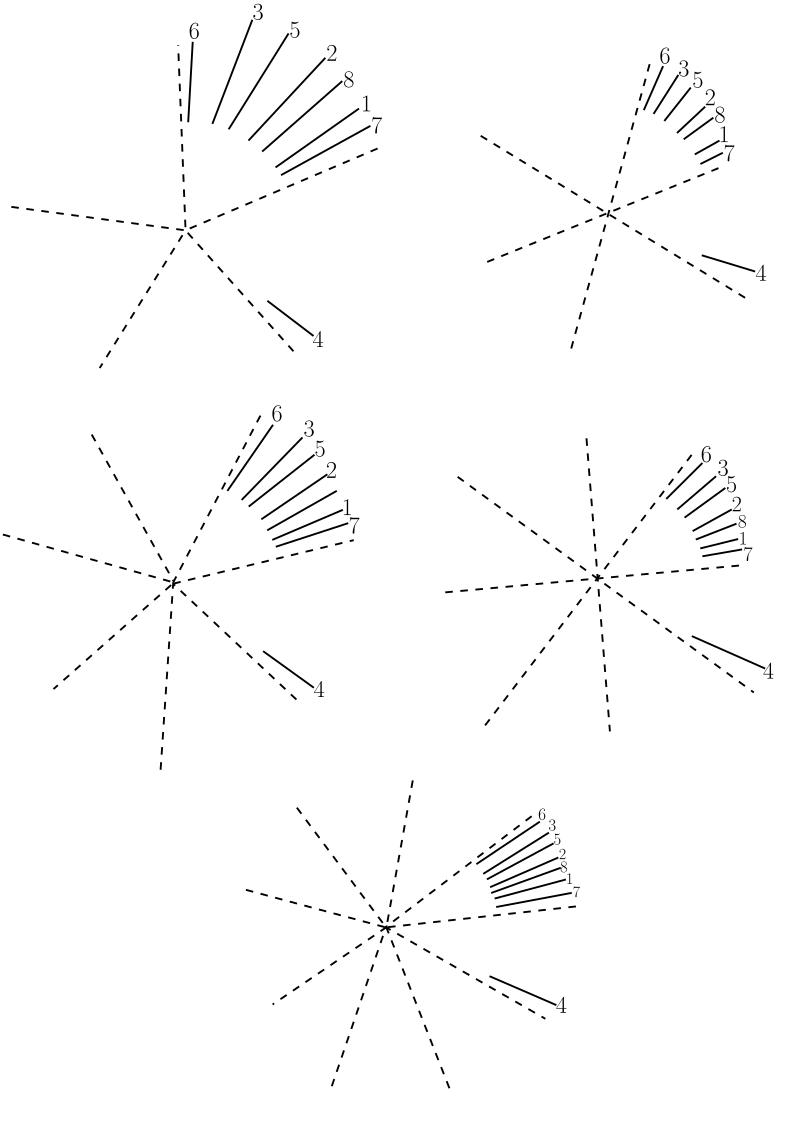}
    \caption{The spiders $S_d(\theta _d)$ for $d=5,6,7,8,9$, when $\underline{s}$ is set to $0001\overline{0010}$ (the dotted lines are not part of the spider, but indicate the additional legs in $\widetilde{S}_d(\theta_d)$). The leg labelled $n$ corresponds to $d^{n-1}\theta$. Compare these with Figure~\ref{fig:exp_spider} }
    \label{fig:sequence_spider}
\end{figure}
\begin{definition}
    Given $\theta \in \Q/\Z$, degree $d\geq 2$ and a preperiodic address $\underline{s}$ with preperiod $\ell\geq 1$ and $k\geq 1$, we say that $S_D(\theta_D)$ and $\Gamma_{\underline{s}}$ are isomorphic if
    \begin{enumerate}
        \item They have the same number of legs
        \item They have the same circular order of legs, in the following sense: 

        If $(\theta_{n_1},\theta_{n_2},....,\theta_{n_{\ell+k}})$ are in counterclockwise order in $\mathscr{O}_d(\theta)$, then $$\sigma^{\circ (n_1 - 1)}(\underline{s}) << \sigma^{\circ (n_2 - 1)}(\underline{s}) << ... <<\sigma^{\circ (n_{\ell+k} - 1)}(\underline{s})$$
    \end{enumerate}
\end{definition}
\begin{proposition}\label{same_spider_exp}
Let $\underline{s}$ be an external address with preperiod $\ell\geq 1$ and period $k\geq 1$, and let  $D$, $\theta_D$ be as defined in  Proposition~\ref{prop:address to poly}. 
    The spiders $S_D(\theta_D)$ and $\Gamma_{\underline{s}}$ are isomorphic.
\end{proposition}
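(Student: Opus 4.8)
The plan is to reduce the asserted isomorphism to a purely combinatorial statement comparing the base-$D$ expansion of $\theta_D$ with the cylindrical order on external addresses, in the same spirit as Propositions~\ref{prop:monotonedyn} and \ref{prop:missedsector}. Recall the two conditions to check: that $S_D(\theta_D)$ and $\Gamma_{\underline s}$ have the same number of legs, and that the natural leg bijection respects circular order. The first is a counting statement and the second is the heart of the matter.

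First I would record the arithmetic of the digit substitution. By Proposition~\ref{prop:address to poly}, $\theta_D$ has $D$-adic expansion $.x_1(D)x_2(D)x_3(D)\cdots$ with $x_n(D)=s_n$ when $s_n\ge 0$ and $x_n(D)=D-|s_n|$ when $s_n<0$, and, shifting, $\mu_D^{\,n-1}(\theta_D)$ has $D$-adic expansion $.x_n(D)x_{n+1}(D)\cdots$, i.e.\ it is obtained from $\sigma^{\circ(n-1)}(\underline s)$ by applying $x(\cdot)$ entrywise. Because $D=1+2(\max|s_n|+1)$, the map $x(\cdot)$ is injective on the entries occurring in $\underline s$: nonnegative entries land in $\{0,\dots,(D-3)/2\}$, negative entries land in $\{(D+3)/2,\dots,D-1\}$, and the middle digits $(D-1)/2,(D+1)/2$ are never used. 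Using injectivity one checks that $\theta_D$ has preperiod exactly $\ell$ and period exactly $k$ under $\mu_D$; the only delicate point is the case where the $D$-adic tail is constantly $D-1$, which happens precisely when $\underline s$ is eventually $\overline{(-1)}$, and there the expansion collapses to one ending in $\overline 0$ and one verifies by hand that the preperiod and period are still $\ell$ and $k$. Hence $\mathscr O_D(\theta_D)=\{\theta_{D,1},\dots,\theta_{D,\ell+k}\}$ with $\theta_{D,n}:=\mu_D^{\,n-1}(\theta_D)$, so $S_D(\theta_D)$ has $\ell+k$ legs, matching the $\ell+k$ legs of $\Gamma_{\underline s}$; this gives the first condition and pins down the leg bijection, namely the leg of $S_D(\theta_D)$ at angle $\theta_{D,n}$ corresponds to the leg $\gamma_n$ of $\Gamma_{\underline s}$ at $\tilde e_n$, which carries external address $\sigma^{\circ(n-1)}(\underline s)$.

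The essential point is that this bijection respects circular order. The circular order of the legs of $S_D(\theta_D)$ about $\infty$ is the cyclic order of the angles $\theta_{D,1},\dots,\theta_{D,\ell+k}$ in $\R/\Z$, while the circular order of the legs of $\Gamma_{\underline s}$ is the one recorded by the cylindrical order $<<$ on $\sigma^{\circ 0}(\underline s),\dots,\sigma^{\circ(\ell+k-1)}(\underline s)$ via the separating line $\beta$ of Section~\ref{sec:exp spider defn}. I would establish the equality of these two cyclic orders by comparing two orbit elements at the first coordinate where their addresses differ. Suppose $\sigma^{\circ(m-1)}(\underline s)$ and $\sigma^{\circ(n-1)}(\underline s)$ agree on coordinates $1,\dots,i-1$ and first differ at coordinate $i$, with entries $a$ and $b$. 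If $a,b$ have the same sign, then $x(\cdot)$ is monotone on the relevant range, so the comparison of $\theta_{D,m},\theta_{D,n}$ in $\R/\Z$ (decided at the $i$-th $D$-adic digit) has the same outcome as the lexicographic comparison of the two addresses; and because two addresses sharing a nonempty prefix lie on the same side of $\overline 0$, that lexicographic comparison agrees with $<<$. If instead $a\ge 0>b$ (the other case being symmetric), then $x(a)$ is a ``low'' digit and $x(b)=D+b$ a ``high'' digit, separated by the unused middle band, so the $\R/\Z$-comparison of $\theta_{D,m},\theta_{D,n}$ is the one that wraps past $0$; this is matched on the address side by the clause $\underline t<\overline 0<\underline s$ in the definition of $<<$, with the side of $\overline 0$ pinned down by the coordinates on which the addresses already agree (or, if those are all $0$, by the signs of $a,b$ directly). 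Running this dichotomy over all pairs yields that the leg bijection carries the $\R/\Z$-cyclic order of $\{\theta_{D,n}\}$ to the $<<$-cyclic order of $\{\sigma^{\circ(n-1)}(\underline s)\}$. A final bookkeeping remark: a few $\theta_{D,n}$ may admit two $D$-adic expansions (one ending in $\overline 0$, one in $\overline{(D-1)}$), namely those whose addresses are eventually $\overline 0$ or $\overline{(-1)}$; there are finitely many, and their cyclic position relative to the other legs is unambiguous, so they cause no trouble.

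I expect the main obstacle to be exactly the opposite-sign case of the circular-order comparison: getting the ``cut at angle $0$'' on the $S_D(\theta_D)$ side to align, consistently and with no phase error, with the ``cut at $\overline 0$'' (the line $\beta$) on the $\Gamma_{\underline s}$ side. The content of the proposition is precisely that the wrap-around forced by reading external-address entries as $D$-adic digits is the same wrap-around encoded in the cylindrical order; the digit arithmetic, the preperiod/period count, the choice of leg bijection, and the same-sign case are all routine once the framework is set up, so essentially all of the difficulty is concentrated in this one verification.
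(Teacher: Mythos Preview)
Your approach mirrors the paper's: compare the $D$-adic digits of $\theta_{D,m},\theta_{D,n}$ with the entries of $\sigma^{m-1}(\underline s),\sigma^{n-1}(\underline s)$ at the first place they differ, and split on the signs of the differing entries. The same-sign case is unproblematic. The gap is in your opposite-sign case, and it is a genuine one. You write that when $a\ge 0>b$ the $\R/\Z$-comparison ``wraps past $0$'' and is matched by the clause $\underline t<\overline 0<\underline s$ on the address side, ``with the side of $\overline 0$ pinned down by the coordinates on which the addresses already agree.'' But if that common prefix contains a nonzero entry, then both shifted addresses lie on the \emph{same} side of $\overline 0$; the clause $\underline t<\overline 0<\underline s$ does not fire and $<<$ reduces to ordinary $<$. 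Meanwhile on the angle side there is no wrap either: both angles share the same leading $D$-adic digits and sit in the same subinterval of $[0,1)$. In that situation the two pairwise comparisons point in opposite directions, and your dichotomy does not close.

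Concretely, take $\underline s = 0,1,-1,\overline{1}$, so $D=5$ and $\theta_D=.014\overline{1}$ in base $5$. One computes $\theta_1<\theta_4<\theta_2<\theta_3$, hence the cyclic order of legs in $S_D(\theta_D)$ is $(1,4,2,3)$; on the address side $\sigma^2<\sigma^0<\sigma^1<\sigma^3$ lexicographically, so the height (equivalently $<<$) cyclic order of legs in $\Gamma_{\underline s}$ is $(1,2,4,3)$. These are distinct cyclic orders (the swap of legs $2$ and $4$ is neither a rotation nor a reflection of a $4$-cycle), so the circular-order preservation you assert fails here; the pair $(m,n)=(4,2)$ is exactly an opposite-sign instance with nonzero common prefix $1$. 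The paper's own proof has the very same gap: its third case asserts $\sigma^{\circ(n-1)}(\underline s)>\overline 0$ and $\sigma^{\circ(m-1)}(\underline s)<\overline 0$, which is only valid when the common prefix consists entirely of zeros.
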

\begin{proof}

    Let $\theta = \theta_D$ and $\theta_n = D^{n-1}\theta_D$ for all $n\geq 1$. 

    Suppose $D^{n-1}\theta < D^{m-1}\theta$, then
    \begin{align*}
        x_n(D)x_{n+1}(D)..... < x_m(D)x_{m+1}(D)....
    \end{align*}
    Let $r\geq 0$ be the least integer such that $x_{n+r}(D) \neq x_{m+r}(D)$. Then for all $r'<r$, there are two possibilities:
    \begin{itemize}
        \item If $0 \leq x_{n+r'}(D) \leq \frac{D-3}{2} $, then $s_{n+r'} = x_{n+r'}(D) = x_{m+r'}(D) = 
        s_{m+r'}$.
        \item If $\frac{D+3}{2} \leq x_{n+r'}(D) \leq D-1$, then $s_{n+r'} = x_{n+r'}(D) - D = x_{m+r'}(D) -D = s_{m+r'}$
    \end{itemize}
    That is, for all $r'<r$, $s_{n+r'} = s_{m+r'}$. 

    At the index $n+r$ we have $x_{n+r}(D) <  x_{m+r}(D)$. There are only three possibilities:

    \begin{itemize}
        \item If $x_{m+r}(D) \leq \frac{D-3}{2}$, then  $s_{n+r} = x_{n+r}(D) < x_{m+r}(D) = s_{m+r} $
        \item If $x_{n+r}(D) \geq \frac{D+3}{2}$, then  $s_{n+r} = x_{n+r}(D)-D < x_{m+r}(D)-D = s_{m+r} $
        \item If $x_{n+r}(D) \leq \frac{D-3}{2}$ and $x_{m+r}(D) \geq \frac{D+3}{2}$, then  $s_{n+r}>0$ and $s_{m+r}(D) < 0$
    \end{itemize}
    
In the first two cases, we directly get $\sigma^{\circ (n-1)}(\underline{s}) < \sigma^{\circ (m-1)}(\underline{s})$. In the third case, $\sigma^{\circ (n-1)}(\underline{s})>\overline{0}$ and $\sigma^{\circ (m-1)}(\underline{s}) < \overline{0}$. Thus $\sigma^{\circ (n-1)}(\underline{s})<<\sigma^{\circ (m-1)}(\underline{s})$.

Lastly, if $D^{n-1}\theta = \frac{\theta+r}{D}$, then $x_n(D) = r$ and $x_m(D) = 0$ for all $m>n$. The latter condition implies $s_m = 0$ for all $m>n$ 
\begin{itemize}
    \item If $0\leq r\leq \frac{D-3}{2}$, then  $\sigma^{\circ (n-1)}(\underline{s}) = r\overline{0}$.
    \item If $\frac{D-3}{2}\leq r\leq D-1$, then $\sigma^{\circ (n-1)}(\underline{s}) = -c\overline{0}$
\end{itemize}
This shows that the spiders $S_D(\theta_D)$ and $\Gamma_{\underline{s}}$ are isomorphic.
\end{proof}
\begin{corollary}
The dynamical systems $\mathcal{F}_{D,\theta_D}: S_D(\theta_D)\righttoleftarrow $ and $\mathcal{F}_{\underline{s}}: \Gamma_{\underline{s}} \righttoleftarrow$ are conjugate by a map $h$ that satisfies
\begin{enumerate}
    \item $h(\infty) = \infty$
    \item $h(e^{2\pi i \theta}) = \tilde{e}_1$
    \item $h$ preserves circular order of legs
\end{enumerate}

\end{corollary}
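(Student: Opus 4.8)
The plan is to deduce the corollary from Proposition~\ref{same_spider_exp} exactly as Proposition~\ref{prop:spiderconjugacy} was deduced from the isomorphism of unicritical spiders: once two spiders carry the same combinatorial data --- number of legs, circular order, and the pattern by which the dynamics permutes the legs --- a conjugating homeomorphism can be built leg by leg. Write $\theta=\theta_D$, $\theta_n=D^{n-1}\theta$, let $L_n=\{re^{2\pi i\theta_n}:r\in[1,\infty]\}$ be the leg of $S_D(\theta_D)$ at angle $\theta_n$, and let $\gamma_n$ be the leg of $\Gamma_{\underline{s}}$ with endpoint $\tilde{e}_n$. Since $\underline{s}$ has preperiod $\ell$ and period $k$ and $\theta_D$ inherits exactly this combinatorics under $\mu_D$ (Proposition~\ref{prop:address to poly}, because $x_n(D)$ determines $s_n$ for $D$ large), each spider has exactly $\ell+k$ distinct legs, and Proposition~\ref{same_spider_exp} supplies an order-preserving bijection $L_n\leftrightarrow\gamma_n$. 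On $S_D(\theta_D)$ the map $\mathcal{F}_{D,\theta_D}$ is simply $re^{2\pi i\theta_n}\mapsto re^{2\pi i\theta_{n+1}}$ --- the exceptional clause ``$dt=\theta$'' never occurs, as $\theta_D$ is not periodic --- so $\mathcal{F}_{D,\theta_D}|L_n\colon L_n\to L_{n+1}$ is a homeomorphism for every $n$, with indices read cyclically on the block $\{\ell+1,\dots,\ell+k\}$; likewise $\mathcal{F}_{\underline{s}}|\gamma_n\colon\gamma_n\to\gamma_{n+1}$.

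Next I would build $h$. Let $h|L_1\colon L_1\to\gamma_1$ be any homeomorphism with $h(e^{2\pi i\theta})=\tilde{e}_1=0$ and $h(\infty)=\tilde{e}_\infty=\infty$; this already secures~(2). Then propagate along the tail: for $n=1,\dots,\ell+k-1$ set
\[
h|L_{n+1}\ :=\ (\mathcal{F}_{\underline{s}}|\gamma_n)\circ (h|L_n)\circ (\mathcal{F}_{D,\theta_D}|L_n)^{-1},
\]
a homeomorphism $L_{n+1}\to\gamma_{n+1}$ carrying $e^{2\pi i\theta_{n+1}}$ to $\tilde{e}_{n+1}$ and $\infty$ to $\infty$. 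Since each of $L_2,\dots,L_{\ell+k}$ is the image of exactly one leg under $\mathcal{F}_{D,\theta_D}$, no conflict of definitions arises, and by construction $h\circ\mathcal{F}_{D,\theta_D}=\mathcal{F}_{\underline{s}}\circ h$ holds on $L_1\cup\dots\cup L_{\ell+k-1}$. The resulting $h$ is a homeomorphism of $S_D(\theta_D)$ onto $\Gamma_{\underline{s}}$, and it preserves the circular order of legs at $\infty$ because the bijection $L_n\leftrightarrow\gamma_n$ does, by Proposition~\ref{same_spider_exp}; this yields~(1) and~(3).

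The only relation still to verify is $h\circ\mathcal{F}_{D,\theta_D}=\mathcal{F}_{\underline{s}}\circ h$ on the last cyclic leg $L_{\ell+k}$, where $\mathcal{F}_{D,\theta_D}(L_{\ell+k})=L_{\ell+1}$; composing around the $k$-cycle, and using $\mathcal{F}_{D,\theta_D}^{\circ k}|L_{\ell+1}=\id$, this reduces to $\mathcal{F}_{\underline{s}}^{\circ k}|\gamma_{\ell+1}=\id$. This is the one delicate point, and I expect it to be the main (though mild) obstacle. It is harmless: the graph map $\mathcal{F}_{\underline{s}}$ of \cite{combi_classif_exp} is pinned down only by its action on vertices and the requirement that $\gamma_n$ map onto $\gamma_{n+1}$, so within this freedom --- concretely, taking $\mathcal{F}_{\underline{s}}$ to translate each horizontal leg $\gamma_n$ rigidly onto $\gamma_{n+1}$ --- the $k$-fold return map on each cyclic leg is the identity; equivalently, one may instead fix $h$ first on the cyclic legs so as to straighten out this return map and then propagate backwards through $L_\ell,L_{\ell-1},\dots,L_1$ via the leg-wise invertibility of $\mathcal{F}_{D,\theta_D}$, which still forces $h(e^{2\pi i\theta})=\tilde{e}_1$. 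With this in place all required relations hold. (As in Proposition~\ref{prop:spiderconjugacy}, $h$ respects the leg-identifications defining the quotient graphs, so it descends to a conjugacy $[\mathcal{F}_{D,\theta_D}]\sim[\mathcal{F}_{\underline{s}}]$ used in the next section, but that is not needed here.)
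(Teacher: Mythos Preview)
Your proof is correct and follows the template of Proposition~\ref{prop:spiderconjugacy} closely: you build $h$ leg by leg from the bijection $L_n\leftrightarrow\gamma_n$ furnished by Proposition~\ref{same_spider_exp}, propagate the conjugacy relation along the orbit, and deal honestly with the closure condition on the periodic cycle (either by exploiting the freedom in the definition of $\mathcal{F}_{\underline{s}}$ on legs, or by anchoring $h$ on the cycle first and pulling back). This is a clean and self-contained argument for the stated corollary.

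The paper takes a somewhat different route. Rather than constructing $h$ explicitly, it verifies that the static partitions match: each sector $T^{stat}_{D,r}(\theta_D)$ corresponds to an interval $T_{r'}(\underline{s})$ for an appropriate $r'$ (with $r'=r$ when $r$ is small and $r'=r-D$ when $r$ is large), so that orbit points fall into corresponding sectors on both sides. It then invokes Proposition~\ref{same_spider_exp} and declares the statement clear. Your approach is more direct for the corollary as literally stated, since the spider maps on $S_D(\theta_D)$ and $\Gamma_{\underline{s}}$ are simply index shifts $L_n\mapsto L_{n+1}$ and $\gamma_n\mapsto\gamma_{n+1}$, and the sector structure plays no role in that. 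What the paper's argument buys is extra information: the sector correspondence controls itineraries, which is exactly what is needed to pass to the quotient graphs $[S_D(\theta_D)]$ and $[\Gamma_{\underline{s}}]$ in the Remark that follows and in Section~\ref{sec:proofoflemma1.4}. So the paper is really proving a bit more than the corollary asserts, while you prove precisely what is asked.
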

\begin{proof}
For $r \in \{0,1,...,D-1\}$, with $r+c = D$,  want to show that $T_{d,r}^{stat}(\theta_D)$ corresponds to $T_r(\underline{s})$
if $r+1 \leq \frac{D-3}{2}$, and to $T_{-(c+1)}(\underline{s})$ if $r+1 \geq \frac{D+3}{2}$. 

    Suppose $D^{n-1}\theta \in T_{d,r}^{stat}(\theta_D)$, then $x_n(D) \in \{r,r+1\}$. 
\begin{itemize}
 \item If $r+1 \leq \frac{D-3}{2}$, then $0 \leq x_n(D) \leq \frac{D-3}{2}$, and thus $s_n \in \{r,r+1\}$, implying $\sigma^{\circ (n-1)}(\underline{s}) \in T_r(\underline{s})$. 
 \item If $r+1 \geq \frac{D+3}{2}$, since $x_n(D) \not \in (\frac{D-3}{2},\frac{D+3}{2})$ we must have $r\geq \frac{D+3}{2}$, and so $s_n = D-x_n(D) \in \{-c,-(c+1)\}$, implying $\sigma^{\circ (n-1)}(\underline{s}) \in T_{-(c+1)}(\underline{s})$.
\end{itemize}

    Combining this with Proposition~\ref{same_spider_exp}, the   statement is clear.

\end{proof}
\begin{remark}
    $[\mathcal{F}_{D,\theta_D}]: [S_D(\theta_D)\righttoleftarrow] $ and $[\mathcal{F}_{\underline{s}}]: [\Gamma_{\underline{s}}] \righttoleftarrow$ are conjugate by a map $h$ that preserves circular order and maps $\infty$ to $\infty$, and $x_1$ to $e_1$.
\end{remark}
Propositions~\ref{prop:address to poly}, \ref{same_spider_exp} and \ref{prop:spiderconjugacy} together give a proof of Lemma~\ref{lemma: external address correspondence}. See Figure~\ref{fig:sequence_spider} for an illustration of $S_d(\theta_d)$ for $\underline{s} = 0001\overline{0010}$. 

\section{Proof of Lemma~\ref{lemma: convergence of spider maps}}
\label{sec:proofoflemma1.4}
Let $\underline{s}$ be an external address with preperiod $\ell\geq 1$ and period $k\geq 1$, and let $D$, $\theta_d$ be as defined in  Proposition~\ref{prop:address to poly}. By Proposition~\ref{same_spider_exp} and Proposition~\ref{prop:spidermapsareconjugate}, for each $d\geq D$, there exists a homeomorphism $h_d:[S_d(\theta)] \longrightarrow [\Gamma_{\underline{s}}]$  preserving circular order of legs that conjugates $[\mathcal{F}_{d,\theta_d}]$ to $[\mathcal{F}_{\underline{s}}]$, that maps $\infty$ to $\infty$ and $x_1$ to $e_1$. Using the Alexander trick, we can extend $h_d$ to a homeomorphism of the sphere. This gives a way of extending $[\mathcal{F}_{\underline{s}}]$ to a degree $d-$ map on $S^2$, given by $h_d \circ [\mathcal{F}_{d,\theta_d}] \circ h_d^{-1}$. By this definition this map is Thurston equivalent to $[\mathcal{F}_{d,\theta_d}]$. 

    Define an operator $\sigma^d_{\underline{s}}$ on $\mathcal{T}_{\underline{s}}$ as follows: given $[\phi] \in \mathcal{T}_{\underline{s}}$ with $\phi(e_1) = 0$, define $\sigma^d_{\underline{s}}([\phi]) = [\psi]$, where $\psi$ completes the following diagram:
    \[
\begin{tikzcd}
\Big([\Gamma_{\underline{s}}],A_{\underline{s}}\Big) \arrow[r,"\psi"] \arrow[d,"{[\mathcal{F}_{\underline{s}}]}"]&\Big(\C,\psi(A_{\underline{s}})\Big) \arrow[d,"f_\phi"]\\
\Big([S_{\underline{s}}],A_{\underline{s}}\Big) \arrow[r,"\phi"] & \Big(\C,\phi(A_{\underline{s}})\Big)
\end{tikzcd}
\]
where $f_\phi(z) = \phi(e_2)(1+\frac{z}{d})^d$.
Define the map $H_d: \mathcal{T}^d_{\theta_d} \longrightarrow  \mathcal{T}_{\underline{s}}$ as $H_d([\phi]) = [\phi \circ h_d^{-1}]$ (it is easy to see that this is well-defined).
\begin{proposition}\label{prop:Hd is isometry}
    $H_d$ is a biholomorphism that is an isometry with respect to the Teichm\"{u}ller metrics on $\mathcal{T}^d_{\theta_d}$ and $\mathcal{T}_{\underline{s}}$.   
\end{proposition}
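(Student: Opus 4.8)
The plan is to unpack the definition of $H_d$ and verify the two assertions — biholomorphism and isometry — via the identification of both spider spaces with honest Teichmüller spaces of punctured spheres, which was established in Proposition~\ref{prop:isom teich spaces}. The key observation is that $h_d \colon [S_d(\theta_d)] \to [\Gamma_{\underline{s}}]$ is a homeomorphism carrying $A_d(\theta_d)$ bijectively onto $A_{\underline{s}}$ and $\infty$ to $\infty$, so after extending $h_d$ to a homeomorphism of $S^2$ (Alexander trick), post-composition with $h_d$ induces a map $\Teich(S^2, A_d(\theta_d)\cup\{\infty\}) \to \Teich(S^2, A_{\underline{s}}\cup\{\infty\})$ sending $[\Phi] \mapsto [\Phi \circ h_d^{-1}]$. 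Under the isomorphisms $H$ of Proposition~\ref{prop:isom teich spaces} this is exactly $H_d$, so it suffices to work at the level of the sphere.

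First I would check well-definedness and bijectivity: if $\Phi \sim M\circ \Phi'$ rel $A_d(\theta_d)\cup\{\infty\}$ for an affine map $M$, then precomposing the isotopy with $h_d^{-1}$ shows $\Phi\circ h_d^{-1} \sim M\circ(\Phi'\circ h_d^{-1})$ rel $A_{\underline{s}}\cup\{\infty\}$; the inverse is visibly $[\Psi]\mapsto [\Psi \circ h_d]$, so $H_d$ is a bijection. For the holomorphic structure, recall that the complex-manifold structure on $\Teich(S^2,A)$ comes from the universal cover $\pi\colon \Teich(S^2,A)\to \Mod(A)\subset \C^{|A|-2}$, and that $h_d$ restricted to $A_d(\theta_d)$ is a fixed bijection onto $A_{\underline{s}}$; hence $H_d$ descends to the identification of moduli spaces that just relabels the marked points according to $h_d|A_d(\theta_d)$, which is a biholomorphism of open subsets of $\C^{|A_{\underline{s}}|-2}$ (it is essentially a coordinate permutation composed with the affine normalization sending the class of $e_1$ to $0$ and $e_2$ to $1$). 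Since $H_d$ lifts a biholomorphism of the base and is a bijection on the total space, it is itself biholomorphic.

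For the isometry statement I would argue directly from the definition of the Teichmüller metric. Given $[\Phi_1],[\Phi_2]\in \Teich(S^2,A_d(\theta_d)\cup\{\infty\})$, a quasiconformal map $\psi'$ isotopic rel $A_d(\theta_d)\cup\{\infty\}$ to $\Phi_1\circ\Phi_2^{-1}$ has the same dilatation as $\psi'$ itself viewed via $h_d$: indeed $(\Phi_1\circ h_d^{-1})\circ(\Phi_2\circ h_d^{-1})^{-1} = \Phi_1\circ\Phi_2^{-1}$ on the nose (the $h_d$'s cancel), so the two infima defining $d_T$ on the two sides are over literally the same set of quasiconformal maps with the same dilatations, hence equal. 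This gives $d_T(H_d[\Phi_1],H_d[\Phi_2]) = d_T([\Phi_1],[\Phi_2])$.

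I do not expect a genuine obstacle here; the statement is essentially formal once one invokes Proposition~\ref{prop:isom teich spaces} to replace the spider spaces by Teichmüller spaces of punctured spheres. The only point requiring a little care is making sure the holomorphic structures match — i.e. that conjugation by $h_d$ respects the covering $\pi$ over moduli space and the labelling of marked points used to coordinatize $\Mod$ — but since $h_d|A_d(\theta_d)$ is a fixed combinatorial bijection this is just bookkeeping, and the cancellation $(\Phi_1 h_d^{-1})(\Phi_2 h_d^{-1})^{-1}=\Phi_1\Phi_2^{-1}$ makes the metric claim immediate.
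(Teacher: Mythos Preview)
Your proposal is correct and follows essentially the same route as the paper: the isometry argument is identical (the cancellation $(\phi\circ h_d^{-1})\circ(\psi\circ h_d^{-1})^{-1}=\phi\circ\psi^{-1}$ is exactly what the paper writes), and your discussion of the biholomorphism simply fills in what the paper dismisses in one line as ``clear.''
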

\begin{proof}
    It is clear that $H_d$ is a biholomorphism. For all $[\phi],[\psi] \in \mathcal{T}^d_{\theta_d}$,
    \begin{align*}
        d_T(H_d([\phi]), H_d([\psi])) & = \inf_{q.c. \psi' \sim( \phi \circ h_d^{-1}) \circ (h_d \circ \psi^{-1})} K(\psi')\\& = \inf_{q.c. \psi' \sim \phi \circ \psi^{-1}} K(\psi') \\& = d_T([\phi],[\psi])
    \end{align*}
\end{proof}
\begin{proposition}\label{prop: conjugate with poly spider operator}
    $\sigma^d_{\underline{s}}$ is equivalent to $\sigma^d_{\theta_d}$ and weakly contracting with respect to the Teichm\"{u}ller metric on $\mathcal{T}_{\underline{s}}$. Let $[\phi_d]$ be the fixed point of $\sigma^d_{\theta_d}: \mathcal{T}^d_{\theta_d} \righttoleftarrow$. Then $[ \phi_d \circ h_d^{-1} ]$ is the fixed point of $\sigma^d_{\underline{s}}$. 
\end{proposition}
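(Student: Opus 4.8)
The plan is to exhibit $\sigma^d_{\underline{s}}$ as the conjugate of $\sigma^d_{\theta_d}$ by the isometry $H_d$ of Proposition~\ref{prop:Hd is isometry}, i.e.\ to prove the identity $\sigma^d_{\underline{s}} = H_d \circ \sigma^d_{\theta_d} \circ H_d^{-1}$; the two remaining assertions (weak contraction, and the formula for the fixed point) then follow formally from this and from the corresponding facts about $\sigma^d_{\theta_d}$.

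First I would fix the normalizations. Given $[\phi] \in \mathcal{T}_{\underline{s}}$, choose the representative $\phi$ with $\phi(e_1)=0$; then $H_d^{-1}([\phi]) = [\phi \circ h_d]$, and $(\phi\circ h_d)(x_1) = \phi(h_d(x_1)) = \phi(e_1) = 0$, so $\phi \circ h_d$ is normalized as required by the definition of $\sigma^d_{\theta_d}$. Since $h_d$ conjugates $[\mathcal{F}_{d,\theta_d}]$ to $[\mathcal{F}_{\underline{s}}]$ and sends $x_1$ to $e_1$, it also sends $x_2 = [\mathcal{F}_{d,\theta_d}](x_1)$ to $e_2 = [\mathcal{F}_{\underline{s}}](e_1)$; consequently $p_{\phi \circ h_d}(z) = (\phi\circ h_d)(x_2)\bigl(1+\tfrac{z}{d}\bigr)^d = \phi(e_2)\bigl(1+\tfrac{z}{d}\bigr)^d = f_\phi(z)$. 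This matches the right-hand maps of the two pullback diagrams.

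Next comes the diagram chase. Write $\sigma^d_{\theta_d}([\phi \circ h_d]) = [\psi']$, so that $f_\phi \circ \psi' = (\phi \circ h_d) \circ [\mathcal{F}_{d,\theta_d}]$. The conjugacy $h_d \circ [\mathcal{F}_{d,\theta_d}] = [\mathcal{F}_{\underline{s}}] \circ h_d$ gives $[\mathcal{F}_{d,\theta_d}]\circ h_d^{-1} = h_d^{-1}\circ[\mathcal{F}_{\underline{s}}]$, whence
\[
f_\phi \circ (\psi' \circ h_d^{-1}) = (\phi\circ h_d)\circ[\mathcal{F}_{d,\theta_d}]\circ h_d^{-1} = \phi\circ[\mathcal{F}_{\underline{s}}].
\]
Thus $\psi' \circ h_d^{-1}$ completes the defining diagram of $\sigma^d_{\underline{s}}$ at $[\phi]$, and $\sigma^d_{\underline{s}}([\phi]) = [\psi'\circ h_d^{-1}] = H_d([\psi']) = H_d\bigl(\sigma^d_{\theta_d}(H_d^{-1}[\phi])\bigr)$, which is the asserted conjugacy. (One should note in passing that $\psi'$, and hence $\psi'\circ h_d^{-1}$, is an orientation-preserving embedding preserving the circular order of legs at $\infty$, so these are legitimate representatives of points of the spider spaces; this is immediate since $h_d$ enjoys the same properties.)

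Finally, since $\sigma^d_{\theta_d}$ is weakly contracting for the Teichm\"uller metric on $\mathcal{T}^d_{\theta_d}$ (Section~\ref{sec:spiders}, via its conjugacy with the Thurston pullback operator $\sigma_{[\mathcal{F}_{d,\theta_d}]}$) and $H_d$ is an isometry, the conjugate operator $\sigma^d_{\underline{s}} = H_d\circ\sigma^d_{\theta_d}\circ H_d^{-1}$ is weakly contracting on $\mathcal{T}_{\underline{s}}$. Moreover, if $[\phi_d]$ is the fixed point of $\sigma^d_{\theta_d}$ (with $\phi_d(x_1)=0$), then $\sigma^d_{\underline{s}}(H_d[\phi_d]) = H_d(\sigma^d_{\theta_d}[\phi_d]) = H_d[\phi_d]$, so $H_d([\phi_d]) = [\phi_d \circ h_d^{-1}]$ is the fixed point of $\sigma^d_{\underline{s}}$ (and it is normalized since $(\phi_d\circ h_d^{-1})(e_1)=\phi_d(x_1)=0$). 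I expect no real obstacle here: the content is entirely a diagram chase, and the only point requiring care is confirming $h_d(x_2)=e_2$, which is what makes the two right-hand maps $p_{\phi\circ h_d}$ and $f_\phi$ literally equal and hence lets the conjugacy go through cleanly.
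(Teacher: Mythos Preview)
Your proof is correct and follows the same approach as the paper: the paper's proof simply asserts that $\sigma^d_{\underline{s}} = H_d \circ \sigma^d_{\theta_d} \circ H_d^{-1}$ ``by definition'' and then invokes Proposition~\ref{prop:Hd is isometry}, whereas you have actually supplied the diagram chase (including the verification that $h_d(x_2)=e_2$ so that $p_{\phi\circ h_d}=f_\phi$) that justifies this conjugacy.
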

\begin{proof}
  By definition, we have $\sigma^d_{\underline{s}} = H_d \circ \sigma^d_{\theta_d} \circ H_d^{-1}$. Combined with Proposition~\ref{prop:Hd is isometry}, the statement follows. 
\end{proof}

\begin{proposition} \label{prop: qc homeos fixing three points}
    Let $\varphi_n$ be a sequence of $K_n$-quasiconformal homeomorphisms of $\widehat{\C}$ fixing three distinct points $a, b, c$ of $\widehat{\C}$. Suppose that $K_n \rightarrow 1$ as $n$ tends to $\infty$, then $\varphi_n\longrightarrow \mathrm{id}_{\C}$ in the spherical metric of $\hat{\C}$.
\end{proposition}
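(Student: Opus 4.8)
The plan is to combine compactness of families of quasiconformal homeomorphisms with the rigidity of conformal maps, reducing everything to the observation that a M\"obius transformation fixing three distinct points is the identity.

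First I would normalize. Pick a M\"obius transformation $M \in \Aut(\hat\C)$ with $M(0)=a$, $M(1)=b$, $M(\infty)=c$, and set $\psi_n := M^{-1}\circ \varphi_n\circ M$. Each $\psi_n$ is again $K_n$-quasiconformal and now fixes $0,1,\infty$; since $M$ and $M^{-1}$ are uniformly continuous on the compact sphere $\hat\C$, we have $\varphi_n\to\id_{\hat\C}$ spherically-uniformly if and only if $\psi_n\to\id_{\hat\C}$ spherically-uniformly. Hence it suffices to prove the statement when $\{a,b,c\}=\{0,1,\infty\}$.

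Since $K_n\to 1$, the number $K:=\sup_n K_n$ is finite, so every $\psi_n$ belongs to the family $\mathcal F_K$ of $K$-quasiconformal self-homeomorphisms of $\hat\C$ fixing $0,1,\infty$. By the standard compactness theorem for quasiconformal mappings, $\mathcal F_K$ is sequentially compact in the topology of uniform convergence for the spherical metric, and any uniform limit of $K_n$-quasiconformal homeomorphisms with $K_n\to K_\infty$ is itself a $K_\infty$-quasiconformal homeomorphism (lower semicontinuity of the maximal dilatation under uniform limits; here the normalization at three points prevents the limit from degenerating to a constant). Now I would argue via subsequences: given any subsequence of $(\psi_n)$, extract a further subsequence $\psi_{n_j}$ converging uniformly to some homeomorphism $\psi$ of $\hat\C$, which necessarily fixes $0,1,\infty$. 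Because $K_{n_j}\to 1$, the map $\psi$ is $1$-quasiconformal, hence conformal by Weyl's lemma, i.e. an element of $\Aut(\hat\C)$. A M\"obius transformation fixing the three distinct points $0,1,\infty$ equals $\id_{\hat\C}$, so $\psi=\id_{\hat\C}$. Since every subsequence of $(\psi_n)$ has a further subsequence converging uniformly to $\id_{\hat\C}$, the whole sequence $(\psi_n)$ converges uniformly to $\id_{\hat\C}$ in the spherical metric, and undoing the normalization yields $\varphi_n\to\id_{\hat\C}$.

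There is no serious obstacle here: the only inputs beyond linear algebra are the compactness theorem for quasiconformal families and the semicontinuity of the dilatation under uniform limits, both standard. If one prefers to avoid quoting these, an alternative is to invoke the explicit H\"older estimate for a $K_n$-quasiconformal self-map of $\hat\C$ normalized at three points, which gives equicontinuity of $(\psi_n)$ via Arzel\`a--Ascoli; since the H\"older exponent $1/K_n\to 1$ forces any subsequential limit to be locally Lipschitz with dilatation bound degenerating to $1$, the limit is again conformal and hence trivial.
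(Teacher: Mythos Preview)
Your argument is correct and follows essentially the same route as the paper's proof: use compactness of normalized quasiconformal families to extract a subsequential limit, observe via lower semicontinuity of the dilatation and Weyl's lemma that the limit is a M\"obius transformation fixing three points, hence the identity, and conclude by the sub-subsequence principle. The only cosmetic difference is that you first conjugate to normalize the fixed points to $0,1,\infty$, whereas the paper applies the compactness theorem directly with the original three points $a,b,c$; this extra step is harmless but unnecessary.
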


\begin{proof}
    Let $\varphi_{n_k}$ be a subsequence of the sequence $\varphi_n$. By \cite[Theorem 1.26]{branner_fagella_2014}, there exists a subsequence of $\varphi_{n_k}$ that converges  in the spherical metric to a quasiconformal map $\varphi \colon \widehat{\C} \to \widehat{\C}$ with $K(\phi) = 1$. By Weyl's Lemma (\cite[Theorem 1.14]{branner_fagella_2014}), $\varphi$ is holomorphic. Since $\varphi $ fixes three 
 distinct points of $\widehat{\C}$, it equals  $\id_{\widehat{\C}}$. 
 
 This argument applies to every subsequence of $\varphi_n$, thus the entire sequence $\varphi_n$ converges to $\id_{\widehat{\C}}$ in the spherical metric.
\end{proof}
\begin{remark}
    In particular, the $\phi_n$'s above converge to $\id_{\C}$ uniformly on compact subsets of $\C$.
\end{remark}

\begin{proposition}\label{pullback_convergence}Let $[\phi_d],[\phi] \in \mathcal{T}_{\underline{s}}$. 

    If  $\phi_d \longrightarrow \phi$ uniformly on compact sets of $[\Gamma_{\underline{s}}] \setminus \{\infty\}$, with $\phi_d(e_1) = \phi(e_1) = 0$ and $\phi_d(e_2) = \phi(e_2) = \lambda$, and $\psi_d \in [\sigma^d_{\underline{s}}([\phi_d])]$, $\psi \in [\sigma_{\underline{s}}(\phi)]$ are the unique maps  such that $p_{d, \lambda} \circ \psi_d = \phi_d \circ [\mathcal{F}_{\underline{s}}]$,  $p_\lambda \circ \psi =  \phi \circ [\mathcal{F}_{\underline{s}}]$, then $\psi_d \longrightarrow \psi$ uniformly on compact sets of $[\Gamma_{\underline{s}}] \setminus \{\infty\}$.  
\end{proposition}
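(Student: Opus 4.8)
The plan is to reduce the statement, via the definition of the spider pullback, to two soft analytic facts about the family $p_{d,\lambda}$. Writing $p_{d,\lambda}(z)=\lambda\big(1+\tfrac{z}{d}\big)^{d}=\lambda\exp\!\big(d\log(1+\tfrac{z}{d})\big)$ and using $d\log(1+\tfrac{z}{d})\to z$ locally uniformly on $\C$, one has $p_{d,\lambda}\to p_{\lambda}$ locally uniformly; moreover, if $V^{d}_{j}$ is the $j$-th static sector of $p_{d,\lambda}$ (a component of $p_{d,\lambda}^{-1}(\C\setminus\R_{\le 0})$) and $U^{stat}_{j}(\lambda)$ is the $j$-th static sector of $p_{\lambda}$, then for each fixed $j$ the inverse branch $\big(p_{d,\lambda}|_{V^{d}_{j}}\big)^{-1}$ converges, uniformly on compacta of $\C\setminus\R_{\le 0}$, to $\big(p_{\lambda}|_{U^{stat}_{j}(\lambda)}\big)^{-1}$ (from $1+\tfrac zd=(w/\lambda)^{1/d}e^{2\pi i j/d}$ one reads off the branch $w\mapsto d\big((w/\lambda)^{1/d}e^{2\pi i j/d}-1\big)=\log(w/\lambda)+2\pi i j+O(1/d)$; this is the convergence of static partitions in the spirit of \cite{Hubbard_et_al}). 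I also use that $p_{\lambda}$ has no critical point, so it is a covering $\C\to\C^{*}$, while the critical point $z=-d$ of $p_{d,\lambda}$ escapes to $\infty$.

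The first real step is to pin down $\psi_{d}$ and $\psi$ at the finitely many vertices $e_{1},\dots,e_{\ell+k}$. Since $\underline{s}$ has preperiod $\ell\ge 1$, the vertex $e_{1}=0$ is the endpoint of the single leg $\gamma_{1}$, each $e_{n}$ lies in the open static sector $\{(2s_{n}-1)\pi<\Im z<(2s_{n}+1)\pi\}$, and $[\mathcal{F}_{\underline{s}}]$ maps $\gamma_{n}$ homeomorphically onto $\gamma_{n+1}$ fixing the end $\infty$. By Proposition~\ref{prop: conjugate with poly spider operator}, $h_{d}$ carries $\psi_{d}$ to the degree-$d$ spider pullback of \cite{spideralgo}; combining this with the identification of the legs of $[S_{d}(\theta_{d})]$ with the external-address sectors of $\underline{s}$ from Section~\ref{sec:proofoflemma1.3} (valid once $d>\max_{n}|s_{n}|$), the $n$-th leg $\psi_{d}(\gamma_{n})$ lies in $V^{d}_{s_{n}}$, and likewise $\psi(\gamma_{n})\subset U^{stat}_{s_{n}}(\lambda)$. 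Hence $\phi(e_{n+1})=p_{\lambda}(\psi(e_{n}))\notin\R_{\le 0}$, so $\phi_{d}(e_{n+1})\to\phi(e_{n+1})\notin\R_{\le 0}$, and for all large $d$
\[
\psi(e_{n})=\big(p_{\lambda}|_{U^{stat}_{s_{n}}(\lambda)}\big)^{-1}\!\big(\phi(e_{n+1})\big),\qquad
\psi_{d}(e_{n})=\big(p_{d,\lambda}|_{V^{d}_{s_{n}}}\big)^{-1}\!\big(\phi_{d}(e_{n+1})\big).
\]
With the convergence of inverse branches and $\phi_{d}(e_{n+1})\to\phi(e_{n+1})$, this gives $\psi_{d}(e_{n})\to\psi(e_{n})$ for every $n$.

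Then I would promote vertex convergence to uniform convergence on compacta, leg by leg. Parametrise $\gamma_{n}$ by $[0,1)$ with $\gamma_{n}(0)=e_{n}$, $\gamma_{n}(t)\to\infty$; fix $t^{*}<1$. As $[\mathcal{F}_{\underline{s}}]$ sends the compact arc $\gamma_{n}([0,t^{*}])$ into a compact arc of $\gamma_{n+1}\setminus\{\infty\}$, the curves $a_{d}:=\phi_{d}\circ[\mathcal{F}_{\underline{s}}]\circ\gamma_{n}$ converge uniformly on $[0,t^{*}]$ to $a:=\phi\circ[\mathcal{F}_{\underline{s}}]\circ\gamma_{n}$, with $a([0,t^{*}])\subset\C^{*}$; and $\psi_{d}\circ\gamma_{n}|_{[0,t^{*}]}$, $\psi\circ\gamma_{n}|_{[0,t^{*}]}$ are the continuous lifts of $a_{d}$ by $p_{d,\lambda}$ and of $a$ by $p_{\lambda}$ with initial values $\psi_{d}(e_{n})\to\psi(e_{n})$. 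A connectedness argument finishes it: for fixed $\eps>0$ and $d$ large, $\{t:|\psi_{d}(\gamma_{n}(t))-\psi(\gamma_{n}(t))|<\eps\}$ is nonempty (vertex convergence), open (continuity), and closed (near $\psi(\gamma_{n}(t_{0}))$ both $p_{\lambda}$ and, for $d$ large, $p_{d,\lambda}$ are injective with inverse branches $o(1)$-close, so a difference $\le\eps$ there is forced to be $o(1)<\eps$), hence equals $[0,t^{*}]$. Since $[\Gamma_{\underline{s}}]$ has finitely many legs and any compactum in $[\Gamma_{\underline{s}}]\setminus\{\infty\}$ meets each leg in some $\gamma_{n}([0,t^{*}_{n}])$, this yields $\psi_{d}\to\psi$ uniformly on compacta.

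The main obstacle is the combinatorial normalisation in the second paragraph: showing that $\psi_{d}(e_{n})$ is exactly the $p_{d,\lambda}$-preimage of $\phi_{d}(e_{n+1})$ in the sector $V^{d}_{s_{n}}$. This must be tracked through the conjugacy $\sigma^{d}_{\underline{s}}=H_{d}\circ\sigma^{d}_{\theta_{d}}\circ H_{d}^{-1}$, the leg placement in the degree-$d$ spider algorithm, and the static-sector correspondences of Section~\ref{sec:proofoflemma1.3}; without it one only knows $\psi_{d}(e_{n})$ is \emph{some} preimage, and the far-away preimages of $p_{d,\lambda}$ (those of modulus $\asymp d$) must be excluded. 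Once this is settled, the remaining inputs---convergence of inverse branches of $p_{d,\lambda}$ and continuity of lifting under $p_{d,\lambda}\to p_{\lambda}$---are routine, and one checks that the escape of the critical point $-d$ to $\infty$ keeps everything well-behaved on the fixed compacta at issue.
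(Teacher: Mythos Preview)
Your proposal is correct and follows essentially the same route as the paper: both hinge on the explicit inverse-branch formula $w_d=d\big((z_d/\lambda)^{1/d}e^{2\pi i s_n/d}-1\big)\to\log(z/\lambda)+2\pi i s_n$, together with the sector identification coming from $x_n(d)\equiv s_n\pmod d$. The paper is simply more direct: it writes this formula at an arbitrary point $x$ on the leg $\gamma_n$ (not just at the vertex $e_n$), computes the limit, and observes that the convergence is uniform on compacta because $r_d\to r$ and $t_d\to t$ uniformly there. This makes your separate vertex-then-arc lifting/connectedness argument unnecessary, though it is not wrong. As for the ``main obstacle'' you flag---pinning down which $p_{d,\lambda}$-preimage $\psi_d(x)$ is---the paper dispatches it in one line by invoking Proposition~\ref{prop:address to poly}: since $x_n(d)$ equals $s_n$ or $d+s_n$, one has $e^{2\pi i x_n(d)/d}=e^{2\pi i s_n/d}$, so the sector label is fixed and bounded independently of $d$. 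You can cite this directly rather than tracking through the conjugacy $H_d$.
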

\begin{proof}
Given $x \in [\Gamma_{\underline{s}}]$ on the edge $\gamma_{n}$ and $y = [\mathcal{F}_{\underline{s}}](x)$,  $\psi_d(x)$ is a point $w_d$ such that 
\begin{align*}
   z_d:= \phi_d(y) & = \lambda\Big(1+\frac{w_d}{d}\Big)^d
\end{align*}
Let $\gamma_1$ be the leg of $\phi$ with endpoint $\phi(e_1)$. We call a component of $\C \setminus p_{d,\lambda}^{-1}(\phi(\gamma_1))$ a sector.  Exactly one of these sectors contains the point $0$, and we label this sector as $x_1(d)$, while labelling the rest counterclockwise from $1$ to $d-1$. 

The point $w_d$ above is chosen so that it belongs in the sector labelled $x_n(d)$. Assuming that $z_d/\lambda=r_de^{it_d}$, where $t_d \in [0,\pi)$, we have $w_d = d\big(r_d^{\frac{1}{d}}e^{\frac{it_d}{d} + \frac{2\pi i x_n(d)}{d}} - 1\big)$. By the construction in Proposition~\ref{prop:address to poly}, $x_n(d)$ is equal to either $s_n$ or $d+s_n$. Therefore, $e^{\frac{2 \pi i x_n(d)}{d}} = e^{\frac{2 \pi i s_n}{d}}$ for all $d\geq D$, and we have 
\begin{align*}
    w_d & =d\Big(r_d^{\frac{1}{d}}e^{i\frac{t_d}{d}} \cos \frac{2\pi  s_n}{d} - 1\Big) + id\Big( r_d^{\frac{1}{d}}e^{i\frac{t_d}{d}} \sin \frac{2\pi  s_n}{d} \Big)
\end{align*}
 $r_d \longrightarrow r$ and $t_d \longrightarrow t$, where $re^{it} = \frac{\phi(y)}{\lambda}$. 

Suppose $re^{it} = \frac{\phi(y)}{\lambda}$ (where $t \in [0,\pi]$). Since $\phi_d \longrightarrow \phi$, we have 
\begin{align*}
    \lim_{d \rightarrow \infty}r_d & = r \\
    \lim_{d \rightarrow \infty}t_d& = t\\ 
    \lim_{d \rightarrow \infty}\psi_d(x) &= \ln r + it + 2\pi i s_n= \psi(x)
\end{align*}
This last convergence is uniform on compact subsets of  $[\Gamma_{\underline{s}}] \setminus \{\infty\}$, and proves the proposition.    
\end{proof}
\begin{proposition}\label{conv_implies_teich_convergence}Given $[\phi_n], [\phi] \in \mathcal{T}_{\underline{s}}$, if $\phi_n \longrightarrow \phi$ uniformly on compact subsets of $[\Gamma_{\underline{s}}] \setminus \{\infty\}$ and we have $\phi_n(e_1) = \phi(e_1) = 0$ for all $n$, then $[\phi_n] \longrightarrow [\phi]$ in $\mathcal{T}_{\underline{s}}$. 
\end{proposition}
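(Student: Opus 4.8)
The statement to prove is: if $\phi_n \to \phi$ uniformly on compact subsets of $[\Gamma_{\underline{s}}] \setminus \{\infty\}$ with $\phi_n(e_1) = \phi(e_1) = 0$, then $[\phi_n] \to [\phi]$ in $\mathcal{T}_{\underline{s}}$. The plan is to reduce this to a statement about the Teichm\"uller metric via the identification $\mathcal{T}_{\underline{s}} \cong \Teich(S^2, A_{\underline{s}} \cup \{\infty\})$ from Proposition~\ref{prop:isom teich spaces}, and then produce explicit quasiconformal maps interpolating between $\phi_n$ and $\phi$ whose dilatations tend to $1$.

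\textbf{Step 1: Extend to the sphere and normalize.} First I would use the Alexander trick to extend each $\phi_n$ and $\phi$ to orientation-preserving homeomorphisms of $S^2$ (still denoted $\phi_n, \phi$), so that $[\phi_n], [\phi]$ are represented as points of $\Teich(S^2, A_{\underline{s}} \cup \{\infty\})$. Recall $A_{\underline{s}} = \{e_1, \ldots, e_r\}$ with $|A_{\underline{s}}| \geq 2$, so $|A_{\underline{s}} \cup \{\infty\}| \geq 3$ and the Teichm\"uller space is well-defined. By the uniform convergence hypothesis, $\phi_n(e_i) \to \phi(e_i)$ for each $i = 1, \ldots, r$, and $\phi_n(\infty) = \phi(\infty) = \infty$, $\phi_n(e_1) = \phi(e_1) = 0$.

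\textbf{Step 2: Build comparison homeomorphisms with small dilatation.} The key is to estimate $d_T([\phi_n], [\phi])$, which equals $\inf \ln K(\psi')$ over quasiconformal $\psi'$ isotopic rel $A_{\underline{s}} \cup \{\infty\}$ to $\phi_n \circ \phi^{-1}$. I would construct an explicit candidate: since $\phi_n \to \phi$ uniformly on compacta of $[\Gamma_{\underline{s}}] \setminus \{\infty\}$, the composition $\phi_n \circ \phi^{-1}$ restricted to the embedded graph $\phi([\Gamma_{\underline{s}}])$ converges uniformly on compacta to the identity (here I use that $\phi^{-1}$ is a fixed homeomorphism, hence uniformly continuous on compacta, and that $\phi([\Gamma_{\underline{s}}])$ has $\infty$ as its only noncompact end). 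Thus for large $n$ the map $\phi_n \circ \phi^{-1}$ moves each point of $\phi([\Gamma_{\underline{s}}] \setminus \{\infty\})$ — in particular each point of $\phi(A_{\underml{s}})$ — by a spherically small amount. Using a standard interpolation (e.g. a bump-function isotopy supported in small disks around the points of $\phi(A_{\underline{s}})$, combined with the complementary-disk argument as in the proof of Proposition~\ref{prop:genspider}), I would produce quasiconformal maps $g_n \simeq \phi_n \circ \phi^{-1}$ rel $A_{\underline{s}} \cup \{\infty\}$ with $K(g_n) \to 1$. This yields $d_T([\phi_n], [\phi]) = \inf \ln K \leq \ln K(g_n) \to 0$.

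\textbf{Main obstacle.} The hard part is constructing $g_n$ carefully enough that it is genuinely \emph{isotopic rel $A_{\underline{s}} \cup \{\infty\}$} to $\phi_n \circ \phi^{-1}$, not merely equal to it on $A_{\underline{s}} \cup \{\infty\}$. Two homeomorphisms of $S^2$ agreeing on a finite set of $\geq 4$ points need not be isotopic rel that set — there can be a nontrivial mapping-class difference. Here the extra structure that saves the argument is that both $\phi_n$ and $\phi$ were extended from embeddings of the \emph{same} graph $[\Gamma_{\underline{s}}]$ respecting circular order of legs at $\infty$, and $\phi_n \to \phi$ on that graph; so $\phi_n \circ \phi^{-1}$ restricted to $\phi([\Gamma_{\underline{s}}])$ is isotopic, through embeddings of the graph, to the inclusion, once $n$ is large. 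One then extends this graph-level isotopy over the complementary disks (each a topological disk by construction, cf. the generalised-spider setup), exactly as in Proposition~\ref{prop:genspider}, to get an ambient isotopy rel $A_{\underline{s}} \cup \{\infty\}$ from $\phi_n \circ \phi^{-1}$ to a map supported near $A_{\underline{s}}$, which can then be taken quasiconformal with dilatation close to $1$. Once the isotopy class is pinned down, the dilatation estimate is routine, and $d_T([\phi_n],[\phi]) \to 0$ gives $[\phi_n] \to [\phi]$.
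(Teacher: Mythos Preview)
Your approach is correct but takes a genuinely different route from the paper's. You build explicit quasiconformal comparison maps $g_n$ in the correct isotopy class, using the spider's graph structure (complementary regions are disks, so the Alexander trick applies) to pin down that $\phi_n \circ \phi^{-1}$ is isotopic rel $A_{\underline{s}} \cup \{\infty\}$ to a map supported near the marked points. The paper instead argues indirectly through the universal cover $\pi: \mathcal{T}_{\underline{s}} \to \Mod(\underline{s})$: the convergence $\phi_n(e_j) \to \phi(e_j)$ gives $[[\phi_n]] \to [[\phi]]$ in moduli space, which lifts (via Proposition~\ref{prop:fiber over moduli space}) to $d_T([\phi_n], [h_n \circ \phi]) \to 0$ for some deck transformations $\langle h_n \rangle \in \PMCG$; then Proposition~\ref{prop: qc homeos fixing three points} (normal-family compactness of $K$-quasiconformal maps fixing three points) forces the witnessing quasiconformal maps to converge to the identity, and discreteness of $\PMCG$ yields $\langle h_n \rangle = \langle \id \rangle$ eventually. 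Your hands-on construction avoids the covering-space machinery and Proposition~\ref{prop: qc homeos fixing three points} entirely, at the cost of having to verify the isotopy class directly; the paper's argument is slicker in that it never touches the complementary disks, letting the deck group and its discreteness absorb the mapping-class issue you flagged as the main obstacle.
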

\begin{proof}
Without loss of generality, we may also assume $\phi_n(e_2) = \phi(e_2) = \lambda \in \C^*$ for all $n$. Since $\phi_n \longrightarrow \phi$, we have $\phi_n(e_j) \longrightarrow \phi(e_j)$ for all $j$. Thus in the moduli space $\Mod(\underline{s})$, $[[\phi_n]] \longrightarrow [[\phi]]$. 

By Proposition~\ref{prop:fiber over moduli space}, there exists a sequence $\langle h_n\rangle \in \PMCG(S^2, \phi(A_{\underline{s}}) \cup \{\infty\})$ such that 
\begin{align*}
    d_T([\phi_n], [h_n \circ \phi]) \rightarrow 0
\end{align*}

By definition of Teichm\"{u}ller distance, there exists a sequence of quasiconformal maps $\psi_n $ isotopic to $\phi_n \circ \phi^{-1} \circ h_n^{-1}$ such that $K(\psi_n) \rightarrow 1$. 
We note that $\psi_n(\infty) = \infty, \psi_n(0) = 0$ and $\psi_n(\lambda) = \lambda$. By Proposition~\ref{prop: qc homeos fixing three points}, $\psi_n \longrightarrow \mathrm{id}_{\C}$ uniformly on compact subsets of $\C$. By construction, for each $n$, $\langle h_n \rangle  = \langle \phi_n^{-1} \circ \phi \circ  \psi_n^{-1}\rangle \in \PMCG(S^2,A_{\underline{s}} \cup \{\infty\})$. 

Since $\phi_n^{-1} \circ \phi \circ \psi_n^{-1} \longrightarrow \textrm{id}_{\C}$, we have $\langle h_n\rangle  \longrightarrow \langle \textrm{id}_{S^2}\rangle $ in $\PMCG(S^2, A_{\underline{s}} \cup \{\infty\})$. Since $\PMCG(A_{\underline{s}} \cup \{\infty\})$ is a discrete group, we must have $\langle h_n\rangle  = \langle \textrm{id}\rangle $ eventually, and thus, 
\begin{align*}
    d_T([\phi_n], [\phi]) \rightarrow 0
\end{align*} 
\end{proof}
\begin{proposition}
$\sigma^d_{\underline{s}} \longrightarrow \sigma_{\underline{s}}$ pointwise on $\mathcal{T}_{\underline{s}}$.
\end{proposition}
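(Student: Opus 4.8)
The plan is to show that for any fixed $[\phi] \in \mathcal{T}_{\underline{s}}$, the sequence $\sigma^d_{\underline{s}}([\phi])$ converges to $\sigma_{\underline{s}}([\phi])$ in the Teichm\"{u}ller metric. First I would fix a representative $\phi$ with $\phi(e_1) = 0$ and (after post-composing with the appropriate affine map, which does not change the class) $\phi(e_2) = \lambda$ for some $\lambda \in \C^*$. Since $\sigma^d_{\underline{s}}$ and $\sigma_{\underline{s}}$ are defined by filling in the same commutative square but with $f_\phi(z) = \phi(e_2)(1+\tfrac{z}{d})^d$ in place of $p_\phi(z) = \phi(e_2)\exp(z)$, the natural strategy is to apply Proposition~\ref{pullback_convergence} with the \emph{constant} sequence $\phi_d \equiv \phi$. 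Indeed $\phi_d \to \phi$ trivially uniformly on compact subsets of $[\Gamma_{\underline{s}}] \setminus \{\infty\}$, and $\phi_d(e_1) = \phi(e_1) = 0$, $\phi_d(e_2) = \phi(e_2) = \lambda$, so the hypotheses are met. Proposition~\ref{pullback_convergence} then produces representatives $\psi_d \in [\sigma^d_{\underline{s}}([\phi])]$ and $\psi \in [\sigma_{\underline{s}}([\phi])]$ — namely the unique lifts satisfying $p_{d,\lambda} \circ \psi_d = \phi \circ [\mathcal{F}_{\underline{s}}]$ and $p_\lambda \circ \psi = \phi \circ [\mathcal{F}_{\underline{s}}]$ — with $\psi_d \to \psi$ uniformly on compact subsets of $[\Gamma_{\underline{s}}] \setminus \{\infty\}$.

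Next I would upgrade this uniform-on-compacts convergence of the representatives to convergence in $\mathcal{T}_{\underline{s}}$. The lifts $\psi_d$ and $\psi$ all send $e_1$ to $0$ (this is how the lift through $p_{d,\lambda}$ or $p_\lambda$ is normalized, since these maps fix $0$ and $\phi([\mathcal{F}_{\underline{s}}](e_1)) = \phi(e_2) = \lambda$ forces $\psi_d(e_1) = \psi(e_1) = 0$). Hence Proposition~\ref{conv_implies_teich_convergence} applies directly and yields $[\psi_d] \to [\psi]$ in $\mathcal{T}_{\underline{s}}$, i.e.
\begin{align*}
    d_T\bigl(\sigma^d_{\underline{s}}([\phi]),\, \sigma_{\underline{s}}([\phi])\bigr) \longrightarrow 0.
\end{align*}
Since $[\phi] \in \mathcal{T}_{\underline{s}}$ was arbitrary, this is exactly pointwise convergence $\sigma^d_{\underline{s}} \to \sigma_{\underline{s}}$.

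The one point requiring a little care — and the main obstacle — is verifying that the lifts chosen in Proposition~\ref{pullback_convergence} genuinely represent $\sigma^d_{\underline{s}}([\phi])$ and $\sigma_{\underline{s}}([\phi])$, i.e. that the map completing the defining diagram of $\sigma^d_{\underline{s}}$ is precisely the lift of $\phi \circ [\mathcal{F}_{\underline{s}}]$ through the branched cover $z \mapsto \phi(e_2)(1+z/d)^d$ that is compatible with the circular order of legs at $\infty$, and similarly for $\sigma_{\underline{s}}$ through $z \mapsto \phi(e_2)\exp(z)$. This is a matter of unwinding definitions: in the spider-operator set-up the lift is pinned down by requiring that the leg $\gamma_n$ map into the correct dynamic sector, which is exactly the sector-labelling bookkeeping $x_n(d) \equiv s_n \ (\mathrm{mod}\ d)$ already carried out inside the proof of Proposition~\ref{pullback_convergence}. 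Once this identification is in place, the argument is a direct concatenation of Propositions~\ref{pullback_convergence} and \ref{conv_implies_teich_convergence}, and no further estimates are needed.
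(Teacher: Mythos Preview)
Your proposal is correct and follows essentially the same route as the paper: fix $[\phi]$ with $\phi(e_1)=0$, apply Proposition~\ref{pullback_convergence} with the constant sequence $\phi_d\equiv\phi$ to get $\psi_d\to\psi$ locally uniformly, then invoke Proposition~\ref{conv_implies_teich_convergence} to conclude $[\psi_d]\to[\psi]$. Your additional remarks about the normalization $\psi_d(e_1)=0$ and the identification of the lifts with the spider-operator outputs are useful clarifications that the paper leaves implicit.
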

\begin{proof}
For a given $d \geq D$ and $[\phi] \in \mathcal{T}_{\underline{s}}$ with $\phi(e_1) = 0$, let $\lambda = \phi(e_2)$ and $\psi_d \in \sigma_d([\phi])$, $\psi \in \sigma_{\underline{s}}([\phi])$ be such that $p_{d,\lambda} \circ \psi_d = \phi \circ [\mathcal{F}_{\underline{s}}]$ and $p_\lambda \circ \psi = \phi \circ [\mathcal{F}_{\underline{s}}]$. By Proposition~\ref{pullback_convergence}, $\psi_d \longrightarrow \psi$ locally uniformly on 
 $[\Gamma_{\underline{s}}] \setminus \{\infty\}$. By Proposition~\ref{conv_implies_teich_convergence}, $[\psi_d] \longrightarrow [\psi]$.    
\end{proof}
\begin{proposition}\label{prop:local uniform spider convergence}
$\sigma^d_{\underline{s}} \longrightarrow \sigma_{\underline{s}}$ uniformly on compact sets of  $\mathcal{T}_{\underline{s}}$. 
\end{proposition}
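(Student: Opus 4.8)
The plan is to leverage pointwise convergence (established in the preceding proposition) together with the observation that every operator involved is non-expanding for the Teichm\"uller metric, making the family $\{\sigma^d_{\underline{s}}\}_{d\geq D}$ equicontinuous; a routine Arzel\`a--Ascoli-type argument then promotes pointwise convergence to uniform convergence on compact subsets of $\mathcal{T}_{\underline{s}}$.

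First I would record the uniform Lipschitz bound. By Proposition~\ref{prop: conjugate with poly spider operator}, each $\sigma^d_{\underline{s}}$ is conjugate (via the isometry $H_d$ of Proposition~\ref{prop:Hd is isometry}) to the weakly contracting operator $\sigma^d_{\theta_d}$, hence is itself weakly contracting on $\mathcal{T}_{\underline{s}}$; in particular $d_T(\sigma^d_{\underline{s}}([\phi]),\sigma^d_{\underline{s}}([\psi])) \leq d_T([\phi],[\psi])$ for all $[\phi],[\psi]$ and \emph{uniformly} in $d$. By Proposition~\ref{prop: strong contraction} the same non-expanding bound holds for $\sigma_{\underline{s}}$. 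Thus $\{\sigma^d_{\underline{s}}\}_{d\geq D}\cup\{\sigma_{\underline{s}}\}$ is a uniformly $1$-Lipschitz, hence uniformly equicontinuous, family of self-maps of $\mathcal{T}_{\underline{s}}$.

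Next, fix a compact set $K\subset\mathcal{T}_{\underline{s}}$ and $\eps>0$, and cover $K$ by finitely many $d_T$-balls $B(p_1,\eps),\dots,B(p_m,\eps)$ with $p_i\in K$. Since $\sigma^d_{\underline{s}}\to\sigma_{\underline{s}}$ pointwise, choose $N\geq D$ with $d_T(\sigma^d_{\underline{s}}(p_i),\sigma_{\underline{s}}(p_i))<\eps$ for all $d\geq N$ and all $i=1,\dots,m$. For arbitrary $[\phi]\in K$, pick $p_i$ with $d_T([\phi],p_i)<\eps$; then for every $d\geq N$, using the $1$-Lipschitz bounds for $\sigma^d_{\underline{s}}$ and $\sigma_{\underline{s}}$,
\begin{align*}
d_T\big(\sigma^d_{\underline{s}}([\phi]),\sigma_{\underline{s}}([\phi])\big)
&\leq d_T\big(\sigma^d_{\underline{s}}([\phi]),\sigma^d_{\underline{s}}(p_i)\big) + d_T\big(\sigma^d_{\underline{s}}(p_i),\sigma_{\underline{s}}(p_i)\big) + d_T\big(\sigma_{\underline{s}}(p_i),\sigma_{\underline{s}}([\phi])\big)\\
&< \eps + \eps + \eps = 3\eps.
\end{align*}
Since $N$ does not depend on $[\phi]\in K$, this is exactly uniform convergence of $\sigma^d_{\underline{s}}$ to $\sigma_{\underline{s}}$ on $K$.

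I do not expect a genuine obstacle here: the only points requiring care are that weak contraction is used solely in its non-strict ($1$-Lipschitz) form and that this bound is uniform in $d$ --- both of which are delivered by Proposition~\ref{prop: conjugate with poly spider operator} through the isometry $H_d$ --- and that the finite cover of $K$ makes the pointwise statement suffice. The analytic content of the result lies entirely upstream, in Proposition~\ref{pullback_convergence} and the pointwise convergence it yields; this final step is purely a soft compactness argument.
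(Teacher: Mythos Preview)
Your proof is correct and is essentially identical to the paper's own argument: the paper covers the compact set by finitely many $\tfrac{\epsilon}{3}$-balls, applies pointwise convergence at the centers, and uses the weak contraction of both $\sigma^d_{\underline{s}}$ and $\sigma_{\underline{s}}$ to bound the remaining two terms in the triangle inequality, obtaining $d_T(\sigma^d_{\underline{s}}([\varphi]),\sigma_{\underline{s}}([\varphi]))<\epsilon$. The only cosmetic difference is the $\epsilon/3$ versus $3\epsilon$ bookkeeping.
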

\begin{proof}
Let $U$ be a compact set in $\mathcal{T}_{\underline{s}}$. Given any $\epsilon>0$,  let $\{B_1,....,B_r\}$ be a covering of $\overline{U}$ by $\frac{\epsilon}{3}-$balls, and let $[\varphi _i]$ be the center of $B_i$. 

There exists an integer $D$ such that for all $d\geq D$, and all $i$, 
\begin{align*}
    d_{T}(\sigma^d_{\underline{s}}([\varphi_i]), \sigma_{\underline{s}}([\varphi_i])) < \frac{\epsilon}{3}
\end{align*}
Since $\sigma_d$ is weakly contracting, for each $[\varphi] \in U$, there exists $i$ such that 
\begin{align*}
d_{T}(\sigma_{\underline{s}}([\varphi_i]),\sigma_{\underline{s}}([\varphi])) < \frac{\epsilon}{3}\\
    d_T(\sigma^d_{\underline{s}}([\varphi]),\sigma^d_{\underline{s}}([\varphi_i])) < \frac{\epsilon}{3}
\end{align*}
Using the three equations above, we have
\begin{align*}
    d_{T}(\sigma^d_{\underline{s}}([\varphi]),\sigma_{\underline{s}}([\varphi])) < \epsilon
\end{align*}    
\end{proof}
Lemma~\ref{lemma: convergence of spider maps} follows by Propositions~\ref{prop:address to poly}, ~\ref{prop: conjugate with poly spider operator} and ~\ref{prop:local uniform spider convergence}. 
\section{Proof of Theorem~\ref{thm:A}}\label{sec:maintheorem}
Given $\lambda \in \mathcal{P}$, choose a preperiodic external address $\underline{s} \in \Theta(\lambda)$, with preperiod $\ell\geq 1$ and period $k\geq 0$. From Section~\ref{sec:spider algo for exp}, we know that $\sigma_{\underline{s}}$ has a fixed point $[\phi] \in \mathcal{T}_{\underline
{s}}$ such that $\phi(e_1) = 0$, and there exists $\psi \in [\phi]$ such that $\phi \circ [\mathcal{F}_{\underline{s}}] \circ \psi^{-1} = \lambda \exp(z)$. 

Let $\sigma^d_{\underline{s}}$ be as defined in Section~\ref{sec:proofoflemma1.4}, and let $[\phi_d]$ be the unique fixed point of $\sigma^d_{\underline{s}}$ in $\mathcal{T}_{\underline{s}}$. 
\begin{proposition}
As $d \rightarrow \infty$,  $[\varphi_d]\longrightarrow [\varphi]$. 
\end{proposition}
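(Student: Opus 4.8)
The plan is to exploit the uniform-on-compacta convergence $\sigma^d_{\underline{s}} \longrightarrow \sigma_{\underline{s}}$ established in Proposition~\ref{prop:local uniform spider convergence}, together with the uniform contraction property of $\sigma_{\underline{s}}$ on compact sets (Proposition~\ref{prop: strong contraction}), to conclude that the fixed points converge. This is a standard continuity-of-fixed-points argument for a family of uniformly contracting maps that converges uniformly on compacta. First I would fix a compact neighbourhood $K \subset \mathcal{T}_{\underline{s}}$ of the fixed point $[\varphi]$ of $\sigma_{\underline{s}}$, large enough that $\sigma_{\underline{s}}(K) \subseteq K$ (such a $K$ exists: take a large closed ball centred at $[\varphi]$, since $\sigma_{\underline{s}}$ is weakly contracting and fixes $[\varphi]$, so $d_T(\sigma_{\underline{s}}([\psi]), [\varphi]) = d_T(\sigma_{\underline{s}}([\psi]), \sigma_{\underline{s}}([\varphi])) \le d_T([\psi], [\varphi])$, i.e.\ closed balls around $[\varphi]$ are forward-invariant). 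Let $c_K \in [0,1)$ be the contraction constant for $\sigma_{\underline{s}}$ on $K$ coming from the strong contraction property of the Thurston pullback operator on compact sets.

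Next I would show that $[\varphi_d] \in K$ for $d$ large. By Proposition~\ref{prop:local uniform spider convergence}, for every $\eps>0$ there is $D'$ with $d_T(\sigma^d_{\underline{s}}([\psi]), \sigma_{\underline{s}}([\psi])) < \eps$ for all $[\psi] \in K$ and $d \ge D'$. Choosing $\eps$ with $\eps < (1-c_K)\cdot(\text{radius of } K)$ and using $\sigma_{\underline{s}}(K) \subseteq K$, one checks that $\sigma^d_{\underline{s}}$ maps (a slightly smaller closed ball, or $K$ itself after shrinking appropriately) into itself; since $\sigma^d_{\underline{s}}$ is also weakly contracting (Proposition~\ref{prop: conjugate with poly spider operator}) and $\mathcal{T}_{\underline{s}}$ is complete, its unique fixed point $[\varphi_d]$ lies in that ball, hence in $K$, for all $d \ge D'$. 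Then for such $d$,
\begin{align*}
    d_T([\varphi_d], [\varphi]) &= d_T(\sigma^d_{\underline{s}}([\varphi_d]), \sigma_{\underline{s}}([\varphi])) \\
    &\le d_T(\sigma^d_{\underline{s}}([\varphi_d]), \sigma_{\underline{s}}([\varphi_d])) + d_T(\sigma_{\underline{s}}([\varphi_d]), \sigma_{\underline{s}}([\varphi])) \\
    &\le \eps_d + c_K\, d_T([\varphi_d], [\varphi]),
\end{align*}
where $\eps_d := \sup_{[\psi]\in K} d_T(\sigma^d_{\underline{s}}([\psi]), \sigma_{\underline{s}}([\psi])) \to 0$. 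Rearranging gives $d_T([\varphi_d],[\varphi]) \le \eps_d/(1-c_K) \to 0$, which is the claim.

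The main obstacle is the forward-invariance bookkeeping in the second step: one must be careful that $\sigma^d_{\underline{s}}$ actually preserves the chosen compact set so that its fixed point is trapped there, rather than escaping to infinity in $\mathcal{T}_{\underline{s}}$ (which is non-compact). This is handled by the quantitative estimate above — choosing the radius of $K$ and the threshold $D'$ so that the perturbation $\eps_d$ is dominated by the contraction gap $(1-c_K)$ times the radius — but it requires knowing the strong (not merely weak) contraction constant $c_K<1$ on the compact set $K$, which is exactly what the topological-entire case of the Thurston pullback operator provides (Section~\ref{sec:thurston pullback}, third bullet) and which transfers to $\sigma_{\underline{s}}$ via the conjugacy with $\sigma_{[\mathcal{F}_{\underline{s}}]}$. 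Once these ingredients are in place the estimate is routine.
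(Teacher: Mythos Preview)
Your approach is essentially the paper's: trap the fixed point of $\sigma^d_{\underline{s}}$ in a closed ball around $[\varphi]$ by combining the local uniform convergence $\sigma^d_{\underline{s}}\to\sigma_{\underline{s}}$ with the strong contraction of $\sigma_{\underline{s}}$ on that ball, then conclude. The paper takes the ball $B_\epsilon$ of arbitrary radius $\epsilon$, shows $\sigma^d_{\underline{s}}(B_\epsilon)\subset B_\epsilon$ via exactly your triangle-inequality estimate, and deduces $[\varphi_d]\in B_\epsilon$; your variant fixes a single $K$ and finishes with the rearrangement $d_T([\varphi_d],[\varphi])\le \eps_d/(1-c_K)$. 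Both are the same argument.

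One point to tighten: from ``$\sigma^d_{\underline{s}}(K)\subseteq K$, $\sigma^d_{\underline{s}}$ weakly contracting, $\mathcal{T}_{\underline{s}}$ complete'' you cannot conclude that the fixed point lies in $K$; weak contraction on a complete space does not force iterates to converge. What you need (and what the paper invokes) is that $\sigma^d_{\underline{s}}$ is itself \emph{strongly} contracting on the compact set $K$, so Banach gives a fixed point in $K$, which by global uniqueness equals $[\varphi_d]$. This strong contraction holds because $\sigma^d_{\underline{s}}$ is conjugate to the Thurston pullback of the topological polynomial $[\mathcal{F}_{d,\theta_d}]$ (Proposition~\ref{prop: conjugate with poly spider operator}), and Thurston pullbacks of topologically entire maps are locally uniformly contracting (Section~\ref{sec:thurston pullback}). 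The paper also uses Hopf--Rinow to ensure closed balls in $\mathcal{T}_{\underline{s}}$ are compact, which you are implicitly assuming when you speak of a ``compact neighbourhood''. With these two clarifications your proof is complete and matches the paper's.
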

\begin{proof}
Since $\mathcal{T}_{\underline{s}}$ is a manifold, it is locally compact. It is also geodesically complete with respect to the Teichm\"{u}ller metric (see \cite[Section 11]{primer}).  By the Hopf-Rinow theorem \cite[page 8]{gromov} , any closed and bounded subset of $\mathcal{T}_{\underline{s}}$ is compact. 
\begin{figure}
    \centering
    \includegraphics[scale=0.3]{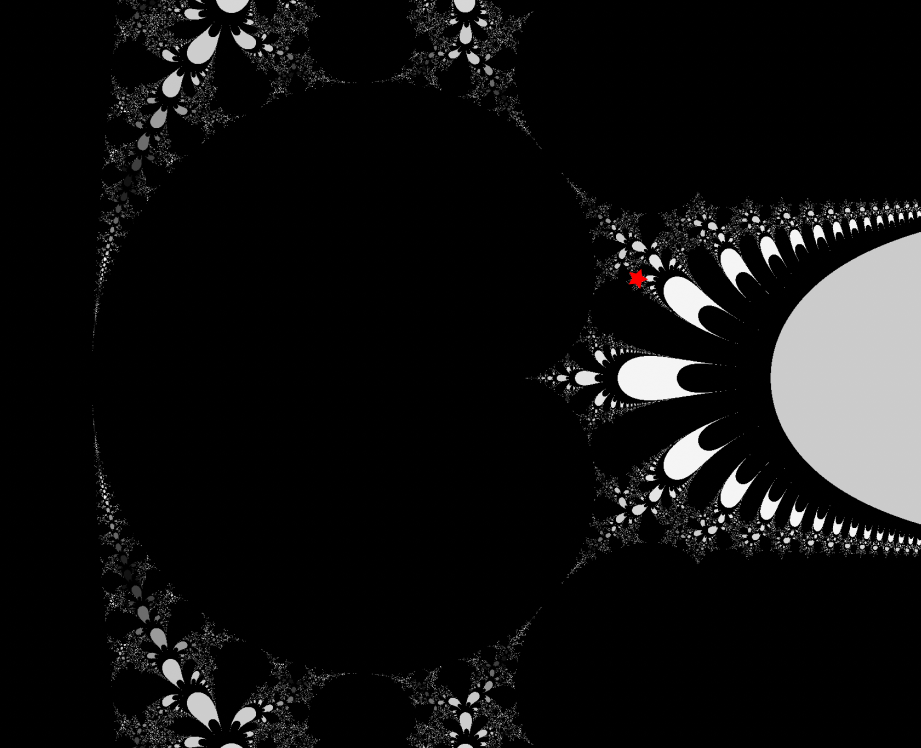}
    \caption{The star marks the position of $\lambda \approx 1.16302+ 0.71056 i$, the landing point of the parameter ray at address $0001\overline{0010}$ in the exponential parameter plane}
    \label{fig:lambda}
\end{figure}
Given $\epsilon >0$, let $B_\epsilon$ be a ball of radius $\epsilon$ centered at $[\phi]$. By the above, $\overline{B}_\epsilon$ is compact, and by Proposition~\ref{prop: strong contraction}, there exists a constant $c_\epsilon \in [0,1)$ such that 
\begin{align*}
    d_T(\sigma_{\underline{s}}([\psi]), \sigma_{\underline{s}}([\phi'])) \leq c_\epsilon d_T([\psi],[\phi'])
\end{align*}
for all $[\psi],[\phi'] \in B_\epsilon$. 
By Proposition~\ref{prop:local uniform spider convergence}, there exists an integer $N$ such that for all $d \geq N$, 
\begin{align*}
    d_T(\sigma^d_{\underline{s}}([\psi]), \sigma_{\underline{s}}([\psi])) < (1-c_\epsilon)\epsilon
\end{align*}
for all $[\psi] \in B_\epsilon$.

Therefore, for all $[\psi] \in B_\epsilon$ and $d\geq N$, 
\begin{align*}
    d_T(\sigma^d_{\underline{s}}([\psi]), [\phi]) &\leq  d_T(\sigma^d_{\underline{s}}([\psi]), \sigma_{\underline{s}}([\psi])) + d_T(\sigma_{\underline{s}}([\psi]), \sigma_{\underline{s}}([\phi])) \\& < (1-c_\epsilon)\epsilon + c_\epsilon \epsilon  < \epsilon
\end{align*}
\begin{figure}[t]
   \begin{subfigure}[b]{0.45\textwidth}
       \centering
       \includegraphics[scale=0.37]{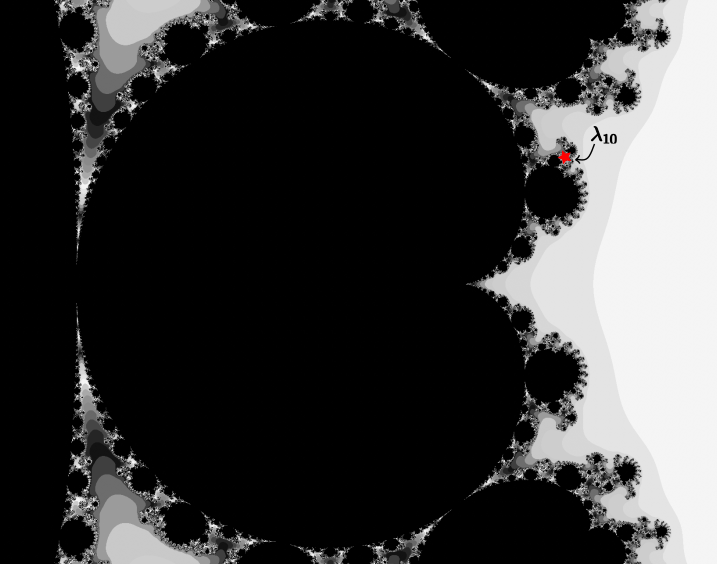}
       \caption{$\lambda_{10}\approx 1.1176 + 0.86608i$}
   \end{subfigure}\hspace{5pt}
   \begin{subfigure}[b]{0.45\textwidth}
       \centering
       \includegraphics[scale=0.31]{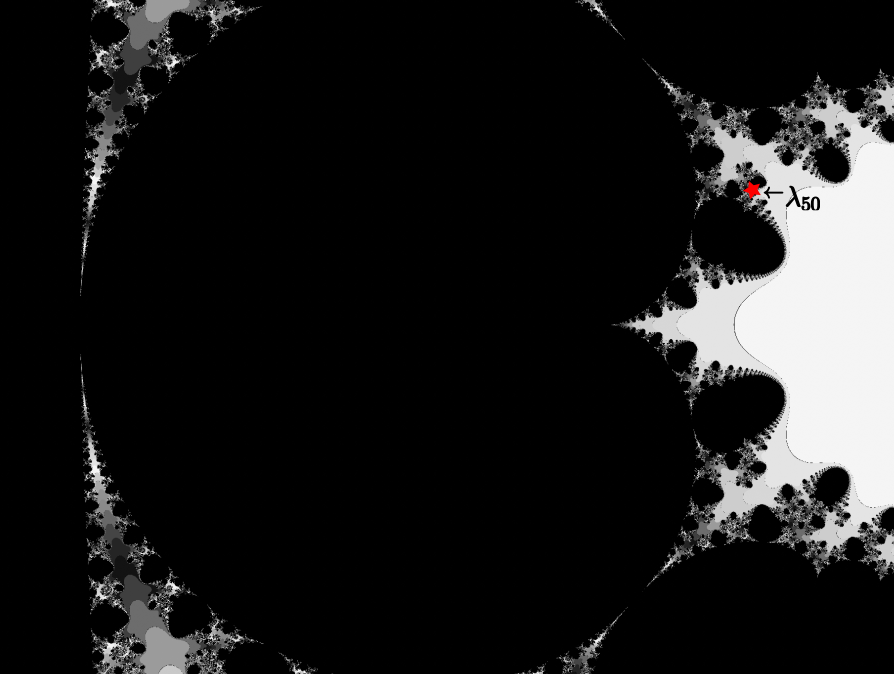}
         \caption{$\lambda_{50}\approx 1.1545 + 0.74281i$}
   \end{subfigure}\\ 
   \vspace{10pt}
    \begin{subfigure}[b]{0.45\textwidth}
       \centering
       \includegraphics[scale=0.32]{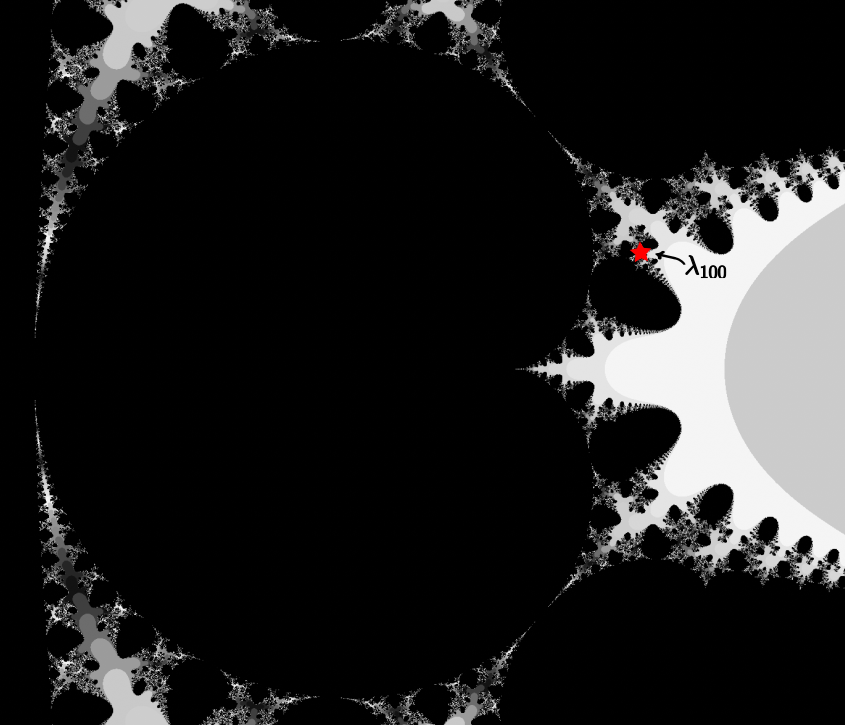}
       \caption{$\lambda_{100}\approx 1.1575 + 0.72671i$}
   \end{subfigure}\hspace{5pt}
     \begin{subfigure}[b]{0.45\textwidth}
       \centering
       \includegraphics[scale=0.32]{lambda100.png}
       \caption{$\lambda_{200}\approx 1.15891 + 0.71869i$}
   \end{subfigure}
    \caption{Approximating parameters for $\lambda \approx 1.16302+ 0.71056 i$. Compare with Figure~\ref{fig:lambda}}
    \label{fig:example}
\end{figure}

This shows that for $d\geq N$, $\sigma^d_{\underline{s}}(B_\epsilon) \subset B_\epsilon$. Since $\sigma^{d}_{\underline{s}}$ is strongly contracting on the compact set $B_\epsilon$, by the Banach fixed point theorem, it has a fixed point in $B_\epsilon$. However, $\sigma^d_\epsilon$ has a unique global fixed point, this thus $[\phi_d] \in B_\epsilon$. Since $\epsilon$ was chosen arbitrarily, we have $[\phi_d] \longrightarrow [\phi]$.
\end{proof}
\begin{proposition}\label{prop:polys exist}
    There exist polynomials $p_{d,\lambda_d} \longrightarrow  p_\lambda$ locally uniformly.
\end{proposition}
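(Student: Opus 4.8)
The idea is to read the approximating polynomials off the fixed points $[\varphi_d]$ of $\sigma^d_{\underline{s}}$ and to use the convergence $[\varphi_d]\to[\varphi]$ established above. By Proposition~\ref{prop: conjugate with poly spider operator} (via $H_d$ and the spider algorithm for $\sigma^d_{\theta_d}$) the fixed point $[\varphi_d]$ admits a representative $\varphi_d$ with $\varphi_d(e_1)=0$ such that $\varphi_d\circ[\mathcal{F}_{\underline{s}}]\circ\psi_d^{-1}=p_{d,\lambda_d}$ for some $\psi_d\in[\varphi_d]$, where $\lambda_d:=\varphi_d(e_2)$ and $p_{d,\lambda_d}$ is the pcf polynomial Thurston equivalent to $[\mathcal{F}_{d,\theta_d}]$; in particular $\theta_d\in\Theta_d(\lambda_d)$, so $\lambda_d$ is exactly the parameter whose monic representatives are landed by the parameter ray at angle $\theta_d$. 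Likewise the fixed point $[\varphi]$ of $\sigma_{\underline{s}}$ has the distinguished representative $\varphi$ with $\varphi(e_1)=0$, $\varphi(e_2)=\lambda$ and $\varphi\circ[\mathcal{F}_{\underline{s}}]\circ\psi^{-1}=p_\lambda$. Since $[\varphi_d]\to[\varphi]$ (the preceding proposition), it remains to prove that $\lambda_d\to\lambda$, after which $p_{d,\lambda_d}(z)=\lambda_d(1+\tfrac{z}{d})^d\to\lambda e^z=p_\lambda(z)$ locally uniformly on $\C$, because $(1+\tfrac{z}{d})^d\to e^z$ locally uniformly and $\lambda_d$ is bounded.

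First I would record that $\{\lambda_d\}$ is bounded: continuity of the universal cover $\pi:\mathcal{T}_{\underline{s}}\to\Mod(\underline{s})$ gives $\varphi_d(e_n)/\lambda_d\to\varphi(e_n)/\lambda$ for every $n$, and the dynamical relation at the fixed point gives $\varphi_d(e_3)/\lambda_d=(1+\lambda_d/d)^d$; as the left side converges to $\varphi(e_3)/\lambda=e^\lambda\neq 0$, the sequence $\lambda_d$ must stay bounded (and away from values making $(1+\lambda_d/d)^d$ degenerate). Now suppose, for contradiction, that $\lambda_d\not\to\lambda$, and pass to a subsequence $\lambda_{d_k}\to\mu\neq\lambda$. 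Rescaling, set $\widehat\varphi_{d_k}:=\tfrac{\lambda}{\lambda_{d_k}}\varphi_{d_k}$, so $[\widehat\varphi_{d_k}]=[\varphi_{d_k}]\to[\varphi]$, $\widehat\varphi_{d_k}(e_1)=0$ and $\widehat\varphi_{d_k}(e_2)=\lambda$. Choosing quasiconformal maps witnessing $d_T([\widehat\varphi_{d_k}],[\varphi])\to 0$ and normalizing them to fix the three points $0,\lambda,\infty$, Proposition~\ref{prop: qc homeos fixing three points} forces $\widehat\varphi_{d_k}\to\varphi$ uniformly on compact subsets of $[\Gamma_{\underline{s}}]\setminus\{\infty\}$, hence $\varphi_{d_k}\to\tfrac{\mu}{\lambda}\varphi$ uniformly on compacta, with $\varphi_{d_k}(e_2)=\lambda_{d_k}\to\mu=(\tfrac{\mu}{\lambda}\varphi)(e_2)$.

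At this point I would apply the (routine, $\lambda_d\to\lambda$) generalization of Proposition~\ref{pullback_convergence} to the converging representatives $\varphi_{d_k}$ and the parameters $\lambda_{d_k}\to\mu$: the maps $\psi_{d_k}$ with $p_{d_k,\lambda_{d_k}}\circ\psi_{d_k}=\varphi_{d_k}\circ[\mathcal{F}_{\underline{s}}]$ converge uniformly on compacta of $[\Gamma_{\underline{s}}]\setminus\{\infty\}$ to a map $\psi$ with $\mu\exp\circ\psi=(\tfrac{\mu}{\lambda}\varphi)\circ[\mathcal{F}_{\underline{s}}]$; equivalently, the recovered holomorphic maps $p_{d_k,\lambda_{d_k}}$ converge locally uniformly to $\mu\exp$. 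Since $[\varphi_{d_k}]$ is a fixed point, $[\psi_{d_k}]=[\varphi_{d_k}]$; by Proposition~\ref{conv_implies_teich_convergence} (and $\psi_{d_k}(e_1)=0$) this yields $[\psi]=\lim[\varphi_{d_k}]=[\varphi]$, so $[\tfrac{\mu}{\lambda}\varphi]=[\varphi]$ is a fixed point of $\sigma_{\underline{s}}$ whose associated holomorphic map is $\mu\exp$. Thus $\mu\exp$ is Thurston equivalent to $[\mathcal{F}_{\underline{s}}]$, hence (by uniqueness of the fixed point of $\sigma_{\underline{s}}$, Proposition~\ref{prop: strong contraction}, together with the elementary fact that $\mu e^z$ and $\lambda e^z$ are affinely conjugate only if $\mu=\lambda$) $\mu=\lambda$, the desired contradiction. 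Therefore $\lambda_d\to\lambda$, and the proposition follows as indicated in the first paragraph.

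The main obstacle is precisely the scale-pinning step $\lambda_d\to\lambda$: Teichm\"uller (or moduli-space) convergence only controls the $\varphi_d(e_n)$ up to a common rescaling, so a priori $\lambda_d$ could drift by a multiple of $2\pi i$; ruling this out is what forces one to feed the convergence back through the pullback construction and exploit the uniqueness of the psf entire map realizing $[\mathcal{F}_{\underline{s}}]$, which is where the careful construction of the operators $\sigma^d_{\underline{s}}$ in Section~\ref{sec:proofoflemma1.4}---and in particular the convergence of their branch choices to the correct exponential branches (Proposition~\ref{pullback_convergence})---does the work.
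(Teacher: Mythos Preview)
Your approach shares the core rescaling idea $\widehat{\varphi}_d=\tfrac{\lambda}{\lambda_d}\varphi_d$ with the paper, together with the use of Proposition~\ref{prop: qc homeos fixing three points} to pass from Teichm\"uller convergence to locally uniform convergence of representatives. However, the route via boundedness, subsequences, a generalized Proposition~\ref{pullback_convergence}, and Thurston uniqueness is longer and has a genuine gap at the boundedness step. From $(1+\lambda_d/d)^d\to e^\lambda$ one cannot conclude that $\lambda_d$ is bounded: for instance $\lambda_d=\lambda+2\pi i k_d$ with integers $k_d=o(\sqrt{d})$ gives the same limit (exactly the $2\pi i$ drift you flag at the end). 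But your contradiction argument structurally requires a convergent subsequence in $\C$, so this is not merely a handwave to be filled in later; without boundedness the whole scheme stalls.

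The paper's proof bypasses all of this with one observation you do not exploit: after rescaling to $\widetilde{\varphi}_d:=\tfrac{\lambda}{\lambda_d}\varphi_d$, the \emph{same} map $\psi_d$ satisfies $p_{d,\lambda}\circ\psi_d=\widetilde{\varphi}_d\circ[\mathcal{F}_{\underline{s}}]$ (just multiply the original fixed-point relation by $\lambda/\lambda_d$). Now $\widetilde{\varphi}_d(e_1)=0$ and $\widetilde{\varphi}_d(e_2)=\lambda$ are \emph{fixed} for all $d$, so Proposition~\ref{pullback_convergence} applies exactly as stated, with no generalization to varying parameters. Choosing quasiconformal representatives $\varphi'_d$ of $[\widetilde{\varphi}_d]$ converging locally uniformly to a representative $\varphi'$ of $[\varphi]$ (via Proposition~\ref{prop: qc homeos fixing three points}), the associated pullbacks $\psi'_d$ under $p_{d,\lambda}$ satisfy $\psi'_d\to\psi'$. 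Since $\varphi'_d$ and $\widetilde{\varphi}_d$ agree on $A_{\underline{s}}$, their pullbacks agree there too, giving $\psi'_d(e_2)=\psi_d(e_2)=\lambda_d$ and $\psi'(e_2)=\psi(e_2)=\lambda$. Hence $\lambda_d\to\lambda$ directly: no boundedness step, no subsequence, and no appeal to Thurston rigidity is needed.
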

\begin{proof}
We may assume that $\varphi_d(\infty) = \varphi(\infty)  = \infty$, $\varphi_d(e_1) = \varphi(e_1)= 0$. Additionally, let $\psi_d  \in [\phi_d]$ be the unique map that satisfies $\phi_d(e_2) = \psi_d(e_2) = \lambda_d$ and  $\phi_d \circ [\mathcal{F}_{\underline{s}}] = p_{d,\lambda_d} \circ \psi_d$. The $p_{d,\lambda_d}$ are pcf for each $d$. Similarly, let $\psi \in [\phi]$ be the unique map that satisfies $\psi(e_2) = \phi(e_2) = \lambda$, and $\phi \circ [\mathcal{F}_{\underline{s}}] = p_\lambda \circ \psi$. To prove this proposition, it suffices to show that $\lambda_d \longrightarrow \lambda$. 

Let $\widetilde{\phi}_d  = \frac{\lambda}{\lambda_d}\phi_d $. We note that $p_{d,\lambda} \circ \psi_d = \widetilde{\phi}_d \circ [\mathcal{F}_{\underline{s}}]$. 

There exist quasiconformal maps $\phi'_d \sim \widetilde{\phi}_d$ rel $A_{\underline{s}}$ and 
$\phi' \sim \phi$ rel $A_{\underline{s}}$ such that the complex dilitation of $\phi'_d \circ (\phi')^{-1} \rightarrow 1$. By Proposition~\ref{prop: qc homeos fixing three points}, $\phi'_d \longrightarrow \phi'$ uniformly on compact sets of $\Gamma_{\underline{s}} \setminus \{\infty\}$. Let $\psi'_d \in [\phi_d]$ and $\psi' \in [\phi]$ be maps such that $p_{d,\lambda} \circ \psi'_d = \phi'_d \circ [\mathcal{F}_{\underline{s}}]$ and $p_{\lambda} \circ \psi' = \phi' \circ [\mathcal{F}_{\underline{s}}]$.  By Proposition~\ref{pullback_convergence}, $\psi'_d \longrightarrow \psi'$ uniformly on compact sets of $\Gamma_{\underline{s}} \setminus \{\infty\}$. 

Given $x \in A_{\underline{s}}$, letting $y=[\mathcal{F}_{\underline{s}}](x)$, we have $\phi'_d(y)  = \widetilde{\phi}_d(y)$,  and $\phi'(y)= \phi(y)$. Therefore $\psi'_d(x) = \psi_d(x)$ and $\psi'(x) = \psi(x)$. In particular,   $\psi'_d(e_2)= \lambda_d$ and $\psi'(e_2) = \lambda$; proving that $\lambda_d \longrightarrow \lambda$. 

\end{proof}

\begin{proof}[Proof of Theorem~\ref{thm:A}]
By Proposition~\ref{prop:polys exist}, there exists a sequence of polynomials $p_{d,\lambda_d}$ that converge to $p_\lambda$ locally uniformly. Let $\underline{s}$ be an external address for $\lambda$, with preperiod $\ell$ and period $k$. The map $p_\lambda$ is Thurston equivalent to $[\mathcal{F}_{\underline{s}}]$ and $p_{d,\lambda_d}$ is Thurston equivalent to $[\mathcal{F}_{d,\theta_d}]$ for each $d$, where $\theta_d$ are as constructed in  Proposition~\ref{prop:address to poly}. By Proposition~\ref{same_spider_exp}, $p_{d,\lambda_d}$ and $p_\lambda$ are conjugate on their postsingular sets for each $d$. Let $c_d \in \mathcal{M}_d$ be the landing point of $R_d(\theta_d)$. Then $z^d+c_d$ is conjugate to $p_{d,\lambda_d}$, and by   Proposition~\ref{prop:address to poly}, we have a polynomial $q \in \Z[x]$ with $\deg q \leq \ell+k-2$ such that $(d-1)\theta_d = \frac{(d-1)q(d)}{d^\ell(d^k-1)}$ for each $d$. 

\end{proof}
\begin{example}
    Let $\lambda = 2\pi i r$, where $r\in \Z \setminus \{0\}$. The orbit of $0$ under $p_\lambda$ is $0\longrightarrow 2\pi i r \righttoleftarrow$, and $\lambda$ has a unique external address, $0\overline{r}$. We have
    \begin{align*}
        \theta_d& = \begin{cases}
            \frac{r}{d(d-1)} & r>0\\
            \frac{d-|r|}{d(d-1)} & r<0\\
         \end{cases}\\
           q(x) &= r\\
          c_d& = e^{\frac{2\pi i r}{d}}\\
           \lambda_d &= d(c_d-1)
    \end{align*}
 \end{example}

\begin{example}
   Let $\lambda $ be the landing point of the ray at address $\underline{s} = 000(-1)\overline{0010}$ (its approximate value is $ 1.16302+ 0.71056 i$; see Figure~\ref{fig:lambda}). The orbit of $0$ under $p_\lambda$ has the form $0 \longrightarrow \lambda \longrightarrow p_\lambda(\lambda) \longrightarrow p_{\lambda}^{\circ 2}(\lambda) \longrightarrow p_{\lambda}^{\circ 3}(\lambda) \longrightarrow p_{\lambda}^{\circ 4}(\lambda) \longrightarrow p_{\lambda}^{\circ 5}(\lambda) \righttoleftarrow$. 

   In Example~\ref{ex:last example}, we computed $\theta_d = \frac{d^5 - d^4 +1}{d^4(d^4-1)}$. In Fig~\ref{fig:lambda} we have indicated the position of $\lambda_d$ for $d = 10, 50, 100$ and $200$, along with approximate values. 
\end{example}

\bibliographystyle{amsalpha}
\bibliography{bibtemplate}

\newcommand{\etalchar}[1]{$^{#1}$}
\providecommand{\bysame}{\leavevmode\hbox to3em{\hrulefill}\thinspace}
\providecommand{\MR}{\relax\ifhmode\unskip\space\fi MR }
\providecommand{\MRhref}[2]{%
  \href{http://www.ams.org/mathscinet-getitem?mr=#1}{#2}
}
\providecommand{\href}[2]{#2}
\begin{thebibliography}{BDH{\etalchar{+}}00}

\bibitem[BDH{\etalchar{+}}00]{Hubbard_et_al}
Clara Bodel\'{o}n, Robert~L. Devaney, Michael Hayes, Gareth Roberts, Lisa~R.
  Goldberg, and John~H. Hubbard, \emph{Dynamical convergence of polynomials to
  the exponential}, J. Differ. Equations Appl. \textbf{6} (2000), no.~3,
  275--307. \MR{1785056}

\bibitem[BF14]{branner_fagella_2014}
Bodil Branner and N\'{u}ria Fagella, \emph{Quasiconformal surgery in
  holomorphic dynamics}, Cambridge Studies in Advanced Mathematics, vol. 141,
  Cambridge University Press, Cambridge, 2014, With contributions by Xavier
  Buff, Shaun Bullett, Adam L. Epstein, Peter Ha\"{\i}ssinsky, Christian
  Henriksen, Carsten L. Petersen, Kevin M. Pilgrim, Tan Lei and Michael
  Yampolsky. \MR{3445628}

\bibitem[BFH92]{hubbard_bielefeld_fisher}
Ben Bielefeld, Yuval Fisher, and John Hubbard, \emph{The classification of
  critically preperiodic polynomials as dynamical systems}, J. Amer. Math. Soc.
  \textbf{5} (1992), no.~4, 721--762. \MR{1149891}

\bibitem[BS08]{bruinschleicher}
Henk Bruin and Dierk Schleicher, \emph{Admissibility of kneading sequences and
  structure of {H}ubbard trees for quadratic polynomials}, J. Lond. Math. Soc.
  (2) \textbf{78} (2008), no.~2, 502--522. \MR{2439637}

\bibitem[DH82]{Mandelbrotconnected}
Adrien Douady and John~Hamal Hubbard, \emph{It\'{e}ration des polyn\^{o}mes
  quadratiques complexes}, C. R. Acad. Sci. Paris S\'{e}r. I Math. \textbf{294}
  (1982), no.~3, 123--126. \MR{651802}

\bibitem[DH84]{DH_Orsay}
A.~Douady and J.H. Hubbard, \emph{\'{E}tude dynamique des polyn\^omes
  complexes. {P}artie {I}}, Publications Math\'ematiques d'Orsay [Mathematical
  Publications of Orsay], vol.~84, Universit\'e de Paris-Sud, D\'epartement de
  Math\'ematiques, Orsay, 1984.

\bibitem[DH93]{neg_char}
Adrien Douady and John~H. Hubbard, \emph{{A proof of Thurston's topological
  characterization of rational functions}}, Acta Mathematica \textbf{171}
  (1993), no.~2, 263 -- 297.

\bibitem[DH09]{orsaynotes}
Adrien Douady and John Hubbard, \emph{Exploring the mandelbrot set. the orsay
  notes.}

\bibitem[EMS16]{multibrot}
Dominik Eberlein, Sabyasachi Mukherjee, and Dierk Schleicher, \emph{Rational
  parameter rays of the multibrot sets}, Dynamical systems, number theory and
  applications, World Sci. Publ., Hackensack, NJ, 2016, pp.~49--84.
  \MR{3444240}

\bibitem[FM12]{primer}
Benson Farb and Dan Margalit, \emph{A primer on mapping class groups},
  Princeton Mathematical Series, vol.~49, Princeton University Press,
  Princeton, NJ, 2012. \MR{2850125}

\bibitem[F{\"o}r06]{Foerster2006}
Markus F{\"o}rster, \emph{Exponential maps with escaping singular orbits},
  Ph.D. thesis, 2006.

\bibitem[FRS07]{Forster_2007}
Markus Förster, Lasse Rempe, and Dierk Schleicher, \emph{Classification of
  escaping exponential maps}, Proceedings of the American Mathematical Society
  \textbf{136} (2007), no.~02, 651--663.

\bibitem[FS05]{förster2005parameter}
Markus Förster and Dierk Schleicher, \emph{Parameter rays for the exponential
  family}, 2005.

\bibitem[Gro07]{gromov}
Misha Gromov, \emph{Metric structures for {R}iemannian and non-{R}iemannian
  spaces}, english ed., Modern Birkh\"{a}user Classics, Birkh\"{a}user Boston,
  Inc., Boston, MA, 2007, Based on the 1981 French original, With appendices by
  M. Katz, P. Pansu and S. Semmes, Translated from the French by Sean Michael
  Bates. \MR{2307192}

\bibitem[HS94]{spideralgo}
John~H. Hubbard and Dierk Schleicher, \emph{The spider algorithm}, Complex
  dynamical systems ({C}incinnati, {OH}, 1994), Proc. Sympos. Appl. Math.,
  vol.~49, Amer. Math. Soc., Providence, RI, 1994, pp.~155--180. \MR{1315537}

\bibitem[HSS09]{hss}
John Hubbard, Dierk Schleicher, and Mitsuhiro Shishikura, \emph{Exponential
  {T}hurston maps and limits of quadratic differentials}, J. Amer. Math. Soc.
  \textbf{22} (2009), no.~1, 77--117. \MR{2449055}

\bibitem[Hub16]{teichmuller_theory_vol2}
John~Hamal Hubbard, \emph{Teichm\"{u}ller theory and applications to geometry,
  topology, and dynamics. {V}ol. 2}, Matrix Editions, Ithaca, NY, 2016, Surface
  homeomorphisms and rational functions. \MR{3675959}

\bibitem[Kaf06]{Kaffl2006}
Alexandra Kaffl, \emph{Hubbard trees and kneading sequences for unicritical and
  cubic polynomials}, Ph.D. thesis, 2006.

\bibitem[Kis95]{Kisaka}
Masashi Kisaka, \emph{Local uniform convergence and convergence of julia sets},
  Nonlinearity \textbf{8} (1995), 273--281.

\bibitem[Lav86]{lavaurs}
Pierre Lavaurs, \emph{Une description combinatoire de l'involution d\'{e}finie
  par {$M$} sur les rationnels \`a d\'{e}nominateur impair}, C. R. Acad. Sci.
  Paris S\'{e}r. I Math. \textbf{303} (1986), no.~4, 143--146. \MR{853606}

\bibitem[LSV06]{combi_classif_exp}
Bastian Laubner, Dierk Schleicher, and Vlad Vicol, \emph{A combinatorial
  classification of postsingularly finite complex exponential maps}.

\bibitem[MB12]{helena}
Helena Mihaljevi\'{c}-Brandt, \emph{Dynamical approximation and kernels of
  non-escaping hyperbolic components}, Ergodic Theory Dynam. Systems
  \textbf{32} (2012), no.~4, 1418--1434. \MR{2955320}

\bibitem[Pfr19]{Pfrang_Thesis}
D.~Pfrang, \emph{Homotopy hubbard trees for post-singularly finite
  transcendental entire maps}, Ph.D. thesis, Jacobs University, Bremen, 2019.

\bibitem[PL98]{pilgrimlei}
Kevin~M. Pilgrim and Tan Lei, \emph{Combining rational maps and controlling
  obstructions}, Ergodic Theory Dynam. Systems \textbf{18} (1998), no.~1,
  221--245. \MR{1609463}

\bibitem[Poi93]{poirier}
Alfredo Poirier, \emph{On post-critically finite polynomials}, ProQuest LLC,
  Ann Arbor, MI, 1993, Thesis (Ph.D.)--State University of New York at Stony
  Brook. \MR{2690269}

\bibitem[PPS21]{dreadlock}
David Pfrang, Sören Petrat, and Dierk Schleicher, \emph{Dreadlock pairs and
  dynamic partitions for post-singularly finite entire functions}, 2021.

\bibitem[PRS23]{pfrang_rothgang_schleicher_2023}
DAVID PFRANG, MICHAEL ROTHGANG, and DIERK SCHLEICHER, \emph{Homotopy hubbard
  trees for post-singularly finite exponential maps}, Ergodic Theory and
  Dynamical Systems \textbf{43} (2023), no.~1, 253–298.

\bibitem[Sch94]{shleicherintaddress}
Dierk Schleicher, \emph{Internal addresses in the {M}andelbrot set and
  irreducibility of polynomials}, ProQuest LLC, Ann Arbor, MI, 1994, Thesis
  (Ph.D.)--Cornell University. \MR{2691195}

\bibitem[Sch04]{fibers}
\bysame, \emph{On fibers and local connectivity of {M}andelbrot and {M}ultibrot
  sets}, Fractal geometry and applications: a jubilee of {B}eno\^{i}t
  {M}andelbrot. {P}art 1, Proc. Sympos. Pure Math., vol.~72, Amer. Math. Soc.,
  Providence, RI, 2004, pp.~477--517. \MR{2112117}

\bibitem[She22]{Shemyakov_Thesis}
S.~Shemyakov, \emph{A topological characterization of certain postsingularly
  finite entire functions: transcendental dynamics and thurston theory}, Ph.D.
  thesis, Universit\'{e} d'Aix-Marseille, 2022.

\bibitem[SZ03]{schleicherzimmer}
Dierk Schleicher and Johannes Zimmer, \emph{Periodic points and dynamic rays of
  exponential maps}, Ann. Acad. Sci. Fenn. Math. \textbf{28} (2003), no.~2,
  327--354. \MR{1996442}

\bibitem[Thu20]{pos_char}
Dylan~P. Thurston, \emph{A positive characterization of rational maps}, Ann. of
  Math. (2) \textbf{192} (2020), no.~1, 1--46. \MR{4125449}

\end{thebibliography}
\end{document}